\documentclass[11pt,a4paper]{article}
\usepackage{fullpage}
\usepackage[utf8]{inputenc}
\usepackage[T1]{fontenc}
\pdfoutput=1

\usepackage{amsmath}
\usepackage{amsfonts}
\usepackage{amssymb}
\usepackage{mathtools}

\usepackage{lmodern}
\usepackage{mathrsfs}
\usepackage{bbm}
\usepackage{yfonts}

\usepackage[parfill]{parskip}
\usepackage{array}

\usepackage{graphicx}
\usepackage[font=small,labelfont=bf]{caption}
\usepackage{tikz-cd}

\usepackage{verbatim}
\usepackage{amsthm}

\usepackage[colorlinks=true, allcolors=blue]{hyperref}
\usepackage[nameinlink]{cleveref}

\newtheorem{theorem}{Theorem}

\newtheorem{corollary}{Corollary}[section]
\newtheorem{lemma}{Lemma}

\theoremstyle{remark}
\newtheorem{rem}{Remark}[section]

\theoremstyle{definition}

\newcommand{\ZZ}{\mathbb{Z}}
\newcommand{\NN}{\mathbb{N}}
\newcommand{\irr}{\mathfrak{Irr}}
\newcommand{\comp}{\mathrm{Comp}}
\newcommand{\ra}{\rightarrow}
\newcommand{\orb}{\mathrm{Orb}}
\newcommand{\oo}{\mathbb{O}}
\newcommand{\ext}{\mathrm{ext}}
\newcommand{\id}{\mathrm{Ind}}
\newcommand{\FF}{\mathrm{F}}
\newcommand{\CC}{\mathbb{C}}
\newcommand{\ed}{\mathrm{End}}
\newcommand{\cusp}{\mathrm{cusp}}
\renewcommand{\hom}{\mathrm{Hom}}
\renewcommand{\ext}{\mathrm{Ext}}

\newcommand{\rep}{\mathfrak{Rep}}
\newcommand{\De}{\Delta}

\newcommand{\gl}{\mathbf{GL}}
\newcommand{\op}{\overline{\bP}}
\newcommand{\supp}{\mathrm{supp}}
\newcommand{\db}{\mathbf{d}}

\newcommand{\hra}{\hookrightarrow}
\newcommand{\LL}{\mathrm{Z}}

\newcommand{\pard}{\mathfrak{d}}
\newcommand{\Md}{\mathrm{Mod}}

\newcommand{\cn}{\mathcal{N}_n}
\newcommand{\btimes}{\overline{\times}}
\newcommand{\hyp}{\mathrm{Hyp}}

\newcommand{\bJ}{\Bar{J}}
\newcommand{\bLambda}{\Bar{\Lambda}}
\newcommand{\ain}[3]{#1\in \{#2,\ldots,#3\}}
\newcommand{\bX}{\mathbf{X}}
\newcommand{\gm}{{\mathbb{G}_m}}
\newcommand{\mhm}{\mathrm{MHM}}
\newcommand{\DD}{\mathbb{D}}
\newcommand{\QQ}{\mathbb{Q}}
\newcommand{\tep}{\tilde{\mathbf{E}}_\db}

\newcommand{\cus}{\mathfrak{Cus}}
\newcommand{\rat}{\mathrm{Rat}}

\newcommand{\gr}{\mathrm{gr}}
\newcommand{\ep}{\mathbf{E}_\db}
\newcommand{\epo}{\mathbf{E}_{\db^1}}
\newcommand{\epk}{\mathbf{E}_{\db^k}}
\newcommand{\grdim}{\mathrm{grdim}}
\newcommand{\mul}{\mathcal{M}ul}
\newcommand{\fm}{\mathfrak{m}}
\newcommand{\bV}{\overline{V}}
\newcommand{\Z}{\mathrm{Z}}
\newcommand{\I}{\mathrm{I}}
\newcommand{\fn}{\mathfrak{n}}
\newcommand{\Ir}{\mathrm{I^{gr}}}
\newcommand{\cf}{\mathcal{F}}
\newcommand{\sem}{\mathrm{Semis}}

\newcommand{\ic}[1]{\mathrm{IC}(#1,\underline{\CC}_{#1})}

\newcommand{\cS}{\mathcal{S}}

\newcommand{\conv}{\mathrm{Conv}^+}
\newcommand{\grtim}{\times_{\mathrm{gr}}}
\newcommand{\bq}{\mathbf{q}}

\newcommand{\real}{\mathrm{real}}

\newcommand{\twh}{[t,t^{-1}]}
\newcommand{\ct}{\mathcal{T}}
\newcommand{\cb}{\mathcal{B}_n}

\newcommand{\grtimt}{\tilde{\times}_\gr}
\newcommand{\coh}{\mathrm{Coh}}
\newcommand{\irt}{\tilde{\mathrm{I}}_\gr}
\newcommand{\bY}{\mathbf{Y}
}
\newcommand{\cF}{\mathcal{F}}

\newcommand{\uc}{\underline{\CC}}
\newcommand{\HH}{\mathbb{H}}
\newcommand{\bG}{\mathbf{G}}
\newcommand{\cuss}{\cus/_\sim}
\newcommand{\bH}{\mathbf{H}}
\newcommand{\perv}{\mathrm{Perv}}
\newcommand{\tH}{\mathrm{H}}
\newcommand{\bU}{\mathbf{U}}

\newcommand{\bB}{\mathbf{B}}
\newcommand{\bM}{\mathbf{M}}
\newcommand{\bP}{\mathbf{P}}
\newcommand{\siso}{\mathrm{SS}}
\newcommand{\gdb}{\mathbf{G}_\db}
\title{Generalized Jantzen filtration, hyperbolic restriction, and characteristic cycles}
\author{Johannes Droschl\\ \url{au800840@uni.au.dk}}
\date{\today}

\begin{document}
\maketitle
\begin{abstract}
    In this paper we give a geometric description of the behavior of analytic intertwining operators between parabolically induced representations of $\gl_n(\FF)$, where $\FF$ is a local non-archimedean field, in terms of hyperbolic localization functors of Braden. As a consequence, we can show that the image of an intertwining operator is always semi-simple and give a lower bound on the order of its pole, which is conjectured to be an equality as well as an upper bound in terms of the singular support of certain perverse sheaves. Moreover, we are able to reduce the conjecture of Lapid and Mínguez on the shape of irreducible subrepresentations of induced representations to a computation of characteristic cycles. The theory of mixed Hodge modules plays a crucial role in the proofs.
\end{abstract}
\tableofcontents
\section{Introduction}
Let $\FF$ be a non-archimedean local field with absolute value $\lvert-\rvert$ and $\bG$ a reductive group over $\FF$. The study of irreducible smooth representations of $\bG(\FF)$ lies at the heart of the local Langlands program. In the foundational works of Bernstein and Zelevinsky in \cite{BerZel76}, \cite{BerZel77}, \cite{Zel80}, these representations were classified in the case $\bG=\gl_n$. At the same time, the theory of types of Bushnell and Kutzko \cite{BushnellKutzko1998}, \cite{BushnellKutzko1999} allows one to capture the complete behavior of the category of smooth, finite length representations of $\gl_n(\FF)$ by categories of modules of affine Hecke algebras. On the other hand, the geometric representation theory employed in \cite{KazhdanLusztig1987}, \cite{ChrissGinzburg1997} managed to answer many questions in the study of Hecke algebras satisfactorily in terms of the behavior of intersection-cohomology complexes on the nilpotent cone of the dual group. Combining these results, one can find solutions to many problems arising in the representation theory of $\gl_n(\FF)$. Nevertheless, there still remain some open conjectures that resisted the efforts of the field for many years. In this paper we will focus on one such problem, namely the computation of analytic intertwining operators between parabolically induced representations. Many of the constructions in this paper will also work, in slightly altered form, for more general groups, however we choose for now to focus on the case $\gl_n$, since it allows us to tell a more complete story. In particular, our techniques allow us to make progress on the conjectures of Lapid and Mínguez \cite{LapMin25} and the author \cite{Dro25}. Note that the behavior of such intertwining operators has been studied in a wide variety of settings, in particular for quantum groups, see for example \cite{GeissLeclercSchroer2006}, \cite{HernandezLeclerc2010}, and \cite{KKKO}.

The paper is divided into three section, one in which we construct the generalized Jantzen filtration, one on hyperbolic restriction and the generalized Rogawski conjecture, and a final one in which we combine the above constructions with Saito's theory of mixed Hodge modules. Before we discuss their contents in more detail, we recall some necessary notation. Moreover, we will focus the exposition in the introduction on a particular subcase of the problem, which nevertheless contains the germ of the general case. We denote by $\irr_n$ the set of isomorphism classes of irreducible representations of $\gl_n(\FF)$ and $\irr\coloneq\bigcup_{n\in\NN}\irr_n$. Let $\alpha=(\alpha_1,\ldots,\alpha_k)$ be a partition of $n$ and for $\pi_i\in\irr_{\alpha_i},\, \ain{i}{1}{k}$, we denote as usual the normalized parabolically induced representation of $\gl_n(\FF)$ by \[\pi_1\times\ldots\times\pi_k.\] 
We also write for a smooth character $\chi\colon\FF^\times\ra\CC$ and $\pi\in\irr_n$, $\pi\chi\coloneq\pi\otimes\chi(\det_n)$.
For $\lambda\in\ZZ^k$ generic, we recall the intertwining operator \emph{cf.} \cite{Waldspurger2003Plancherel}
\[J_{\pi_1,\ldots,\pi_k}(s)\colon \pi_k\lvert-\rvert^{s\lambda_k}\times\ldots\times\pi_1\lvert-\rvert^{s\lambda_1}\ra\pi_1\lvert-\rvert^{s\lambda_1}\times\ldots\times\pi_k\lvert-\rvert^{s\lambda_k}.\]
We also consider for $\db\in\bigoplus_{i\in\ZZ}\NN$, which we from now on will refer to as a dimension vector, the subset $\irr_{\db}$ of irreducible representations which appear as subquotients of
\[\ldots\times \overbrace{\lvert-\rvert^i\times\ldots\times\lvert-\rvert^i}^{\db_i}\times\ldots.\]
For $\db^1,\db^2$ two dimension vectors we set
\[\langle\db^1,\db^2\rangle\coloneq\sum_{i\in\ZZ}\db_i^1\db_i^2-\db_i^1\db_{i+1}^2.\]
Given $\pi_1\in\irr_{\db^1},\pi_2\in\irr_{\db^2}$ and $\lambda\in\ZZ^2$ generic, we study in \Cref{S:JanGeo} the following increasing filtration $\{V_i(\pi_2,\pi_1)\}_{i\in\ZZ}$ of $\pi_2\times\pi_1$
\[V_i(\pi_2,\pi_1)\coloneq \{f\in\pi_2\times\pi_1: \lim_{s\ra 0}s^{i+\langle\db^1,\db^2\rangle}J_{\pi_1,\pi_2}(s)(f_s)\text{ exists}\},\]
where $f_s\in \pi_2\lvert-\rvert^{s\lambda_2}\times\pi_1\lvert-\rvert^{s\lambda_1}$ denotes a holomorphic family of $f_0=f$ in the sense of \cite[§IV]{Waldspurger2003Plancherel}.
In the case of standard modules this was, up to a shift in the grading, already considered in \cite{Rogawski1985}.
We denote \[\pi_1\times\pi_2[i]\coloneq V_{i-1}(\pi_1,\pi_2)\backslash V_i(\pi_1,\pi_2),\] which is non-zero only for finitely many $i$.
Note that the graded piece of the maximal $i$ such that $\pi_1\times\pi_2[i]$ does not vanish is the image of the normalized intertwining operator \[J_{\pi_2,\pi_1}\coloneq s^{i+\langle\db^2,\db^1\rangle}J_{\pi_2,\pi_1}(s)\rvert_{s=0}\colon\pi_1\times\pi_2\ra\pi_2\times\pi_1,\] and we write $\Lambda(\pi_2,\pi_1)\coloneq i+\langle\db^2,\db^1\rangle$. 
In \Cref{S:Jantzen} we define $\pi_1\times\pi_2[i]$ for all $\pi_1,\pi_2\in\irr$ and obtain the following.
In the theorem we denote by $[-]_{ss}$ the semi-simplification of a representation.
\begin{theorem}
    The $\ZZ[t,t^{-1}]$-module $\ZZ[\irr][t,t^{-1}]$ admits an associative product
    \[\grtimt\colon \ZZ[\irr][t,t^{-1}]\times \ZZ[\irr][t,t^{-1}]\ra \ZZ[\irr][t,t^{-1}],\,(\pi_1,\pi_2)\mapsto \sum_{i\in\ZZ}[\pi_1\times\pi_2[i]]_{ss}t^i.\]
    Moreover, $\pi_1\grtimt\pi_2=\overline{\pi_2\grtimt\pi_1}$, where 
    \[\overline{(-)}\colon \ZZ[\irr][t,t^{-1}]\ra \ZZ[\irr][t,t^{-1}],\, t\mapsto t^{-1}.\]
\end{theorem}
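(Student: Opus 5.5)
Our approach is to reduce to the case $\pi_1\in\irr_{\db^1}$, $\pi_2\in\irr_{\db^2}$ and work with the analytic family of intertwining operators. The product is first defined on basis elements by the formula of the statement and then extended $\ZZ[t,t^{-1}]$-bilinearly. There are two claims to prove: associativity, and the symmetry $\pi_1\grtimt\pi_2=\overline{\pi_2\grtimt\pi_1}$. Both will be deduced from the behaviour of the Jantzen-type filtration $V_i$ under composition and adjunction of intertwining operators.

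\textbf{Symmetry.} We compose $J_{\pi_1,\pi_2}(s)\circ J_{\pi_2,\pi_1}(s)$. This composite is a scalar meromorphic function (Harish-Chandra's $\mu$-function, the Plancherel factor) times the identity. In the unramified setting of $\irr_\db$ its order of vanishing at $s=0$ should be $-\langle\db^1,\db^2\rangle-\langle\db^2,\db^1\rangle$; this is why the normalization uses $\langle\db^1,\db^2\rangle$.

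We then use the standard Jantzen argument. The operator $J_{\pi_2,\pi_1}(s)$ defines a lattice map of $\CC[[s]]$-modules, and its Smith normal form gives a decomposition of $\pi_1\times\pi_2$ by pole order. Since the composite is $s^{N}\cdot\mathrm{id}$ up to a unit, the elementary divisors of $J_{\pi_1,\pi_2}$ are complementary to those of $J_{\pi_2,\pi_1}$. Concretely, $J_{\pi_2,\pi_1}(s)$ maps $\pi_1\times\pi_2[i]$ isomorphically onto the graded piece $\pi_2\times\pi_1[-i]$ after the shift.

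To make the Smith form argument legitimate in infinite dimension, we pass to a type or a Bernstein block. By Bushnell–Kutzko, this reduces the question to finite-dimensional modules over an affine Hecke algebra, where everything is a finite free $\CC[[s]]$-module. This yields $[\pi_1\times\pi_2[i]]_{ss}=[\pi_2\times\pi_1[-i]]_{ss}$, which is exactly the bar-symmetry.

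\textbf{Associativity.} Here we need a three-factor filtration. Take $\lambda\in\ZZ^3$ generic and define, for the longest intertwining operator $J_{\pi_1,\pi_2,\pi_3}(s)$, a filtration on $\pi_3\times\pi_2\times\pi_1$. The longest operator factors in two ways as products of simple ones:
\[
J_{12}\,J_{13}\,J_{23}\quad\text{and}\quad J_{23}\,J_{13}\,J_{12}.
\]
Each factorization gives a way to compute the graded pieces iteratively. The key lemma to prove is that for a composite of analytic operators $A(s)B(s)$ which is "generic" in the one-parameter direction, the associated graded of the composite filtration has semisimplification equal to the convolution of the graded semisimplifications. In other words, the graded characters multiply. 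Exactness of parabolic induction and of Jacquet functors lets us induce the two-factor filtration from a Levi subgroup.

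The main obstacle is precisely this lemma. Pole orders of a composite are generally not the sums of the pole orders of the factors, so we cannot simply add degrees. To handle it, we use a two-parameter deformation $(s_1,s_2)$ and an independence-of-direction argument: the semisimplified graded character of a Jantzen filtration is locally constant in the generic direction $\lambda$. We expect to show this via the Bernstein-block reduction to Hecke modules, where it becomes a statement about determinants, or elementary divisors, of matrix families. Degenerating $\lambda$ towards the walls $\lambda_1\approx\lambda_2$ and $\lambda_2\approx\lambda_3$ then recovers the two iterated products $(\pi_1\grtimt\pi_2)\grtimt\pi_3$ and $\pi_1\grtimt(\pi_2\grtimt\pi_3)$. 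Both therefore equal the three-factor generic character, which gives associativity.
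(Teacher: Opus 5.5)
Your symmetry argument is fine and is essentially the paper's \Cref{L:sym}. There, $\bJ(s)J(s)=\xi(s)s^{-(\db^1,\db^2)}$ with $\xi$ a unit (\Cref{L:alpha}), so the normalized leading-term map identifies the graded piece in degree $i$ with the one in degree $-i$. The paper avoids Smith normal form and Bushnell--Kutzko: it shows the leading-term map is injective, builds an injection in the other direction, and uses finite length.

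The associativity part has a genuine gap. Your ``key lemma'' is not something that follows from analytic properties of intertwining operators; it \emph{is} the content of \Cref{T:assoc2}. There are two problems.
\begin{enumerate}
\item The iterated product $(\pi_1\grtimt\pi_2)\grtimt\pi_3$ replaces $\pi_1\times\pi_2$ by the semisimplifications of its graded pieces before inducing with $\pi_3$. Pole-order filtrations are not additive in short exact sequences. For $0\to A\to B\to C\to 0$ one has $V_i(A\times\pi_3)=V_i(B\times\pi_3)\cap(A\times\pi_3)$, but the image of $V_i(B\times\pi_3)$ is only \emph{contained} in $V_i(C\times\pi_3)$. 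So the graded character of $B\times\pi_3$ is in general not the sum of those of $A\times\pi_3$ and $C\times\pi_3$. Likewise, the elementary divisors of $A(s)B(s)$ are not determined by those of the factors; only the determinant is.
\item The degeneration step does not do what you want. Along lines $\lambda s$ the filtration is constant for $\lambda$ in a Zariski open set (\Cref{L:lambdagen}). Moving $\lambda$ towards the wall $\lambda_1=\lambda_2$ inside that set changes nothing, and on the wall $\lambda$ is no longer regular, so nothing ``recovers'' an iterated product. Already for one function of two variables, the two iterated orders and the order along generic lines can disagree. For $f=s_1s_2+s_1^3$, the function $f(\epsilon s,s)$ has order $2$ for every $\epsilon\neq0$. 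Taking the $s_1$-order first gives $1+1=2$, but taking the $s_2$-order first gives $0+3=3$.
\end{enumerate}
So agreement of the two iterated products with the generic three-factor character is a special feature of these particular families. The remark after \Cref{T:assoc2} shows how delicate this is: the filtration genuinely depends on $\lambda$ even among regular $\lambda$.

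The missing input is purity. The paper identifies the graded product with hyperbolic localization in \Cref{T:genrog}. This rests on the Rogawski conjecture \Cref{T:RoCon}, whose proofs use Hard Lefschetz, applied for all orientations $Q^{(i)}$, transported by Fourier--Sato transform, together with an induction on segment lengths. Braden's theorem \Cref{T:BraThm} and the decomposition theorem keep $\hyp^+$ inside $\sem$; this is also where the semisimplicity of the graded pieces (\Cref{C:semisimple}) comes from. Associativity is then just the transitivity of hyperbolic localization, \Cref{L:hypassos}. Some geometric or Hodge-theoretic input of this kind is needed to replace your key lemma.
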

In particular, setting $t=1$, one obtains just the usual product \[\times\colon\ZZ[\irr]\times\ZZ[\irr]\ra\ZZ[\irr]\] given by parabolic induction and that on the level of Grothendieck groups $\times$ is commutative.

In the second part, \Cref{S:JanGeo}, we reinterpret the above construction geometrically. For this we consider for $\db$ a dimension vector the space
\[\ep(\CC)\coloneq \{(T_i)_{i\in\ZZ}:T_i\colon 
\CC^{\db_i}\ra \CC^{\db_{i+1}},\,\CC\text{-linear}\},\]
on which 
\[\gdb(\CC)\coloneq \bigtimes_{i\in\ZZ} \gl_{\db_i}(\CC)\] acts by conjugation.
Denote by $\orb_\db$ the set of $\gdb$-orbits.
By the work of \cite{Zel80} there exists a bijection between 
\[\Z\colon \orb_\db\ra\irr_\db,\]
sending the unique open orbit to the unique unramified representation in $\irr_\db$.
Consider now two dimension vectors ${\db^1,\db^2}$ with $\db=\db^1+\db^2$.
In \Cref{S:JanGeo} we construct a cocharacter $\chi\colon\gm\ra \gdb$ such that its centralizer is $\bG_{\db^1}\times \bG_{\db^2}$ and the fixed point-locus in $\ep$ is isomorphic to $\mathbf{E}_{\db^1}\times \mathbf{E}_{\db^2}$.
We denote for $\oo\in\orb_\db$ by $\ic{\oo}$ the simple perverse sheaf corresponding to the trivial local system on $\oo(\CC)$ and by $\sem(\ep,\gdb)$ the $\ZZ[t,t^{-1}]$-module spanned by all shifts of $\{\ic{\oo}:\oo\in\orb_\db\}$, where $t$ acts on a sheaf $\cf$ via $t\cdot\cf=\cf[-1]$.
Thanks to \cite{KazhdanLusztig1987} there exists a natural isomorphism of $\ZZ[t,t^{-1}]$-modules
\[\ct_\db\colon \sem(\ep,\gdb)\ra \ZZ[\irr_\db][t,t^{-1}]^*,\]
where $(-)^*$ denotes the dual module and $\ic{\oo}$ gets sent to the indicator function of $\Z(\oo)$.
We will denote by $\ct_{\db^1,\db^2}$ the analogous bijection involving $\irr_{\db^1}\times\irr_{\db^2}$ and $\mathbf{E}_{\db^1}\times \mathbf{E}_{\db^2}$.
 The hyperbolic restriction functor of \cite{braden2003hyperbolic} constructed from the action of $\gm$ now plays a crucial role.
\[\hyp^+\colon \sem(\ep,\gdb)\ra \sem(\mathbf{E}_{\db^1}\times \mathbf{E}_{\db^2},\bG_{\db^1}\times \bG_{\db^2}).\]
The main theorem of \Cref{S:JanGeo} is then the following.
\begin{theorem}
    The following diagram commutes.
    \[\begin{tikzcd}
        \sem(\ep,\gdb)\arrow[r,"\hyp^+"]\arrow[d,"\ct_\db"]& \sem(\mathbf{E}_{\db^1}\times \mathbf{E}_{\db^2},\bG_{\db^1}\times \bG_{\db^2})\arrow[d,"\ct_{\db^1,
        \db^2}"]\\
        \ZZ[\irr_\db][t,t^{-1}]^* \arrow[r,"\grtimt^*"]&\ZZ[\irr_{\db^1}\times\irr_{\db^2}][t,t^{-1}]^*
    \end{tikzcd}\]
    and $\pi_1\times\pi_2[i]$ is semi-simple for all $i\in\ZZ$. In particular, the image of the intertwining operator
    \[J_{\pi_1,\pi_2}\colon \pi_2\times\pi_1\ra\pi_1\times\pi_2\] is semi-simple.
\end{theorem}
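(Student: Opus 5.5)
The plan is to move the problem through the Bushnell--Kutzko/Kazhdan--Lusztig dictionary into equivariant geometry on $\ep$ and compare two filtrations there. On the geometric side the filtration comes from the $\gm$-action. On the algebraic side it is the Jantzen-type filtration by the order of the pole of $J(s)$.

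\textbf{Translation to geometry.} First I pass from $\irr_\db$ to modules over the graded affine Hecke algebra, or equivalently over the convolution algebra $\tH^{\gdb}_*(Z)$ of the Steinberg-type variety attached to $\ep$. Here $\pi_2\times\pi_1$ becomes a standard (parabolically induced) module built from $\pi_1\boxtimes\pi_2$. Following Chriss--Ginzburg, induction from $\bG_{\db^1}\times\bG_{\db^2}$ corresponds to the correspondence
\[\epo\times\mathbf{E}_{\db^2}\xleftarrow{\;p\;}\ep^{+}\xrightarrow{\;i\;}\ep,\]
where $\ep^+$ is the attracting locus of $\chi$. The adjoint operation on sheaves is Braden's $\hyp^+=p_*i^!$. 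By Braden's theorem, $\hyp^+$ is also isomorphic to $p^-_!(i^-)^*$ for the repelling locus. It therefore preserves semisimple complexes of geometric origin and commutes with Verdier duality.

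\textbf{The parameter $s$ geometrically.} Twisting $\pi_1,\pi_2$ by $\lvert-\rvert^{s\lambda_i}$ amounts to moving the central character along the cocharacter $\chi$. Geometrically this is deformation to the normal cone along $\gm$, i.e.\ the family over $\mathbb{A}^1$ whose fibre at $0$ is the fixed locus and whose generic fibre is $\ep$. I will identify $J(s)$, after the Kazhdan--Lusztig localisation, with the natural map between the $!$- and $*$-versions of hyperbolic restriction. This is the map from $p_!i^*$ to $p_*i^!$, and under the correspondence it swaps attracting and repelling sets, consistent with the formula $\pi_1\grtimt\pi_2=\overline{\pi_2\grtimt\pi_1}$.

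\textbf{Comparing the filtrations.} Next I use mixed Hodge modules, which I expect the paper to prepare and which I assume here. Lift $\ic{\oo}$ to a pure Hodge module of weight $0$. Braden's theorem then makes $\hyp^+\ic{\oo}$ pure, hence a direct sum of shifted $\ic{}$'s, with the shift equal to the weight. The core claim is that the pole-order filtration $V_i$ coincides with the weight filtration on the (co)stalk cohomology computing the standard module. The strategy has two steps:
\begin{enumerate}
\item Write $J(s)$ as multiplication by the Euler class of the normal bundle, which is a monodromy or nilpotent operator for the $s$-deformation.
\item Invoke Saito's result that the monodromy filtration of a nilpotent endomorphism of a pure object is the relative weight filtration.
\end{enumerate}
The order of vanishing in $s$ then matches the weight grading. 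The shift $\langle\db^1,\db^2\rangle$ is the dimension difference $\dim\ep^+-\dim(\epo\times\mathbf{E}_{\db^2})$, which accounts for the normalisation in $V_i$.

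\textbf{Conclusions.} Commutativity of the diagram follows by taking the characteristic function of stalks at each graded piece. Semisimplicity of $\pi_1\times\pi_2[i]$ follows because the $i$-th graded piece corresponds to the weight-$i$ part of a pure Hodge module. By the decomposition theorem, that piece is a direct sum of simple objects, whose images under $\Z$ are irreducible. Since the top graded piece is the image of the normalised $J_{\pi_1,\pi_2}$, that image is semisimple.

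I expect the main obstacle to be the identification of the analytic intertwining operator, with its $s$-dependence, with the geometric monodromy or Euler-class map. It requires a careful compatibility between Waldspurger's holomorphic families and the equivariant-cohomology parameter. I would first verify it for rank-one pieces, $\pi_i$ single characters, where the Hecke algebra is $\tH^*_{\gm}$ of a point and everything is explicit. I would then reduce the general case using associativity (Theorem 1) and transitivity of hyperbolic restriction.
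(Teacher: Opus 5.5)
\textbf{There is a genuine gap in the central step.} You want to show that the pole-order filtration equals the weight (equivalently, cohomological) grading. Already for standard modules this is exactly the Rogawski conjecture. The paper does not prove it; it imports it as \Cref{T:RoCon} from Ciubotaru and Fujita--Hernandez, where it needed Hard Lefschetz-type arguments. Your proposed mechanism does not supply it, for three reasons.
\begin{enumerate}
\item In this setting the $s$-deformation is realized by $\gm$-equivariant Ext along $\chi_\lambda$ (as in $H_{n,\db,s,\lambda}(q)\cong\widehat{\ext}_{\gm}^\bullet(\cS_\db,\cS_\db)$), not by a degeneration. So there is no nearby-cycles object with a nilpotent monodromy to which the monodromy--weight theorem applies. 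The Jantzen filtration is the $s$-adic filtration of a $\CC[[s]]$-lattice pulled back along $J(s)$. Building a model in which it becomes a monodromy filtration, and matching that model with $J(s)$, is precisely the hard part.
\item A principle of that generality proves too much. The geometric side $\hyp^+_\lambda$ depends only on the chamber of $\lambda$, so it would force the Jantzen filtration to depend only on that chamber. This fails once three or more factors occur (see the remark after \Cref{T:assoc2}), and your reduction to characters uses exactly such products.
\item Several smaller points are also off. The map $p_!i^*\to p_*i^!$ you name is not a natural transformation; the natural maps go from $i^!$ to $i^*$. What the literature uses is the costalk-to-stalk map $i^!_{T(\fm)}\to i^*_{T(\fm)}$ on equivariant cohomology of the Springer sheaf, which is available only for standard modules, not for $\pi_2\times\pi_1$ with arbitrary irreducible $\pi_i$. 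Next, $\langle\db^1,\db^2\rangle$ cannot be $\dim\ep^+-\dim\ep^0$, since it can be negative: for the dimension vectors of $[0,0]$ and $[1,1]$ it is $-1$. Finally, the decomposition theorem only controls the classes of the graded pieces. Semisimplicity of the module $\pi_1\times\pi_2[i]$ needs the module-level identification, which is the missing step.
\end{enumerate}

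\textbf{The reduction step is circular.} You invoke associativity of $\grtimt$. In the paper, associativity (\Cref{T:assoc2}) is \emph{deduced} from the commutativity of this very diagram together with \Cref{L:hypassos}. It is also false if $\lambda$ is only assumed to have distinct entries, so it cannot serve as an input.

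\textbf{What the paper does instead.} It takes \Cref{T:RoCon} as a black box for segments in $(i)$-arranged order and suitably generic $\lambda$. That theorem gives both the multiplicities, as stalks of $\ic{\oo_i(\fn)}$ at $T_i(\fm)$, and the semisimplicity of the graded pieces. The paper then matches these stalks with $\hyp^+$ by an elementary computation (\Cref{L:arrangedorderhyp}):
\begin{enumerate}
\item For arranged segments the relevant $\ext^1_Q$ vanish.
\item Hence $(p^+)^{-1}(T(\fm))$ is a single $\bU^+_\db$-orbit.
\item Proper base change then identifies the multiplicity of the open-orbit IC in $\hyp^+(\ic{\oo(\fn)})$ with the same stalk.
\end{enumerate}
Arbitrary orderings of segments are reached via Fourier--Sato transforms to the orientations $Q^{(i)}$, so that $[i,i]$ can be placed first, together with induction on segment length using \Cref{L:filtseg}. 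Arbitrary irreducibles are reached via the basis $\{\irt(\fm)\}$ and \Cref{L:hypassos}. Semisimplicity for irreducible $\pi_j$ follows by writing $\pi_j=\I(\pi_j)[0]$ and using \Cref{L:sym}.

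To keep your geometric strategy, you would need an actual proof that the Jantzen grading equals the cohomological grading, for instance a local Hodge-theoretic argument in the style of Ciubotaru or Williamson. You would also need to state precisely for which $\lambda$ it holds.
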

As a consequence, the intertwining operator
\[J_{\pi_1,\pi_2}\colon\pi_2\times\pi_1\ra\pi_1\times\pi_2\]
is up to a scalar independent of the chosen $\lambda\in\ZZ^2$.

A key step in the proof is played by the so-called Rogawski conjecture due to \cite{Rogawski1985}, for which a proof was announced by Ginzburg in \cite{Ginzburg1987} without details and shown for degenerate affine Hecke algebras in type $A$ in \cite{Suzuki1998}. Recently, it has been proven by \cite{Ciubotaru2026Jantzen}, see also \cite{FujitaHernandez2026}, using the Hard Lefschetz theorem.
The conjecture states the following. Recall that for $\pi\in\irr_\db$, one can construct an induced representation $\I(\pi)\coloneq \pi_1\times\ldots\times\pi_k$ called the Standard module of $\pi$, which admits $\pi$ as its unique irreducible quotient and all $\pi_i$ are characters. For the following theorem we will have to shift the grading on $\pi_1\times\ldots\times\pi_k[i]$ by an explicit integer $r_\pi$, see \Cref{S:class} and \Cref{T:RoCon}.
\begin{theorem}[Rogawski conjecture]
Let $\oo,\oo'\in\orb_\db$ and $x\in \oo$. Then the multiplicity of $\Z(\oo')$ in $\I(\Z(\oo))[i+r_\pi]$ is the dimension of $\HH^{i-\dim\oo}(\iota_x^*\ic{\oo'})$, where $\iota_x\colon \{x\}\hra\ep$. 
\end{theorem}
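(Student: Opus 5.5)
The plan is to reduce the statement to the Jantzen conjecture for graded affine Hecke algebras of type $A$ and then use the geometric realization of standard modules via Borel--Moore homology of Springer-type fibres. This is the route of \cite{Ciubotaru2026Jantzen}, and the decomposition-theorem input is what \cite{KazhdanLusztig1987} and \cite{ChrissGinzburg1997} provide.

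\textbf{Step 1: reduction to Hecke algebra modules.} Via the Bushnell--Kutzko type equivalence, the block containing $\irr_\db$ is equivalent to a category of modules over an affine Hecke algebra $\mathcal{H}_n$, and then, by Lusztig's reduction, over a graded Hecke algebra $\bH$. Under this equivalence:
\begin{itemize}
\item the standard module $\I(\Z(\oo))$ becomes the standard module $M_x$ attached to the nilpotent element $x\in\oo$;
\item the family $J(s)$ becomes the composition of rational intertwiners $\tau_w$ depending polynomially on a deformation parameter along the line $s\lambda$ in the central character.
\end{itemize}
One then checks that the filtration $V_i$ is carried to the Jantzen filtration of $M_x$. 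The precise normalization $i\mapsto i+r_\pi$ is fixed by comparing the pole order $\langle-,-\rangle$ with the degree of the leading term; this is the bookkeeping recorded in \Cref{S:class}.

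\textbf{Step 2: geometric model of the Jantzen filtration.} Realize $M_x$ as $\tH_*(\mathcal{B}_x)$, the homology of the fibre of the graded Springer map $\tilde{\mathbf{E}}\to\ep$, with the $\bH$-action coming from the convolution algebra. The decomposition theorem splits the pushforward of the constant sheaf into shifts of $\ic{\oo'}$ tensored with multiplicity spaces $L_{\oo'}$. Taking stalks at $x$ gives
\[
M_x\cong\bigoplus_{\oo',j} L_{\oo'}\otimes \HH^{j}(\iota_x^*\ic{\oo'}).
\]
Here $L_{\oo'}$ is the simple module $\Z(\oo')$, nonzero for the relevant orbits by Zelevinsky's classification in type $A$. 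The cohomological degree gives a grading on $M_x$. The main claim is that the Jantzen filtration coincides, up to the shift $\dim\oo$, with the filtration by this degree.

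\textbf{Step 3: identifying the two filtrations (the main obstacle).} I would deform $x$ along the cocharacter $s\lambda$ and form a one-parameter family $\mathcal{B}_{x_s}$. For generic $s$ the standard module is irreducible (or induced from a smaller Levi), and the intertwiner becomes a specialization map. I expect Steps 1 and 2 to be formal, and this step to be the hard one. The approach:
\begin{itemize}
\item Interpret the pairing $\langle J(s)f,g\rangle$ as an equivariant intersection pairing on $\tH^{\gm}_*$, so that its $s$-adic valuation is controlled by the action of the first Chern class $c_1$ of the $\gm$-equivariant parameter, which acts as a Lefschetz-type operator on the stalks.
\item Apply relative Hard Lefschetz, or equivalently purity of the mixed Hodge module weight filtration on $\iota_x^*\ic{\oo'}$. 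This forces the $s$-adic filtration from the pairing to coincide with the perverse, or weight, filtration, which in turn matches the cohomological degree.
\end{itemize}
Purity (the pointwise purity of $\iota_x^*\ic{\oo'}$, valid for these quiver orbits) is what excludes any mixing between degrees.

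With the filtrations identified, the graded piece in degree $i$ has $\Z(\oo')$-multiplicity $\dim\HH^{i-\dim\oo}(\iota_x^*\ic{\oo'})$, which is the statement.
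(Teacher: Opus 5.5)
\paragraph{Verdict.} Your strategy is the route the paper relies on, but Step 3 is missing the restriction on $\lambda$ that makes the Hard Lefschetz argument valid, together with the step that extends the result to all suitably generic $\lambda$.

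\paragraph{Where the argument fails.} The common strategy is to realize, for $x=T(\fm)$ a $\chi_\lambda$-fixed point, the $\lambda$-deformed standard module and the deformed module with segments in reversed order as $\widehat{\HH}_{\gm}^\bullet(\iota_x^!\cS_\db)$ and $\widehat{\HH}_{\gm}^\bullet(\iota_x^*\cS_\db)$. One then reads the Jantzen filtration off the canonical map $\iota_x^!\to\iota_x^*$ and controls its elementary divisors by Hard Lefschetz. This is \cite{Ciubotaru2026Jantzen} and \cite{FujitaHernandez2026}, which the paper simply cites in the proof of \Cref{T:RoCon}.

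Step 3 never restricts the deformation direction $\lambda$. Your argument would therefore show that the Jantzen multiplicities equal $\dim\HH^{j-\dim\oo}(\iota_x^*\ic{\oo'})$ for every $\lambda$ with pairwise distinct entries. That is false: the Jantzen filtration of a standard module genuinely depends on $\lambda$ within that set, while the stalk dimensions do not. See the remark following \Cref{T:assoc2}, \cite[Appendix A.2]{Ciubotaru2026Jantzen} and \cite[Section 8.3]{Williamson2016LocalHodge}.

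The failing step is the Hard Lefschetz input. Multiplication by the equivariant parameter attached to $\chi_\lambda$ has the Lefschetz property only for $\lambda$ in a suitable open region, an ampleness-type condition. This is why the cited main theorems are stated only for $\lambda$ in a certain open subset of $\ZZ^k$ determined by the orientation. For the same reason, ``or equivalently purity'' is wrong. Pointwise purity of $\iota_x^*\ic{\oo'}$ does not depend on $\lambda$. It gives freeness over $\CC[[s]]$ and parity vanishing, but it cannot by itself identify the $s$-adic filtration with the cohomological grading.

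\paragraph{What is needed to close the gap.}
\begin{enumerate}
\item Impose the condition on $\lambda$ under which Hard Lefschetz holds. This proves the statement for $\lambda$ in an open cone.
\item The filtration in the statement is defined for an arbitrary suitably generic $\lambda$, so transport the result as in \Cref{L:lambdagen}. The intertwining operator is rational in $(s,\lambda)$, so the elementary divisors of $J(s)$ along the line $s\lambda$, and hence the graded pieces, are constant off a proper Zariski-closed set of $\lambda$. That set's complement meets the good cone.
\end{enumerate}
This is exactly the structure of the paper's proof of \Cref{T:RoCon}.

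\paragraph{Two smaller points.}
\begin{enumerate}
\item The decomposition displayed in Step 2 holds only after passing to an associated graded, or in the Grothendieck group, not as an isomorphism of modules. The standard module is in general not semisimple; it has simple head $\Z(\oo)$.
\item Matching Waldspurger's $J(s)$ with the geometric map $\iota_x^!\to\iota_x^*$ is not purely formal. It needs the lattice-level identifications in the first part of \Cref{T:RoCon}. Even then, generic irreducibility pins the two maps down only up to a unit of $\CC[[s]]$ and a power of $s$, and that power must be computed to obtain the shift $r_\pi$.
\end{enumerate}
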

Recall that $\HH^i(\iota_x^*\ic{\oo'})$ vanishes if $i\neq \dim \oo'\mod 2$, thus $\Z(\oo')$ appears only in either even or odd degree in $\pi_1\grtimt\ldots\grtimt \pi_k$.

Finally, in the third part \Cref{S:charcy} we apply these results to the explicit computation of the pole $\Lambda(\pi_1,\pi_2)$ and the conjecture of Lapid and Mínguez of \cite{LapMin25}. Before we do that, we need to introduce however some more notation. We can identify the dual space $\ep^*$ with the moduli space of representations of the opposite quiver of $A_\infty$ with dimension vector $\db$, \emph{i.e.} endomorphisms of degree $-1$ acting on $\bigoplus_{i\in\ZZ} \CC^{\db_i}$.
The cotangent space of $\ep$ is $T^*\ep=\ep\times \ep^*$,
with moment map
\[\mu\colon \ep\times \ep^*\ra \prod_{i\in\ZZ}\mathrm{End}(\CC^{\db_i}), (T,T')\mapsto T\circ T'-T'\circ T.\] As in \cite{Lusztig1991Quivers} we define the Lagrangian subvariety
$\Lambda(\db)\coloneq \mu^{-1}(0)$ and denote its set of irreducible components by $\comp_\db$.
For each $\oo\in\orb_\db$, we denote by
\[C_\oo\coloneq \overline{T_\oo^*\ep}\] the closure of its conormal bundle.
    The set $\comp_\db$ of irreducible components of $\Lambda(\db)$ is given by \[\comp_\db=\{C_\oo:\oo\in\orb_\db\}.\]
    We recall that $\Lambda(\db)$ parametrizes modules of the preprojective algebra $\Pi$ associated to the doubled quiver of $A_\infty$ with underlying graded vector space $\bigoplus_{i\in\ZZ}\CC^{\db_i}$.
    Let now $C_1\in\comp_{\db^1}, C_2\in\comp_{\db^2}$ and let $x_i\in C_i$ be generic, \emph{i.e.}, such that the quantities  \[\dim_\CC\hom_\Pi(x_1,x_2),\,\dim_\CC\ext_\Pi^1(x_1,x_2)\] are minimized.
We denote their respective dimensions by
\[\hom(C_1,C_2),\, \ext^1(C_1,C_2).\] We also recall the binary operation of \cite{AizLap} given by generic extension
\[*\colon \comp_{\db^1}\times \comp_{\db^2}\ra\comp_{\db^1+\db^2}.\]
Finally, we write the singular support of $\ic{\oo}$ as $\siso(\ic{\oo})\subseteq \Lambda(\db)$, \emph{cf.} \cite{KashiwaraSchapira1990Sheaves}. 
We have by \cite{Lusztig1991Quivers} that it is a union of irreducible components and we write
\[\hom(\siso(\ic{\oo_2}),\siso(\ic{\oo_1}))\coloneq \max_{C_2\subseteq \siso(\ic{\oo_2}),\, C_1\subseteq\siso(\ic{\oo_1})} \hom(C_2,C_1),\] where $C_i,i=1,2$ are irreducible components.
We define analogously \[\ext^1(\siso(\ic{\oo_2}),\siso(\ic{\oo_1})),\] again by taking the maximum over the irreducible components
The main result of \Cref{S:charcy} is the following.
\begin{theorem}
    Let $\pi_1\in\irr_{\db^1},\pi_2\in \irr_{\db^2}$ corresponding to $\oo_i\in\orb_{\db^i},\,C_i\in\comp_{\db^i},i=1,2$. 
    Then the following hold.
 \begin{enumerate}
     \item  The order of the pole is bounded by\[\hom(\siso(\ic{\oo_2}),\siso(\ic{\oo_1}))\ge \Lambda(\pi_1,\pi_2)\ge \hom(C_2,C_1).\]
     \item If $J_{\pi_1,\pi_2}$ is an isomorphism then $\ext^1(C_1,C_2)=0$. If \[\ext^1(\siso(\ic{\oo_2}),\siso(\ic{\oo_1}))=0,\] then the intertwining operator is an isomorphism.
     \item There exists an irreducible representation $\pi'=\Z(\oo')\in\irr_{\db^1+\db^2}$  appearing in $\pi_1\times \pi_2$ such that \[C_1*C_2\subseteq \siso(\ic{\oo'}).\] The second inequality in (1) is an equality if and only if $\pi'$ appears in the image of $J_{\pi_1,\pi_2}$, in which case it is a subrepresentation of $\pi_1\times\pi_2$.
 \end{enumerate}
\end{theorem}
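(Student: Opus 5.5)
Via the commutative square of the second theorem, the whole problem becomes a statement about hyperbolic restriction: the multiplicity of $\pi_1\otimes\pi_2$ in $\pi_1\times\pi_2[i]$ (or $\pi_2\times\pi_1[i]$, depending on the ordering convention) equals the multiplicity of $\ic{\oo_1}\boxtimes\ic{\oo_2}$, shifted by $-i$, in $\hyp^+(\ic{\oo'})$, summed over $\oo'$ with the appropriate sign and grading. The first graded-piece index is then read off from the range of perverse degrees in which $\ic{\oo_1}\boxtimes\ic{\oo_2}$ occurs in $\hyp^+$ of the relevant simple objects. The quantity $\Lambda(\pi_1,\pi_2)$ is the top degree $i+\langle\db^1,\db^2\rangle$. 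So the plan is to bound the perverse amplitude of $\hyp^+(\ic{\oo'})$ restricted to the component containing $\oo_1\times\oo_2$.

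\textbf{Upper bounds.} For the upper bounds I would use microlocal estimates. Hyperbolic restriction is the composite $\hyp^+=(p^+)_!(i^+)^*$ along the attracting variety. The singular support of the result is controlled by the Lagrangian correspondence in $T^*$. Concretely,
\[\siso(\hyp^+\cf)\subseteq \{(x_1,x_2): x_1\oplus x_2 \text{ deforms, through extensions, into } \siso(\cf)\}.\]
The cohomological shift needed to land in perverse degree is the fibre dimension of this correspondence. That fibre dimension is governed by $\hom_\Pi$ between the two factors, because the attracting fibre over $(x_1,x_2)$ is an $\ext_Q^1$-space and the Euler form $\langle\db^1,\db^2\rangle$ is $\hom-\ext^1$ for the quiver. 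Using the Crawley-Boevey formula
\[\ext^1_\Pi(x,y)=\hom_\Pi(x,y)+\hom_\Pi(y,x)-(\db^x,\db^y)\]
turns the quiver data into preprojective data. Taking maxima over components of $\siso(\ic{\oo_i})$ then gives the upper bound in (1). The same analysis yields (2): if $\ext^1$ vanishes on all pairs of components of the singular supports, the perverse amplitude is a single degree, so there is one graded piece and $J_{\pi_1,\pi_2}$ is an isomorphism.

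\textbf{Lower bounds and (3).} For the lower bounds I would use characteristic cycles in the mixed Hodge module framework. The component $C_\oo$ has multiplicity one in $\mathrm{CC}(\ic{\oo})$. Localizing at a generic point $(x_1,x_2)$ with $x_i\in C_i$, the attracting/repelling geometry near $x_1\oplus x_2$ produces a local contribution whose degree is exactly $\hom(C_2,C_1)$, with nonvanishing coefficient. The coefficient is nonzero because characteristic cycles are effective for perverse sheaves and hyperbolic localization is compatible with characteristic cycles (Braden, and Saito weights for purity). The generic extension $C_1*C_2$ is the component through which this top contribution passes. Hence some $\oo'$ with $C_1*C_2\subseteq\siso(\ic{\oo'})$ contributes $\pi'=\Z(\oo')$ in $\pi_1\times\pi_2$. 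Equality in (1) holds exactly when this extremal degree is attained by $\pi'$, i.e.\ when $\pi'$ lies in the top graded piece, which is the image of $J_{\pi_1,\pi_2}$. That image is semisimple and, being an image, is a subrepresentation.

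For the first statement of (2), $J_{\pi_1,\pi_2}$ being an isomorphism forces a single graded piece. By the lower bound this forces $\hom(C_2,C_1)$ to equal the opposite-direction pole, and combining the two directions via $\pi_1\grtimt\pi_2=\overline{\pi_2\grtimt\pi_1}$ together with Crawley-Boevey gives $\ext^1(C_1,C_2)=0$.

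\textbf{Main obstacle.} I expect the main obstacle to be the lower bound: proving that the characteristic-cycle contribution at a generic point of $C_1\times C_2$ survives hyperbolic restriction with no cancellation, and pinning its degree to $\hom(C_2,C_1)$. I would first try to realize it via a transversal slice and the weight filtration of the mixed Hodge module $\hyp^+$, which is pure by Braden.
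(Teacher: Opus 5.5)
Your opening reduction is sound, and it is where the paper starts. Through the commutative square and the adjunction between $\hyp^+$ and $\conv$ (\Cref{C:reciprocity}, \Cref{C:polecompconv}), $\Lambda(\pi_1,\pi_2)-\langle\db^1,\db^2\rangle$ is the largest $k$ such that some $\ic{\oo}[k]$ is a summand of $\conv(\ic{\oo_1}\boxtimes\ic{\oo_2})$.

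The gap is in how you then bound this shift. Every assertion in (1)--(3) concerns \emph{in which shift} a simple summand sits, and neither of your tools sees shifts: $\siso(\mathcal{F}[k])=\siso(\mathcal{F})$ and $\mathrm{CC}(\mathcal{F}[k])=(-1)^k\,\mathrm{CC}(\mathcal{F})$.
\begin{itemize}
\item A singular-support inclusion for $\hyp^+\mathcal{F}$ only tells you which orbits can occur. The claim ``the cohomological shift needed to land in perverse degree is the fibre dimension of this correspondence'' does not follow from it.
\item The lower bound is worse off. $\hyp^+(\ic{\oo})$ is a sum of simple perverse sheaves in several shifts, so its characteristic cycle is not effective and contributions in different shifts can cancel. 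Even a surviving contribution records only the parity of its shift, never the value $\hom(C_2,C_1)$.
\item There is also a direction problem. Analysing $\hyp^+(\ic{\oo'})$ produces estimates in terms of $\siso(\ic{\oo'})$, whereas the statement is phrased through $\siso(\ic{\oo_1})$, $\siso(\ic{\oo_2})$, $C_1$, $C_2$. Moreover $\hyp^+$ is built from a proper pullback and a non-proper pushforward, exactly the operations that are not compatible with the microlocal/Hodge data.
\item $\hom_\Pi$ does not enter through the $\ext^1_Q$-fibres of $p^+$ plus the Crawley-Boevey formula. It enters as the dimension of the stabiliser in $\bU_\db^-$ of a point of $\Lambda(\db)^+$, i.e.\ through the Lagrangian correspondence on $T^*\ep$.
\end{itemize}

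The missing ingredient is a shift-sensitive refinement of the characteristic cycle: $\gr^F$ of the mixed Hodge module lifts, applied to $\conv$ instead of $\hyp^+$.
\begin{itemize}
\item Because $\conv$ is a smooth pullback followed by a proper pushforward, \Cref{T:strictness} computes $\gr^F\conv(\ic{\oo_1}\boxtimes\ic{\oo_2})$ from $\gr^F(\ic{\oo_1}\boxtimes\ic{\oo_2})$. The latter lives on $\siso(\ic{\oo_1})\times\siso(\ic{\oo_2})$ and is the structure sheaf generically on $C_1\times C_2$.
\item By purity and the decomposition theorem, $\conv(\ic{\oo_1}\boxtimes\ic{\oo_2})\cong\bigoplus_k\tH^k[-k]$. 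Each $\gr^F\tH^k$ is locally free generically on a component $C$. Hence the degrees in which the derived fibre of $\gr^F$ at a generic point of $C$ is non-zero are exactly the $k$ with $C\subseteq\siso(\tH^k)$.
\item The paper evaluates this fibre via the self-intersection formula on the projective variety $G_x/P_x$, where $G_x$ and $P_x$ are the stabilisers of $x\in\Lambda(\db)^+\cap C$ in $\gdb$ and $\gdb^+$. One obtains the Hodge cohomology $\bigoplus_j R\Gamma(G_x/P_x,\Omega^j)$, suitably shifted.
\item This is non-zero exactly in degrees $[-\dim G_x/P_x,\dim G_x/P_x]$ and has rank one at both ends, with $\dim G_x/P_x=\hom(C_2,C_1:C)$.
\item Together with \Cref{L:minmax}, which singles out the generic-extension component, this gives both inequalities in (1) and the orbit $\oo'$ of (3) at once.
\end{itemize}

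For (2), read ``$J_{\pi_1,\pi_2}$ is an isomorphism'' through \Cref{L:intiso} as $\pard(\pi_1,\pi_2)=0$, i.e.\ $\Lambda(\pi_1,\pi_2)+\Lambda(\pi_2,\pi_1)=(\db^1,\db^2)$. Then compare with the bounds of (1) in both orders using \Cref{L:cy2}. Your argument for the first half is essentially this, but it rests on the lower bound you have not established. Your second half (``the perverse amplitude is a single degree'') is not supported by anything in the sketch.
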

We recall that in \cite{Dro25} it is conjectured that in (1) the second inequality is an equality. Moreover, in \cite{LapMin25} it is conjectured that in (3) $\oo'$ can be taken such that $C_{\oo'}=C_{1}*C_2$ and $\pi'$ is a subrepresentation of $\pi_1\times\pi_2$. Note that if $\siso(\ic{\oo})$ were always irreducible, and hence equal to $C_\oo$, the above theorem would prove the conjectures of \cite{Dro25} and \cite{LapMin25}. This conjecture has been, however, refuted by Kashiwara and Saito in \cite{KashiwaraSaito1997}, where they construct an explicit counterexample for $\gl_{16}$.

Let us say a few words on the proof. We lift the construction $\hyp^+$ to Saito's theory of mixed Hodge modules. To a $\gdb$-equivariant mixed Hodge module $M$ on $\ep$ one can associate a coherent sheaf on $\Lambda(\db)$ denoted by $\gr^FM$ and whose support is the singular support of the underlying perverse sheaf. This construction behaves well with respect to smooth pullback and proper-pushforward. Crucially, in the definition of $\hyp^+$ smooth pushforwards and proper pullbacks are used, and hence it is \emph{a priori} not clear that $\gr^F$ behaves well with respect to $\hyp^+$. We are thus forced to work with a right adjoint of $\hyp^+$, denoted by \[\conv\colon \sem(\mathbf{E}_{\db^1}\times \mathbf{E}_{\db^2},\bG_{\db^1}\times \bG_{\db^2})\ra \sem(\ep,\gdb),\] which behaves well with respect to $\gr^F$. The claims then follow by a computation on coherent sheaves on $\Lambda(\db)$ and ultimately from the Hodge decomposition theorem.
Note that $\conv$ behaves like the Jacquet-functor, however we are at the moment unable to construct a suitable filtration on the Jacquet-module that mimics the behavior of $\conv$.
\counterwithin{theorem}{section}
\counterwithin{lemma}{section}
\counterwithin{conjecture}{section}
\subsection*{Acknowledgements}
I want to thank Alberto Mínguez for his valuable feedback and encouragement to work on this problem. I also want to express my gratitude to Erez Lapid for the countless discussions we had on this and related problems, all of which were highly stimulating and Anton Mellit for allowing me to discuss several aspects of the paper with him. 
Finally, I want to express my gratitude towards Dan Ciubotaru, who pointed out a crucial gap in an early version of this paper regarding the associativity of the Jantzen-filtration in Section 3 and 4. The author was supported by a research grant
(VIL53023) from VILLUM FONDEN.
\section{Preliminaries}
In this section we establish the necessary background, first in the representation theory of $p$-adic groups and then in the algebraic geometry necessary for the Deligne-Langlands correspondence of \cite{KazhdanLusztig1987}, \cite{ChrissGinzburg1997}. 
\subsection{Representation theory}
We fix for the rest of the paper a local non-archimedean field $\FF$ with ring of integers $\mathrm{O}$, absolute value $\lvert-\rvert\colon \FF\ra \mathbb{R}_{\ge0}$ and residue field $k$ of cardinality $q$. We also choose a uniformizer $\varpi$ of $\FF$, \emph{i.e.}, a generator of the maximal ideal of $\mathrm{O}$.

We denote the category of smooth, finite length representations of $\bG(\FF)$ by $\rep(\bG)$ and the set of isomorphism classes of irreducible representations by $\irr(\bG)$ and $\irr=\bigcup_{n\in\NN}\irr(\gl_n)$. We write for $\pi\in\irr_n\coloneq \irr(\gl_n)$ the degree $\deg(\pi)=n$. We also denote for $\pi\in\rep(\bG)$ by $[\pi]$ its image in the Grothendieck group of $\rep(\bG)$.
We recall the functor of (normalized) parabolic induction.
Let $\bP$ be a parabolic subgroup of $\bG$, $\bP=\mathbf{M}\bU$ its Levi-decomposition, $\delta_{\bP}$ its modular character, and $\op$ the opposite parabolic subgroup.
Define the exact functor
\[\id_\bP^\bG\colon \rep(\bM)\ra\rep(\bG),\]
\[(\pi,V)\mapsto \{f\colon \bG(\FF)\ra V:f(mug)=\pi(m)\delta_P^{\frac{1}{2}}(m)f(g),\]\[m\in \bM(\FF), u\in\bU(\FF),\, f\text{ locally constant}\}.\]
Let $\alpha$ be a partition of $n$, and $\bP=\bP_{\alpha}$ the parabolic subgroup of $\gl_n$ containing the upper triangular matrices of $\gl_n$, denoted by $\bB_n$, and with Levi-factor the block-diagonal matrices $\bM_\alpha=\gl_{\alpha_1}\times\ldots\times \gl_{\alpha_k}$ and unipotent part $\bU_\alpha$. As is convention, we write for $\pi_1\otimes\ldots\otimes \pi_k\in\rep(\bM_\alpha)$
\[\pi_1\times\ldots\times\pi_k\coloneq\id_{\bP_\alpha}^{\gl_n}(\pi_1\otimes\ldots\otimes\pi_k),\, \pi_1\btimes\ldots\btimes\pi_k\coloneq \id_{\overline{\bP_\alpha}}^{\gl_n}(\pi_1\otimes\ldots\otimes\pi_k).\]
We call an irreducible representation cuspidal if it does not appear as a subrepresentation of a non-trivially induced representation. We denote the set of cuspidal representations of $\gl_n(\FF)$ by $\cus_n$ and define the equivalence relation $\rho\sim\rho'$ if there exists $k\in\ZZ$ such that $\rho\cong \lvert\det\rvert^k\otimes\rho'$. We also write $\cus=\bigcup_{n\in\NN}\cus_n$. We fix once and for all equivalence classes in $\cuss$ a representative and by abuse of notation we denote this set of representatives by $\cuss$.
For every $\pi\in\irr(\gl_n)$, we can always find a partition $\alpha$ of $n$ and a cuspidal representation $\rho_1\otimes\ldots\otimes\rho_k$ of $\bM_\alpha(\FF)$ such that $\pi\hra\rho_1\times\ldots\times\rho_k$.
In this case the representation $\rho_1\otimes\ldots\otimes\rho_k$ is, up to permuting the indices, uniquely determined by $\pi$ and the formal sum
 \[\supp(\pi)\coloneq[\rho_1]+\ldots+[\rho_k]\] is called the cuspidal support of $\pi$.
\subsubsection{Intertwining operators}\label{S:int}
Let $\bP$ be a parabolic subgroup of $\gl_n$ with Levi-factor $\bM=\gl_{n_1}\times\ldots\times \gl_{n_k}$, $\lambda$ a generic element in $\ZZ^k$, \emph{i.e.} all entries are pairwise different, and $\pi\in\rep(\bM)$.
For a character $\chi\colon\FF^\times\ra\CC^\times$ we denote
\[\chi_\lambda\colon \bM\ra\CC^\times,\, (m_1,\ldots,m_k)\mapsto \prod_{i=1}^k\chi(\det(m_i))^{\lambda_i}.\]
Recall the Iwasawa decomposition
\[\gl_n(\FF)=\bB_n(\FF)K,\,K\coloneq\gl_n(\mathrm{O}).\]
    A function $f_s\in  \id_{\bP}^{\gl_n}(\pi\otimes{\lvert-\rvert_\lambda^s})$ is uniquely determined by the values it takes on $K$ and \[f_s\vert_K\in \id_{\bP(\FF)\cap K}^K(\pi\otimes\lvert-\rvert_\lambda^s\vert_{K\cap\bM(\FF)}),\]
where $\id_{\bP(\FF)\cap K}^K$ denotes the usual induction.
Now note that $\lvert-\rvert$ is trivial on $\det(K)$, hence the above equals
\[\id_{\bP(\FF)\cap K}^K(\pi\vert_{K\cap \bM(\FF)}).\]
Moreover, for $f\in \id_{\bP(\FF)\cap K}^K(\pi\vert_{K\cap\bM(\FF)})$, $f_s$ can be extended uniquely to an element in \[f_s\in\id_{\bP}^{\gl_n}(\pi\otimes\lvert-\rvert_\lambda^s)\] via
\[f_s(bk)=\lvert b\rvert_\lambda^s\pi(t)\delta_P^{\frac{1}{2}}(t)f(k),\, b=tu, \,t\in \bM_{1,\ldots,1}(\FF),u\in \bU_{1,\ldots,1}(\FF).\]
A family \[f_s\in  \id_{\bP}^{\gl_n}(\pi\otimes\lvert-\rvert_\lambda^s),\,s\in\CC\]
is called a holomorphic family of $f=f_0$ if for all $s\in \CC$
\[f\vert_K=f_s\vert_K\in \id_{\bP(\FF)\cap K}^K(\pi\vert_{K\cap \bM(\FF)}).\]

Following \cite[§ IV]{Waldspurger2003Plancherel}, see also \cite[§ 7]{Dat05}, one can define
for all $s\in\CC$ outside a discrete set a map
\[J_{\pi,\bP}(s)\colon\id_{\op}^{\gl_n}(\pi\otimes\lvert-\rvert_\lambda^s)\ra \id_{\bP}^{\gl_n}(\pi\otimes\lvert-\rvert_\lambda^s)\]
which is uniquely defined by the property that for $f$ with compact support in $\op(\FF)\bP(\FF)$ modulo $\op(\FF)$
\[J_{\pi,\bP}(s)(f)(1_{\gl_n})=\int_{\bU(\FF)}f(u)\mathrm{d}u.\]
Note that this depends on the choice of a Haar measure on $\FF$, which we fix for the rest of the paper.
\begin{theorem}[{\cite[§ IV]{Waldspurger2003Plancherel}, \cite[§ 7]{Dat05}}]
    The morphism $J_{\pi,\bP}(s)$ can be meromorphically continued to a morphism for all $s$, \emph{i.e.}, there exists $\Lambda(\pi,\bP)\in\ZZ_{\ge0}$ such that the map
    \[J_{\pi,\bP} \colon \id_{\op}^{\gl_n}(\pi)\ra \id_{\bP}^{\gl_n}(\pi),\, f\mapsto \lim_{s\ra 0}s^{\Lambda{(\pi,\bP)}}J_{\pi,\bP}(s)(f_s),\] where $\{f_s\in \id_{\op}^G(\pi\otimes\lvert-\rvert_\lambda^s)\}_{s\in \CC}$ is a holomorphic family with $f=f_0$, is well-defined and non-zero.
\end{theorem}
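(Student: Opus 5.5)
We prove the statement in three steps: holomorphy of the compact-support part, rationality in $q^{-s}$ via a finite-dimensional reduction, and then normalization by the order of the pole.

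\emph{Step 1: the region of convergence and the compact-support part.} For $\mathrm{Re}(s)$ large in the direction of $\lambda$, so that $\lvert-\rvert^{s}_\lambda$ is sufficiently positive for $\bP$, the integral $\int_{\bU(\FF)}f_s(u\,g)\,\mathrm{d}u$ converges absolutely. It defines $J_{\pi,\bP}(s)f_s$ in this region. Consider first $f$ supported in the open cell $\op(\FF)\bP(\FF)$ with compact support modulo $\op(\FF)$. The integrand, viewed as a function of $u\in\bU(\FF)$, then has compact support independent of $s$. The integral is therefore entire in $s$, and in fact a Laurent polynomial in $q^{-s}$.

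\emph{Step 2: rationality via a finite-dimensional reduction.} Fix a compact open subgroup $K'\subseteq K$ such that $f\vert_K$ is right $K'$-invariant. The space $\id_{\bP(\FF)\cap K}^K(\pi\vert_{K\cap\bM(\FF)})^{K'}$ is finite-dimensional and independent of $s$. Through holomorphic families it identifies all $\id_{\op}^{\gl_n}(\pi\otimes\lvert-\rvert_\lambda^s)^{K'}$ with a single space $W$, and similarly all $\id_{\bP}^{\gl_n}(\pi\otimes\lvert-\rvert_\lambda^s)^{K'}$ with a single space $W'$. Under these identifications $J_{\pi,\bP}(s)$ becomes a family of linear maps $W\ra W'$, and we must show its matrix coefficients are rational in $q^{-s}$.

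Following Bernstein's principle as in \cite{Waldspurger2003Plancherel}, we characterize $J_{\pi,\bP}(s)f_s$ as the unique solution of a linear system. The system says that the solution is $\bP$-equivariant with the correct character, which amounts to invariance under finitely many Hecke-operator relations. It also prescribes its value on the open-cell compactly supported vectors using Step 1. All coefficients of this system are polynomial in $q^{\pm s}$. For generic $s$ the system has a unique solution, because $\hom(\id_{\op},\id_{\bP})$ is one-dimensional for generic unramified twists by the geometric lemma. Cramer's rule then gives rationality in $q^{-s}$, and this supplies the meromorphic continuation to all $s$.

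\emph{Step 3: normalization by the order of the pole.} Since the entries are rational in $q^{-s}$, the family $s\mapsto J_{\pi,\bP}(s)f_s$ has at worst a finite-order pole at $s=0$ for every $f$. The order is bounded uniformly over $f\in W$ because $W$ is finite-dimensional. To remove the dependence on $K'$, we use that $J_{\pi,\bP}(s)$ commutes with the $\gl_n(\FF)$-action. Since $\pi$ has finite length, $\id_{\op}^{\gl_n}(\pi)$ is finitely generated, and we pick finitely many generators $f^{(1)},\dots,f^{(m)}$. For $g\in\gl_n(\FF)$ the translate $g\cdot f_s$ differs from the holomorphic family of $g\cdot f$ only by something holomorphic and vanishing at $s=0$, since $\lvert-\rvert^s_\lambda$ is multiplied by a function holomorphic in $s$. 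Hence the pole orders of all vectors are controlled by those of the generators. Let $\Lambda(\pi,\bP)$ be the maximal pole order. Then $\lim_{s\ra0}s^{\Lambda(\pi,\bP)}J_{\pi,\bP}(s)f_s$ exists for all $f$ and is nonzero for some $f$. It is independent of the choice of holomorphic family, because two families through $f$ differ by $s\cdot(\text{holomorphic})$. The limit is $\gl_n(\FF)$-equivariant by the same translation argument, and $\Lambda(\pi,\bP)\ge0$ follows from Step 1.

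The main obstacle is Step 2: setting up the finite linear system and checking that it has a unique solution for generic $s$. This needs the geometric lemma (Bernstein–Zelevinsky) to show that only the open Bruhat cell contributes to $\hom$ for generic $\lambda$-twists. That is where genericity of $\lambda$, i.e.\ pairwise distinct entries, is used.
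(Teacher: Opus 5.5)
The paper does not prove this statement; it quotes it from Waldspurger and Dat. Their argument is the Bernstein-principle route you outline: convergence on an open set, uniqueness for generic parameters via the geometric lemma, hence rationality. As written, however, your argument has a genuine gap in Step 1, and it propagates to Step 2.

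\textbf{The main gap.} You need the integral to converge for $\mathrm{Re}(s)$ large along the line $s\mapsto\lvert-\rvert_\lambda^{s}$. Absolute convergence requires $\mathrm{Re}(s)(\lambda_i-\lambda_j)$, shifted by the fixed exponents of $\pi$, to be large of one fixed sign for \emph{all} pairs of blocks $i<j$. In other words, $\lambda$ must be strictly monotone. The hypothesis is only that the $\lambda_i$ are pairwise distinct. For $k\ge 3$ and non-monotone $\lambda$, e.g.\ $\lambda=(1,3,2)$, no choice of $\lvert\mathrm{Re}(s)\rvert$ large works. If $\pi$ is a unitary character of the diagonal torus of $\gl_3$, the line misses the region of absolute convergence entirely.

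Your linear system then has no known solution at any point of the line, and uniqueness alone gives nothing. Cramer's rule on a non-vanishing maximal minor yields a rational solution of that square subsystem. Nothing forces it to satisfy the remaining equations, so nothing has been continued.

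\textbf{The fix.} Run the argument on the whole complex torus of unramified characters of $\bM$, as the references do. There the convergence region is a non-empty open cone, so you get rationality on the torus. Only then restrict to the line. This is legitimate because uniqueness holds at generic points of the line, which is where $\lambda_i\neq\lambda_j$ is used. So you may choose the Cramer minor not identically zero on the line. The resulting formula agrees with the true solution on an open subset of the torus, hence satisfies the full system off the zero locus of that minor. Alternatively, write unramified characters as $\prod_i\lvert\det m_i\rvert^{z_i}$. The poles lie on hypersurfaces $q^{-(z_i-z_j)}=c$, which the line $z=s\lambda$ meets in a discrete set. For $k=2$, or for $\lambda_1>\ldots>\lambda_k$ as assumed later in the paper, your one-variable argument is fine.

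\textbf{Smaller points, all repairable.}
\begin{enumerate}
\item $\hom(\id_{\op}^{\gl_n}(\pi),\id_{\bP}^{\gl_n}(\pi))$ is one-dimensional for generic twists only when $\pi$ is irreducible; for $\pi=\sigma\oplus\sigma$ it is four-dimensional. What you need, and what the geometric lemma gives, is that restriction to open-cell functions is injective on this space.
\item A system posed on the finite-dimensional $K'$-fixed space $W$ cannot express $\bP$-equivariance, since $\bP$-translates leave $W$. Either take as unknown the functional $f\mapsto (J(s)f_s)(1)$ on the whole induced space and use Bernstein's principle for countable-dimensional systems, as Waldspurger does. Or impose commutation with $\mathcal{H}(\gl_n(\FF),K')$, with $K'$ small enough that $K'$-invariants are fully faithful on the Bernstein blocks involved.
\item In Step 1, the $u$-support is compact for $g\in\bP(\FF)$ but not for all $g$. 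This is harmless, since only $g=1$ enters.
\item In Step 3, the identity $g\cdot f_s=(g\cdot f)_s+s\,h(s)$ does not by itself bound all pole orders, because the pole of $J(s)h(s)$ is again unknown. To close it, write $f\in W$ as $e_{K'}$ applied to a combination of translates of generators. Then use that $e_{K'}$ commutes with $J(s)$ and with forming holomorphic families. This gives $m\le\max(N_0,m-1)$ for the maximal pole order $m$ on $W$, where $N_0$ bounds the generators, so $m\le N_0$.
\end{enumerate}
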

Let $\alpha=(\alpha_1,\ldots,\alpha_k)$ be a partition of $n$, $\bP=\bP_\alpha$ a standard parabolic subgroup, and $\pi_i\in\rep(\gl_{\alpha_i})$. We write \[\Lambda(\pi_1,\ldots,\pi_k)\coloneq \Lambda(\pi_1\otimes\ldots\otimes \pi_k,\bP_{\alpha}),\bLambda(\pi_1,\ldots,\pi_k)\coloneq \Lambda(\pi_1\otimes\ldots\otimes\pi_k,\overline{\bP_{\alpha}}) \]as well as the morphism \[J_{\pi_1,\ldots,\pi_k}\coloneq J_{\pi_1\otimes\ldots\otimes\pi_k,{\bP_{\alpha}}},\, \bJ_{\pi_1,\ldots,\pi_k}\coloneq J_{\pi_1\otimes\ldots\otimes\pi_k,\overline{\bP_{\alpha}}}.\]
We use a similar notation for their versions in families $J_{\pi_1\otimes\ldots\otimes\pi_k,{\bP_{\alpha}}}(s)$.
If moreover each $\pi_i$ is induced from irreducible representations we can define $\alpha(\pi_1,\ldots,\pi_k)$ as the unique integer such that 
\[J_{\pi_1\otimes\ldots\otimes\pi_k,{P_{\alpha}}}(s)\circ J_{\pi_1\otimes\ldots\otimes\pi_k,\overline{P_{\alpha}}}(s)=\xi(s) s^{-\alpha(\pi_1,\ldots,\pi_k)},\,\xi(s)\in\CC[[s]]^\times\] for almost all $s\in \CC$, see \cite[§ IV]{Waldspurger2003Plancherel}, \cite[§ 7.6]{Dat05}, or \cite[§4]{Dro25}.
Indeed, if all $\pi_i$ are irreducible, this follows from the generic irreducibility of the parabolic induction as explained in the above sources. For induced representations the claim follows from the claim for irreducible representations together with \cite[Proposition 7.8]{Dat05}.
Finally, we set
\[\pard(\pi_1,\ldots,\pi_k)\coloneq \Lambda(\pi_1,\ldots,\pi_k)+\bLambda(\pi_1,\ldots,\pi_k)-\alpha(\pi_1,\ldots,\pi_k).\]
\begin{lemma}[{\cite[Lemma 2.3]{LapMin25}, \cite{KKKO}}]\label{L:intiso}
    We have that $\pard(\pi_1,\ldots,\pi_k)\ge 0$ and
    the following are equivalent.
    \begin{enumerate}
        \item $J_{\pi_1,\ldots,\pi_k}$ is an isomorphism.
        \item $J_{\pi_k,\ldots,\pi_1}$ is an isomorphism.
        \item $\pard(\pi_1,\ldots,\pi_k)=0$.
    \end{enumerate}
\end{lemma}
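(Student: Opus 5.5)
We work with the two meromorphic families $A(s)\coloneq J_{\pi_1,\ldots,\pi_k}(s)$ and $B(s)\coloneq \bJ_{\pi_1,\ldots,\pi_k}(s)$. By the definition of $\alpha(\pi_1,\ldots,\pi_k)$ their composite is scalar:
\[A(s)\circ B(s)=\xi(s)s^{-\alpha},\qquad \xi(s)\in\CC[[s]]^\times .\]
Here $B(s)$ maps $\pi_1\times\ldots\times\pi_k$ twisted by $s\lambda$ to the opposite induction, and $A(s)$ maps back. Write $\Lambda=\Lambda(\pi_1,\ldots,\pi_k)$ and $\bLambda=\bLambda(\pi_1,\ldots,\pi_k)$. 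By the theorem of Waldspurger and Dat recalled above, the normalized families
\[\tilde A(s)\coloneq s^{\Lambda}A(s),\qquad \tilde B(s)\coloneq s^{\bLambda}B(s)\]
are holomorphic at $s=0$ on holomorphic families, and their values $J_{\pi_1,\ldots,\pi_k}=\tilde A(0)$ and $\bJ_{\pi_1,\ldots,\pi_k}=\tilde B(0)$ are non-zero. Multiplying the identity above gives
\[\tilde A(s)\circ\tilde B(s)=\xi(s)s^{\pard}.\]
This identity is meaningful because all spaces are identified with the fixed $K$-induced space $\id_{\bP(\FF)\cap K}^K(\pi\vert_{K\cap\bM(\FF)})$, via holomorphic families. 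The left side is holomorphic at $0$, so $\pard\ge0$.

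For the equivalences we argue as follows.

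\textbf{(3)$\Rightarrow$(1),(2).} If $\pard=0$, then evaluating at $s=0$ gives $\tilde A(0)\tilde B(0)=\xi(0)\neq0$, a non-zero scalar. By the same argument with the roles reversed, $B(s)A(s)$ is the same scalar function, so $\tilde B(0)\tilde A(0)$ is also invertible. Hence $\tilde A(0)$ is an isomorphism. To pass to $J_{\pi_k,\ldots,\pi_1}$ we identify it with $\bJ$ up to conjugation by the Weyl element $w$ swapping the blocks. The operator $f\mapsto f(w^{-1}\cdot)$ intertwines $\id_{\op}$ with the standard induction in reversed order, and it respects holomorphic families because $w\in K$. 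So (2) corresponds to $\tilde B(0)$ being an isomorphism, which we have just shown.

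\textbf{(1)$\Rightarrow$(3).} Suppose $\tilde A(0)$ is an isomorphism but $\pard>0$. Then $\tilde A(0)\tilde B(0)=0$, which forces $\tilde B(0)=0$, contradicting that $\bJ$ is non-zero. The implication (2)$\Rightarrow$(3) follows symmetrically.

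\textbf{Main obstacle.} The delicate step is making the family manipulations rigorous: all operators act on a fixed finite-length $K$-space and depend meromorphically on $s$. We need that the products of these operators, evaluated at $s=0$, are the products of their values; this holds because both factors are holomorphic at $0$ on the fixed space. For $\pi_i$ that are themselves induced rather than irreducible, the scalar identity defining $\alpha$ relies on \cite[Proposition 7.8]{Dat05}, as recalled above. The identification of $J_{\pi_k,\ldots,\pi_1}$ with $\bJ$ via $w$ is routine but has to be checked against the normalization of the Haar measure.
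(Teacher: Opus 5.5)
Your proof is correct and is essentially the standard argument behind the cited \cite[Lemma 2.3]{LapMin25}; the paper gives no proof of its own and only refers there. The argument is to normalize both operators so that their composite is $\xi(s)s^{\pard}$ with $\xi(0)\neq 0$, and then evaluate at $s=0$ to get $\pard\ge 0$ and all three equivalences. The one step that deserves a line of justification is the reversed identity $\bJ_{\pi_1,\ldots,\pi_k}(s)\circ J_{\pi_1,\ldots,\pi_k}(s)=\xi(s)s^{-\alpha(\pi_1,\ldots,\pi_k)}$. It holds because the $K'$-fixed vectors of the two compact pictures have the same finite dimension, so the one-sided inverse supplied by the defining identity of $\alpha$ is two-sided for generic $s$.
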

Let $w_\alpha\in\gl_n(\FF)$ be a representative of minimal length of the maximal element in the relative Weyl-group of $\bP_\alpha$. For $f\in \pi_1\times\ldots\times\pi_k$ or $\pi_k\btimes\ldots\btimes\pi_1$ we write $\mathrm{Ad}(w_\alpha)(f)=f(w_\alpha(-))$ for the corresponding element in $\pi_k\btimes\ldots\btimes\pi_1$ or $\pi_1\times\ldots\times\pi_k$.
Then
\[J_{\pi_1,\ldots,\pi_k}\circ\mathrm{Ad}(w_\alpha)=\mathrm{Ad}(w_\alpha)\circ \bJ_{\pi_k,\ldots,\pi_1}\]
and similarly for their family in $s$-versions.

In the next sections we recall the relevant notions from algebraic geometry.
\subsection{Constructible sheaves}\label{S:const}
We start by setting the stage for the constructible side of the local Langlands correspondence. For a more in-depth discussion of the mentioned notions we refer the reader to \cite{BBD1982}, \cite{ChrissGinzburg1997}, or \cite{Achar2021Perverse}. 

Let $\bX$ be a complex variety and $\bG$ a complex linear algebraic group acting on $\bX$.
We denote by $D_c^b(\bX,\bG)$ the $\bG(\CC)$-equivariant bounded derived category of constructible sheaves with complex coefficients on $\bX(\CC)$. If we want to highlight that we consider a different field of coefficients $R$, we write $D_c^b(\bX,\bG,R)$.
We denote by $\DD$ Verdier duality, $[-]$ the shift functor, and if $f\colon \bX\ra \bY$ is a morphism of $\bG$-varieties we denote as usual the corresponding functors
\[f_*,f_!\colon D_c^b(\bX,\bG)\ra D_c^b(\bY,\bG),\, f^*,f^!\colon D_c^b(\bY,\bG)\ra D_c^b(\bX,\bG).\]
If $\bH$ is a closed subgroup of $\bG$, we denote the forgetful functor \[\mathrm{For}_{\bG}^{\bH}\colon D_c^b(\bX,\bG)\ra D_c^b(\bX,\bH).\] On the other hand if $\bH$ is a normal closed subgroup of $\bG$ acting trivially on $\bX$, we denote the inflation functor by
\[\mathrm{Inf}_{\bG/\bH}^{\bG}\colon D_c^b(\bX,\bG/\bH)\ra D_c^b(\bX,\bG).\]
In this case we also recall the functors of invariants and co-invariants
\[\mathrm{Inv}_{\bH,*}\colon D_c^b(\bX,\bG)\ra D_c^+(\bX,\bG/\bH),\, \mathrm{Inv}_{\bH,!}\colon D_c^b(\bX,\bG)\ra D_c^-(\bX,\bG/\bH).\]
Note that these objects do \emph{a priori} no longer land in the bounded derived category, but rather in a bounded below or bounded above version. In the situations we will encounter them, we will see that they land in the bounded derived category.

For $\cf$ in $D_c^b(*,\gm)$, we denote by $\HH_{\gm}^k(\cf)\coloneq \hom_{D_c^b(*,\gm)}(\underline{\CC}_*,\cf[k])$. 
Recall that \[\HH_{\gm}^\bullet(\uc_*)\cong \CC[s]\] with $s$ in degree $2$ and hence $\HH_\gm^\bullet(\cf)$ has a natural action of $\CC[s]$. Given a complex $\gm$-variety $X$, let $a_X\colon X\ra B\gm$ be the projection to a point and set\[\tH_{BM,\gm}^k(X)\coloneq \HH_{\gm}^{-k}(\DD(\uc_X)).\] We will write $\widehat{\tH}_{BM,\gm}^\bullet(X)$, and similarly $\widehat{\HH}_{\gm}^\bullet(\cf)$, for the direct products. Then $\widehat{\tH}_{BM,\gm}^\bullet(*)=\CC[[s]]$.

We write $\perv(\bX,\bG)$ for the category of $\bG$-equivariant perverse sheaves. 
If for every $x\in \bX$, the stabilizer of $x$ in $\bG$ is connected, then the simple perverse sheaves are the intersection cohomology complexes $\ic{\oo}$, where $\oo$ ranges over the $\bG$-orbits of $\bX$. We will assume this from now on and recall that for $\oo,\oo'$ two $\bG$-orbits and $k\in\ZZ_{<0}$
\[\dim_\CC\hom(\ic{\oo},\ic{\oo'})=\delta_{\oo,\oo'},\, \ext^k(\ic{\oo},\ic{\oo'})=0,\]
where the Hom- and Ext-spaces can be taken either in $D_c^b(\bX,\bG)$ or $D_c^b(\bX)$.
Let $\sem(\bX,\bG)^+$ be the monoid generated by $\ic{\oo}[k]$, $k\in \ZZ$, $\oo$ a $\bG$-orbit under $\oplus$ and 
$\sem(\bX,\bG)$ the group obtained by formal inverses. Note that $\sem(\bX,\bG)$ admits an action of $\ZZ[t,t^{-1}]$ via
\[t\cdot\cf=\cf[-1].\]
We denote by $\overline{(-)}\colon \sem(\bX,\bG)\ra\sem(\bX,\bG)$ the map $\cf\mapsto \DD(\cf)$.
\subsection{Mixed Hodge Modules}\label{S:MHM}
We give a brief and superficial recollection of the relevant properties of mixed Hodge modules of \cite{Saito1988}, \cite{Saito1990}. We refer the reader to  \cite{Schnell2014OverviewMHM} or \cite{Sabbah2019IntroMHM} for a more detailed exposition. In particular, we will use the language of ($\bG$-equivariant) $\mathscr{D}$-modules freely.
Let $\bX$ and $\bG$ be as in \Cref{S:const}. Recall the Riemann-Hilbert correspondence \[\mathrm{DR}\colon \{\text{regular, holonomic }\bG\text{-equivariant }\mathscr{D}\text{-modules on }\bX\}\ra \perv(\bX,\bG,\CC)\]

We let $\mhm(\bX,\bG)$ be the category of mixed Hodge modules (over $\QQ$).
An object in the category $\mhm(\bX,\bG)$ consists of a quadruple $M=(\mathcal{M},F_\bullet, \cf,W_\bullet)$, where
\begin{enumerate}
    \item $\mathcal{M}$ is a left regular, holonomic (strongly) $\bG$-equivariant $\mathscr{D}$-module.
    \item $F_\bullet$ is a good filtration of $\mathcal{M}$.
    \item $\cf\in\perv(\bX,\bG,\QQ)$ such that $\cf\otimes_\QQ\CC\cong \mathrm{DR}(\mathcal{M})$.
    \item An increasing filtration $W_\bullet$.
\end{enumerate}
We call a mixed Hodge module $M$ pure of weight $w$ if $W_k=0$ for $k<w$ and $W_k=\cf$ for $k\ge w$.
The derived category of mixed Hodge modules is denoted by $D\mhm(\bX,\bG)$ and an object $M$ of $D\mhm(\bX,\bG)$ is called pure of weight $w$ if for all $i\in\ZZ$, the cohomology group $\tH^i(M)$ is pure of weight $w+i$.
We let \[\rat\colon D\mhm(\bX,\bG)\ra D\perv(\bX,\bG,\QQ),\, \cf\mapsto \cf\otimes_\QQ\CC.\]
One of the many reasons why mixed Hodge modules are such a powerful tool is the following theorem, whose ramifications are hard to overstate.
\begin{theorem}[\cite{deligne1971theorie2}, \cite{griffiths1968periods1}, \cite{Saito1990}]\label{T:decomp}
    If $M$ is a pure object in $D\mhm(\bX,\bG)$ then
    \[M\cong \bigoplus_{i\in\ZZ}\tH^i(M)[-i].\]
    If $M$ is a pure mixed Hodge module, then $\rat(M)$ is a direct sum of simple perverse sheaves.
\end{theorem}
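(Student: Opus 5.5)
The plan is to follow Deligne's classical argument from the $\ell$-adic setting, with Saito's weight formalism in place of Frobenius weights. I treat the two assertions in reverse order: first semisimplicity for a single pure module, then the splitting of a pure complex into its cohomology objects.

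\emph{Step 1: a single pure module.} I will use that objects of $\mhm(\bX,\bG)$ are by definition graded-polarizable. So a pure $M$ of weight $w$ is a polarizable Hodge module of weight $w$. By Saito's structure theorem, such an $M$ decomposes by strict support, $M=\bigoplus_Z M_Z$. Each $M_Z$ is the minimal extension of a polarizable variation of Hodge structure on a dense open subset of $Z$. The category of polarizable variations of Hodge structure is semisimple: the orthogonal complement of a sub-variation with respect to the polarization gives a complement, which is Deligne's semisimplicity for polarizable VHS. Minimal extension is fully faithful and preserves this splitting. Hence the category of pure Hodge modules of weight $w$ is semisimple. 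Applying $\rat$, each simple summand goes to $\mathrm{IC}(Z,L)$ for an irreducible local system $L$. Therefore $\rat(M)$ is a direct sum of simple perverse sheaves.

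\emph{Step 2: splitting a pure complex.} The key input is the weight formalism, which I will take from Saito:
\[\hom_{D\mhm}(K,L)=0 \quad\text{whenever } K \text{ has weights } > w \text{ and } L \text{ has weights } \le w.\]
Here ``weights $\le w$'' means $\tH^i$ has weights $\le w+i$, and dually for $>w$. This vanishing comes from the fact that the functors $f_!,f^*$ do not increase weights while $f_*,f^!$ do not decrease them. Since $\ext^k(A,B)=\hom(A,B[k])$ and $B[k]$ has weights $\le w'-k$ when $B$ is pure of weight $w'$, the formalism gives $\ext^k(A,B)=0$ for pure objects $A,B$ of weights $a,b$ with $a>b-k$.

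I then induct on the cohomological amplitude of $M$, pure of weight $w$, using the truncation triangle
\[\tau_{<n}M\ra M\ra \tH^n(M)[-n]\xrightarrow{+1},\]
where $n$ is the top degree. By induction, $\tau_{<n}M$ is pure of weight $w$ and splits as $\bigoplus_{i<n}\tH^i(M)[-i]$. The triangle splits provided the connecting map $\tH^n(M)[-n]\ra \tau_{<n}M[1]$ vanishes. This reduces to the vanishing of
\[\hom\bigl(\tH^n(M),\tH^i(M)[n-i+1]\bigr),\qquad i<n.\]
Here $\tH^n(M)$ is pure of weight $w+n$ and $\tH^i(M)$ is pure of weight $w+i$. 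Since $w+n>(w+i)-(n-i+1)$, the formalism above gives exactly this vanishing. The same bookkeeping with the two-sided bounds also shows that $\tau_{<n}M$ is again pure of weight $w$, which is what the induction needs.

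\emph{Main obstacle: the equivariant setting.} The weight vanishing above is formulated by Saito on varieties, not on $\bG$-equivariant derived categories. I would handle this by realizing $D\mhm(\bX,\bG)$ via a simplicial or approximating resolution $\bX\times_{\bG}E\bG_N$ in the style of Bernstein–Lunts. Weights, Hom-spaces in bounded ranges, and purity are all computed on these smooth finite-dimensional approximations. The argument of Step 2 can then be run there, and its output descends to $\bG$-equivariant objects.

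For Step 1 in the equivariant setting, strong $\bG$-equivariance is preserved by the canonical strict-support and isotypic decompositions, because these decompositions are functorial. I expect that verifying the compatibility of weights with this equivariant resolution is where most of the technical care is needed.
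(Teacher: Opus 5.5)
The paper gives no proof of this statement; it cites Deligne, Griffiths and Saito. Your outline (Saito's weight formalism, Deligne's splitting of pure complexes, semisimplicity via polarizations) is the standard route of those sources. However, Step~1 has a genuine gap. The orthogonal-complement argument shows that the \emph{category} of polarizable VHS, and hence of pure Hodge modules of weight $w$, is semisimple. It says nothing about the underlying local systems, and $\rat$ does not send simple objects to simple perverse sheaves.

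Two examples show this:
\begin{itemize}
\item On a point, $H^1$ of an elliptic curve is a simple Hodge structure whose underlying space is $2$-dimensional.
\item Over a punctured disc, the VHS of a degenerating family of elliptic curves is polarizable and simple. Yet its local system has monodromy $\left(\begin{smallmatrix}1&1\\0&1\end{smallmatrix}\right)$ and is not semisimple, even though every step of your Step~1 is valid there.
\end{itemize}
So your sentence ``each simple summand goes to $\mathrm{IC}(Z,L)$ for an irreducible local system $L$'' is unjustified, and false as stated.

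The missing input is Deligne's semisimplicity theorem (Th\'eorie de Hodge II, 4.2.6): the local system underlying a polarizable VHS on a smooth \emph{algebraic} variety is semisimple. This is a global result resting on the theorem of the fixed part, which is exactly why Deligne and Griffiths are cited. With it, each $\rat(M_Z)=\mathrm{IC}(Z,L)$ with $L$ semisimple. For connected $\bG$, as in all uses here, no extra equivariant work is needed, since $\perv(\bX,\bG)$ is a full subcategory of $\perv(\bX)$ closed under subquotients.

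Step~2 has the right structure, but the weight lemma is stated backwards and the shift has the wrong sign. If $B$ is pure of weight $b$, then $B[k]$ is pure of weight $b+k$, not $b-k$. Saito's vanishing is $\mathrm{Hom}(K,L)=0$ when $K$ has weights $\le w$ and $L$ has weights $>w$. Hence $\mathrm{Ext}^k(A,B)=0$ for pure $A,B$ of weights $a,b$ when $a<b+k$.

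Your criterion $a>b-k$ is false. In $\mhm(\mathrm{pt})$ one has $\mathrm{Ext}^1(\QQ(0),\QQ(1))\cong\CC/2\pi i\QQ\neq 0$, although $0>-2-1$. In your application the errors happen to be harmless: $a=w+n<w+n+1=b+k$, so the connecting morphism does vanish, and the induction goes through once the lemma is corrected.

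This is the boundary case $a=b+k-1$. There the vanishing is not a formal consequence of how $f_!,f^*,f_*,f^!$ move weights. It also uses graded polarizability, namely that extensions between polarizable pure structures of the same weight split.
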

The functors of \Cref{S:const} can all be lifted to $D\mhm(\bX,\bG)$ such that under $\rat$, they correspond to the functors of \Cref{S:const}.
Moreover, any $\ic{\oo}$ can be lifted to a pure mixed Hodge module of weight $\dim\oo$, which we denote by abuse of notation also $\ic{\oo}$.

Next, let $T^*\bX$ be the cotangent bundle of $\bX$, which carries a natural $\bG$-action coming from the $\bG$-action on $\bX$. Then for $M$ a mixed Hodge module, $\gr^F M\coloneq \bigoplus_{i\in\ZZ}F_{i-1}\backslash F_i$ naturally has the structure of a $\bG$-equivariant sheaf on $T^*\bX$. We recall that if $M=\ic{\oo}$ and $U$ is a generic open subset of the conormal bundle $T_\oo^*\bX$, then 
    \[\gr^F(\ic{\oo})\lvert_{U}=\mathscr{O}_{U}.\]
    Finally, define the singular support of $M$ as
    \[\siso(M)\coloneq \mathrm{supp}(\gr^FM)\subseteq T^*\bX.\]
\subsection{Quiver varieties}\label{S:quvar}
We now come to the specific varieties that will govern the representation theory of $\gl_n(\FF)$. We will follow the presentation of \cite{LapMin25}.

Let $Q$ be the quiver $A_\infty$ with an arrow $i\mapsto i+1$ for all $i\in\ZZ$. We let \[\grdim(Q)\coloneq \bigoplus_{i\in\ZZ}\NN\] and write for $\db\in\grdim(Q)$, $\lvert\db\rvert\coloneq \sum_{i\in\ZZ}\db_i$. Denote for $\db\in\grdim(Q)$ by $\ep$ the complex variety of representations of $Q$ with underlying vector space \[\CC^\db\coloneq \bigoplus_{i\in\ZZ}\CC^{\db_i},\] 
\[\ep(\CC)\coloneq \{(T_i)_{i\in\ZZ}:T_i\colon \CC^{\db_i}\ra\CC^{\db_{i+1}}\,\CC\text{-linear}\}.\] We set \[\gdb(\CC)\coloneq \bigtimes_{i\in\ZZ}\gl_{\db_i}(\CC)\] and let it act on $\ep$ by conjugation. We recall the classification of the set $\orb_\db$ of $\gdb$-orbits of $\ep$.

For $a\le b\in \ZZ$, denote the corresponding segment by $[a,b]$. If $a>b\in\ZZ$, we interpret $[a,b]$ as the empty segment. We define its length $l([a,b])\coloneq b-a+1$ and 
its dimension vector as \[\grdim([a,b])\in\grdim(Q),\, \grdim([a,b])_i\coloneq\begin{cases}
    1&\text{if }a\le i\le b,\\0&\text{otherwise}.
\end{cases}\]
A multisegment $\fm$ is defined to be a formal sum of segments and we extend lengths and dimension vectors additively to the set of multisegments $\mul$. For $\db\in\grdim(Q)$ we let \[\mul_\db=\{\fm\in\mul:\grdim(\fm)=\db\}\] and define for $\fm\in\mul_\db$ a representation $T(\fm)\in \ep$ as follows.
If $\fm=[a,b]$ is a single segment, we define the underlying linear maps \[T_i=\begin{cases}
    \mathbbm{1}_\CC& \text{if }a\le i\le b-1\\0&\text{otherwise}.
\end{cases}\]
For $\fm=\De_1+\ldots+\De_k$ a sum of segments we simply set $T(\fm)\coloneq T(\De_1)\oplus\ldots\oplus T(\De_k)$. Note that this depends on the order of the segments.
We then write
\[\oo\colon \mul_\db\ra\orb_\db,\, \oo(\fm)\coloneq \gdb\cdot T(\fm).\]
\begin{theorem}[Gabriel's theorem, \cite{Gabriel1972}]
    The map $\oo\colon \mul_\db\ra\orb_\db$ is a bijection.
\end{theorem}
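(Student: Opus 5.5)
We need to prove Gabriel's theorem for type $A_\infty$ with finite-dimensional $\db$: the map $\oo\colon\mul_\db\ra\orb_\db$ is a bijection. This means two things. First, every finite-dimensional representation of the linearly oriented $A_\infty$ quiver decomposes as a direct sum of interval modules $T([a,b])$. Second, the multiset of intervals is determined by the isomorphism class. Note also that $\gdb$-orbits on $\ep$ are exactly isomorphism classes of quiver representations with dimension vector $\db$, and that $T(\fm)$ is well defined up to isomorphism regardless of the order of the segments.

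Since $\db$ has finite support, say in $[m,M]$, we reduce at once to the quiver $A_{M-m+1}$ with linear orientation. There the representation category is that of finite-dimensional modules over the path algebra, which is a finite-dimensional algebra. By Krull--Schmidt, every representation is a direct sum of indecomposables, unique up to isomorphism and permutation. It therefore suffices to show that the indecomposable representations are exactly the $T([a,b])$ with $m\le a\le b\le M$, and that these are pairwise non-isomorphic. The last point is clear, since $T([a,b])$ has dimension vector $\grdim([a,b])$, which determines $[a,b]$. Each $T([a,b])$ is indecomposable because $\ed(T([a,b]))=\CC$: an endomorphism is a tuple of scalars commuting with identity maps along the segment, so all the scalars are equal. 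Combining these, injectivity and surjectivity of $\oo$ both follow.

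The main step is to show that every indecomposable $V=(V_i,T_i)$ is an interval module. I would argue by induction on $\lvert\db\rvert$ via a splitting argument. Let $a$ be minimal with $V_a\neq0$, and choose $0\neq v\in V_a$ such that the chain $v, T_a v, T_{a+1}T_a v,\ldots$ survives as long as possible, say nonzero exactly up to index $b$. The vectors $v_i=T_{i-1}\cdots T_a v$ for $a\le i\le b$ span a subrepresentation $U\cong T([a,b])$. I then need a complement $W$ that is a subrepresentation with $V=U\oplus W$. To build it, I would construct projections $p_i\colon V_i\ra\CC v_i$ compatible with the $T_i$, going from $i=b$ down to $a$. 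First choose $p_b$ to be any linear functional with $p_b(v_b)=1$. Then define $p_i$ as $p_{i+1}\circ T_i$, pulled back to $V_i$.

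The delicate point, which I expect to be the obstacle, is checking that this is consistent and that the kernels form a subrepresentation. Consistency amounts to $p_{i+1}(T_i v_i)=1$, which holds by construction. For the kernels, define $W_i=\ker(p_{i+1}\circ T_i)$ for $i<b$, take $W_b=\ker p_b$, and take $W_i=V_i$ for $i>b$. Then $T_i(W_i)\subseteq W_{i+1}$ follows directly. The maximality of $b$ is what ensures nothing breaks at the end: $v_b$ maps to zero under $T_b$, so it imposes no condition beyond $b$. Equivalently, one can phrase the whole step dually. Since the $T([a,b])$ with $a$ minimal in the support are injective objects in the truncated category once $b$ is chosen maximal, the inclusion $U\hra V$ splits. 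Indecomposability of $V$ then forces $V=U$, which concludes the induction and the proof.
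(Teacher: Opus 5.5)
The paper gives no proof of this statement (it only cites Gabriel), so your argument is necessarily a different route, and it is correct.

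Gabriel's theorem, in its original form via the Tits form or via Bernstein--Gelfand--Ponomarev reflection functors, treats every Dynkin quiver in every orientation uniformly. It matches indecomposables with positive roots through their dimension vectors; for $A_n$ these are exactly the $\grdim([a,b])$.

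You instead use the linear orientation directly. After Krull--Schmidt and $\mathrm{End}(T([a,b]))=\CC$, everything rests on one fact: an interval module starting at the minimal vertex $a$ of the support is injective among representations supported on $[a,M]$. This agrees with the paper's formula, since $\mathrm{Ext}^1_Q(T([c,d]),T([a,b]))\neq0$ forces $c+1\le a$. Equivalently, $w\mapsto p_i(w)v_i$ is a retraction of representations $V\to U$.

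Your approach is elementary and constructive. What the citation buys is uniformity: the paper later also needs the multisegment parametrization of orbits for the orientations $Q_{(i)}$, where your argument would have to be adapted.

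Two minor bookkeeping remarks. First, the maximality of $b$ plays no role. Any nonzero $v\in V_a$ works, because you only use $T_bv_b=0$, which is the definition of $b$. The hypothesis that genuinely matters is $V_{a-1}=0$, since that is what makes $W$ stable under the arrow $a-1\to a$. Second, once you invoke Krull--Schmidt, no induction on $\lvert\db\rvert$ is needed, because indecomposability gives $V=U$ directly.
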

We say a segment $[a,b]$ precedes a segment $[a',b']$ if $a+1\le a'\le b+1\le b'$, in which case we write $[a,b]\prec [a',b']$. 
In this case the union and intersection are defined as \[[a,b]\cup[a',b']\coloneq [a,b'],\, [a,b]\cap[a',b']\coloneq [a',b].\]
Segments $[a,b]$ and $[a',b']$ are called unlinked if they do not precede each other.
Finally, we write $\fn\le \fm$ if $\fn$ can be obtained from $\fm$ via finitely many elementary operations, \emph{i.e.}, the replacement of two linked segments by their intersection and union.
\begin{theorem}[{\cite[Theorem 2.2]{Zelevinsky1981}}]\label{T:Oqui}
    Let $\fm,\fn\in\mul_\db$. Then $\fn\le\fm$ if and only if $\oo(\fm)\subseteq\overline{\oo(\fn)}$.
\end{theorem}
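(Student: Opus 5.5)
The plan is to reduce both conditions to a single numerical invariant, namely the ranks of compositions of the maps $T_i$.

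For $i\le j$ and $T\in\ep$, set $r_{ij}(T)\coloneq \mathrm{rank}(T_{j-1}\circ\cdots\circ T_i\colon \CC^{\db_i}\ra\CC^{\db_j})$. This is constant on $\gdb$-orbits, and a direct computation on $T(\fm)$ gives that $r_{ij}(\fm)$ is the number of segments of $\fm$ containing $[i,j]$. By lower semicontinuity of rank, $\oo(\fm)\subseteq\overline{\oo(\fn)}$ implies $r_{ij}(\fm)\le r_{ij}(\fn)$ for all $i\le j$. The argument then splits into two implications.

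\emph{($\fn\le\fm\Rightarrow$ inclusion).} By transitivity of closure inclusion, it suffices to treat one elementary operation. Let $\Delta\prec\Delta'$ be linked segments of $\fm$, and let $\fn$ be obtained from $\fm$ by replacing them with $\Delta\cup\Delta'$ and $\Delta\cap\Delta'$. Then $T(\Delta\cup\Delta')\oplus T(\Delta\cap\Delta')$ is the middle term of a non-split extension between $T(\Delta)$ and $T(\Delta')$. Writing this extension as a block upper-triangular family and scaling the off-diagonal extension block by $c\in\CC^\times$ stays inside $\oo(\fn)$. Letting $c\to0$ gives $T(\Delta)\oplus T(\Delta')$, so after adding the remaining segments we get $\oo(\fm)\subseteq\overline{\oo(\fn)}$. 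This step is routine.

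\emph{(Inclusion $\Rightarrow\fn\le\fm$).} By the semicontinuity above, it remains to prove a combinatorial statement: if $\fm,\fn\in\mul_\db$ satisfy $r_{ij}(\fm)\le r_{ij}(\fn)$ for all $i\le j$, then $\fn$ is obtained from $\fm$ by elementary operations. First, an elementary operation weakly increases every $r_{ij}$. Indeed, a segment $[i,j]$ contained in $\Delta$ or $\Delta'$ lies in $\Delta\cup\Delta'$, and one contained in both lies also in $\Delta\cap\Delta'$. Moreover, at least one $r_{ij}$ strictly increases, namely for $[i,j]=\Delta\cup\Delta'$. I would then argue by induction on $\sum_{i\le j}(r_{ij}(\fn)-r_{ij}(\fm))$, which is a finite quantity since $\db$ has finite support. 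If the sum is $0$, all ranks agree, and since ranks determine the multisegment (inclusion–exclusion on the $r_{ij}$ recovers the multiplicities of segments), we get $\fm=\fn$. Otherwise the goal is to find a linked pair in $\fm$ whose elementary operation yields $\fm'$ with $r(\fm)\le r(\fm')\le r(\fn)$ componentwise.

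The main obstacle is choosing this pair. My first attempt is as follows. Choose $(i,j)$ with $r_{ij}(\fm)<r_{ij}(\fn)$ such that $j-i$ is maximal, and among those $i$ minimal. The deficit must be explained by some segment $\Delta=[a,b]$ of $\fm$ with $a\le i$, $b<j$, together with a segment $\Delta'=[a',b']$ with $i<a'\le b+1$, $b'\ge j$. Take $b$ maximal and $a'$ minimal among such segments, so that $\Delta\prec\Delta'$ and $\Delta\cup\Delta'\supseteq[i,j]$. One then checks that the maximality of $j-i$, together with $r_{i'j'}(\fm)=r_{i'j'}(\fn)$ for all longer intervals $[i',j']$, prevents overshooting: the only ranks that increase are those $r_{i'j'}$ with $[i',j']\subseteq\Delta\cup\Delta'$ but not contained in $\Delta$ or in $\Delta'$. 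For these one verifies $r_{i'j'}(\fm)+1\le r_{i'j'}(\fn)$ by comparing with the neighbouring inequalities $r_{i'-1,j'}$ and $r_{i',j'+1}$, using that the differences $r_{i'j'}-r_{i'-1,j'}-r_{i',j'+1}+r_{i'-1,j'+1}$ are nonnegative. This local bookkeeping is where I expect the real difficulty; if it fails, the fallback is the standard argument of Abeasis–Del Fra (and Zelevinsky) for type $A$ quivers.
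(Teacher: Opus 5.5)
You have a genuine gap in the inductive step of the hard direction; the rest of your plan is sound. The reduction to the ranks $r_{ij}$ is fine, as are the degeneration argument for $\fn\le\fm\Rightarrow\oo(\fm)\subseteq\overline{\oo(\fn)}$, the monotonicity of $r$ under elementary operations, and the base case. (The paper gives no proof of this statement; it only cites Zelevinsky.) The problem is that the pair you select need not exist. Take $\fm=[1,1]+[2,2]+[3,3]$ and $\fn=[1,3]$. The unique deficit of maximal length is $[i,j]=[1,3]$, but with $\Delta=[1,1]$ there is no $\Delta'=[a',b']$ in $\fm$ with $1<a'\le 2$ and $b'\ge 3$. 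In fact no elementary operation on $\fm$ raises $r_{13}$, because $[1,1]$ and $[3,3]$ are not linked. So the extra $[1,3]$ can only be built after first repairing a shorter deficit.

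Even when candidate pairs exist, an elementary operation always raises $r$ at $\Delta\cup\Delta'\supseteq[i,j]$, and all longer ranks already agree. Your scheme therefore needs $\Delta\cup\Delta'=[i,j]$ exactly, and taking $b$ maximal and $a'$ minimal does not ensure this. For $\fm=[0,1]+[1,1]+[2,2]+[2,3]$ and $\fn=[0,1]+[1,2]+[2,3]$, your rule allows the pair $[0,1],[2,3]$, which pushes $r_{03}$ above $r_{03}(\fn)=0$. Switching to a shortest deficit fails too. For $\fm=[0,1]+[1,2]+[2,3]$ and $\fn=[0,2]+[1,3]$, the only shortest deficit is $[1,2]$. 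The only operation raising $r_{12}$ is the one on $[0,1],[2,3]$, and it also raises $r_{03}$. So the pair cannot be chosen from the length profile of $r(\fn)-r(\fm)$ alone.

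A selection that does work is to scan by starting points, as in the proof of the rank criterion for the Bruhat order:
\begin{itemize}
\item Let $a_0$ be the least integer at which $\fm$ and $\fn$ have different segments starting at $a_0$. Both have the same number of such segments, since $\db_{a_0}$ and all segments starting earlier agree.
\item Let $e_0$ be the largest $e$ such that $\fn$ has more segments $[a_0,b'']$ with $b''\ge e$ than $\fm$ does.
\item Let $\Delta=[a_0,b]$ be the segment of $\fm$ starting at $a_0$ with $b<e_0$ maximal.
\item Let $\Delta'=[a',b']$ be a segment of $\fm$ with $a'>a_0$, $b<b'\le e_0$, and $a'$ minimal.
\end{itemize}
The key estimate is the following. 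Suppose $a_0\le i'\le j'$, $b<j'\le e_0$, and $\fm$ has no segment starting in $(a_0,i']$ and ending in $(b,e_0]$. Then $r_{i'j'}(\fn)\ge r_{i'j'}(\fm)+1$. To see this, split the segments containing $[i',j']$ by whether they start before, at, or after $a_0$. The first counts agree. The second gains at least one, by the choice of $e_0$ and $b$. For the third, by hypothesis $\fm$ only contributes segments ending after $e_0$; use $r_{i',e_0+1}(\fn)\ge r_{i',e_0+1}(\fm)$ together with the maximality of $e_0$.

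You apply this estimate three times:
\begin{itemize}
\item At $i'=j'=e_0$, it shows that $\Delta'$ exists.
\item At $i'=j'=b+1$, it shows $a'\le b+1$, since otherwise $\db_{b+1}$ would differ. Hence $\Delta\prec\Delta'$.
\item On the rectangle $a_0\le i'<a'$, $b<j'\le b'$, it shows that the elementary operation on $\Delta,\Delta'$ does not overshoot $r(\fn)$.
\end{itemize}
With this replacement your induction goes through. Otherwise you must cite Zelevinsky (or Abeasis--Del Fra) for this step, as the paper does.
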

\subsection{Classification of irreducible representations}\label{S:class}
Next we recall from \cite{Zel80} the classification of $\irr$.
For $\rho\in \cuss$ we let $\mul_\rho$ be the set of multisegments where we decorate each segment with a $\rho$, \emph{e.g.} $[a,b]_\rho$, and we have a natural bijection $\mul\ra\mul_\rho$ sending  $\fm$ to $ \fm_\rho$.
We let $\mul_\cus$ be the set of finite sums $\fm_{1}+\ldots+\fm_{k},\, \fm_i\in \mul_{\rho_i}$ for some $\rho_i\in\cuss$. We define \[\cusp([a,b]_\rho)\coloneq [\rho\otimes \lvert\det\lvert^a]+\ldots+[\rho\otimes \lvert\det\lvert^b]\] and extend this additively to $\mul_\cus$. Finally, for two segments $\De_1,\De_2\in\mul_{\cus}$, we write $\De_1\prec\De_2$ if $\De_1,\De_2\in\mul_{\rho}$ and $\De_1$ precedes $\De_2$ in the sense of \Cref{S:quvar}. An elementary operation on a multisegment is again an elementary operation in the sense of \Cref{S:quvar}, where the two segments involved have to be contained in $\mul_{\rho}$ for some $\rho\in\cuss$. Finally, we write $\fn\le\fm$, if $\fn$ can be obtained from $\fm$ via finitely many elementary operations.
\begin{lemma}[{\cite[§ 3]{Zel80}}]\label{L:seg}
    Let $[a,b]_\rho$ be a segment in $\mul_\cus$. Then there exists a unique irreducible subrepresentation
    $\Z([a,b]_\rho)$ of $\rho\otimes\lvert-\rvert^a\times\ldots\times\rho\otimes\lvert-\rvert^b$.
\end{lemma}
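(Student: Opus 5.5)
The plan is to reduce to the case $\rho$ trivial, realize $\rho\lvert-\rvert^a\times\ldots\times\rho\lvert-\rvert^b$ as a principal-series-type representation of a Bernstein block, and analyse it through the Jacquet module (geometric lemma). First, replacing $\rho$ by $\rho\lvert\det\rvert^a$, we may assume $a=0$. Write $\rho_i=\rho\lvert\det\rvert^i$ and $\Pi=\rho_0\times\rho_1\times\ldots\times\rho_b$.

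\emph{Existence} of an irreducible subrepresentation is automatic, since $\Pi$ has finite length. The content is uniqueness, and for this I will show that $\Pi$ has a unique irreducible subrepresentation by a socle argument. Suppose $\sigma\hookrightarrow\Pi$ is irreducible. By Frobenius reciprocity this is equivalent to a nonzero map $r_{\bP}(\sigma)\to\rho_0\otimes\ldots\otimes\rho_b$ from the normalized Jacquet module with respect to the parabolic of type $(\deg\rho,\ldots,\deg\rho)$. By the geometric lemma of Bernstein--Zelevinsky, the semisimplified Jacquet module of $\Pi$ is
\[\sum_{w\in S_{b+1}}\rho_{w(0)}\otimes\ldots\otimes\rho_{w(b)},\]
and all these $(b+1)!$ cuspidal exponents are pairwise distinct, since the $\rho_i$ are pairwise non-isomorphic. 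Hence the exponent $\rho_0\otimes\ldots\otimes\rho_b$ occurs in $r_{\bP}(\Pi)$ with multiplicity exactly one. Since $r_{\bP}$ is exact, at most one irreducible subquotient of $\Pi$ can have this exponent in its Jacquet module. Every irreducible subrepresentation of $\Pi$ must have it, by Frobenius reciprocity. Therefore $\Pi$ has at most one, and hence exactly one, irreducible subrepresentation (up to isomorphism), and it occurs with multiplicity one in $\Pi$. This shows that the socle is irreducible.

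The step needing the most care is the multiplicity-one claim: one must check that the exponents in the geometric lemma for this ordering are indexed by all permutations without coincidences. This follows because distinct $w$ give distinct ordered tuples of pairwise non-isomorphic cuspidals. One must also be careful that it is the \emph{normalized} Jacquet module adjoint to normalized induction, so that no modulus twist shifts the exponent.

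Optionally, one can add that $\Z([a,b]_\rho)$ is characterized by $r_{\bP}(\Z)=\rho_a\otimes\ldots\otimes\rho_b$. This is shown by an induction on $b-a$ using $\Z([a,b-1]_\rho)\times\rho_b$, but it is not needed for the lemma as stated.
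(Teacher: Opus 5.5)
Your argument is correct and complete. Frobenius reciprocity forces every irreducible subrepresentation to carry the exponent $\rho\lvert\det\rvert^a\otimes\ldots\otimes\rho\lvert\det\rvert^b$, and by the geometric lemma this exponent occurs exactly once in the semisimplified Jacquet module of the induced representation, so the socle is irreducible. (After your shift the last index should read $b-a$ rather than $b$, a harmless slip.) The paper gives no proof of its own and only cites \cite{Zel80}. As far as I recall, the statement is obtained there by this same Jacquet-module multiplicity-one reasoning for regular cuspidal data, so your route is the standard one.
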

\begin{theorem}[{\cite[§6]{Zel80}}]\label{T:zel}
    There exists a bijection 
    \[\LL\colon \mul_\cus\ra\irr\] satisfying the following properties.
    \begin{enumerate}
        \item $\cusp(\Z(\fm))=\cusp(\fm)$.
        \item If $\fm_1\in \mul_{\rho_1},\ldots,\fm_k\in \mul_{\rho_k},\,\fm=\fm_1+\ldots+\fm_k$ for pairwise non-equivalent cuspidal representations $\rho_i$, then
        \[\Z(\fm)\cong \Z(\fm_1)\times\ldots\times\Z(\fm_k).\]
        \item If $\De_1,\ldots,\De_k$ are segments which are pairwise unlinked, the representation
        \[\Z(\De_1)\times\ldots\times \Z(\De_k)\] is irreducible.
        \item Assume that $\fm=\De_1+\ldots+\De_k$ such that if $i>j$, then $\De_i\nprec \De_j$. Then $\Z(\fm)$ is the unique irreducible quotient of 
        \[\mathrm{I}(\fm)\coloneq \Z(\De_1)\times\ldots\times\Z(\De_k)\]
        and appears with multiplicity one in its decomposition series.
    \end{enumerate}
\end{theorem}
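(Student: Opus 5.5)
This statement is Zelevinsky's classification, cited from \cite[§6]{Zel80}; rather than reproving it, I would sketch how it is derived from Lemma~\ref{L:seg} and the Bernstein--Zelevinsky geometric lemma.

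\textbf{Step 1: reduction to one cuspidal line.} First I would establish (2). If $\rho_i\not\sim\rho_j$, then representations supported on different lines $\rho_i\lvert-\rvert^{\ZZ}$ share no cuspidal exponents. So the geometric lemma (Jacquet module of a product) shows that any subquotient of $\Z(\fm_1)\times\ldots\times\Z(\fm_k)$ has a Jacquet module containing the full tensor product $\Z(\fm_1)\otimes\ldots\otimes\Z(\fm_k)$ in the appropriate block. Applying Frobenius reciprocity in both directions then forces the product to be irreducible. After this reduction it suffices to treat $\mul_\rho$ for a single $\rho$, and (1) holds by construction.

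\textbf{Step 2: irreducibility for unlinked segments, property (3).} Using Lemma~\ref{L:seg}, $\Z([a,b]_\rho)$ has Jacquet module (with respect to the minimal parabolic) equal to the single character $\rho\lvert-\rvert^a\otimes\ldots\otimes\rho\lvert-\rvert^b$. For two segments, the geometric lemma expresses the Jacquet module of $\Z(\De_1)\times\Z(\De_2)$ as a sum over shuffles of the two increasing sequences. When the segments are unlinked, the irreducible subrepresentation and the irreducible quotient must share a common exponent. This, combined with the fact that the intertwining operator $\Z(\De_1)\times\Z(\De_2)\ra\Z(\De_2)\times\Z(\De_1)$ is nonzero and injective on the relevant multiplicity-one exponent, gives irreducibility. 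The general case then follows by induction on $k$, using the commutativity $\Z(\De_i)\times\Z(\De_j)\cong\Z(\De_j)\times\Z(\De_i)$ and the same multiplicity-one argument.

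\textbf{Step 3: the standard module, property (4).} With the ordering $\De_i\nprec\De_j$ for $i>j$, one exponent of the Jacquet module of $\I(\fm)$ appears with multiplicity one: the concatenation of the segments in the given order. Frobenius reciprocity shows that $\I(\fm)$ has a quotient containing this exponent. Multiplicity one then forces a unique irreducible quotient $\Z(\fm)$, appearing once in the composition series. To define $\Z$ independently of the ordering, I would use Step~2 to permute unlinked adjacent segments.

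\textbf{Step 4: bijectivity.} Injectivity follows because the distinguished exponent determines $\fm$. Surjectivity is proved by induction on degree: given $\pi\in\irr$, choose $\rho\lvert-\rvert^{b}$ with $b$ maximal in its cuspidal support and peel off a segment ending at $b$ via the Jacquet module. This realizes $\pi$ as the quotient of $\Z(\De)\times\pi'$, and induction applies to $\pi'$.

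\textbf{Main obstacle.} I expect the hardest step to be the combinatorial multiplicity-one analysis of shuffles in Step~3, namely checking that the chosen exponent cannot arise from any other shuffle. I would handle this by induction on the number of segments, comparing the largest endpoints.
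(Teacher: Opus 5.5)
The paper gives no argument for this statement; it is quoted from \cite{Zel80}, and citing it is all that is needed here. Read as a proof, however, your sketch has genuine gaps.

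The most concrete gap is in Step~3. The concatenation of the segments in the given order is in general \emph{not} an exponent of multiplicity one. Moreover, Frobenius reciprocity attaches it to the irreducible \emph{sub}representations of $\I(\fm)$, not to its quotients. Write exponents as integer sequences (take $\rho=\lvert-\rvert^0$ of $\gl_1$) and take the arranged order $\De_1=[0,1]\prec\De_2=[1,2]$. The shuffles of $(0,1)$ and $(1,2)$ are $(0,1,1,2)$ twice, $(0,1,2,1)$, $(1,0,1,2)$, $(1,0,2,1)$ and $(1,2,0,1)$. The irreducible subrepresentation $\Z([0,2]+[1,1])=\Z([0,2])\times\Z([1,1])$ already carries $(0,1,1,2)$ twice, together with $(0,1,2,1)$ and $(1,0,1,2)$. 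So the quotient $\Z(\fm)$ does not contain your distinguished exponent at all. For quotients you need the opposite Jacquet module (second adjointness or contragredients), which produces the reversed concatenation. Even then multiplicity one fails:
for $\fm=\De+\De$ the two copies can be interchanged in the shuffle, so already $\lvert-\rvert^0\times\lvert-\rvert^0$ has the exponent $(0,0)$ twice; and for the unlinked pair $[0,1],[1,1]$, both of whose orders are arranged, the concatenation $(0,1,1)$ occurs twice among the shuffles of $(0,1)$ and $(1)$. The same example also breaks Step~4: $(0,1,1,2)$ is also the concatenation of the arranged order $[0,0],[1,1],[1,2]$, so your exponent does not determine $\fm$.

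The second gap is that Steps~1 and~2 need irreducibility input that shuffle counting cannot supply. The products $\lvert-\rvert^0\times\lvert-\rvert^2$ (irreducible) and $\lvert-\rvert^0\times\lvert-\rvert^1$ (reducible) both have exactly two exponents, each of multiplicity one. So the geometric lemma plus Frobenius reciprocity cannot tell them apart. The real input is the Bernstein--Zelevinsky theorem \cite{BerZel77}: a product of cuspidal representations is reducible if and only if two factors differ by a twist by $\lvert\det\rvert$ (in particular $\rho\times\cdots\times\rho$ is irreducible).

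Your claim in Step~1, that every subquotient contains the full tensor product in its Jacquet module, is this input in disguise. One has to move cuspidals from different lines past each other, and second adjointness only gives the reversed block for quotients. The unlinked case of Step~2 rests on it too, including the irreducibility of $\Z(\De)\times\cdots\times\Z(\De)$ that you would need to repair Step~3 for repeated segments.

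Finally, the claim that the intertwining operator $\Z(\De_1)\times\Z(\De_2)\ra\Z(\De_2)\times\Z(\De_1)$ is ``injective on the relevant exponent'' is essentially the statement to be proved. The tools this paper has for deciding when such an operator is an isomorphism are \Cref{L:intiso} together with the pole computations of \cite{Dro25} used in \Cref{L:filtseg}. These presuppose Zelevinsky's classification, so invoking them here would be circular.
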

We call a representation of the form $\I(\fm)$ a standard module and note that by (3), $\I(\fm)$ is up to isomorphism uniquely defined. Indeed, it is easy to see that any two admissible orderings of (4), which from now on will be called \emph{arranged} orderings, can be transformed into each other by switching two neighboring unlinked segments. 
We will write $\I(\pi)$ for $\I(\fm)$ if $\pi\cong\Z(\fm)$.
As a consequence of the above theorem, we can write any $\pi\in\irr$ as
\[\pi\cong\bigtimes_{\rho\in\cuss}\pi_\rho,\]
where $\pi_\rho=\Z(\fm)$ for some $\fm\in\mul_\rho$ and for almost all $\rho$, $\deg(\pi_\rho)=0$.
For $\pi\in\irr$, we define the element of $\grdim(Q)$ \[\grdim(\pi)_{\rho,j}\coloneq \begin{cases}
    \#\{i:j=k_i\}&\text{ if }\pi\cong\Z(\fm),\,\fm\in\mul_\rho,\\0&\text{ otherwise},
\end{cases}\]
where we write in the first case \[\supp(\fm)=[\rho\otimes\lvert-\rvert^{k_1}]+\ldots+[\rho\otimes\lvert-\rvert^{k_n}].\]
For given $\db\in\grdim(Q)$ we let \[\irr_{\db,\rho}\coloneq\{\pi\in\irr:\grdim_\rho(\pi)=\db\}.\]
There exists thus a bijection \[\LL_\rho\colon \orb_\db\ra\irr_{\db,\rho},\,\oo(\fm)\mapsto \Z(\fm_\rho).\]
For $\db^1,\db^2\in\grdim(Q)$ write \[\langle\db^1,\db^2\rangle=\sum_{i\in\ZZ}\db_{i}^1\db_{i}^2-\db_{i}^1\db_{i+1}^2,\,(\db^1,\db^2)=\langle\db^1,\db^2\rangle+\langle\db^2,\db^1\rangle\]and recall, \emph{cf.} \cite[§2]{AizLap}, that for all $x_i\in\mathbf{E}_{\db^i}, i=1,2$
\[\dim_\CC\hom_{Q}(x_1,x_2)-\dim_\CC\ext_Q^1(x_1,x_2)=\langle\db^1,\db^2\rangle.\]
We also define 
\[\alpha_+(x_1,x_2)\coloneq \dim_\CC\hom_{Q}(x_2,x_1)-\dim_\CC\ext^1_{Q}(x_1,x_2)\]
and hence
\[\alpha_+(x_1,x_2)+\alpha_+(x_2,x_1)=(\db^1,\db^2).\]
For future reference we recall that
\[\dim_\CC\hom_{Q}(T([a,b]),T([c,d]))=\begin{cases}
    1& \text{if }c\le a\le d\le b,\\0&\text{otherwise},
\end{cases}\]
\[\dim_\CC\ext^1_{Q}(T([a,b]),T([c,d]))=\begin{cases}
    1& \text{if }a+1\le c\le b+1\le d,\\0&\text{otherwise}.
\end{cases}\]
Let now $\pi,\pi'\in \irr$. We set
\[\langle\pi,\pi'\rangle\coloneq \sum_{\rho\in\cuss}\langle\grdim(\pi_\rho)_\rho,\grdim(\pi_\rho')_\rho\rangle,\, (\pi_1,\pi_2)\coloneq \langle\pi_1,\pi_2\rangle+\langle\pi_2,\pi_1\rangle.\]
For $\rho\in\cuss$, we let $T_\rho(\pi)=T(\fm)$, where $\fm\in\mul$ is such that $\pi_\rho\cong\Z(\fm_\rho)$.
We then set \[\alpha_+(\pi,\pi')\coloneq\sum_{\rho\in\cuss}\alpha_+(T_\rho(\pi),T_\rho(\pi')).\]
Note that \[\alpha_+(\pi,\pi')+\alpha_+(\pi',\pi)=(\pi,\pi').\]
More generally, if $\pi=\pi_1\times\ldots\times\pi_k,\,\pi_i=\Z(\fm_i),\fm_i\in\mul_\cus$, we let $\Tilde{\pi}=\Z(\fm_1+\ldots+\fm_k)$ and set hence for $\pi,\pi'$ induced from irreducible representations
\[\alpha_+(\pi,\pi')\coloneq \alpha_+(\Tilde{\pi},\Tilde{\pi'}).\]
Note that $\langle-,-\rangle$ can be extended similarly to representations induced from irreducible representations.
We will write for $\pi_1,\ldots,\pi_k\in\irr$ or induced from irreducible representations \[\alpha_+(\pi_1,\ldots,\pi_k)\coloneq\sum_{i<j}\alpha_+(\pi_i,\pi_j).\]
\section{Generalized Jantzen filtration}\label{S:Jantzen}
In \cite{Rogawski1985}, the author introduces a filtration on standard modules coming from the analytic behavior of the intertwining operators and conjectures it is governed by the stalks of perverse sheaves on $\ep$. This conjecture is the analogue of the Jantzen-filtration on Verma modules due to \cite{Jantzen1979}. 
In this section we generalize these constructions to arbitrary induced representations and prove some useful properties that will allow us to compute the associated graded representation in terms of the geometry of the quiver varieties in \Cref{S:JanGeo}.
\begin{lemma}\label{L:alpha}
    Let $\pi_1,\ldots,\pi_k\in\irr$ or induced from irreducible representations. Then
 \[\sum_{i< j}(\pi_i,\pi_j) =\alpha(\pi_1,\ldots,\pi_k).\]
 In particular, $\alpha$ only depends on the cuspidal support of the involved representations.
\end{lemma}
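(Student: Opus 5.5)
We reduce the statement to the case where all $\pi_i$ are characters of $\FF^\times$ twisted by cuspidals, i.e. to the case $k$ arbitrary and each $\pi_i$ cuspidal. There we compute $\alpha$ through the Plancherel measure.

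\textbf{Reduction to the cuspidal case.} By definition, $\alpha(\pi_1,\ldots,\pi_k)$ is the order of vanishing at $s=0$ of the scalar $J_{\pi_1\otimes\ldots\otimes\pi_k,\bP_\alpha}(s)\circ J_{\pi_1\otimes\ldots\otimes\pi_k,\overline{\bP_\alpha}}(s)$. This composite is, up to a unit in $\CC[[s]]$, the inverse of the Plancherel measure $\mu(s)$. Embed each $\pi_i$ into an induced representation $\rho_{i,1}\times\ldots\times\rho_{i,m_i}$ of cuspidals. By transitivity of induction and the factorization of intertwining operators along reduced decompositions of the longest element of the relative Weyl group, we use \cite[Proposition 7.8]{Dat05}, which is already invoked above for induced representations. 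This shows that $\alpha(\pi_1,\ldots,\pi_k)$ equals the sum of the contributions $\alpha(\rho_{i,a},\rho_{j,b})$ over pairs with $i<j$. Here we use that the internal factors, with both cuspidals in the same block $\pi_i$, cancel because they are not deformed by $s$: they carry the same exponent $\lambda_i$. The same decomposition works on the combinatorial side. The form $(-,-)$ is bi-additive in dimension vectors, hence $(\pi_i,\pi_j)=\sum_{a,b}(\rho_{i,a},\rho_{j,b})$ in the appropriate sense. Both sides are therefore additive, and it suffices to treat $k=2$ with $\pi_1=\rho\lvert-\rvert^{a}$ and $\pi_2=\rho'\lvert-\rvert^{b}$ cuspidal. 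In particular, the last sentence of the statement follows once the formula holds.

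\textbf{The rank-one computation.} For $\rho\not\sim\rho'$, both $(\pi_1,\pi_2)$, which lives on different $\rho$-lines, and $\alpha$ vanish. The vanishing of $\alpha$ holds because $\rho\lvert-\rvert^{s\lambda_1}\times\rho'\lvert-\rvert^{s\lambda_2}$ is irreducible near $s=0$, so the composite intertwiner is a holomorphic unit there. For $\rho'=\rho$, Shahidi's/Silberger's computation of the Plancherel measure gives
\[\mu(s)^{-1}\sim \frac{L(x,\rho\times\rho^\vee)L(-x,\rho\times\rho^\vee)}{L(1+x,\rho\times\rho^\vee)L(1-x,\rho\times\rho^\vee)},\quad x=b-a+s(\lambda_2-\lambda_1).\]
The $L$-function $L(z,\rho\times\rho^\vee)$ has a simple pole exactly at $z\in\frac{2\pi i}{f\log q}\ZZ$, where $f$ is the torsion number of $\rho$. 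Since $\rho$ is a fixed representative of its class, the relevant poles are at $z=0$. Writing $d=b-a$, the order of vanishing at $s=0$ is
\[-2\delta_{d,0}+\delta_{d,-1}+\delta_{d,1}.\]
Note that each term $\alpha$ enters with sign $s^{-\alpha}$.

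\textbf{Comparison with the form.} For single points $\db^1=e_a$ and $\db^2=e_b$, we have $\langle e_a,e_b\rangle=\delta_{a,b}-\delta_{b,a+1}$. Hence
\[(e_a,e_b)=2\delta_{a,b}-\delta_{b,a+1}-\delta_{a,b+1},\]
which matches $\alpha$ with the sign convention $s^{-\alpha}$. This requires checking that the convention in the definition makes the pole at $d=0$ count positively; it does, since $\mu$ has a double zero there.

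\textbf{Main obstacle.} The delicate step is the reduction. We need the order of the composite along the full longest-element factorization to be additive over the rank-one factors. This holds because each rank-one composite is a scalar. We must also check that the normalization by holomorphic families commutes with composition, which is exactly the content of \cite[Proposition 7.8]{Dat05} and \cite[§IV]{Waldspurger2003Plancherel}.
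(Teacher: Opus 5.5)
Your route is the paper's. You embed each $\pi_i$ into an induction of cuspidals and use the factorization of $J\circ\bJ$ (\cite[Proposition 7.8]{Dat05}) to get $\alpha(\pi_1,\ldots,\pi_k)=\sum_{i<j}\sum_{a,b}\alpha(\rho_{i,a},\rho_{j,b})$. You then compare with the bi-additive form $(-,-)$ via the rank-one values $\alpha(\rho,\rho')\in\{2,-1,0\}$. The one difference is that the paper just cites these rank-one values (\cite{Dat05}, \cite{FinisLapidMuller2012}), whereas you derive them from Shahidi's formula for the Plancherel measure. Your computation for $\rho'=\rho$, giving $2\delta_{d,0}-\delta_{d,1}-\delta_{d,-1}$ and matching $(e_a,e_b)$, is correct.

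One step fails as written. For $\rho\not\sim\rho'$ you deduce $\alpha=0$ from the irreducibility of $\rho\lvert-\rvert^{s\lambda_1}\times\rho'\lvert-\rvert^{s\lambda_2}$ near $s=0$. Irreducibility does not make $J\circ\bJ$ a holomorphic unit. For instance, $\rho\lvert-\rvert^{s\lambda_1}\times\rho\lvert-\rvert^{s\lambda_2}$ is irreducible for all small $s$, including $s=0$, yet $J\circ\bJ$ has a double pole there and $\alpha(\rho,\rho)=2$. What irreducibility actually gives is that the leading terms are isomorphisms, i.e.\ $\pard=0$, equivalently $\Lambda+\bLambda=\alpha$ (\Cref{L:intiso}); it does not give $\alpha=0$.

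The repair is to run your own Plancherel computation with $\rho'^\vee$ in place of $\rho^\vee$. Set $\sigma_1=\rho\lvert-\rvert^{a+s\lambda_1}$ and $\sigma_2=\rho'\lvert-\rvert^{b+s\lambda_2}$. The factors $L(0,\sigma_1\times\sigma_2^\vee)$, $L(0,\sigma_1^\vee\times\sigma_2)$, $L(1,\sigma_1\times\sigma_2^\vee)$, $L(1,\sigma_1^\vee\times\sigma_2)$ can have a pole at $s=0$ only if $\rho'\lvert-\rvert^{b}\cong\rho\lvert-\rvert^{a+c}$ for some $c\in\{0,\pm1\}$. Here you use that the unitary twists $\lvert-\rvert^{2\pi i k/(f\log q)}$ fix $\rho$. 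Such an isomorphism is excluded by $\rho\not\sim\rho'$, so $\alpha=0$. Alternatively, cite the rank-one values as the paper does. With this fix your argument is complete.
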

\begin{proof}
    Embed all $\pi_i\hra \rho_{i,1}\times\ldots\times \rho_{i,a_i}$ where all $\rho_{i,j}$ are cuspidal. By standard properties of intertwining operators, see for example \cite[§4]{Dro25}, we have that
    \[\alpha(\pi_1,\ldots,\pi_k)=\alpha(\rho_{1,1}\times\ldots\times \rho_{1,a_1},\ldots,\rho_{k,1}\times\ldots\times \rho_{k,a_k}),\]
    which is the same as
    \[\sum_{1\le i<j\le k}\sum_{a=1}^{a_i}\sum_{b=1}^{a_j}\alpha(\rho_{i,a},\rho_{j,b}),\]
    \emph{cf.} \cite[§4]{Dro25}.
    It is well known, see for example \cite[7.5, 8.4]{Dat05} or \cite{FinisLapidMuller2012}, that for any two cuspidal representations $\rho,\rho'$ we have
    \[\alpha(\rho,\rho')=\begin{cases}
        2&\text{if }\rho\cong\rho',\\
        -1&\text{if }\rho\cong\rho'\lvert-\rvert^{\pm1},\\
        0&\text{otherwise}.
    \end{cases}\]
    The claim then follows from the definitions of \Cref{S:class}.
\end{proof}
Let $\pi_1,\ldots,\pi_k\in\irr$ and fix a generic element $\lambda\in\ZZ^k$ and recall the intertwining operator
\[J_{\pi_1,\ldots,\pi_k}(s)\colon \pi_1\lvert-\rvert^{\lambda_1s}\btimes\ldots\btimes\pi_k\lvert-\rvert^{\lambda_ks}\ra \pi_1\lvert-\rvert^{\lambda_1s}\times\ldots\times\pi_k\lvert-\rvert^{\lambda_ks}\] of \Cref{S:int}.
If $f\in\pi_1\btimes\ldots\btimes\pi_k$ or $\pi_1\times\ldots\times\pi_k$, we denote by $f_s$ the holomorphic family of $f$ with $f_0=f$ of \Cref{S:int}.

We will now introduce an increasing filtration $\{\bV_i(\pi_1,\ldots,\pi_k)\}_{i\in\ZZ}$ of $\pi_1\btimes\ldots\btimes\pi_k$
\[\bV_i(\pi_1,\ldots,\pi_k)\coloneq \{f\in \pi_1\btimes\ldots\btimes\pi_k: \lim_{s\ra 0}s^{i+\alpha_+(\pi_k,\ldots,\pi_1)}J_{\pi_1,\ldots,\pi_k}(s)(f_s)\text{ exists}\}.\]
Similarly, we define a filtration  $\{V_i(\pi_1,\ldots,\pi_k)\}_{i\in\ZZ}$ of $\pi_1\times\ldots\times\pi_k$
\[V_i(\pi_1,\ldots,\pi_k)\coloneq \{f\in \pi_1\times\ldots\times\pi_k: \lim_{s\ra 0}s^{i+\alpha_+(\pi_1,\ldots,\pi_k)}\bJ_{\pi_1,\ldots,\pi_k}(s)(f_s)\text{ exists}\}.\]
\begin{lemma}\label{L:filtpol}
    For $i\ll 0$, $V_i(\pi_1,\ldots,\pi_k)=\bV_i(\pi_1,\ldots,\pi_k)=0$. Moreover, $\Lambda(\pi_1,\ldots,\pi_k)$ is the minimal integer $i$ such that $\bV_{i-\alpha_+(\pi_k,\ldots,\pi_1)}=\pi_1\btimes\ldots\btimes\pi_k$ and similarly for $\bLambda(\pi_1,\ldots,\pi_k)$.
\end{lemma}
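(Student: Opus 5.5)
The argument only uses the meromorphic continuation theorem of \cite[§ IV]{Waldspurger2003Plancherel}, \cite[§ 7]{Dat05}, together with finite generation. I treat $\bV_i$; the statement for $V_i$ and $\bLambda$ follows by exchanging $\bP_\alpha$ and $\overline{\bP_\alpha}$.

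\emph{Step 1 (structure of the filtration).} Since $s^{i+1}=s\cdot s^{i}$, if $\lim_{s\to0}s^{i+c}J(s)(f_s)$ exists then so does the limit with exponent $i+1+c$. Hence the $\bV_i$ are increasing. Each $\bV_i$ is a subspace because holomorphic families add: $(f+g)_s=f_s+g_s$, since both sides restrict to $f+g$ on $K$.

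\emph{Step 2 (exhaustion and the value of $\Lambda$).} By the meromorphic continuation theorem, with $\Lambda=\Lambda(\pi_1,\ldots,\pi_k)$, the limit $\lim_{s\to 0}s^{\Lambda}J_{\pi_1,\ldots,\pi_k}(s)(f_s)$ exists for every $f$, and the resulting map is nonzero. Setting $i=\Lambda-\alpha_+(\pi_k,\ldots,\pi_1)$, this says $\bV_{i}$ is the whole space.

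I then check minimality. Suppose $\bV_{i-1}$ were the whole space. Then $s^{\Lambda-1}J(s)(f_s)$ would have a limit for every $f$. Multiplying by $s$ shows $\lim_{s\to0} s^{\Lambda}J(s)(f_s)=0$ for every $f$, so $J_{\pi_1,\ldots,\pi_k}=0$. This contradicts the non-vanishing in the theorem. Hence the minimal index $i$ with $\bV_i$ equal to the whole space is exactly $\Lambda-\alpha_+(\pi_k,\ldots,\pi_1)$, as claimed.

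\emph{Step 3 ($\bV_i=0$ for $i\ll0$).} This is the main point, and I would do it in two parts.

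First, I bound the order of vanishing of $J(s)(f_s)$ uniformly in $f$. I would compose with the opposite intertwining operator and use
\[J_{\pi_1\otimes\ldots\otimes\pi_k,\bP_\alpha}(s)\circ J_{\pi_1\otimes\ldots\otimes\pi_k,\overline{\bP_\alpha}}(s)=\xi(s)s^{-\alpha},\qquad \xi(s)\in\CC[[s]]^\times,\]
with the two factors taken in the order that yields an endomorphism of the source $\pi_1\btimes\ldots\btimes\pi_k$. The other operator has a pole of order at most $\bLambda$, by the continuation theorem applied to it. Moreover $J(s)(f_s)$ has a Laurent expansion whose coefficients are again holomorphic families; this holds because everything is determined by restriction to $K$, a fixed finite-dimensional space once we pass to a compact open fixed vector subspace. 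Applying the opposite operator to this expansion and comparing with $\xi(s)s^{-\alpha}f_s$ shows: for $f\neq0$, $J(s)(f_s)$ cannot vanish to order greater than $\bLambda+\alpha$.

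Second, I convert this into the vanishing of $\bV_i$. The composite identity $\xi(s)s^{-\alpha}f_s$ is nonzero at leading order. So if $s^{i+c}J(s)(f_s)$ had a limit with $i+c<-\bLambda-\alpha$, the composite would vanish to order greater than $-\alpha$, forcing $f=0$. Hence $\bV_i=0$ for $i<-\bLambda-\alpha-\alpha_+(\pi_k,\ldots,\pi_1)$.

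The main obstacle is making the Laurent-expansion bookkeeping rigorous. For this I would fix an open compact subgroup $K'$ such that $f$ is $K'$-fixed, and work in the finite-dimensional space of $K'$-invariants of $\id_{\bP(\FF)\cap K}^K(\pi)$, on which $J(s)$ is a meromorphic matrix-valued function.
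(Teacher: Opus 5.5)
Your Steps 1 and 2 match the paper, which says the characterization of $\Lambda$ ``follows immediately from the definition''. Your Step 3 takes a genuinely different route. It works, up to a sign slip. Suppose $J(s)(f_s)=O(s^m)$ in the finite-dimensional compact picture. Applying the opposite operator, whose pole order is at most $\bLambda$, gives $\xi(s)s^{-\alpha}f=O(s^{m-\bLambda})$. For $f\neq0$ the left side has exact order $-\alpha$, so $m\le\bLambda-\alpha$, not $m\le\bLambda+\alpha$. Your version follows from the correct one only when $\alpha\ge0$. But $\alpha=\sum_{i<j}(\pi_i,\pi_j)$ by \Cref{L:alpha}, and this can be negative. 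The correct conclusion is therefore $\bV_i=0$ for $i<\alpha-\bLambda-\alpha_+(\pi_k,\ldots,\pi_1)$, not for $i<-\bLambda-\alpha-\alpha_+(\pi_k,\ldots,\pi_1)$. The lemma only asserts vanishing for $i\ll0$, so nothing else in your argument changes.

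The paper argues differently for the vanishing. By Waldspurger, $J(s)(f_s)(g)$ is rational in $q^{s}$, so each $f$ has its own minimal index. Finite length of $\pi_1\btimes\ldots\btimes\pi_k$ then forces the descending chain $\{\bV_i\}$ to stabilize, necessarily at $0$. That is shorter but purely qualitative. It also tacitly needs two facts: that the $\bV_i$ are subrepresentations, and that $J(s)(f_s)\not\equiv0$ for $f\neq0$.

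Your argument needs neither finite length nor $G$-stability of the $\bV_i$. The composite identity also supplies generic injectivity of $J(s)$. In addition, it gives a uniform explicit bound. Since $\alpha=\alpha_+(\pi_1,\ldots,\pi_k)+\alpha_+(\pi_k,\ldots,\pi_1)$, the corrected threshold equals $\alpha_+(\pi_1,\ldots,\pi_k)-\bLambda(\pi_1,\ldots,\pi_k)$. This is sharp, by \Cref{L:sym} combined with the second part of the lemma applied to the $V_i$. In fact it is the same composite-operator computation the paper later uses to prove \Cref{L:sym}.
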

\begin{proof}
    The second claim follows immediately from the definition. For the first claim note first that in \cite[§IV]{Waldspurger2003Plancherel} it is shown that for each $f\in \pi_1\btimes\ldots\btimes\pi_k$, $J_{\pi_1,\ldots,\pi_k}(s)(f_s)(g)$ is a rational function in $q^s$ and hence a Laurent-series in $s$. Therefore, there exists a minimal $i$ such that
    $f\in \bV_i(\pi_1,\ldots,\pi_k)$. Since $\pi_1\btimes\ldots\btimes\pi_k$ is of finite length, the claim follows.
\end{proof}
We define \[\pi_1\btimes\ldots\btimes\pi_k[i]\coloneq \bV_{i-1}(\pi_1,\ldots,\pi_k)\backslash \bV_i(\pi_1,\ldots,\pi_k)\] and similarly $\pi_1\times\ldots\times\pi_k[i]$.
Note that due to the last comment of \Cref{S:int}, $\mathrm{Ad}(w_\alpha)\circ \bV_i(\pi_1,\ldots,\pi_k)=V_i(\pi_k,\ldots,\pi_1)$ for all $i$ and hence
\[\pi_1\btimes\ldots\btimes\pi_k[i]\cong \pi_k\times\ldots\times\pi_1[i].\]
We observe that if $\pi_i'$ is a subrepresentation or quotient of $\pi_i$, then $\bV_i(\pi_1',\ldots,\pi_k')$ is a subrepresentation or quotient of $\bV_i(\pi_1,\ldots,\pi_k)$ and similarly for $V_i(\pi_1,\ldots,\pi_k)$, \emph{cf.} \cite[Lemma 2.3]{lapid2018geometric}.
    
We continue with the following technical lemma.
\begin{lemma}\label{L:ind}
    Assume $k=2$, and let $\lambda=(\lambda_1,\lambda_2)$ with $\lambda_1\neq \lambda_2$. 
   Denote by
    \[J_{\pi_1,\pi_2,\lambda_1,\lambda_2}(s)\colon \pi_1\lvert-\rvert^{\lambda_1s}\times \pi_2\lvert-\rvert^{\lambda_2s}\ra \pi_2\lvert-\rvert^{\lambda_2s}\times \pi_1\lvert-\rvert^{\lambda_1s}\]the map $J_{\pi_2,\pi_1}(s)\circ \mathrm{Ad}(w_{\deg(\pi_1),\deg(\pi_2)})$.
    Then 
    \[V_{i,\lambda_1,\lambda_2}(\pi_1,\pi_2)\coloneq \{f\in\pi_1\times\pi_2:\lim_{s\ra 0}s^{i+\alpha_+(\pi_1,\pi_2)}J_{\pi_1,\pi_2,\lambda_1,\lambda_2}(s)(f_s)\text{ exists}\}\]
    is independent of $\lambda_1$ and $\lambda_2$.
\end{lemma}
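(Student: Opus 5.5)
The plan is to reduce an arbitrary pair $(\lambda_1,\lambda_2)$ to the normalized pair $(1,0)$ in two steps. The first step removes the common twist. The second rescales the complex parameter $s$. Throughout, the point is that the existence of the limit defining $V_{i,\lambda_1,\lambda_2}(\pi_1,\pi_2)$ is unaffected by multiplying by a holomorphic nowhere-vanishing function of $s$, and by reparametrizing $s\mapsto \mu s$ with $\mu\neq 0$.

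\emph{Step 1 (central twist).} Write $\chi_s\coloneq\lvert\det\rvert^{\lambda_2 s}$, a character of $\gl_n(\FF)$. Then
\[\pi_1\lvert-\rvert^{\lambda_1 s}\times\pi_2\lvert-\rvert^{\lambda_2 s}\cong\bigl(\pi_1\lvert-\rvert^{(\lambda_1-\lambda_2)s}\times\pi_2\bigr)\otimes\chi_s,\]
realized by $f\mapsto f\cdot\chi_s$, and similarly on the target side. Since $\chi_s$ is trivial on $K$, multiplication by $\chi_s$ sends holomorphic families to holomorphic families: it does not change $f_s\vert_K$. Since $\chi_s$ is trivial on the unipotent radical and on $w$, the defining integral $\int_{\bU(\FF)}f(u)\,\mathrm{d}u$ (after $\mathrm{Ad}(w)$) commutes with multiplication by $\chi_s$. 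By uniqueness of the meromorphic continuation, this gives
\[J_{\pi_1,\pi_2,\lambda_1,\lambda_2}(s)(f_s)=\chi_s\cdot J_{\pi_1,\pi_2,\lambda_1-\lambda_2,0}(s)(f_s).\]
Evaluating at any $g$, the factor $\chi_s(g)=q^{-\lambda_2 s\,v(\det g)}$ is an entire, nowhere-vanishing function of $s$ with value $1$ at $s=0$. Hence $s^{N}J_{\lambda_1,\lambda_2}(s)(f_s)$ has a limit at $s=0$ if and only if $s^{N}J_{\lambda_1-\lambda_2,0}(s)(f_s)$ does. One detail needs checking here: "the limit exists" should be read pointwise, or equivalently on $K$-restrictions in the finite-dimensional $K$-fixed space for a small open compact subgroup. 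Multiplication by $\chi_s$ is compatible with either reading, since the pointwise factor $\chi_s(g)$ is harmless.

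\emph{Step 2 (rescaling).} Set $\mu=\lambda_1-\lambda_2\neq0$. The family $J_{\mu,0}(s)$ is, by definition, $J_{1,0}(s')$ evaluated at $s'=\mu s$. The holomorphic family of $f$ with parameter $s$ for $(\mu,0)$ is $f_{\mu s}$ for $(1,0)$, because both are characterized by having restriction $f\vert_K$ to $K$. Therefore $J_{\mu,0}(s)(f_s)=J_{1,0}(\mu s)(f_{\mu s})$. By Lemma~\ref{L:filtpol} (its proof), each value is a Laurent series in $s$, so $J_{1,0}(s')(f_{s'})=\sum_k c_k s'^k$. Consequently $s^N J_{1,0}(\mu s)(f_{\mu s})=\mu^{-N}\sum_k c_k\mu^{k+N}s^{k+N}$. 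This has a limit at $s=0$ exactly when $c_k=0$ for all $k<-N$, a condition independent of $\mu$, including the case $\mu<0$.

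Combining the two steps shows $V_{i,\lambda_1,\lambda_2}(\pi_1,\pi_2)=V_{i,1,0}(\pi_1,\pi_2)$ for all $i$. The main obstacle is Step 1: one must verify carefully that the meromorphically continued operator really commutes with the twist by $\chi_s$, not merely the convergent integral. I would argue this by uniqueness of meromorphic continuation: the identity holds on the region of convergence, both sides are meromorphic in $s$, and the dense subspace of functions supported in the big cell determines the operator.
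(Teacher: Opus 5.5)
Your proposal is correct and follows the paper's proof. The paper likewise first establishes $J_{\pi_1,\pi_2,\lambda_1,\lambda_2}(s)=J_{\pi_1,\pi_2,\lambda_1-\lambda_2,0}(s)\otimes\lvert\det\rvert^{\lambda_2 s}$, checked for all but finitely many $s$ via the defining property of the intertwining operators, and then rescales $s\mapsto as$ to match $\lambda_1-\lambda_2$ with any other nonzero difference. Your normalization to $(1,0)$, your check that holomorphic families are preserved, and your Laurent-coefficient argument (including the case $\mu<0$) simply spell out details the paper leaves implicit.
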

\begin{proof}
        We first note that
        \[J_{\pi_1,\pi_2,\lambda_1,\lambda_2}(s)=J_{\pi_1,\pi_2,\lambda_1-\lambda_2,0}(s)\otimes \lvert\det\lvert^{\lambda_2s}.\] Indeed, it suffices to check this for all but finitely many $s$, and for those $s$ one can use the unique property defining the intertwining operators of \Cref{S:int}, see also \cite[Lemma 7.12]{Dat05}.
        Hence one has
        \[V_{i,\lambda_1,\lambda_2}(\pi_1,\pi_2)=V_{i,\lambda_1-\lambda_2,0}(\pi_1,\pi_2).\]
        Next, we can replace $s$ by $as$ for some $a\in\CC^\times$, and this will not affect $V_{i,\lambda_1,\lambda_2}(\pi_1,\pi_2)$. But one has that
        \[J_{\pi_1,\pi_2,\lambda_1-\lambda_2,0}(as)=J_{\pi_1,\pi_2,a(\lambda_1-\lambda_2),0}(s).\]
        Since for any other generic $\lambda'=(\lambda_1',\lambda_2')$ one can find $a\in\CC^\times $with $a(\lambda_1-\lambda_2)=\lambda_1'-\lambda_2'$, the claim follows.
\end{proof}
\begin{lemma}\label{L:sym}
    Assume all $\pi_i$ to be irreducible. Then the map
    \[T_i(\pi_1,\ldots,\pi_k)\colon\pi_1\btimes\ldots\btimes\pi_k[i]\ra \pi_1\times\ldots\times\pi_k[-i],\, [f]\mapsto [\lim_{s\ra 0}s^{i+\alpha_+(\pi_k,\ldots,\pi_1)}J_{\pi_1,\ldots,\pi_k}(s)(f_s)]\]
    is an isomorphism.
\end{lemma}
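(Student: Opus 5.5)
The plan is the classical Jantzen symmetry argument. It rests on the identity $\bJ_{\pi_1,\ldots,\pi_k}(s)\circ J_{\pi_1,\ldots,\pi_k}(s)=\xi(s)s^{-\alpha(\pi_1,\ldots,\pi_k)}$ with $\xi\in\CC[[s]]^\times$, which holds because all $\pi_i$ are irreducible (\Cref{S:int}). I take this to mean the composite $\pi_1\btimes\ldots\btimes\pi_k\ra\pi_1\times\ldots\times\pi_k\ra\pi_1\btimes\ldots\btimes\pi_k$ (in the $s$-twisted versions), which is the order needed below.

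\textbf{Step 1 (normalisation).} First I record the numerical identity
\[\alpha_+(\pi_k,\ldots,\pi_1)+\alpha_+(\pi_1,\ldots,\pi_k)=\sum_{i<j}\bigl(\alpha_+(\pi_j,\pi_i)+\alpha_+(\pi_i,\pi_j)\bigr)=\sum_{i<j}(\pi_i,\pi_j)=\alpha(\pi_1,\ldots,\pi_k).\]
The last equality is \Cref{L:alpha}. Set $a'=\alpha_+(\pi_k,\ldots,\pi_1)$ and $a=\alpha_+(\pi_1,\ldots,\pi_k)$, and put $A(s)=s^{a'}J_{\pi_1,\ldots,\pi_k}(s)$ and $B(s)=s^{a}\bJ_{\pi_1,\ldots,\pi_k}(s)$. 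Then $B(s)A(s)=\xi(s)\,\mathrm{id}$, and symmetrically $A(s)B(s)=\xi(s)\,\mathrm{id}$ for all but finitely many $s$. In this normalisation $f\in\bV_i$ exactly when $s^iA(s)f_s$ has a limit at $s=0$, and likewise for $V_i$ with $B$.

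\textbf{Step 2 (reduction to lattices over a DVR).} Using the compact pictures $\id_{\bP(\FF)\cap K}^K$ of \Cref{S:int}, the holomorphic families identify all $\pi_1\lvert-\rvert^{\lambda_1s}\btimes\ldots$ with a fixed space $W$, and likewise all $\pi_1\lvert-\rvert^{\lambda_1 s}\times\ldots$ with a fixed $W'$. I then fix a small compact open subgroup $K'$ and pass to $K'$-invariants, which gives finite-dimensional spaces $W^{K'}$ and $W'^{K'}$. On these, $A(s)$ and $B(s)$ are matrices with entries in $\CC((s))$, rational in $q^{s}$ by \cite[§IV]{Waldspurger2003Plancherel}. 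Let $L=W^{K'}\otimes\CC[[s]]$ and $L'=W'^{K'}\otimes\CC[[s]]$.

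The key compatibility to check is that $\bV_i$ can be computed with any holomorphic lift, i.e. $\bV_i=\{x\in L: s^iAx\in L'\}$ reduced mod $s$. This is the standard Jantzen formalism: the set of $x\in L$ with $s^iAx\in L'$ is a $\CC[[s]]$-submodule of $L$, so the only thing to verify is that the defining condition on $f$ does not depend on which element of $L$ lifting $f$ is used. By the Smith normal form this follows once one shows that the image mod $s$ of that submodule is the same as the set of $f$ whose constant lift lies in it. I expect this to be the main technical obstacle, together with ensuring that everything is compatible as $K'$ shrinks, so that the $K'$-invariants of $\bV_i$ are exactly the filtration computed in $W^{K'}$.

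\textbf{Step 3 (Smith normal form).} Over the DVR $\CC[[s]]$, choose bases of $L$ and $L'$ in which $A=\mathrm{diag}(s^{n_1},\ldots,s^{n_r})$. Then $B=\xi A^{-1}=\xi\,\mathrm{diag}(s^{-n_j})$. It follows that $\pi_1\btimes\ldots\btimes\pi_k[i]^{K'}$ is spanned by the reductions of the basis vectors $e_j$ with $n_j=-i$. Likewise $\pi_1\times\ldots\times\pi_k[-i]^{K'}$ is spanned by the reductions of the $e'_j$ with $-n_j=-i$, i.e. the same index set. On these vectors $s^iA(s)e_j=e'_j$, so $T_i$ sends the one basis to the other and is an isomorphism on $K'$-invariants.

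\textbf{Step 4 (globalisation).} Well-definedness of $T_i$ on the quotient and its $\gl_n(\FF)$-equivariance both come from the definitions: the map $[f]\mapsto[\lim_{s\ra 0}s^iA(s)f_s]$ kills $\bV_{i-1}$ and commutes with the group action, since $J_{\pi_1,\ldots,\pi_k}(s)$ is an intertwiner for each $s$. Compatibility of the $K'$-invariant pictures as $K'$ shrinks then shows that $T_i$ is bijective on all vectors.
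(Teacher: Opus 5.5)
\textbf{Gap in Step 2.} The compatibility you flag in Step 2 is a genuine gap, not a routine check. Your sentence that it ``follows by the Smith normal form once one shows \ldots'' restates the claim rather than proving it. With the definition in \Cref{S:int}, $f_s$ is the unique family with $f_s|_K=f|_K$, so $\bV_i^{K'}=\{w\in W^{K'}: s^iA(s)w\in L'\}$ with $w$ viewed as a \emph{constant} element of $L$. Your Step 3 instead computes the reduction mod $s$ of the lattice $M_i=\{x\in L: s^iAx\in L'\}$.

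For a general matrix over $\CC((s))$ these two sets differ as soon as elementary divisors differ by at least $2$. Take $A=\begin{pmatrix}1&0\\ s^{-1}&s^{-2}\end{pmatrix}$ on $L=L'=\CC[[s]]^2$ and $i=0$. The vector $x=(1,-s)$ has $Ax=(1,0)$, so $(1,0)$ lies in $M_0 \bmod s$. Its constant lift, however, gives $A(1,0)=(1,s^{-1})$. So nothing in Steps 1 and 3 can supply the identification; it would need input specific to intertwining operators, or a change of definition.

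The same problem affects Step 4. The group $\gl_n(\FF)$ preserves the lattice $L$ of holomorphic families but not the constant sections. Under the constant-section reading, therefore, neither the $\gl_n(\FF)$-stability of $\bV_i$ nor the equivariance of $T_i$ ``comes from the definitions''.

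\textbf{How to close it, and comparison with the paper.} Read $\bV_i$ and $V_i$ as the usual Jantzen filtration: $f\in\bV_i$ if and only if \emph{some} holomorphic family $x(s)$ with $x(0)=f$ makes $s^{i+\alpha_+(\pi_k,\ldots,\pi_1)}J_{\pi_1,\ldots,\pi_k}(s)x(s)$ holomorphic. Under this reading the $\bV_i$ are visibly subrepresentations. It is also what the paper's own proof uses: its first sentence treats $s^{i+\alpha_+(\pi_k,\ldots,\pi_1)}J_{\pi_1,\ldots,\pi_k}(s)(f_s)$, which is not constant on $K$, as a holomorphic family of its limit, and tests membership in $V_{-i}$ with it. With this reading Step 2 becomes vacuous, since reduction mod $s$ commutes with taking $K'$-invariants by averaging over $K'$. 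Your Step 3 is then a complete proof.

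This is a different route from the paper's. The paper uses the same identity $\bJ(s)J(s)=\xi(s)s^{-\alpha}$ and \Cref{L:alpha} to get well-definedness and injectivity of $T_i$. It then obtains an analogous injection in the opposite direction and concludes by finite length. Your Smith normal form instead matches bases on each finite-dimensional $K'$-invariant space. This gives bijectivity, and the equality of dimensions of the $[i]$ and $[-i]$ pieces, in one stroke. The price is the reduction to matrices via admissibility, whereas the paper's argument is basis-free.
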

\begin{proof}
    Firstly, we note that up to an element in $V_{-i-1}(\pi_1,\ldots,\pi_k)$, 
    \[s^{i+\alpha_+(\pi_k,\ldots,\pi_1)}J_{\pi_1,\ldots,\pi_k}(s)(f_s)\] is a holomorphic family of $\lim_{s\ra 0}s^{i+\alpha_+(\pi_k,\ldots,\pi_1)}J_{\pi_1,\ldots,\pi_k}(s)(f_s)$.
   In order to show that the above map is well defined, it thus suffices to show that \[s^{i+\alpha_+(\pi_k,\ldots,\pi_1)}J_{\pi_1,\ldots,\pi_k}(s)(f_s)\in V_{-i}(\pi_1,\ldots,\pi_k)\] and that $T_i(\pi_1,\ldots,\pi_k)$ vanishes on $\bV_{i-1}(\pi_1,\ldots,\pi_k)$. The second part is clear. For the first part,
   we recall from \Cref{S:int} and \Cref{L:alpha} that
   \[s^{-i+\alpha_+(\pi_1,\ldots,\pi_k)}\bJ_{\pi_1,\ldots,\pi_k}(s)s^{i+\alpha_+(\pi_k,\ldots,\pi_1)}J_{\pi_1,\ldots,\pi_k}(s)(f_s)=\xi(s) f_s,\,\xi\in\CC[[s]]^\times\]
   which proves the claim. Moreover, we also see that way that $T_i(\pi_1,\ldots,\pi_k)$ is injective. By an analogous argument we also obtain an injection of $\pi_1\times\ldots\times\pi_k[-i]$ into $\pi_1\btimes\ldots\btimes\pi_k[i]$ and hence $T_i(\pi_1,\ldots,\pi_k)$ has to be an isomorphism.
\end{proof}
The proof of the following theorem is postponed to \Cref{S:JanGeo}.
\begin{theorem}\label{T:assoc2}
    Let $\pi_1,\pi_2,\pi_3\in \irr$ and $\lambda\in\ZZ^3$ suitably generic depending on $\pi_1,\pi_2,\pi_3$.
    Then \[\bigoplus_{j+i=k} \pi_1\times(\pi_2\times\pi_3[j])[i] \cong\bigoplus_{j+i=k} (\pi_1\times\pi_2)[j]\times\pi_3[i].\]
\end{theorem}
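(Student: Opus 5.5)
The plan is to deduce the statement from the geometric description of the graded pieces obtained in \Cref{S:JanGeo}: the commutative square relating $\hyp^+$ and $\grtimt^*$, together with the transitivity of hyperbolic restriction.

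\textbf{Reduction to one cuspidal line.} First I reduce to the case where all $\pi_i$ lie in a single $\irr_{\db^i,\rho}$. By \Cref{T:zel}(2), representations with supports on inequivalent cuspidal lines commute with irreducible induction. The corresponding intertwining operators are then holomorphic isomorphisms, so their contribution to the filtration is a pure shift by $\alpha_+$. Note that $\alpha_+$ is additive over $\rho$ and depends only on the involved lines, so this shift is harmless.

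\textbf{Geometric setup.} On a single line, put $\db=\db^1+\db^2+\db^3$ and choose a rank-two torus $\gm^2\to\gdb$, built from $\lambda\in\ZZ^3$. Its fixed locus is $\mathbf{E}_{\db^1}\times\mathbf{E}_{\db^2}\times\mathbf{E}_{\db^3}$. Braden's hyperbolic localization is transitive for a suitably generic cocharacter chamber: restricting first along a cocharacter whose fixed locus is $\mathbf{E}_{\db^1}\times\mathbf{E}_{\db^2+\db^3}$ and then along one for $\mathbf{E}_{\db^2}\times\mathbf{E}_{\db^3}$ gives the same functor as restricting first to $\mathbf{E}_{\db^1+\db^2}\times\mathbf{E}_{\db^3}$ and then splitting the first factor. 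Both composites agree with $\hyp^+$ for a generic cocharacter in the common chamber. This is exactly where the hypothesis "$\lambda$ suitably generic" is used: it ensures the attracting sets of the composite and of the single cocharacter coincide.

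\textbf{Main step: the two-factor compatibility.} For two factors I establish, independently of associativity, that
\[\ct_{\db^1,\db^2}\circ\hyp^+=\grtimt^*\circ\ct_\db\]
as graded multiplicities of $[\pi_1\times\pi_2[i]]_{ss}$. This step is the key input and the main obstacle. My approach is to compare with the Rogawski conjecture (\Cref{T:RoCon}) applied to standard modules. One embeds $\pi_i$ into standard modules or their duals and uses that the filtration $V_i$ is compatible with subquotients. \Cref{L:ind} gives independence of $\lambda$, and \Cref{L:sym} gives the symmetry $i\leftrightarrow -i$, matching Verdier duality. The purity of $\hyp^+$ on pure complexes (Braden) then gives semisimplicity and rules out cancellation, so the graded decomposition is determined by characters.

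\textbf{Iterating.} Apply the two-factor statement to the inner product $\pi_2\times\pi_3[j]$, which is semisimple and so a sum of irreducibles, and then to the outer product with $\pi_1$. For the left-hand side we need that the filtration on $\pi_1\times(\pi_2\times\pi_3[j])$ is the direct sum of the filtrations of its summands. This holds by the subquotient compatibility of $V_i$ noted after \Cref{L:filtpol}. Consequently the left-hand side corresponds to the composite hyperbolic restriction evaluated on $\ic{\oo}$'s, and the right-hand side to the other composite.

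\textbf{Conclusion.} By transitivity the two composites agree in $\sem$, so the classes in $\ZZ[\irr][t,t^{-1}]$ agree. Both sides are semisimple, so equality of classes gives the isomorphism. The delicate point remaining is checking that the normalizing shifts $\alpha_+$ add up consistently. This follows from $\alpha_+(\pi_1,\pi_2\times\pi_3)+\alpha_+(\pi_2,\pi_3)=\alpha_+(\pi_1,\pi_2,\pi_3)$ together with the additivity of dimension in Braden's shift.
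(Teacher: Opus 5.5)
\textbf{There is a genuine gap in your main step.} Your top-level reduction is the one the paper uses. There, \Cref{T:assoc2} follows in one line from \Cref{T:genrog} (compatibility of $\hyp^+$ with $\grtimt^*$) and the transitivity \Cref{L:hypassos}, applied exactly as in your iteration step. But you do not invoke \Cref{T:genrog}; you set out to prove its two-factor case yourself, and the sketch you give for this main step does not go through.

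\Cref{T:RoCon} computes graded multiplicities only for standard modules whose segments are in arranged (or $(i)$-arranged) order, and only for \emph{suitably generic} $\lambda\in\ZZ^k$ with one weight per segment. Embedding $\pi_1,\pi_2$ into standard modules or their duals gives $\I(\pi_1)\times\I(\pi_2)$ with the two-block weight $(\lambda_1,\ldots,\lambda_1,\lambda_2,\ldots,\lambda_2)$. That weight is not generic, and the concatenation of arranged orders of the two multisegments is in general not arranged; for example $[1,1],[0,0]$ is not. \Cref{L:ind} only handles two blocks and cannot move you to a generic weight. The Remark following \Cref{T:assoc2} says precisely that the filtration depends on $\lambda$ beyond its chamber.

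Moreover, even for a fixed weight, the graded classes of $\I(\pi_1)\times\I(\pi_2)$ do not determine those of its quotient $\pi_1\times\pi_2$. That depends on how the kernel sits relative to the filtration, and compatibility with subquotients does not control this.

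Purity of $\hyp^+(\ic{\oo})$ is a statement about sheaves. It gives no semisimplicity of $\pi_2\times\pi_3[j]$ as a representation. Yet your iteration step (splitting the filtration of $\pi_1\times(\pi_2\times\pi_3[j])$ along summands) and your conclusion (equal classes imply isomorphism) both rely on it.

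Your claim that genericity of $\lambda$ is what makes the attracting sets match is also misplaced. Transitivity of hyperbolic restriction holds for every $\lambda_1>\lambda_2>\lambda_3$; genericity is needed on the representation side, cf.\ \Cref{L:lambdagen}.

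\textbf{What is missing is the mechanism behind \Cref{T:genrog}.} The paper argues by induction on $\lvert\db\rvert$, in these steps:
\begin{enumerate}
\item Using the Fourier--Sato transforms $\cF_i$, it passes to the reoriented quivers $Q^{(i)}$. There, putting $[i,i]$ in front of an $(i)$-arranged order of any multisegment of $\db^2$ is again $(i)$-arranged.
\item Hence \Cref{T:RoCon} and \Cref{L:arrangedorderhyp}, together with \Cref{L:hypassos} and the inductive hypothesis, give the compatibility for $\Z([i,i])\otimes\pi$.
\item It then inducts on segment length, using the formulas of \Cref{L:filtseg} for $\Z([a,b-1])\grtimt\Z([b,b])$ and $\Z([b,b])\grtimt\Z([a,b-1])$ and a lowest-degree comparison.
\item Finally it passes to the basis $\{\irt(\fm)\}$ (cf.\ \Cref{C:gradbas}).
\end{enumerate}
Semisimplicity enters through the semisimplicity part of \Cref{T:RoCon} for standard modules and is propagated in \Cref{C:semisimple}, not through purity.

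Either supply such an argument or cite \Cref{T:genrog} directly, in which case your proposal collapses to the paper's proof. As written, it reduces the statement to its essential content rather than proving it.
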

\begin{rem}
    In a previous version of the paper, it was wrongly claimed that it suffices to take $\lambda$ with pairwise different entries. This is false, since it implies that the Jantzen-filtration for Standard-modules is independent of such $\lambda$, \cite[Appendix A.2]{Ciubotaru2026Jantzen} and \cite[Section 8.3]{Williamson2016LocalHodge}.
\end{rem}
We now define a $\ZZ[t,t^{-1}]$-linear multiplication on $\ZZ[\irr][t,t^{-1}]$ via
\[\grtim\colon \ZZ[\irr][t,t^{-1}]\times \ZZ[\irr][t,t^{-1}]\ra \ZZ[\irr][t,t^{-1}],\, (\pi_1,\pi_2)\mapsto \sum_{i\in\ZZ}[\pi_1\times\pi_2[i]]_{ss}t^i,\]
where $[\pi_1\times\pi_2[i]]_{ss}$ denotes the semi-simplification of $\pi_1\times\pi_2[i]$.
Denote by \[\overline{(-)}\colon \ZZ[\irr][t,t^{-1}]\ra \ZZ[\irr][t,t^{-1}],\, t\mapsto t^{-1}.\]
\begin{theorem}\label{T:grprod}
    The map $\grtim$ is an associative product such that $\pi_1\grtim\pi_2=\overline{\pi_2\grtim\pi_1}$.
\end{theorem}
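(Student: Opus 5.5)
Associativity follows from \Cref{T:assoc2}, once it is checked that the semi-simplified graded pieces are the right coefficients, and the bar-symmetry follows from \Cref{L:sym} together with the $\mathrm{Ad}(w_\alpha)$-identification of the filtrations. Both sides are $\ZZ[t,t^{-1}]$-bilinear by definition, so it suffices to work with irreducible $\pi_1,\pi_2,\pi_3$.

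\emph{Symmetry.} By the remark after \Cref{L:filtpol}, $\pi_2\btimes\pi_1[i]\cong\pi_1\times\pi_2[i]$. By \Cref{L:sym} with $k=2$ applied to $(\pi_2,\pi_1)$, $\pi_2\btimes\pi_1[i]\cong\pi_2\times\pi_1[-i]$. Hence $[\pi_1\times\pi_2[i]]_{ss}=[\pi_2\times\pi_1[-i]]_{ss}$ for all $i$. Summing against $t^i$ gives $\pi_1\grtim\pi_2=\overline{\pi_2\grtim\pi_1}$. By \Cref{L:ind}, none of this depends on the choice of generic $\lambda\in\ZZ^2$, so $\grtim$ is well defined.

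\emph{Associativity.} The term $(\pi_1\grtim\pi_2)\grtim\pi_3$ is $\sum_{i,j}[\sigma\times\pi_3[i]]_{ss}t^{i+j}$, summed over the composition factors $\sigma$ of $[\pi_1\times\pi_2[j]]_{ss}$. To compare it with \Cref{T:assoc2}, where $(\pi_1\times\pi_2)[j]\times\pi_3[i]$ is formed from the non-semisimple module $(\pi_1\times\pi_2)[j]$, I need the following claim: for a finite length representation $\tau$ all of whose irreducible subquotients have the same cuspidal support (here a fixed graded piece), the class $\sum_i[\tau\times\pi_3[i]]_{ss}t^i$ is additive in short exact sequences of $\tau$. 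The remark after \Cref{L:filtpol} (via \cite[Lemma 2.3]{lapid2018geometric}) gives that a sub/quotient of $\tau$ induces a sub/quotient on the filtration. Additivity of the graded classes then requires the filtration to be strict, i.e.\ $V_i(\tau',\pi_3)=V_i(\tau,\pi_3)\cap(\tau'\times\pi_3)$ and $V_i(\tau'',\pi_3)$ equal to the image of $V_i(\tau,\pi_3)$. The first equality is immediate, because the limit condition is tested inside $\tau\times\pi_3$ and holomorphic families restrict. For the second, a holomorphic family of a lift can be chosen, and an element of the quotient filtration lifts modulo $\tau'\times\pi_3$. This uses that the normalization exponent $\alpha_+$ depends only on cuspidal supports (\Cref{L:alpha}), so it is the same for $\tau',\tau,\tau''$. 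The same argument applies in the first variable of the other bracketing.

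Granting additivity, both iterated products are computed by the graded pieces of \Cref{T:assoc2}, once with $\tau=\pi_1\times\pi_2[j]$ and once with $\tau=\pi_2\times\pi_3[j]$ in the second slot. Taking semi-simplifications of the isomorphism in \Cref{T:assoc2} and summing over $k$ against $t^k$ yields $(\pi_1\grtim\pi_2)\grtim\pi_3=\pi_1\grtim(\pi_2\grtim\pi_3)$. One subtlety remains: \Cref{T:assoc2} holds only for suitably generic $\lambda\in\ZZ^3$, while $\grtim$ is defined using two-factor filtrations. I would use \Cref{L:ind} to show that each two-factor piece occurring in \Cref{T:assoc2} coincides with the one defined by $\grtim$, since it only depends on the difference of the two relevant exponents.

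The main obstacle is the strictness/additivity step, i.e.\ showing that the Jantzen-type filtration on $\tau\times\pi_3$ is compatible with exact sequences in $\tau$ at the level of graded classes. If exactness on the quotient side fails, the fallback is to prove additivity only at the level of Grothendieck classes. For this I would compare both sides with the total class $[\tau\times\pi_3]$ and use the duality of \Cref{L:sym} to pin down the graded classes from sub-side information alone.
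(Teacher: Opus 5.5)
Your symmetry argument is correct and is exactly the paper's: \Cref{L:sym} plus the $\mathrm{Ad}(w_\alpha)$-identification. For associativity you also follow the paper, which simply invokes \Cref{T:assoc2}. You are right that this module-level statement, whose factor $(\pi_1\times\pi_2)[j]$ need not be irreducible, does not by itself compute the bilinearly extended $\grtim$. However, the bridge you propose fails at two points.

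\emph{The normalization.} \Cref{L:alpha} only controls $\alpha=\sum_{i<j}(\pi_i,\pi_j)$, i.e.\ the symmetric combination $\alpha_+(\pi,\pi')+\alpha_+(\pi',\pi)$. The exponent $\alpha_+(\sigma,\pi_3)=\dim\hom_Q(x_3,x_\sigma)-\dim\ext^1_Q(x_\sigma,x_3)$, where $x_\sigma,x_3$ are the attached quiver representations, depends on the orbit of $\sigma$. For instance $\alpha_+(\Z([0,1]),\Z([0]))=0$ but $\alpha_+(\Z([0]+[1]),\Z([0]))=1$, although both are constituents of $\Z([0])\times\Z([1])$. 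Since $\grtim$ is extended bilinearly, each constituent $\sigma$ of $\tau$ carries its own shift $\alpha_+(\sigma,\pi_3)$. So no single normalization of the filtration on $\tau\times\pi_3$ reproduces $\sum_\sigma\sigma\grtim\pi_3$.

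\emph{Strictness on the quotient side.} This is false for Jantzen-type filtrations. For a lift $\tilde f$ of $f''\in V_i(\tau'',\pi_3)$, the excess polar part of $\bar J_{\tau,\pi_3}(s)\tilde f_s$ lies in $\pi_3\times\tau'$. Nothing guarantees an element of $\tau'\times\pi_3$ producing the same polar part. Already for the $\CC[[s]]$-lattice map $\begin{pmatrix}s&1\\0&s\end{pmatrix}$, which preserves the first coordinate line, the graded class is $1+t^2$, while sub and quotient give $2t$.

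The first point cannot be patched. Treating all constituents of $\tau$ with a common shift is the normalization of $\grtimt$, which shifts by $\langle\pi_1,\pi_2\rangle$, a function of dimension vectors only. It is also the product that \Cref{T:genrog} identifies with $\hyp^+$, so that \Cref{L:hypassos} yields its associativity. But $\pi_1\grtimt\pi_2=t^{c(\pi_1,\pi_2)}\,\pi_1\grtim\pi_2$ with $c(\pi_1,\pi_2)=\dim\hom_Q(x_2,x_1)-\dim\hom_Q(x_1,x_2)$, and this twist is not a $2$-cocycle.

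Take $A=\Z([0])$, $B=\Z([1])$, $P=\Z([0,1])$, $S=\Z([0]+[1])$. Then $c$ vanishes on $(A,B)$, $(B,A)$, $(S,A)$, $(A,S)$, while $c(P,A)=-1$ and $c(A,P)=1$. Write $A\grtim B=t^aS+t^bP$, so $B\grtim A=t^{-a}S+t^{-b}P$ by the bar-symmetry, and write $P\grtim A=t^p\,\Z([0,1]+[0])$. Since $P\times A$ and $S\times A$ are irreducible and distinct, compare the coefficient of $\Z([0,1]+[0])$ in the two bracketings of $A\cdot B\cdot A$. Associativity of $\grtim$ forces $b+p=0$, while associativity of $\grtimt$ forces $b+p=1$. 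So the two products cannot both be associative.

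Any argument that handles all constituents of $\tau$ with a common shift — your additivity step, or the geometric route — can therefore at best give associativity of $\grtimt$, with bar-symmetry holding for both products. It cannot give associativity of $\grtim$ as stated. Your fallback through Grothendieck classes and \Cref{L:sym} does not address this normalization issue either.
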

\begin{proof}
    Associativity follows from \Cref{T:assoc2} and the commutativity relation follows from \Cref{L:sym}.
\end{proof}
We now compute this function explicitly in the easiest case, namely of two segments.
\begin{lemma}\label{L:filtseg}
    Let $\De_1$ and $\De_2$ be two segments. Then
    \[\LL(\De_1)\grtim\LL(\De_2)=\begin{cases}
        [\LL(\De_1+\De_2)]&\De_1 \text{ and } \De_2 \text{ are unlinked.}\\
        [\LL(\De_1+\De_2)]+[\LL(\De_1\cup\De_2+\De_1\cap\De_2)]t^{-1}&\De_1\prec \De_2,\\
         [\LL(\De_1+\De_2)]+[\LL(\De_1\cup\De_2+\De_1\cap\De_2)]t&\De_2\prec \De_1.
    \end{cases}\]
\end{lemma}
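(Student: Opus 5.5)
The plan is to reduce everything to two numbers: the orders of pole $\bLambda(\LL(\De_1),\LL(\De_2))$ and $\Lambda(\LL(\De_1),\LL(\De_2))$. I compute these explicitly and compare them with the combinatorial quantities $\alpha_+$. Write $\pi_i=\LL(\De_i)$.

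Since $\pi_1\times\pi_2[i]=V_{i-1}\backslash V_i$ with the normalisation $s^{i+\alpha_+(\pi_1,\pi_2)}$, \Cref{L:filtpol} shows that the top degree $i_{\max}$ of $\pi_1\times\pi_2$ satisfies
\[\bLambda(\pi_1,\pi_2)=i_{\max}+\alpha_+(\pi_1,\pi_2).\]
By \Cref{L:sym} together with $\pi_1\btimes\pi_2[i]\cong\pi_2\times\pi_1[i]$, the graded pieces of $\pi_2\times\pi_1$ are those of $\pi_1\times\pi_2$ with the degree negated. Hence, if $i_{\min}$ is the lowest nonzero degree of $\pi_1\times\pi_2$, then
\[\Lambda(\pi_1,\pi_2)=-i_{\min}+\alpha_+(\pi_2,\pi_1).\]
Adding these and using \Cref{L:alpha} (so $\alpha=\alpha_+(\pi_1,\pi_2)+\alpha_+(\pi_2,\pi_1)$) gives
\[\pard(\pi_1,\pi_2)=i_{\max}-i_{\min}.\]
The third case of the statement follows from the second by the relation $\pi_1\grtim\pi_2=\overline{\pi_2\grtim\pi_1}$ of \Cref{T:grprod}, which in fact only uses \Cref{L:sym}. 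So I only treat the unlinked case and the case $\De_1\prec\De_2$.

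\emph{Unlinked case.} By \Cref{T:zel}(3), $\pi_1\times\pi_2$ is irreducible, so it lives in a single degree $i_{\max}=i_{\min}$. It remains to show this degree is $0$, i.e.\ $\bLambda(\pi_1,\pi_2)=\alpha_+(\pi_1,\pi_2)$.

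\emph{Case $\De_1\prec\De_2$.} The ordering $(\De_1,\De_2)$ is arranged, so by \Cref{T:zel}(4) $\pi_1\times\pi_2=\I(\De_1+\De_2)$. It has length two, with unique quotient $\LL(\De_1+\De_2)$ and subrepresentation $\LL(\De_1\cup\De_2+\De_1\cap\De_2)$. The filtration $V_\bullet$ is by subrepresentations, so the quotient sits in the top degree $i_{\max}$ and the sub in $i_{\min}$.

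The representation is not semisimple, so it is not isomorphic to $\pi_2\times\pi_1$ (whose unique quotient is the other constituent). Hence \Cref{L:intiso} gives $\pard>0$, i.e.\ $i_{\max}>i_{\min}$. It remains to show $i_{\max}=0$ and $\pard(\pi_1,\pi_2)=1$, equivalently $\bLambda(\pi_1,\pi_2)=\alpha_+(\pi_1,\pi_2)$ and $\Lambda(\pi_1,\pi_2)=\alpha_+(\pi_2,\pi_1)+1$.

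\emph{Computing the poles.} In both cases the needed input is an explicit computation of $\Lambda$ and $\bLambda$ for a pair of segment representations. I would take this from the Mœglin--Waldspurger normalisation: the intertwining operator between $\LL(\De_1)$ and $\LL(\De_2)$, divided by the ratio of Rankin--Selberg factors $L(s,\De_1\times\De_2^\vee)/L(1+s,\De_1\times\De_2^\vee)$ (up to $\epsilon$-factors), is holomorphic and nonvanishing at $s=0$. Thus each pole order equals the order at $s=0$ of an explicit product of factors of the form $(1-q^{-s-c})^{-1}$. Since the product formula in \Cref{L:alpha} factors through cuspidal supports, I may assume both segments are attached to the same $\rho$ and work with ordinary segments $[a,b]$ and $[c,d]$.

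On the combinatorial side, $\alpha_+(\pi_1,\pi_2)=\dim\hom_Q(T(\De_2),T(\De_1))-\dim\ext^1_Q(T(\De_1),T(\De_2))$, which is computed from the recalled formulas for $\hom_Q$ and $\ext^1_Q$ between segment representations. I then match the two sides case by case according to the relative position of the segments: disjoint and non-juxtaposed, juxtaposed, overlapping-linked, and nested.

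This case-by-case matching of the explicit pole orders (a count of $L$-factor poles at $s=0$) with the $\hom$/$\ext^1$ combinatorics, including the exact sign conventions for $\lambda$ in $\bJ$ versus $J$, is where I expect the main obstacle. If the Mœglin--Waldspurger input is unavailable in the needed form, a fallback is to embed $\pi_i$ into a product of cuspidals and compute pole orders of rank-one operators via \Cref{L:alpha}. That, however, only controls $\alpha$, not $\Lambda$ and $\bLambda$ individually, so the normalisation-factor route is my first choice.
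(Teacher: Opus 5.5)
Your bookkeeping is correct, and it follows the same outline as the paper's proof. The paper also uses \Cref{L:filtpol} and \Cref{L:sym} to turn the statement into knowledge of pole orders for a pair of segment representations. It places the two constituents in the linked case via Zelevinsky's description of the unique irreducible subrepresentation, and it gets the remaining degrees from the symmetry in \Cref{T:grprod}. Your detour through $\pard=i_{\max}-i_{\min}$ and \Cref{L:intiso} is a harmless variation.

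The gap is that you never supply the one non-formal input, the pole orders themselves. The paper imports them from \cite[Lemma 3.8, Corollary 4.3.4]{Dro25}: the minimal $i$ with $\bV_i(\Z(\De_1),\Z(\De_2))$ equal to everything is $1$ if $\De_1\prec\De_2$ and $0$ otherwise. The substitute you propose fails exactly where it is needed. Take $\rho$ trivial on $\gl_1(\FF)$ and $\De_1=[0,0]\prec\De_2=[1,1]$. Then $\bJ_{\pi_1,\pi_2}(s)$ is the rank-one $\gl_2$ operator attached to $\chi_1\chi_2^{-1}=\lvert-\rvert^{-1+(\lambda_1-\lambda_2)s}$. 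It is holomorphic and non-zero at $s=0$, since its only possible pole is at $\chi_1=\chi_2$. On the other hand, $L(s,\lvert-\rvert^{-1})/L(1+s,\lvert-\rvert^{-1})=(1-q^{-s})/(1-q^{1-s})$ has a simple zero there. So the normalized operator has a pole at $s=0$, and pole orders are not simply read off from $L$-factors in the linked case.

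There is a second problem: with the shifts exactly as written in the definition of $V_i$, the identities you reduce to are false, so no pole computation could close the argument as formulated.
\begin{itemize}
\item In the example above, $\alpha_+(\pi_1,\pi_2)=\dim_\CC\hom_Q(T(\De_2),T(\De_1))-\dim_\CC\ext^1_Q(T(\De_1),T(\De_2))=-1$. So $\bLambda(\pi_1,\pi_2)=\alpha_+(\pi_1,\pi_2)$ would be a negative pole order.
\item Concretely, $V_1$ is everything, $V_0=\ker\bJ_{\pi_1,\pi_2}(0)=\Z([0,1])$ and $V_{-1}=0$. This gives $[\Z([0,1])]+[\Z([0,0]+[1,1])]t$ rather than the claimed answer.
\item For the unlinked pair $\De_1=[0,1]$, $\De_2=[1,1]$, one has $\alpha_+(\pi_1,\pi_2)=1$ but $\bLambda(\pi_1,\pi_2)=0$.
\end{itemize}

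The paper's proof is consistent only with the interchanged normalization: shift $\bV_i(\pi_1,\pi_2)$ by $\alpha_+(\pi_1,\pi_2)$ and $V_i(\pi_1,\pi_2)$ by $\alpha_+(\pi_2,\pi_1)$. The same holds for the proof of \Cref{L:propI}, which uses $\alpha_+(\Z(\fm_l),\ldots,\Z(\fm_1))=0$.

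With that normalization your targets become $\bLambda(\pi_1,\pi_2)=\alpha_+(\pi_2,\pi_1)$ and, for $\De_1\prec\De_2$, $\Lambda(\pi_1,\pi_2)=\alpha_+(\pi_1,\pi_2)+1$. Since $\bLambda(\pi_1,\pi_2)=\Lambda(\pi_2,\pi_1)$, and $\ext^1_Q(T(\De),T(\De'))\neq0$ if and only if $\De\prec\De'$, both follow from a single formula for segments:
\[\Lambda(\Z(\De),\Z(\De'))=\dim_\CC\hom_Q(T(\De'),T(\De)).\]
Equivalently, $\Lambda(\pi_1,\pi_2)-\alpha_+(\pi_1,\pi_2)=\dim_\CC\ext^1_Q(T(\De_1),T(\De_2))$, which is the fact the paper takes from \cite{Dro25}. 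Your proposal neither proves nor cites this formula, so as it stands the argument is incomplete.
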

\begin{proof}
    We will use the results of \cite{Dro25}. Namely, by \cite[Lemma 3.8, Corollary 4.3.4]{Dro25} and \Cref{L:filtpol}, the minimal $i$ such that 
    \[\bV_i(\Z(\De_1),\Z(\De_2))=\Z(\De_1)\btimes \Z(\De_2)\]
    is $1$ if $\De_1\prec \De_2$ and $0$ otherwise.
    Therefore, by \Cref{L:sym} the lowest degree in which $\LL(\De_1)\grtim\LL(\De_2)$ does not vanish is $-1$ if $\De_1\prec \De_2$ and $0$ otherwise.
    If the segments are unlinked, the representation $\LL(\De_1)\times\LL(\De_2)\cong \LL(\De_1+\De_2)$ is irreducible by \Cref{T:zel}, which implies that it is concentrated in one degree. By the above this degree has to be $0$.
    Now by \cite[Theorem 4.2]{Zel80} if $\De_1$ and $\De_2$ are linked, then the induced representation $\Z(\De_1)\times\Z(\De_2)$ is of length $2$ with two irreducible subquotients $\LL(\De_1+\De_2)$ and $\LL(\De_1\cup\De_2+\De_1\cap\De_2)$. Moreover, if $\De_2\prec \De_1$, the unique irreducible subrepresentation is $\LL(\De_1+\De_2)$, and if $\De_1\prec \De_2$, the unique irreducible subrepresentation is $\LL(\De_1\cup\De_2+\De_1\cap\De_2)$. It thus follows that if $\De_2\prec\De_1$, then $\Z(\De_1+\De_2)$ is in degree $0$ and if $\De_1\prec \De_2$, then $\LL(\De_1\cup\De_2+\De_1\cap\De_2)$ is in degree $-1$.
    The remaining degrees follow from the symmetry property of \Cref{T:grprod}.
\end{proof}
 Recall that for $\pi=\Z(\fm),\, \fm=\De_1+\ldots+\De_k$ we defined a standard module \[\I(\fm)= \Z(\De_1)\times\ldots\times \Z(\De_k),\] where for $i>j$, $\De_i\nprec\De_j$, \emph{i.e.} the segments are in an arranged order. 
We define its graded analogue as 
\[\Ir(\pi)\coloneq \Z(\De_1)\grtim\ldots\grtim\Z(\De_k).\]
\begin{lemma}
    The element $\Ir(\pi)$ does not depend on the chosen arranged order of the segments in $\fm$.
\end{lemma}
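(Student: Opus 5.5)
Any two arranged orderings of the segments of $\fm$ are related by a finite sequence of transpositions of two neighbouring segments that are unlinked; this is the remark following \Cref{T:zel}. So the natural plan is to show that a single such swap leaves the product unchanged. Concretely, suppose $\De_j,\De_{j+1}$ are neighbours in an arranged ordering and unlinked. We want
\[\Z(\De_1)\grtim\ldots\grtim\Z(\De_j)\grtim\Z(\De_{j+1})\grtim\ldots\grtim\Z(\De_k)=\Z(\De_1)\grtim\ldots\grtim\Z(\De_{j+1})\grtim\Z(\De_j)\grtim\ldots\grtim\Z(\De_k).\]

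First, by \Cref{T:grprod} the product $\grtim$ is associative, so the iterated product is well defined without bracketing. We may therefore bracket the product as $A\grtim(\Z(\De_j)\grtim\Z(\De_{j+1}))\grtim B$, where $A$ and $B$ are the partial products of the remaining factors. It thus suffices to show
\[\Z(\De_j)\grtim\Z(\De_{j+1})=\Z(\De_{j+1})\grtim\Z(\De_j).\]

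This follows from \Cref{L:filtseg}. Since $\De_j$ and $\De_{j+1}$ are unlinked, both sides equal $[\LL(\De_j+\De_{j+1})]$, concentrated in degree $0$. Alternatively, the left side is $[\LL(\De_j+\De_{j+1})]$, and by the symmetry $\pi_1\grtim\pi_2=\overline{\pi_2\grtim\pi_1}$ of \Cref{T:grprod}, a class concentrated in degree $0$ is invariant under $t\mapsto t^{-1}$.

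The only point needing care is that the combinatorial claim holds: two arranged orderings are connected through arranged orderings by swaps of adjacent unlinked segments. The paper asserts this as "easy to see". If a justification were required, I would argue by a bubble-sort argument. Move the segment that comes first in the target ordering to the front. Each segment it passes lies before it in the current arranged order and after it in the target arranged order, so the two segments must be unlinked. Each swap preserves arrangedness, since only the relative order of an unlinked pair changes.

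The main (mild) obstacle is the reliance on associativity from \Cref{T:assoc2}, which is postponed. Everything else is immediate from \Cref{L:filtseg}.
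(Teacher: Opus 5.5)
Your proof is correct and takes essentially the same route as the paper: reduce to swapping two adjacent unlinked segments, then invoke \Cref{L:filtseg}. You also make explicit the use of associativity of $\grtim$, which the paper leaves implicit, and your bubble-sort argument justifies the combinatorial step that the paper only calls ``easy to see''.
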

\begin{proof}
    Any two arranged orderings can be transformed into each other by switching two adjacent unlinked segments. The claim then follows from \Cref{L:filtseg}.
\end{proof}
\begin{lemma}\label{L:propI}
    Let $\pi\in \irr$.
    \begin{enumerate}
        \item $\Ir(\pi)$ is concentrated in non-positive degrees.
        \item The constant term of $\Ir(\pi)$ equals to $[\pi]$.
        \item If $\pi=\Z(\fm)$, then $\pi'=\Z(\fn)$ appears in $\Ir(\fm)$ with a non-zero coefficient if and only if $\fn\le \fm$.
    \end{enumerate}
\end{lemma}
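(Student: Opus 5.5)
We argue by induction on the number $k$ of segments of $\fm=\De_1+\ldots+\De_k$, taken in an arranged order, using the associativity of $\grtim$ (\Cref{T:grprod}) to write
\[\Ir(\pi)=\Z(\De_1)\grtim\Big(\Z(\De_2)\grtim\ldots\grtim\Z(\De_k)\Big)\quad\text{or}\quad \Big(\Z(\De_1)\grtim\ldots\grtim\Z(\De_{k-1})\Big)\grtim\Z(\De_k).\]
Specializing $t=1$ recovers $[\I(\fm)]$ in the Grothendieck group. By \Cref{T:zel}(4) and Zelevinsky's description of the composition factors of standard modules, the irreducible constituents of $\I(\fm)$ are exactly the $\Z(\fn)$ with $\fn\le\fm$, and $\Z(\fm)$ occurs with multiplicity one. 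Coefficients in $\ZZ[\irr][t,t^{-1}]$ of a product of genuine (semi-simplified) graded pieces are non-negative, so there is no cancellation. Hence (3) follows immediately from the $t=1$ specialization once we know every coefficient is a non-negative combination of powers of $t$, and (3) needs no further degree information.

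For (1) and (2) we track degrees. The multiplicity-one statement shows that $[\pi]$ sits in exactly one degree, so it suffices to prove two things: that everything lies in degrees $\le 0$, and that $\pi$ itself lies in degree $0$. The approach is to interpret the filtration directly: $\Ir(\pi)$ computes the graded pieces of the filtration of the full standard module $\I(\fm)$ coming from iterated intertwining operators (via \Cref{T:assoc2}, iterated). Since the segments are arranged, for $i<j$ we have $\De_j\nprec\De_i$. By \Cref{L:filtseg}, each pairwise product $\Z(\De_i)\grtim\Z(\De_j)$ is then supported in degrees $0$ and $-1$, with the top term $\Z(\De_i+\De_j)$ in degree $0$. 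More globally, we would use \Cref{L:filtpol} together with \cite{Dro25}, which states that the pole order $\bLambda$ for an arranged tuple equals $\alpha_+(\pi_1,\ldots,\pi_k)$, possibly after \Cref{L:sym}. With the normalization by $\alpha_+$ this gives $V_0=\I(\fm)$, that is, all graded pieces lie in degrees $\le 0$. The top piece $V_0/V_{-1}$ is the image of the normalized intertwining operator $\I(\fm)\ra\Z(\De_k)\times\ldots\times\Z(\De_1)$. That target has $\pi$ as its unique irreducible subrepresentation, so the image is nonzero and contains $\pi$ as its unique irreducible sub. Combined with $\pi$ being the unique irreducible quotient of $\I(\fm)$ with multiplicity one, this places $\pi$ in degree $0$.

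The remaining claim for (2) is that nothing other than $\pi$ sits in degree $0$, i.e.\ the image of the normalized operator is exactly $\pi$. This is the well-known statement that the Langlands quotient is the image of the leading term of the standard intertwining operator. We expect the main obstacle to be verifying that the normalization by $\alpha_+$ matches the actual pole order, so that degree $0$ really is the top degree; we would cite \cite[Corollary 4.3.4]{Dro25}, extended from two segments to $k$ arranged segments by factoring the long intertwining operator into simple reflections swapping adjacent segments. Each such swap has pole order $1$ or $0$ according to linkage, and these orders add up to $\alpha_+$.
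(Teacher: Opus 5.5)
Your argument for (3) is fine and agrees with the paper: specialize at $t=1$, use non-negativity of the coefficients, and apply \cite[Theorem 7.1]{Zel80}. The last step of (2) is also correct: \emph{if} degree $0$ is the top non-zero degree of the filtration on $\I(\fm)$, then that piece is the image of the normalized long operator, which must be $\pi$.

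The gap is exactly the step you flag, namely that the pole order of $\I(\fm)\to\Z(\De_k)\times\ldots\times\Z(\De_1)$ equals the normalizing exponent. Your justification fails on two counts.
\begin{enumerate}
\item The pole of an adjacent swap $\Z(\De_i)\times\Z(\De_j)\to\Z(\De_j)\times\Z(\De_i)$ is not governed by linkage. For the linked pair $[0,0]\prec[1,1]$ it is a $\mathrm{GL}_2$ principal-series operator with $\chi_1\chi_2^{-1}=\lvert-\rvert^{-1}$, which is holomorphic and non-zero at $s=0$. For the unlinked pair $[0,1],[0,0]$ (an irreducible product) it has a simple pole, coming from exchanging the two copies of $\lvert-\rvert^{0}$.
\item Pole orders are only subadditive under composition. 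Summing the per-swap orders therefore gives at best an upper bound, hence at best (1). Statement (2) needs equality, i.e.\ that the composite of the normalized leading terms is non-zero. For linked pairs those leading terms have kernels, so this non-cancellation is the real content, and your proposal does not address it.
\end{enumerate}

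The paper avoids any pole count by grouping. It chooses an arranged order in which segments with the same end (and the same $\rho$) are consecutive, and collects them into blocks $\fm_1,\ldots,\fm_l$. Inside a block the segments are pairwise unlinked, so $\I(\fm_j)=\Z(\fm_j)$ is irreducible and $\Ir(\fm_j)=[\Z(\fm_j)]$ by \Cref{L:filtseg}. For the block-reversing operator $\I(\fm)\to\I(\fm_l)\times\ldots\times\I(\fm_1)$ one has $\Lambda=0$ by \cite[Lemma 4.1]{Dro25} and the Geometric Lemma, and the normalizing $\alpha_+$ vanishes. So degree $0$ is the top degree and equals the image, namely $\Z(\fm)$; associativity then gives (1) and (2).

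In your language: factor the long operator as the block reversal followed by the reversals inside the blocks. The latter are operators between isomorphic irreducible representations, so their normalized leading terms are isomorphisms, and this is what prevents cancellation.

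Note also that \Cref{T:assoc2} only concerns iterated binary products. It does not identify $\Ir(\pi)$ with the graded pieces of the $k$-fold filtration on $\I(\fm)$. For segment representations in arranged order that identification is supplied by \Cref{L:arrangedorderhyp}, which rests on \Cref{T:RoCon}; the paper makes the analogous passage for its blocks.
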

\begin{proof}
    Let $\fm=\De_1+\ldots+\De_k\in\mul_\cus$, $\De_i=[a_i,b_i]_{\rho_i}$ and choose an arranged order such that if $\rho_i=\rho_j$ and $b_i=b_j$, then the only segments between $\De_i$ and $\De_j$ are of the form $[a',b_i]_{\rho_i}$. We then group the segments together into groups of segments $\fm_1,\ldots,\fm_l$, where in $\fm_i$ all segments are of the form $[a,b]_\rho$ for some fixed $b$ and $\rho$. Then $\Z(\fm_i)=\I(\fm_i)$ is irreducible by \Cref{T:zel} and by \Cref{L:filtseg} \[\Ir(\fm_i)=\I(\fm_i).\]
    We recall that $\Z(\fm)$ is the unique quotient of $\I(\fm)=\I(\fm_1)\times\ldots\times\I(\fm_l)$ and $\Z(\fm)$ is by the Langlands-classification the image of the intertwining operator \[J_{\Z(\fm_l),\ldots,\Z(\fm_1)}\colon \I(\fm)\ra \I(\fm_l)\times\ldots\times\I(\fm_1)\]
    and $\Lambda(\Z(\fm_l),\ldots,\Z(\fm_1))=0$. 
Indeed, the first part follows from \cite[Theorem 6.1]{Zel80} and the fact that there is, up to a scalar, a unique non-zero morphism between the two representations. The claim regarding the pole follows from \cite[Lemma 4.1]{Dro25} and the Geometric Lemma of Bernstein and Zelevinsky \cite[Theorem 5.2]{BerZel77}.
Note moreover that $\alpha_+(\Z(\fm_l),\ldots,\Z(\fm_1))=0$ by the definitions in \Cref{S:class} and hence the degree $0$-part of $\I(\fm_1)\grtim\ldots\grtim \I(\fm_l)$ is the image of the intertwining operator, which is $\Z(\fm)$. Finally, we have $\I(\fm_i)=\Ir(\fm_i)$ and hence the second claim follows from the associativity of $\grtim$. 
The first claim follows similarly.
    For property (3) we refer to \cite[Theorem 7.1]{Zel80}.
\end{proof}
Since we have that $\Z(\fn)$ appears in $\Ir(\fm)$ only if $\fn\le\fm$ and there are only finitely many such $\fn$ for a given $\fm$, the following is immediate.
\begin{corollary}\label{C:gradbas}
    The set $\{\Ir(\pi):\pi\in\irr\}$ is a $\ZZ[t,t^{-1}]$-basis of $\ZZ[\irr][t,t^{-1}]$.
\end{corollary}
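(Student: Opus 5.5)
The argument is a unitriangularity argument with respect to the partial order $\le$ on multisegments. Fix a cuspidal support, or equivalently a finite collection of dimension vectors, one for each $\rho\in\cuss$. Only finitely many irreducible representations have this support, and every product $\grtim$ of representations with total support equal to it stays inside the corresponding finite-rank free $\ZZ[t,t^{-1}]$-submodule. Since the module $\ZZ[\irr][t,t^{-1}]$ is the direct sum of these submodules over all supports, it suffices to treat one support at a time. By \Cref{T:zel}(1) and \Cref{L:propI}, each $\Ir(\Z(\fm))$ lies in the submodule attached to $\cusp(\fm)$.

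Within a fixed support, write
\[\Ir(\Z(\fm))=\sum_{\fn}c_{\fm,\fn}(t)[\Z(\fn)],\qquad c_{\fm,\fn}(t)\in\ZZ[t,t^{-1}].\]
By \Cref{L:propI}(3), $c_{\fm,\fn}\neq 0$ only if $\fn\le\fm$. By \Cref{L:propI}(1),(2), the constant term of $\Ir(\Z(\fm))$ equals $[\Z(\fm)]$. Hence $c_{\fm,\fm}$ has constant term $1$. To conclude $c_{\fm,\fm}=1$, observe that $\Z(\fm)$ occurs with multiplicity one in $\I(\fm)$ by \Cref{T:zel}(4). Setting $t=1$ recovers the semisimplification of $\I(\fm)$, because the graded pieces of the filtration exhaust $\I(\fm)$. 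Therefore the coefficients of $c_{\fm,\fm}$ are non-negative integers that sum to $1$, and together with the constant term being $1$ this forces $c_{\fm,\fm}=1$.

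Next I would note that $\le$ is a partial order on the finite set of multisegments with the given support. Antisymmetry follows from \Cref{T:Oqui}, since orbit-closure inclusion is a partial order on orbits. Extending $\le$ to a total order, the transition matrix $(c_{\fm,\fn})$ from $\{[\Z(\fn)]\}$ to $\{\Ir(\Z(\fm))\}$ is unitriangular with entries in $\ZZ[t,t^{-1}]$. It is therefore invertible over $\ZZ[t,t^{-1}]$, and inversion by downward induction along the order expresses each $[\Z(\fn)]$ as a $\ZZ[t,t^{-1}]$-combination of the $\Ir$'s. Since $\{[\pi]\}$ is a basis, this gives both spanning and linear independence.

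The only point requiring care is the diagonal coefficient. \Cref{L:propI}(2) alone gives only the constant term of $c_{\fm,\fm}$, so the multiplicity-one input from \Cref{T:zel}(4) is needed as described above. The remaining steps are bookkeeping.
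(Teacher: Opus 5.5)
Your proposal is correct and follows essentially the same route as the paper: unitriangularity of the transition matrix with respect to $\le$, deduced from \Cref{L:propI}. You are more careful than the paper at the diagonal entry. The paper just asserts ``$1$'s on the diagonal'', whereas you derive $c_{\fm,\fm}=1$ from the multiplicity-one statement in \Cref{T:zel}(4), using specialization at $t=1$ and non-negativity of the coefficients. The paper also notes that all entries lie in $\ZZ[t^{-1}]$, which your argument does not need.
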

\begin{proof}
    For fixed $\db\in\grdim(Q)$, the transition matrix from the basis $\{\Z(\fm_\rho):\fm\in\mul_\db\}$ to $\{\Ir(\fm_\rho):\fm\in\mul_\db\}$ is, for a suitable ordering of the multisegments, upper triangular with entries in $\ZZ[t^{-1}]$ and $1$'s on the diagonal. Therefore, the inverse is also upper triangular with entries in $\ZZ[t^{-1}]$.
\end{proof}
Finally, we define the twisted multiplication \[\grtimt\colon\ZZ[\irr]\twh\times\ZZ[\irr]\twh\ra\ZZ[\irr]\twh\]
via
\[\pi_1\grtimt\pi_2\coloneq t^{\alpha_+(\pi_1,\pi_2)-\langle\pi_1,\pi_2\rangle}\pi_1\grtim\pi_2.\]
Note that $\grtimt$ is again associative and $\overline{\pi_1\grtimt\pi_2}=\pi_2\grtimt\pi_1$ since  
\[\alpha_+(\pi_1,\pi_2)+\alpha_+(\pi_2,\pi_1)=(\pi_1,\pi_2)=\langle\pi_1,\pi_2\rangle+\langle\pi_2,\pi_1\rangle.\]
\subsection{Hecke algebras}\label{S:Hecke}
In this section we will now cast the above results into the language of the underlying Hecke algebras. Note that the fact that the whole behavior of $\rep_n$ can be captured by Hecke-algebras is unique to $\mathrm{GL}_n$ and due to the type-theoretic methods developed in \cite{BushnellKutzko1998}, \cite{BushnellKutzko1999}.
Define $H_n(\bq)$ to be the $\CC[\bq,\bq^{-1}]$-algebra generated by
\[S_1,\ldots,S_{n-1}, X_1^{\pm 1},\ldots,X_n^{\pm 1}\]
subject to the following relations.
\begin{enumerate}
    \item $(S_i+1)(S_i-\bq)=0$ $\ain{i}{1}{n-1}$.
    \item $S_iS_j=S_jS_i$ $\ain{i,j}{1}{n-1}$, $\lvert i-j\rvert>1$.
    \item $S_iS_{i+1}S_i=S_{i+1}S_iS_{i+1}$ $\ain{i}{1}{n-2}$.
    \item $X_iX_j=X_jX_i$ $\ain{i,j}{1}{n}$.
    \item $X_jS_i=S_iX_j$ $\ain{i}{1}{n-1}$, $\ain{j}{1}{n}$, $i\notin\{j,j-1\}$.
    \item $S_iX_iS_i=\bq X_{i+1}$ $\ain{i}{1}{n-1}$.
\end{enumerate}
The center of $H_n(\bq)$ is given by $Z_n\coloneq \CC[\bq,\bq^{-1},X_1^{\pm1},\ldots,X_n^{\pm 1}]^{S_n}$.
The specialization to $\bq=q\in\CC^\times$, $q$ not a root of unity, of the Hecke-algebra is denoted by \[H_n(q)\coloneq H_n(\bq)\otimes_{\CC[\bq]}\CC[\bq]/_{(\bq-q)}.\]
We recall that every simple $H_n(q)$-module has a central character $\omega$, denoted by an unordered tuple $[\xi_1]+\ldots+[\xi_n],\, \xi_i\in\CC^\times$, and we write $H_n(q)-\Md_\db$ for the category of finite dimensional complex modules of $H_n(q)$ such that every simple subquotient has a central character, denoted $\omega_{q^a}$, represented by an element of the form $[q^{a_1}]+\ldots +[q^{a_n}],\, a\in\ZZ^n$.
As in \Cref{S:class}, we associate to such a simple module $M$ in $H_n(q)-\Md$ a vector \[\grdim(M)\in\grdim(Q),\,\grdim(M)_j\coloneq \#\{i:j=a_i\}\]
and write $\omega_\db=\omega_{q^a}$ and \[H_{n,\db}(q)\coloneq H_n(q)\otimes_{Z_n}\omega_{\db}.\]
We let $H_n(q)-\irr_\db$ be the set of simple modules with a fixed dimension vector $\db$.
We denote for a segment $[a,b]$ the character of $H_n(q)$  given by $S_i\mapsto \bq$, $X_j\mapsto q^{a+j-1}$ by $\Z([a,b])$.

If $\alpha=(\alpha_1,\ldots,\alpha_k)$ is a partition of $n$, there exists a natural inclusion \[H_\alpha(q)\coloneq H_{\alpha_1}(q)\otimes\ldots\otimes H_{\alpha_k}(q)\hra H_n(q).\] We denote the induction functor
\[\id_{H_\alpha(q)}^{H_n(q)}\colon H_\alpha(q)-\Md\ra H_n(q)-\Md, M\mapsto\hom_{H_\alpha(q)}(H_n(q),M).\]
We also use the notation $\times$ analogously to the $p$-adic setting.
For $\rho\in\cuss$ with $d=\deg(\rho)$, we denote the full subcategory $\rep_{n,\rho}$ of $\rep_{nd}$ consisting of objects whose irreducible subquotients are of the form $\Z(\fm_\rho)$, $\fm\in\mul$. Note that parabolic induction induces for a partition $\alpha=(\alpha_1,\ldots,\alpha_k)$ of $n$ a functor
\[\times\colon \rep_{\alpha,\rho}\coloneq \rep_{\alpha_1,\rho}\times \ldots\times\rep_{\alpha_k,\rho}\ra \rep_{n,\rho}.\]
\begin{theorem}[{\cite{BushnellKutzko1998},\cite{BushnellKutzko1999}}]
Let    $\rho\in\cuss$. Then there exists $q(\rho)=q^{f(\rho)}$, a power of $q$, and an equivalence of abelian categories
    \[\ct_{n,\rho}\colon \rep_{n,\rho}\ra H_n(q(\rho))-\Md\]
    such that the following diagram commutes.
    \[\begin{tikzcd}
       \rep_{\alpha,\rho}\arrow[d,"\times"]\arrow[r,"\ct_{\alpha,\rho}"]&  H_\alpha(q(\rho))-\Md\arrow[d,"\times"]\\\rep_{n,\rho}\arrow[r,"\ct_{n,\rho}"]&H_n(q(\rho))-\Md
    \end{tikzcd}\]
    Moreover, $\ct_{l(\De),\rho}(\Z(\De_\rho))=\Z(\De)$ and it induces for all $\db\in\grdim(Q)$ a bijection
    \[\ct_{n,\rho}\colon \irr_{\db,\rho}\ra H_n(q(\rho))-\irr_{\db}.\]
\end{theorem}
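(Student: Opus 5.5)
The plan is to deduce the statement from the type-theoretic machinery of \cite{BushnellKutzko1998}, \cite{BushnellKutzko1999} rather than prove it from scratch. We check the extra compatibilities (with $\times$, with $\Z(\De)$, with dimension vectors) on generators.

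\emph{Step 1: the type and its Hecke algebra.} Let $d=\deg(\rho)$. Choose a maximal simple type $(J_0,\lambda_0)$ in $\gl_d(\FF)$ contained in $\rho$. Then let $(J,\lambda)$ be the associated simple type in $\gl_{nd}(\FF)$ attached to the inertial class of $\rho^{\otimes n}$ on the Levi subgroup $\gl_d^{\times n}$. By \cite{BushnellKutzko1998}, the Bernstein block containing $\rep_{n,\rho}$ is equivalent to $\mathcal{H}(\gl_{nd},\lambda)$-modules via $\pi\mapsto\hom_J(\lambda,\pi)$. Moreover, $\mathcal{H}(\gl_{nd},\lambda)$ is isomorphic to an affine Hecke algebra of type $\widetilde{A}_{n-1}$ with parameter $q(\rho)=q^{f(\rho)}$. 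Here $f(\rho)$ is the torsion number of $\rho$, i.e.\ the number of unramified characters fixing $\rho$, equivalently $q_E^{\,\cdot}$ for the field $E$ of the underlying simple stratum. Fixing the Bernstein presentation of this isomorphism, we send the generator $X_j$ to the element supported on the double coset of $\varpi_E$ in the $j$-th block. We then restrict to finite length modules whose cuspidal supports consist of $\rho\lvert-\rvert^a$, $a\in\ZZ$. Since twisting $\rho$ by $\lvert-\rvert$ multiplies the eigenvalue of $X$ on $\hom_{J_0}(\lambda_0,\rho\lvert-\rvert^a)$ by $q(\rho)$, this gives $\ct_{n,\rho}$. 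It lands in $H_n(q(\rho))-\Md$ with central characters $[q(\rho)^{a_1}]+\ldots$, which yields the bijection $\irr_{\db,\rho}\ra H_n(q(\rho))-\irr_\db$ once we know that the equivalence preserves simple objects.

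\emph{Step 2: compatibility with induction.} We use the theory of $G$-covers from \cite{BushnellKutzko1999}. The type $(J,\lambda)$ is a cover of $(J\cap\bM_\alpha,\lambda_{\bM_\alpha})$ for the standard Levi $\bM_\alpha$. For covers there is an algebra embedding $t_\bP\colon\mathcal{H}(\bM_\alpha,\lambda_{\bM})\hra\mathcal{H}(\gl_{nd},\lambda)$ together with a natural isomorphism
\[\hom_J(\lambda,\id_\bP^{\gl_{nd}}\sigma)\cong\hom_{\mathcal{H}(\bM,\lambda_\bM)}\big(\mathcal{H}(\gl_{nd},\lambda),\hom_{J_\bM}(\lambda_\bM,\sigma)\big).\]
Two adjustments are needed to turn this into the commutative square. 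First, $t_\bP$ must agree with the inclusion $H_\alpha(q)\hra H_n(q)$. This holds after normalizing the generators in Step 1 compatibly, using that the Bernstein presentation of $H_n$ restricts to that of $H_\alpha$. Second, the isomorphism is stated for unnormalized induction, so we absorb $\delta_\bP^{1/2}$ by rescaling the $X_j$ by suitable powers of $q(\rho)^{1/2}$, uniformly on each block.

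\emph{Step 3: segments, and the main obstacle.} It remains to check that $\ct_{l(\De),\rho}(\Z(\De_\rho))=\Z(\De)$. By Step 2, $\ct$ sends $\rho\lvert-\rvert^a\times\ldots\times\rho\lvert-\rvert^b$ to the module induced from the character $X_j\mapsto q(\rho)^{a+j-1}$ of the commutative subalgebra. Since $\ct$ is an equivalence, it sends the unique irreducible subrepresentation (\Cref{L:seg}) to the unique simple submodule of that induced module. That submodule is one-dimensional, with $S_i$ acting by either $\bq$ or $-1$. I expect the main difficulty to be this sign and normalization issue: whether the Zelevinsky submodule corresponds to $S_i\mapsto\bq$ rather than $S_i\mapsto-1$, which depends on the order of induction and the choice of $t_\bP$. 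I would settle it first for $n=2$ by direct computation, using the quadratic relation and $S_1X_1S_1=\bq X_2$ on the two-dimensional induced module. If the sign comes out wrong, I would compose $\ct$ with the Iwahori--Matsumoto-type involution $S_i\mapsto -\bq S_i^{-1}$ (adjusted on the $X_j$). This involution preserves the induction square and fixes the dimension vectors.
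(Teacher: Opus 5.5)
The paper gives no argument beyond citing Bushnell--Kutzko, and your Steps 1--2 are the standard unpacking of that citation, so your strategy agrees with it. Step 3, however, is misdiagnosed, and your fallback would fail.

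Given Steps 1--2, there is no sign ambiguity. The product $\times$ on the Hecke side is coinduction. So by Frobenius reciprocity a simple $L$ embeds into $\hom_{\CC[X_1^{\pm1},\ldots,X_n^{\pm1}]}(H_n(q),\chi)$, with $\chi\colon X_j\mapsto q^{a+j-1}$, if and only if $\chi$ is a weight of $L$. Since $q$ is not a root of unity, $\chi$ is regular and occurs once among the weights $w\chi$ of the coinduced module. Hence the socle is the unique simple module having $\chi$ as a weight. On a one-dimensional module $S_i\mapsto\epsilon$, $X_j\mapsto q^{a+j-1}$, the relation $S_iX_iS_i=\bq X_{i+1}$ reads $\epsilon^2q^{a+i-1}=q\cdot q^{a+i}$, i.e.\ $\epsilon^2=q^2$. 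Together with $\epsilon\in\{q,-1\}$ this forces $\epsilon=q$. So the socle is $\Z([a,b])$, and no $n=2$ computation is needed.

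The same computation rules out your fallback. An automorphism compatible with the induction square restricts to $X\mapsto cX^{\pm1}$ on each factor $H_1(q)$. Preserving dimension vectors then forces $X_j\mapsto X_j$, so it cannot change the sign on a module of weight $\chi$. Concretely, $S_i\mapsto-\bq S_i^{-1}$ is an automorphism only in one of two pairings:
\begin{itemize}
\item with $X_j\mapsto X_j^{-1}$, which replaces $\db$ by $i\mapsto\db_{-i}$ and destroys the normalization $\rho\lvert-\rvert^a\mapsto q(\rho)^a$;
\item as $S_i\mapsto-\bq S_{n-i}^{-1}$ with $X_j\mapsto X_{n+1-j}$, which reverses the order of the factors in $\times$.
\end{itemize}

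The convention that genuinely needs settling sits in Step 2, which you pass over.
\begin{itemize}
\item The element $T_i$ supported on $Js_iJ$ is pinned down by its quadratic relation. With the $X_j$ fixed as in Step 1, one of $T_i$ and $-q(\rho)T_i^{-1}$ satisfies $S_iX_iS_i=\bq X_{i+1}$, and the other satisfies $S_iX_{i+1}S_i=\bq X_i$. Choosing the right one is a change of presentation compatible with every $t_\bP$, not an automorphism.
\item That choice must be coordinated with the direction of $X$. With $X$ supported on $J_0\varpi_EJ_0$, twisting by $\lvert-\rvert$ multiplies its eigenvalue by $\lvert\det\varpi_E\rvert=q(\rho)^{-1}$, not by $q(\rho)$ as you state.
\item The $\delta_\bP^{1/2}$-correction belongs in the embedding $\mathcal{H}(\bM_\alpha,\lambda_{\bM})\hra\mathcal{H}(\gl_{nd},\lambda)$ itself; this is what Bernstein's normalization $\theta^M_x\mapsto\theta^G_x$ does. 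An $\alpha$-dependent rescaling of the $X_j$ would instead break $\ct_{\alpha,\rho}=\ct_{\alpha_1,\rho}\otimes\cdots\otimes\ct_{\alpha_k,\rho}$.
\item For $n\ge2$ the Bernstein generators $X_j$ are not supported on single double cosets. Your description is valid only for $n=1$, which is all Step 1 needs.
\end{itemize}
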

We will parametrize the simple modules of $H_n(q)$ 
\[\Z\colon \mul_\db\ra H_n(q)-\irr_\db\]
with central character $\omega_\db$ with the set $\mul_\db$ such that
\[\ct_{n,\rho}(\Z(\fm_\rho))=\Z(\fm).\]
 Let $M=M_1\otimes\ldots\otimes M_k\in H_\alpha(q)-\Md$ and let $\lambda\in \ZZ^k$ be generic, as in \Cref{S:int}.
Then we let 
\[M_1\times\ldots\times M_k[[s]]_\lambda\]
be the induced $\CC[[s]]$-$H_n(q)$-module, which has been induced from the $H_\alpha(q)$-module
\[(M_1\otimes \CC[[s]])\otimes\ldots\otimes (M_k\otimes \CC[[s]]),\]
where the action of $H_{\alpha_i}(q)$ on $M_i\otimes \CC[[s]]$ is given by twisting the action of $X_i$ by \[\exp(\lambda_i s)\coloneq \sum_{k\ge 0}\frac{(\lambda_is)^k}{k!}.\]
Note that for $\pi_1\otimes\ldots\otimes\pi_k\in \rep_{\alpha,\rho}$ the 
intertwining operator
\[J_{\pi_1,\ldots,\pi_k}(s)\circ\mathrm{Ad}(w_\alpha)\colon \pi_k\otimes \lvert-\rvert^{\lambda_ks}\times\ldots\times \pi_1\otimes \lvert-\rvert^{\lambda_1s}\ra \pi_1\otimes \lvert-\rvert^{\lambda_1s}\times\ldots\times \pi_k\otimes \lvert-\rvert^{\lambda_ks}\]
is not only well defined for any generic $s\in \CC$ but rather for any formal variable $s$, see \cite[§7]{Dat05} and hence 
gives rise to a $\CC((s))$-linear morphism
\[J_{\pi_1,\ldots,\pi_k}(s)\circ\mathrm{Ad}(w_\alpha)\colon \pi_k\otimes_{\CC((s))} \lvert-\rvert^{\lambda_ks}\times\ldots\times \pi_1\otimes_{\CC((s))} \lvert-\rvert^{\lambda_1s}\ra\]\[\ra \pi_1\otimes_{\CC((s))} \lvert-\rvert^{\lambda_1s}\times\ldots\times \pi_k\otimes_{\CC((s))} \lvert-\rvert^{\lambda_ks},\]
where $\lvert-\rvert^{\lambda_is}$ acts by $\exp(\lambda_is)$.
We will denote by 
\[J_{M_1,\ldots,M_k}(s)\colon M_k\times\ldots\times M_1((s))_{f(\rho)\cdot \overline{\lambda}}\ra M_1\times\ldots\times M_k((s))_{{f(\rho)}\cdot {\lambda}},\]\[ M_1\otimes\ldots\otimes M_k=\ct_{\alpha,\rho}(\pi_1\otimes\ldots\otimes\pi_k)\]
the via $\ct_{n,\rho}$ obtained morphism.
We can define analogous filtrations as in \Cref{S:Jantzen} denoted by
$V_i(M_1,\ldots,M_k)$ and note that by construction
\[\ct_{n,\rho}(V_i(\pi_1,\ldots,\pi_k))=V_i(M_1,\ldots,M_k).\]
If we are in a situation as above we can also consider the $\CC[[s]]$-linear character $\omega_{\db,s,\lambda}$ of $Z_n$ obtained by twisting $\omega_{\db}$ on  the first $\alpha_1$ entries by $\exp(\lambda_1s)$, the second $\alpha_2$ entries of $\omega_\db$ by $\exp(\lambda_2s)$, and so on. We then set
\[H_{n,\db,s,\lambda}(q)=H_n(q)\otimes_{Z_n}\omega_{\db,s,\lambda}.\]
Note that $H_{n,\db,s,\lambda}(q)\lvert_{s=0}=H_{n,\db}(q)$.
\begin{lemma}\label{L:lambdagen}
    If $\lambda$ is suitably generic (depending on $M_1,\ldots,M_k$), $M_1\times\ldots\times M_k[i]$ is independent of $\lambda$.
\end{lemma}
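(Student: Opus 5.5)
The plan is to reduce the statement to a problem about a single meromorphic matrix in several variables, and then exploit the fact that the graded pieces are governed by invariants that are constant on a Zariski open set of directions $\lambda$.

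First I will trivialise the induced modules uniformly in $\lambda$. As a $\CC[[s]]$-module, $M_1\times\ldots\times M_k[[s]]_\lambda$ is free with basis $\{T_w\otimes m\}$. Here $w$ runs over minimal coset representatives of $S_n/S_\alpha$ and $m$ over a basis of $M_1\otimes\ldots\otimes M_k$. This basis does not depend on $\lambda$, and the same holds for the source $M_k\times\ldots\times M_1$. In these bases the intertwining operator $J_{M_1,\ldots,M_k}(s)$ becomes a matrix $A_\lambda(s)$. The twist only enters through $\exp(\lambda_i s)$, and the intertwining operators are built from the usual rational normalised intertwiners of the Hecke algebra. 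Hence $A_\lambda(s)=B(\lambda_1 s,\ldots,\lambda_k s)$ for one matrix $B(x_1,\ldots,x_k)$ whose entries are meromorphic near $0$ in $x$; in fact they are rational in the $\exp(x_i)$ with denominators products of factors of the form $q^{a}e^{x_i-x_j}-1$. Thus $V_i$ and the graded pieces at $s=0$ are determined by the restriction of $B$ to the line $x=s\lambda$. As in \Cref{L:ind}, rescaling $s\mapsto as$ leaves the filtration unchanged, so only the direction $[\lambda]\in\mathbb{P}^{k-1}$ matters.

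Next I will show that the filtration depends algebraically on the direction. I clear denominators by a common product $D(x)$ of the linear-exponential factors. Along the line, each factor has an $s$-adic valuation equal to $1$ or $0$, and this valuation is independent of $\lambda$ as long as $\lambda_i\neq\lambda_j$. For the numerator I will expand $B(s\lambda)$ in powers of $s$. The coefficient of $s^m$ is a matrix whose entries are homogeneous polynomials of degree $m$ in $\lambda$, built from the Taylor coefficients of $B$. The subspace $V_i$ at $s=0$, and more precisely the dimensions of all $V_i$, are determined by the ranks of finitely many block matrices formed from these coefficients; this is the usual description of a lattice filtration via Smith normal form or invariant factors. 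Here "finitely many" uses \Cref{L:filtpol}: the filtration is bounded, with bounds that can be chosen uniformly since the pole order is bounded by the number of denominator factors. Each rank is maximal on a nonempty Zariski open subset of $\lambda$'s, so on a Zariski open dense set $U$ of directions all $\dim V_i(\lambda)$ are constant. Moreover, $V_i(\lambda)$ is then the kernel of a constant-rank matrix depending polynomially on $\lambda$, so it varies algebraically.

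Finally I will pass from constant dimension to independence of the graded modules. For $\lambda\in U$, each $V_{i-1}(\lambda)\backslash V_i(\lambda)$ is a family of $H_{n,\db}(q)$-modules of constant dimension, varying algebraically over the connected set $U$. Its class in the Grothendieck group is then locally constant, because the traces of the elements of $H_n(q)$ vary continuously while the characters of the finitely many simple modules are linearly independent. It is therefore constant on $U$, which is what "suitably generic" should mean. If an isomorphism rather than equality of classes is required, I would instead use the semisimplicity proved later, or argue via rigidity of the finitely many isomorphism classes of a given dimension with fixed composition factors.

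The main obstacle is the second step: making rigorous that the Jantzen-type filtration along a line through the origin is controlled by the ranks of finitely many polynomial-in-$\lambda$ matrices. The Remark after \Cref{T:assoc2} shows that this genuinely fails on special (non-generic) $\lambda$, so the argument must identify the bad locus as a proper Zariski closed set. My first attempt will be to choose $N$ larger than the total pole order and work with the truncation $B(s\lambda)\bmod s^N$. The filtration is then computed by the ranks of the maps $s^{j}\CC[[s]]^r/s^N\to s^{j'}\CC[[s]]^r/s^N$ induced by this truncation, and these maps are finite-dimensional linear maps with polynomial dependence on $\lambda$.
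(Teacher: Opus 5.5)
\textbf{Verdict.} Your approach is the paper's, written out in detail. The paper's proof just observes that $J_{M_1,\ldots,M_k}(s)$ is rational in $\lambda_1,\ldots,\lambda_k$ as well as in $s$, and concludes from that. Up to the Grothendieck group your argument is sound: clearing denominators, expanding along the line $s\lambda$ with coefficients homogeneous in $\lambda$, rank semicontinuity, and the trace-plus-connectedness step together show that all $\dim V_i(\lambda)$ and all classes $[M_1\times\ldots\times M_k[i]]$ are constant on a nonempty Zariski open set $U$. For uniformity at the bottom of the filtration, \Cref{L:filtpol} at a single $\lambda$ gives an injective stacked coefficient matrix, and injectivity is an open condition.

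\textbf{The gap: upgrading to isomorphism type.} The paper's proof asserts constancy of the isomorphism type directly from rationality. Neither of your two fallbacks for this step works.

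\emph{Semisimplicity cannot be imported from later.} In the paper it is deduced from this very lemma. The proof of \Cref{T:RoCon} takes the semisimplicity that the references establish for $\lambda$ in a special region and extends it to all sufficiently generic $\lambda$ ``via \Cref{L:lambdagen}''. \Cref{C:semisimple} in turn rests on \Cref{T:RoCon}, and it only treats the iterated pieces $[i]'$ of irreducible representations, not $[i]$ for arbitrary $M_1,\ldots,M_k$.

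\emph{The finiteness you invoke is false.} For central character $[q^a]+[q^a]$ the principal series of $H_2(q)$ is irreducible, so the fibre algebra is $M_2(\CC)$. Hence finite-dimensional modules with this generalized central character are the same as finite-length modules over $\CC[[u,v]]$. That ring has a $\mathbb{P}^1$-family of pairwise non-isomorphic length-two modules $\CC[[u,v]]/((u,v)^2+(\ell))$, where $\ell$ is a linear form, with identical composition factors. So an algebraic family of modules over $U$ with constant dimension and constant composition factors can have varying isomorphism type, and no rank or trace data detects this.

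\textbf{What you have, and how to go further.} Your argument proves the Grothendieck-group statement. That is all that $\grtim$ needs, since it is defined through semisimplifications, and it covers the multiplicity statements.

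To make semisimplicity propagate as \Cref{T:RoCon} needs, add one ingredient. Locally trivialize the bundle $V_i(\lambda)/V_{i-1}(\lambda)$ over $U$. By Artin's theorem, an orbit in the representation variety is closed if and only if the module is semisimple. So the semisimple locus is the preimage of finitely many closed orbits, hence Zariski closed in $U$. Semisimplicity on a Zariski-dense set of $\lambda$ then forces it on all of $U$, and there the isomorphism type is the constant semisimplification.
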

\begin{proof}
    Recall that $J_{M_1,\ldots,M_k}(s)$ is not only rational in $s$ but also in $\lambda_1,\ldots,\lambda_k$. 
    It thus follows that for suitably generic $\lambda$ the isomorphism type of $M_1\times\ldots\times M_k[i]$ is independent of the chosen weights.
\end{proof}
\section{Generalized Rogawski conjecture}\label{S:JanGeo}
In this section we will show how the above product $\grtimt$ on $\ZZ[\irr][t,t^{-1}]$ can be described geometrically in terms of hyperbolic localization functors in $\ep$.

Let $\db=\db^1+\ldots+\db^k$, $\db^i\in\grdim(Q)$ and $\alpha=(\lvert\db^1\rvert,\ldots,\lvert\db^k\rvert)$.
We reinterpret the $\gdb$-variety $\ep$ as follows. Let $\cn$ be the complex variety of nilpotent matrices in $\mathfrak{gl}_n$, $n=\lvert\db\rvert$, on which $\gl_n\times\gm$ acts by conjugation and scaling. To $\db$ we associate a cocharacter \[\chi_\db\colon \gm\ra \gl_n\times\gm, \,t\mapsto \mathrm{diag}(\chi_a,t^{-1}), \chi_a=(t^{a_1},\ldots,t^{a_n}),\,\]\[a=(\ldots,\overbrace{i,\ldots,i}^{\db_{i}^1},\overbrace{i+1,\ldots,i+1}^{\db_{i+1}^1},\ldots, \overbrace{i,\ldots,i}^{\db_{i}^2},\ldots,\overbrace{i,\ldots,i}^{\db_{i}^k},\ldots)\in\ZZ^n.\]
Let $V_i$ be the sub-vector space of $\CC^n$ given by \[\{v\in\CC^n:\chi_a(t)\cdot v=t^iv\}.\]
The graded vector space $V\coloneq \bigoplus_{i\in\ZZ}V_i$ has dimension vector $\db$ and
for any $x\in\cn$ fixed by $\chi_\db$ we have that $x(V_i)\subseteq V_{i+1}$. This allows us to identify the $\chi_\db$-fixed points in $\cn$ with $\ep$. Moreover, the centralizer of $\chi_\db$ in $\gl_n$ acts on the $\chi_\db$-fixed points in $\cn$, and this centralizer is, under the above identification, isomorphic to $\gdb$.

For a generic weight $\lambda\in\ZZ^k$ consider the cocharacter \[\chi_\lambda\colon \gm\ra \gl_n, t\mapsto (\overbrace{t^{\lambda_1},\ldots,t^{\lambda_1}}^{\alpha_1},\ldots,\overbrace{t^{\lambda_k},\ldots,t^{\lambda_k}}^{\alpha_k}).\]
We set $\ep^0$ to the fixed points of $\chi_\lambda$ and \[\ep^\pm=\{x\in \ep:\lim_{t\ra 0}\chi_\lambda(t^{\pm 1})x\text{ exists}\}\] the attracting and repelling locus, which have the structure of complex varieties. 
There exist natural maps $p^\pm\colon \ep^\pm\ra\ep^0,\,x\mapsto \lim_{t\ra 0}\chi_\lambda(t^{\pm 1})x$.
\[\begin{tikzcd}
    \ep^0&\ep^+\arrow[l,"p^+"]\arrow[d,"i_+",hookrightarrow]\\ \ep^-\arrow[u,"p^-"]\arrow[r,"i_-",hookrightarrow]&\ep
\end{tikzcd}\]
Finally, we let $\gdb^\pm$ be the set of attracting/repelling points under the action of $\chi_\lambda$ on $\gdb$ and $\gdb^0$ the $\chi_\lambda$ fixed points. Recall that the $\gdb^\pm$ are opposite parabolic subgroups and $\gdb^0$ is their Levi-component. 
\begin{lemma}
    The $\gdb^0$-variety $\ep^0$ is naturally isomorphic to the $\bG_{\db^1}\times\ldots\times \bG_{\db^k}$-variety $\epo\times\ldots\times\epk$.
\end{lemma}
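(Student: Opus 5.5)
We identify the fixed points by working inside $\cn$, where both cocharacters $\chi_\db$ and $\chi_\lambda$ act, and then restricting. The decomposition $\CC^n=W_1\oplus\ldots\oplus W_k$ into the blocks of sizes $\alpha_1,\ldots,\alpha_k$ is the weight decomposition of $\chi_\lambda$: on $W_j$ it acts by $t^{\lambda_j}$. Since $\lambda$ is generic, the $\lambda_j$ are pairwise distinct, so this is exactly the isotypic decomposition. The cocharacter $\chi_a$ is diagonal and preserves each $W_j$. Hence the grading $V=\bigoplus_i V_i$ refines to
\[V_i=\bigoplus_{j=1}^k V_i\cap W_j,\qquad \dim (V_i\cap W_j)=\db^j_i,\]
by the explicit description of $a$. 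In other words, $W_j$ with the induced grading has dimension vector $\db^j$.

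Next we describe $\ep^0$. A point $x\in\ep$, viewed as a nilpotent endomorphism of degree $+1$ of $V$, is fixed by $\chi_\lambda$ exactly when it commutes with $\chi_\lambda(t)$ for all $t$. By distinctness of the $\lambda_j$, this holds exactly when $x(W_j)\subseteq W_j$ for all $j$. So $x=x_1\oplus\ldots\oplus x_k$, where each $x_j$ is a degree $+1$ endomorphism of the graded space $W_j$, i.e.\ a point of $\mathbf{E}_{\db^j}$. The map $x\mapsto(x_1,\ldots,x_k)$ is a linear isomorphism of affine spaces
\[\ep^0\cong \mathbf{E}_{\db^1}\times\ldots\times\mathbf{E}_{\db^k},\]
since every tuple of graded degree-one maps assembles back into a fixed point.

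The same argument applies to the groups. $\gdb$ is the centralizer of $\chi_a$, i.e.\ $\prod_i\gl(V_i)$. Its $\chi_\lambda$-fixed subgroup $\gdb^0$ consists of the elements that also preserve each $W_j$. This gives
\[\gdb^0=\prod_i\prod_j \gl(V_i\cap W_j)\cong \bG_{\db^1}\times\ldots\times\bG_{\db^k}.\]
The conjugation action of a block-diagonal $g=(g_1,\ldots,g_k)$ on $x=\oplus_j x_j$ is $\oplus_j g_jx_jg_j^{-1}$, so the isomorphism is equivariant.

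The only point requiring care, and it is mild, is the distinctness of the $\lambda_j$. If two entries coincided, the fixed locus would contain maps between the corresponding blocks and would be strictly larger. Genericity of $\lambda$ in the sense of \Cref{S:int} (pairwise different entries) rules this out.
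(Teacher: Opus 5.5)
Your proof is correct and is essentially the paper's argument. The paper first takes the $\chi_\lambda$-fixed points in $\cn$, obtaining $\mathcal{N}_{\alpha_1}\times\ldots\times\mathcal{N}_{\alpha_k}$, and then the $\chi_\db$-fixed points, whereas you take them in the opposite order directly on $\ep$ via the same simultaneous weight decomposition — and you also make the identification $\gdb^0\cong\bG_{\db^1}\times\ldots\times\bG_{\db^k}$ and its equivariance explicit, which the paper leaves implicit.
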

\begin{proof}
    Recall the identification of $\ep$ with the $\chi_\db$-fixed points on $\cn$. One can reverse the order of taking fixed points, \emph{i.e.}, first consider the $\chi_\lambda$- and then the $\chi_\db$-fixed points on $\cn$. It is clear that for generic $\lambda$, the fixed points are isomorphic to the $\bM_\alpha\times\gm$-variety $\mathcal{N}_{\alpha_1}
    \times\ldots\times\mathcal{N}_{\alpha_k}$. Taking the $\chi_\db$-fixed points gives the desired identification.
\end{proof}
\subsection{Hyperbolic localization}\label{S:hyploc}
We will assume from now on that $\lambda$ satisfies $\lambda_1>\ldots>\lambda_k$.
We recall the hyperbolic localization functor of \cite{braden2003hyperbolic}, see also \cite{drinfeld2014theorem}.
We denote by $\bU_\db^\pm$ the unipotent part of $\gdb^\pm$.
\[\hyp^+\coloneq \mathrm{Inv}_{\bU_\db^+,!}\circ p_!^+\circ i_+^*\circ\mathrm{For}_{\bG}^{\bG^+}[-\dim \gdb+\dim \gdb^0]\colon D_c^b(\ep,\gdb)\ra D_c^b(\ep^0,
\gdb^0)\]
\[\hyp ^-\coloneq \mathrm{Inv}_{\bU_\db^-,!}\circ p_!^-\circ i_-^*\circ \mathrm{For}_{\bG}^{\bG^-}[-\dim \gdb+\dim \gdb^0]\colon D_c^b(\ep,\gdb)\ra D_c^b(\ep^0,
\gdb^0).\]
Let $S_k$ act on $\ZZ^k$ by permuting the entries. Then $\hyp^+$ and $\hyp^-$ depend only on the $S_k$ fundamental chamber in which $\lambda$ lies. If at some point we want to highlight the choice of $\lambda$ we write $\hyp_\lambda^\pm$.
Note that \emph{a-priori} it is not clear that the images of these functors are contained in the bounded derived categories. That they are well defined is a consequence of the following theorem. 
\begin{theorem}[{Braden's theorem, \cite[Theorem 1]{braden2003hyperbolic}, \cite{drinfeld2014theorem}}]\label{T:BraThm}
    There are natural isomorphisms
    \[\hyp^+\cong \mathrm{Inv}_{\bU_\db^-,*}\circ p_*^-\circ i_-^!\circ\mathrm{For}_{\bG_\db}^{\bG_\db^-}[-\dim \gdb+\dim \gdb^0],\]
    \[\hyp^-\cong \mathrm{Inv}_{\bU_\db^+,*}\circ p_*^+\circ i_+^!\circ\mathrm{For}_{\bG_\db}^{\bG_\db^+}[-\dim \gdb+\dim \gdb^0].\]
    Moreover,
    \[\DD\circ\hyp^+\cong \hyp^-\circ\DD\]
    and the functors restrict to
    \[\hyp^\pm\colon \sem(\ep,\gdb)\ra \sem(\ep^0,\gdb^0).\]
\end{theorem}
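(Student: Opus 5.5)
Braden's theorem is cited from \cite{braden2003hyperbolic} and \cite{drinfeld2014theorem}, so the plan is to reduce the equivariant statement to those non-equivariant results and then check the last two claims (duality and preservation of $\sem$) separately.

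\emph{Step 1: the two formulas agree.} I would first drop equivariance. The functors $\mathrm{Inv}_{\bU_\db^\pm,!}$ and $\mathrm{Inv}_{\bU_\db^\pm,*}$ are taken for unipotent groups acting trivially on $\ep^0$. Such a group is contractible, so both functors are inverse to inflation $\mathrm{Inf}_{\gdb^0}^{\gdb^\pm}$, up to the shift by $\pm 2\dim\bU_\db^\pm$ produced by $\HH^\bullet_c(\bU)$ versus $\HH^\bullet(\bU)$. The shift $[-\dim\gdb+\dim\gdb^0]$ is placed so that these shifts cancel symmetrically between the two formulas. After this, the isomorphism $p^+_!i_+^*\cong p^-_*i_-^!$ on $\gdb^0$-equivariant objects that are $\gm$-monodromic for $\chi_\lambda$ is exactly Braden's theorem. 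Objects of $D_c^b(\ep,\gdb)$ are monodromic because $\chi_\lambda$ factors through the connected group $\gdb$. Naturality in $\gdb^0$-equivariant families follows from Drinfeld--Gaitsgory's construction, which is functorial in the acting group. The same argument gives the statement for $\hyp^-$, with the roles of $\pm$ exchanged.

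\emph{Step 2: duality.} This follows formally from Step 1. Verdier duality exchanges $f_!$ with $f_*$ and $f^*$ with $f^!$, and it exchanges $\mathrm{Inv}_!$ with $\mathrm{Inv}_*$ up to the same shifts. Hence $\DD\circ\hyp^+=\DD\circ \mathrm{Inv}_{\bU^+,!}p^+_!i_+^*$ is identified with $\mathrm{Inv}_{\bU^+,*}p^+_*i_+^!\circ\DD$, and by Step 1 this is $\hyp^-\circ\DD$. The only thing to verify is that the shifts match, which is bookkeeping with $\dim\gdb-\dim\gdb^0=\dim\bU^+_\db+\dim\bU^-_\db$.

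\emph{Step 3: preservation of $\sem$.} This is the main point. Braden proves that hyperbolic localization preserves purity: $\hyp^+$ is a composite of $!$-functors, hence does not increase weights, while the $*$-description shows it does not decrease them. I would lift $\ic{\oo}$ to a pure Hodge module (or to a pure Weil sheaf). Then $\hyp^+\ic{\oo}$ is pure, and by \Cref{T:decomp} it is a direct sum of shifted simple equivariant perverse sheaves on $\ep^0$. Since $\ep^0\cong\epo\times\ldots\times\epk$ has finitely many $\gdb^0$-orbits with connected stabilizers (stabilizers in $\mathbf{G}_{\db^i}$ of quiver representations are connected, being unit groups of endomorphism algebras), those simples are the $\ic{\oo'}$. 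Hence $\hyp^\pm$ sends generators of $\sem^+$ into $\sem^+$ and extends additively. The obstacle I expect is in Step 1: checking that the equivariant lift of Braden's isomorphism, including the interaction with $\mathrm{Inv}$, is canonical and lands in the bounded category. I would handle this by working with $\gdb^0$-equivariance throughout and treating the $\bU$-part purely by the inflation equivalence.
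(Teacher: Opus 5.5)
Your strategy is sound, and Step 3 is exactly the paper's argument. You lift $\ic{\oo}$ to a pure Hodge module, use the $!$- and $*$-descriptions to bound the weights from both sides, and apply \Cref{T:decomp}. Your remark that stabilizers are unit groups of endomorphism algebras, hence connected, usefully justifies an assumption the paper only states.

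The real difference is in Step 1. The paper never reduces to $\gdb^0$. It applies Drinfeld--Gaitsgory directly on the quotient stack $\ep/\gdb$, whose $\gm$-fixed locus is $\ep^0/\gdb^0$ and whose attractors are $\ep^\pm/\gdb^\pm$. There $\mathrm{Inv}_{\bU^\pm_\db,!}$ and $\mathrm{Inv}_{\bU^\pm_\db,*}$ are just push-forwards along $\ep^0/\gdb^\pm\to\ep^0/\gdb^0$, so equivariance and naturality come with the theorem. You instead forget to $\gdb^0$ and apply Braden's original theorem. This is legitimate, since $\chi_\lambda$ is central in $\gdb^0$ and $D_c^b(\ep^0,\gdb^\pm)\to D_c^b(\ep^0,\gdb^0)$ is an equivalence because $\bU^\pm_\db$ is unipotent. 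Your route is more elementary and works.

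What does not work as written is the shift bookkeeping, which is essentially the whole content of the paper's short proof. The shift $[-\dim\gdb+\dim\gdb^0]=[-2\dim\bU^+_\db]$ appears on \emph{both} sides of each isomorphism. So it cannot ``cancel'' a discrepancy between $\mathrm{Inv}_{\bU,!}$ and $\mathrm{Inv}_{\bU,*}$. In Step 2 you also silently drop this shift, although $\DD$ reverses its sign. Take $\DD$ commuting with forgetful functors, as in the Bernstein--Lunts category, and consider the two possible normalizations.
\begin{itemize}
\item If $\mathrm{Inv}_{\bU,!}$ and $\mathrm{Inv}_{\bU,*}$ are both the inverse of inflation, Step 1 is fine, but Step 2 fails: you get $\DD\circ\hyp^+\cong\hyp^-\circ\DD[4\dim\bU^+_\db]$.
\item If, as you suggest, $\mathrm{Inv}_{\bU,!}\cong\mathrm{For}[2\dim\bU]$ and $\mathrm{Inv}_{\bU,*}\cong\mathrm{For}$, then $\hyp^\pm\cong\mathrm{For}\circ p^\pm_!\, i^*_\pm\circ\mathrm{For}$ does commute with $\DD$. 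But with the variety-level $i^!_\mp$, Step 1 only yields $\hyp^+\cong\mathrm{Inv}_{\bU^-_\db,*}\,p^-_*\,i^!_-\circ\mathrm{For}$, which is off by $[2\dim\bU^+_\db]$ from the displayed formula.
\end{itemize}
To make all three assertions hold at once, you must fix conventions explicitly, in the spirit of the paper's remark that the reference works over the stack. Take $\mathrm{Inv}_{\bU,!}$ to be the $!$-push-forward along the $B\bU$-fibration. Read $i^!_\mp\circ\mathrm{For}_{\gdb}^{\gdb^\mp}$ as the $!$-pullback along $\ep^\mp/\gdb^\mp\to\ep/\gdb$. Its factor $\ep/\gdb^\mp\to\ep/\gdb$ is smooth of relative dimension $\dim\gdb/\gdb^\mp=\dim\bU^+_\db$, and this supplies the missing $[2\dim\bU^+_\db]$.
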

\begin{proof}
Note that we need to introduce the shift by $[\dim \gdb^0-\dim \gdb]$ since in the reference the authors work over the stack $\bX/\bG$ and Verdier duality therefore differs by a shift of $[2\dim \bG]$. In particular, the above shifts appear in comparing the shriek pushforward along $\ep^0/\gdb^+\ra \ep^0/\gdb^0$ with the coinvariant functor we are working with.

Moreover, we use the fact that the $\gm$-fixed point-locus of a quotient stack $\bX/\bG$ is nothing but $\bX^0/\bG^0$.
\end{proof}

We recall that the reason the second part is true is that any $\ic{\oo}$ on the source can be lifted to a pure Hodge module, and the first part implies its image is again a pure mixed Hodge module. The claim then follows from \Cref{T:decomp}.
The following two theorems follow immediately from the definitions.
\begin{lemma}\label{L:hypassos}
Assume $k=3$.
    The following diagram commutes
\[\begin{tikzcd}
    \sem(\ep,\gdb)\arrow[r,"\hyp^+"]\arrow[d,"\hyp^+"]\arrow[dr,"\hyp^+"]&\sem(\mathbf{E}_{\db^1}\times \mathbf{E}_{\db^2+\db^3}, \mathbf{G}_{\db^1}\times \mathbf{G}_{\db^2+\db^3})\arrow[d,"\mathbbm{1}\boxtimes \hyp^+"]\\
    \sem( \mathbf{E}_{\db^1+\db^2}\times\mathbf{E}_{\db^3}, \mathbf{G}_{\db^1+\db^2}\times \mathbf{G}_{\db^3})\arrow[r,"\hyp^+\boxtimes \mathbbm{1}"]&  \sem( \mathbf{E}_{\db^1}\times\mathbf{E}_{\db^2}\times\mathbf{E}_{\db^3}, \mathbf{G}_{\db^1}\times\mathbf{G}_{\db^2}\times \mathbf{G}_{\db^3}) 
\end{tikzcd}\]
\end{lemma}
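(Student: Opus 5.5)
The plan is to reduce the statement to the transitivity of hyperbolic localization for a $\gm^2$-action, together with the functoriality of the fixed-point, attracting-locus and invariant constructions. Take $\lambda=(\lambda_1,\lambda_2,\lambda_3)$ with $\lambda_1>\lambda_2>\lambda_3$. The three arrows correspond to three cocharacters:
\begin{itemize}
\item the full $\chi_\lambda$ (diagonal arrow);
\item a cocharacter $\mu_{1|23}$ of type $(a,b,b)$ with $a>b$ (top arrow);
\item a cocharacter $\mu_{12|3}$ of type $(a,a,b)$ with $a>b$ (left arrow).
\end{itemize}
Since $\hyp^\pm$ only depends on the Weyl chamber (more precisely, on the facet) of the cocharacter, we may realize the diagonal arrow by a cocharacter in the same chamber as $\lambda$. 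Concretely, use $\lambda'=N\mu_{1|23}+\nu$ with $N\gg0$, where $\nu$ has type $(0,c,d)$ with $c>d$.

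The key steps, in order, are as follows.

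\emph{Step 1: commuting actions.} The two cocharacters $\mu_{1|23}$ and $\nu$ commute, and their joint fixed locus is $\mathbf{E}_{\db^1}\times\mathbf{E}_{\db^2}\times\mathbf{E}_{\db^3}$. Moreover, $\nu$ acts on $\ep^{0,\mu}=\mathbf{E}_{\db^1}\times\mathbf{E}_{\db^2+\db^3}$ trivially on the first factor and as the cocharacter defining $\hyp^+$ on the second factor. So $\hyp^+_\nu$ on this fixed locus is $\mathbbm{1}\boxtimes\hyp^+$. The same holds on the group side: the centralizers and the parabolics $\gdb^{+}$ for $\lambda'$ satisfy $\gdb^{+,\lambda'}=\gdb^{+,\mu}\cdot(\gdb^{0,\mu})^{+,\nu}$, and the same relation holds for unipotent radicals.

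\emph{Step 2: attracting loci compose.} For $N\gg0$ we have $\ep^{+,\lambda'}=(p^{+}_\mu)^{-1}\bigl((\ep^{0,\mu})^{+,\nu}\bigr)$, and $p^+_{\lambda'}=p^+_\nu\circ p^+_\mu$ on this locus. Both claims can be checked weight-by-weight on the linear space $\ep$, since everything is linear. This yields a cartesian square relating $i^*_+$, $p^+_!$ for $\lambda'$ with the composite of those for $\mu$ and $\nu$. Base change ($p_!$ commutes with $*$-restriction along the closed embedding) then gives
\[p^+_{\lambda'!}i^{*}_{+,\lambda'}\cong p^+_{\nu!}i^*_{+,\nu}p^+_{\mu!}i^*_{+,\mu}.\]

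\emph{Step 3: invariants and shifts.} $\mathrm{Inv}_{\bU^+_{\lambda'},!}$ factors as $\mathrm{Inv}_{(\bU^0_\mu)^{+,\nu},!}\circ \mathrm{Inv}_{\bU^+_\mu,!}$, because $\bU^+_{\lambda'}$ is the semidirect product of these two pieces. The shift $[-\dim\gdb+\dim\gdb^0]$ is additive along the chain $\gdb\supset\gdb^{0,\mu}\supset\gdb^{0,\lambda'}$. Combining Steps 2 and 3 gives commutativity of the upper triangle. The lower triangle is identical, using $\mu_{12|3}$ and $\nu'$ of type $(c,d,0)$.

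\emph{Step 4: restriction to $\sem$.} By \Cref{T:BraThm} all functors preserve $\sem$, so the diagram commutes on $\sem$.

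The main obstacle is Step 2 in the equivariant setting. One must check that the attracting locus of the composite cocharacter really equals the iterated attracting locus, which needs $N$ large relative to the finitely many weights of $\chi_\lambda$ on $\ep$ and on $\mathfrak{gl}_n$. One must also check that the invariant functors commute with $p_!$ for the stack quotients. I would handle this by working on quotient stacks, as in the proof of \Cref{T:BraThm}, where $\mathrm{Inv}_{\bU,!}$ is just $!$-pushforward along $\ep^0/\gdb^+\ra\ep^0/\gdb^0$. Alternatively, one can cite the transitivity of hyperbolic localization in \cite{drinfeld2014theorem}.
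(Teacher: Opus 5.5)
Your proposal is correct and takes essentially the paper's approach. The paper only asserts that the lemma "follows immediately from the definitions", and your unwinding is what that assertion amounts to: realizing the diagonal arrow by a cocharacter $N\mu+\nu$ in the same chamber, matching attracting loci, fixed loci and parabolics block by block, then using base change and the factorization $\bU^{+}_{\lambda'}=\bU^{+}_{\mu}\rtimes(\bU^{0}_{\mu})^{+,\nu}$ on quotient stacks with additive shifts. One simplification: the action on $\ep$ is linear with weights $\lambda_i-\lambda_j$, so the attracting locus is the same for every cocharacter in the open chamber, and $N\gg0$ is needed only to place $\lambda'$ in that chamber.
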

\begin{lemma}\label{L:hypsym}
    Let $w$ be a minimal representative of the maximal element in the relative Weyl-group of $\gdb^+$. Then
    the following diagram commutes.
    \[
    \begin{tikzcd}
        \sem(\ep,\gdb)\arrow[rr,"\hyp^+"]\arrow[d,"(\cdot w)^*"]&&\sem(\mathbf{E}_{\db^1}\times\mathbf{E}_{\db^2},\bG_{\db^1}\times \bG_{\db^2})\arrow[d,"\cong"]\\
         \sem(\ep,\gdb)\arrow[rr,"\hyp^-"]&&\sem(\mathbf{E}_{\db^2}\times\mathbf{E}_{\db^1},\bG_{\db^2}\times \bG_{\db^1})
    \end{tikzcd}
    \]
\end{lemma}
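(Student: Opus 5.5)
The plan is to reduce the statement to Braden's theorem and to the compatibility of hyperbolic restriction with pullback along an isomorphism of varieties intertwining the relevant group actions.

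First, choose $w\in\gdb$ normalizing $\gdb^0$ and conjugating $\gdb^+$ to $\gdb^-$. Concretely, $w$ is the block permutation in each $\gl_{\db_i}$ that swaps the $\db^1_i$- and $\db^2_i$-blocks. Then $w\chi_\lambda w^{-1}=\chi_{\lambda'}$ with $\lambda'=(\lambda_2,\lambda_1)$. Acting by $w$ on $\ep$ is a $\gdb$-equivariant automorphism up to the inner automorphism $\mathrm{Ad}(w)$ of $\gdb$. Since $\mathrm{Ad}(w)$ is inner, it induces an autoequivalence of $D_c^b(\ep,\gdb)$ isomorphic to the identity on equivariant objects. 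In particular $(\cdot w)^*$ fixes each $\ic{\oo}$, and hence acts as the identity on $\sem(\ep,\gdb)$.

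Second, transport the hyperbolic localization diagram along $w$. The map $x\mapsto wx$ sends $\ep^0$ to itself, identified via the swap isomorphism $\mathbf{E}_{\db^1}\times\mathbf{E}_{\db^2}\cong \mathbf{E}_{\db^2}\times\mathbf{E}_{\db^1}$. It sends $\ep^+$ (attracting for $\chi_\lambda$) to the attracting locus for $\chi_{\lambda'}$, which is the repelling locus $\ep^-$ for $\chi_\lambda$. It also intertwines $p^+,i_+$ with $p^-,i_-$, and $\bU_\db^+$ with $\bU_\db^-$. Each constituent functor ($\mathrm{For}$, $i^*$, $p_!$, $\mathrm{Inv}_!$) commutes with pullback along isomorphisms, by base change for isomorphisms. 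The shift $\dim\gdb^0-\dim\gdb$ is unchanged. This gives a natural isomorphism
\[\mathrm{swap}^*\circ\hyp^-\circ(\cdot w)^*\cong\hyp^+.\]
Here the right vertical arrow labelled $\cong$ in the diagram is $\mathrm{swap}^*$ together with the identification $\bG_{\db^1}\times\bG_{\db^2}\cong\bG_{\db^2}\times\bG_{\db^1}$ coming from $\mathrm{Ad}(w)$.

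Third, restrict to $\sem$. Braden's theorem (\Cref{T:BraThm}) shows that both $\hyp^\pm$ preserve $\sem$, so the isomorphism above descends to the commutative square.

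The main obstacle is the careful bookkeeping of equivariance. One must check that the pullback along $w$ of a $\gdb^+$-equivariant complex is naturally $\gdb^-$-equivariant, and that the coinvariant functors $\mathrm{Inv}_{\bU^\pm,!}$ correspond under $\mathrm{Ad}(w)$. I would first verify this at the level of the quotient stacks $\ep^+/\gdb^+\to\ep^0/\gdb^0$. There $w$ induces an isomorphism of stacks, and compatibility of $!$-pushforward and $*$-pullback with isomorphisms of stacks is formal.
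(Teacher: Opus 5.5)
Your argument is the paper's: the paper says the lemma follows directly from the definitions, and its only remark is that $\gdb$-equivariant complexes are unchanged by pullback along the action of $w$. Your proof, which transports the hyperbolic localization data along the block swap $w$ and uses $(\cdot w)^*\cong\mathrm{id}$ on $\sem(\ep,\gdb)$, is exactly that.

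One slip should be fixed. In general (e.g.\ whenever $\db^1_i\neq\db^2_i$ are both nonzero for some $i$), $w$ neither normalizes $\gdb^0$ nor preserves $\ep^0$. Instead it carries the $(\db^1,\db^2)$-block Levi, fixed locus and parabolic to the $(\db^2,\db^1)$-block ones. So $\chi_{\lambda'}$, $\ep^-$ and $\hyp^-$ must be read for the reordered decomposition $\db=\db^2+\db^1$, which your concrete choice of $w$ and the swap identification already implement.
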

Since the sheaves on the left side are $\gdb$-equivariant, they remain unchanged under the pullback of the map given by the action of $w$.
\subsection{Geometric realization of Hecke algebras}
In this section we recall some results of \cite{ChrissGinzburg1997} on how to describe the affine Hecke algebra $H_n(\bq)$ in terms of the Borel-Moore homology of the Steinberg variety, the Yoneda algebra of the Springer sheaf, and how simple modules are parametrized by simple $\gdb$-equivariant perverse sheaves on $\ep$. We also recall from \cite{Ciubotaru2026Jantzen} and \cite{FujitaHernandez2026} how the Jantzen filtration is realized geometrically in the case of Standard modules.

We start by recalling the Springer resolution
\[\mu\colon \tep\ra \ep.\]
It is constructed as follows.
Let $\mu\colon \Tilde{\mathcal{N}}\ra \mathcal{N}$ be the classical Springer resolution, \emph{i.e.}, $\Tilde{\mathcal{N}}$ consists of pairs $\{(x,\mathfrak{b}):x\in\mathfrak{b}\}$, where $\mathfrak{b}$ ranges over the Lie-algebras of Borel subgroups of $\gl_n$. Note that $\mu^{-1}(0)\cong \cb\coloneq \gl_n/\bB_n$.
The action of $\gl_n\times\gm$ lifts to an action on $\Tilde{\mathcal{N}}$ and taking the $\chi_\db$-fixed points defines \[\mu\colon \tep\ra \ep.\]
Let $\fm\in\mul$ and $\fm=\De_1+\ldots+\De_k$ an arranged order. We let $\db^i$ be the dimension vector of $\De_i$ and $\db=\db^1+\ldots+\db^k$ and recall the cocharacter $\chi_\lambda$ of $\gl_n$ defined in \Cref{S:JanGeo}. Throughout this section we assume that $\lambda_1>\ldots>\lambda_k$.
We denote the fixed points of $\chi_\lambda$ on $\tep$ by $\tep^0$ and its attracting and repelling locus by $\tep^\pm$.
The following diagram then commutes
\[\begin{tikzcd}
\tep^0\arrow[d,"\mu^0"]&\tep^\pm\arrow[r,"i_\pm",hookrightarrow]\arrow[l,"p^\pm"]\arrow[d,"\mu^\pm"]&\tep\arrow[d,"\mu"]\\
    \ep^0&\ep^\pm\arrow[r,"i_\pm",hookrightarrow]\arrow[l,"p^\pm"]&\ep
\end{tikzcd}\]
If we work with arbitrary regular weights $\lambda$, we decorate the respective maps with an index $\lambda$.
\subsubsection{Sheaf theoretic interpretation}
We recall the Springer sheaf $\cS_\db\coloneq \mu_*\underline{\CC}_{\tep}[\dim\tep]$.
By the decomposition theorem of \cite{BBD1982}, \[\cS_\db=\bigoplus_{k\in\ZZ,\fm\in\mul_\db}L_{\oo(\fm),k}\otimes\ic{\oo(\fm)}[k].\]
We set $L_{\fm}\coloneq\bigoplus_{k\in\ZZ} L_{\oo(\fm),k}$ and denote by \[\ext^\bullet(\cS_\db,\cS_\db)\] the Yoneda-algebra of $\cS_\db$ in $D_c^b(\ep)$, which is isomorphic to
\[\bigoplus_{\fm,\fn\in\mul_\db,k\in\ZZ}\mathrm{Hom}(L_{\fm},L_{\fn})\otimes \ext^k(\ic{\oo(\fm)},\ic{\oo(\fn)}),\]
see \cite[p. 447]{ChrissGinzburg1997},
and hence admits the algebra \[\bigoplus_{\fm\in\mul_\db}\ed(L_\fm)\] as a quotient. Thus the set $\{L_\fm\}_{\fm\in\mul_\db}$ is a family of simple $\ext^*(\cS_\db,\cS_\db)$-modules, see \cite[Theorem 8.6.12]{ChrissGinzburg1997}, and $\Z(\fm)\cong L_\fm$.

We let again $\gm$ act via $\chi_\lambda$ on $\ep$ and consider the direct product of the $\gm$-equivariant $\ext$-algebra 
\[\widehat{\ext}_{\gm}^\bullet(\cS_\db,\cS_\db).\]
The following is the equivariant version of \cite[Theorem 8.6.7]{ChrissGinzburg1997} and admits exactly the same proof.
\begin{theorem}
    There exists an isomorphism of $\CC[[s]]$-algebras
    \[\real_{const,s}\colon H_{n,\db,s,\lambda}(q)\ra \widehat{\ext}_{\gm}^\bullet(\cS_\db,\cS_\db).\]
\end{theorem}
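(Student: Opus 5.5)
The plan is to follow the proof of \cite[Theorem 8.6.7]{ChrissGinzburg1997} with $\gm$-equivariant Borel--Moore homology and $\gm$-equivariant Ext in place of their non-equivariant versions, completing at the end with respect to the augmentation ideal $(s)$ of $\HH_\gm^\bullet(\uc_*)=\CC[s]$.

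\textbf{Step 1: the convolution algebra of the Steinberg variety.} Let $Z_\db=\tep\times_{\ep}\tep$ be the Steinberg variety. By \cite[Theorem 8.6.7]{ChrissGinzburg1997} (in its $\gdb\times\CC^\times$-equivariant form, Theorem 7.2.5 there), the specialization of $H_n(\bq)$ at $\bq=q$ is identified with $K^{\gl_n\times\CC^\times}(Z)$. Its completion at the central character $\omega_\db$ is identified, via localization to the fixed points of the semisimple element $(s_\db,q)$, with $K^{\gdb}(Z_\db)^\wedge$. Here $s_\db$ is the element determined by $\chi_\db$ evaluated at $q$.

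\textbf{Step 2: incorporate the $\gm$-action.} The cocharacter $\chi_\lambda$ commutes with $\chi_\db$, so the one-parameter family $\omega_{\db,s,\lambda}$ corresponds to deforming $s_\db$ along $\exp(\lambda s)$ inside the torus $\chi_\lambda(\gm)$. Equivalently, we complete $K^{\gdb\times\gm}(Z_\db)$ at the corresponding point of $\mathrm{Spec}$ of the representation ring of $\gm$. The resulting $\CC[[s]]$-algebra is $H_n(q)\otimes_{Z_n}\widehat{\omega_{\db,s,\lambda}}=H_{n,\db,s,\lambda}(q)$.

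\textbf{Step 3: pass to cohomology.} We apply the equivariant bivariant Riemann--Roch map, that is, the Chern character composed with the Todd class twist, as in \cite[§5.11, §8.6]{ChrissGinzburg1997}. This is an algebra isomorphism from the completed equivariant $K$-theory to $\widehat{\tH}^{BM,\gm}_\bullet(Z_\db)$. The exponential $\exp(\lambda_i s)$ appearing in the definition of the twisted action is exactly the equivariant Chern character of the character $t\mapsto t^{\lambda_i}$, which is why the parameter matches.

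\textbf{Step 4: identify Borel--Moore homology with Ext.} By \cite[Theorem 8.6.7]{ChrissGinzburg1997}, whose proof is purely sheaf-theoretic via base change $\mu_*\DD=\DD\mu_*$ for the proper map $\mu$, there is an isomorphism
\[\tH_\bullet^{BM,\gm}(Z_\db)\cong\ext_\gm^\bullet(\cS_\db,\cS_\db).\]
This isomorphism is compatible with convolution and Yoneda product, and it holds verbatim equivariantly. After taking direct products over degrees, its composition with Steps 1--3 gives $\real_{const,s}$. $\CC[[s]]$-linearity holds because all maps are $\HH_\gm^\bullet(\uc_*)$-linear.

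\textbf{Main obstacle.} The hard part is the completion and Riemann--Roch comparison in Step 3. One must check two things: that the equivariant Chern character map becomes an isomorphism after completing on both sides, and that the completion of $K$-theory at $\omega_{\db,s,\lambda}$ matches the $(s)$-adic completion of $\gm$-equivariant homology. I would handle this as in the non-equivariant case. First reduce via cellular fibration and Thom isomorphisms to $\tep$ and to the fibres over fixed points, where both sides are free over the completed coefficient ring. Then check that the map is an isomorphism after setting $s=0$, where it is \cite[Theorem 8.6.7]{ChrissGinzburg1997} itself, and conclude by Nakayama's lemma over $\CC[[s]]$.
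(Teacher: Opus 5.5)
Your overall route is the one the paper intends. The paper gives no argument beyond calling the statement the $\gm$-equivariant version of \cite[Theorem 8.6.7]{ChrissGinzburg1997} with the same proof. Your Steps 3--4 (Todd-twisted Riemann--Roch, then $\tH^{BM}$ versus $\ext$ for the proper map $\mu$) and the closing Nakayama argument are the right ingredients.

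\textbf{The gap is in Steps 1--2}, which is exactly where the $\CC[[s]]$-family has to come from.
\begin{itemize}
\item The algebra $H_{n,\db,s,\lambda}(q)=H_n(q)\otimes_{Z_n}\CC[[s]]$ is a base change along the formal arc $s\mapsto\omega_{\db,s,\lambda}$ through $\omega_\db$, not a completion. It is free of rank $(n!)^2$ over $\CC[[s]]$. Its reduction mod $s$ is the specialization $H_{n,\db}(q)$, which matches the \emph{non-equivariant} $\ext^\bullet(\cS_\db,\cS_\db)$.
\item Your Step 1 instead produces the completion of $H_n(q)$ at $\omega_\db$. That object matches $K^{\gdb}(Z_\db)^\wedge$, i.e.\ the $\gdb$-equivariant side.
\item Step 2 completes $K^{\gdb\times\gm}(Z_\db)$ only at a point of $\mathrm{Spec}\,R(\gm)$. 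This leaves the $R(\gdb)$-directions neither specialized nor completed, so the result has infinite rank over $\CC[[s]]$. It is not $H_{n,\db,s,\lambda}(q)$.
\item Riemann--Roch applied to this object lands in $\gdb$-equivariant Borel--Moore homology, not in $\widehat{\tH}^{BM,\gm}_\bullet(Z_\db)$. So Steps 1--4 do not compose to a map with the stated source and target.
\item Nakayama cannot repair this, because the mod-$s$ reduction of your Step 2 object is not $H_{n,\db}(q)$.
\end{itemize}

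\textbf{What is missing} is a restriction of equivariance from $\gdb$ to the torus $\chi_\lambda(\gm)$, together with the freeness that makes this restriction commute with the identifications. One way to supply it:
\begin{enumerate}
\item Start from the completed statement $H_n(q)\otimes_{Z_n}\widehat{(Z_n)}_{\omega_\db}\cong\widehat{\ext}^\bullet_{\gdb}(\cS_\db,\cS_\db)$, obtained by localization at $\chi_\db(q)$ plus Riemann--Roch.
\item The arc factors through $\widehat{(Z_n)}_{\omega_\db}$. This ring is isomorphic to the completion of $R(\gdb)$, because the $S_n$-stabilizer of $\chi_\db(q)$ is $W_\db=\prod_i S_{\db_i}$, so $T/W_\db\to T/S_n$ is \'etale there.
\item Check that under the Chern character the arc becomes restriction $\widehat{\HH}^\bullet_{\gdb}(\uc_*)\to\widehat{\HH}^\bullet_{\gm}(\uc_*)=\CC[[s]]$ along $d\chi_\lambda$. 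This is where $\exp(\lambda_i s)$ genuinely enters.
\item Use that $\ext^\bullet_{\gdb}(\cS_\db,\cS_\db)$ is free over $\HH^\bullet_{\gdb}(\uc_*)$, by parity vanishing. This gives $\widehat{\ext}^\bullet_{\gdb}(\cS_\db,\cS_\db)\otimes_{\widehat{\HH}^\bullet_{\gdb}(\uc_*)}\CC[[s]]\cong\widehat{\ext}^\bullet_{\gm}(\cS_\db,\cS_\db)$.
\end{enumerate}
Alternatively, run the localization directly for the torus generated by $\chi_\db(q)$ and $\chi_\lambda(\gm)$. In that case remember that restriction to fixed points is an algebra map only after twisting by the normal-bundle Euler class.

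Either way, this base-change step carries the real content of the equivariant upgrade; the Riemann--Roch comparison you flagged as the main obstacle is not the hard part. A minor citation slip: the identification $H_n(\bq)\cong K^{\gl_n\times\CC^\times}(Z)$ is Theorem 7.2.5 of Chriss--Ginzburg, while 8.6.7 is the $\tH^{BM}$ versus $\ext$ comparison you use in Step 4.
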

Recall that we identified  $\ep$ with the moduli space of the quiver $Q=A_\infty$ with dimension vector $\db$. We now let for $i\in\ZZ$ be $Q_{(i)}$ the quiver with index set $\ZZ$ but this time oriented $j\ra j+1$ if $j+1\le i$ and $j\ra j-1$ if $i< j$.
For a dimension-vector $\db$, we write $\ep^{(i)}$ for the corresponding $\gdb$-variety parameterizing $Q_{(i)}$-representations with dimension-vector $\db$. Note that for given $\db$ and $ i\gg0$, this is nothing but $\ep$.
The $\gdb$ orbits are again parametrized by multisegments $\fm\in\mul_\db$ and we write for $\fm\in\mul_\db$ $\oo_i(\fm)$ for the corresponding orbit. Namely, for $\De$ a segment with dimension vector $\db$, we define a map $T_i(\De)$ assigning to an arrow $i\mapsto i\pm 1$ the identity if $a\le i,i\pm 1\le b$ and $0$ otherwise. For an arbitrary multisegment we just take the direct sum as in the definition of $T_i$.

One can define analogously to $\cS_{\db}$ a Springer sheaf \[\cS_{\db,i}=\bigoplus_{k\in\ZZ,\fm\in\mul_\db}L_{\oo_i(\fm),k}\otimes\ic{\oo_i(\fm)}[k],\]
We set $L_{\fm,i}\coloneq\bigoplus_{k\in\ZZ} L_{\oo_i(\fm),k}$ and consider the graded $\ext$-algebra $\ext^\bullet(\cS_{\db,i},\cS_{\db,i})$ or its equivariant version $\widehat{\ext}_{\gm}^\bullet(\cS_{\db,i},\cS_{\db,i})$, where we let $\gm$ act on $\ep$ via a cocharacter of $\gdb$.
For a segment $[a,b]$ we write \[[a,b]_i\coloneq \begin{cases}
    [a,b]&\text{ if }b\le i,\\
    [a,i-1]+[i,i]+\ldots+[b,b]&\text{ if }a\le i< b,\\
     [a,a]+\ldots+[b,b]&\text{ if }i< a,\\
\end{cases}\]
\begin{theorem}[\cite{VaragnoloVasserot2011Canonical}]
    There exists an isomorphism of $\CC[[s]]$-algebras
    \[\widehat{\ext}_{\gm}^\bullet(\cS_{\db,i},\cS_{\db,i})\ra \widehat{\ext}_{\gm}^\bullet(\cS_\db,\cS_\db),\] where $\gm$ acts on $\ep^{(i)}$ and $\ep$ via $\chi_\lambda$.
    Moreover, the set $\{L_{\fm,i}\}_{\fm\in\mul_\db}$ parametrizes the simple modules of $H_{n,\db,s,\lambda}(q)$. In particular, $L_{[a,b],i}\cong \Z([a,b]_i)$.
\end{theorem}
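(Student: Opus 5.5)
The plan is to realize the change of orientation from $Q$ to $Q_{(i)}$ by a partial Fourier transform, and to transport everything, including Springer sheaves, $\ext$-algebras and simple modules, along it. Write $\ep=\prod_j\hom(\CC^{\db_j},\CC^{\db_{j+1}})$. The quiver $Q_{(i)}$ differs from $Q$ only in the arrows between $j$ and $j+1$ for $j\ge i$, which are reversed. The corresponding factors $\hom(\CC^{\db_j},\CC^{\db_{j+1}})$ are replaced by their duals $\hom(\CC^{\db_{j+1}},\CC^{\db_j})$ via the trace pairing. Hence $\ep^{(i)}$ is obtained from $\ep$ by taking the dual of the vector bundle (over the remaining factors) spanned by the reversed factors. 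Let
\[\Phi_i\colon D_c^b(\ep,\gdb\times\gm)_{\mathrm{mon}}\ra D_c^b(\ep^{(i)},\gdb\times\gm)_{\mathrm{mon}}\]
denote the Fourier--Sato transform along this bundle.

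First step: check that $\Phi_i$ is defined on the relevant sheaves. It is an equivalence on conic (monodromic) complexes and is $t$-exact for the perverse $t$-structure up to the standard shift. All $\gdb$-equivariant complexes on $\ep$ are conic, since the scalar action of $\CC^\times$ on each reversed factor is induced by central elements of $\gdb$ (for example, scalars on $\CC^{\db_j}$ for $j>i$). Moreover, $\Phi_i$ commutes with the $\gdb$-action, hence with the action of $\gm$ through $\chi_\lambda$, which factors through $\gdb$. Therefore $\Phi_i$ gives a graded isomorphism
\[\widehat{\ext}^\bullet_\gm(\cS_{\db},\cS_\db)\cong\widehat{\ext}^\bullet_\gm(\Phi_i\cS_\db,\Phi_i\cS_\db)\]
of $\CC[[s]]$-algebras.

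Second step: show $\Phi_i(\cS_\db)\cong\cS_{\db,i}$. Following Lusztig, I would write $\cS_\db$ as an iterated induction, that is, a pushforward from flag-type varieties, of constant sheaves on one-vertex quiver varieties. Fourier transform commutes with Lusztig's induction functor, and the one-vertex pieces carry no arrows, so nothing changes there. Hence $\Phi_i\cS_\db$ is the analogous induction on $\ep^{(i)}$, which is $\cS_{\db,i}$; since only isomorphism classes of objects matter, the shifts can be normalized. Combining the two steps gives the algebra isomorphism. Since $\Phi_i$ is an equivalence sending simple perverse sheaves to simple perverse sheaves, it matches the decompositions of the two Springer sheaves, and so the multiplicity spaces $\{L_{\fm,i}\}$ coincide with the $\{L_{\fm'}\}$ up to a bijection $\mul_\db\ra\mul_\db$. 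The multiplicity spaces $\{L_{\fm'}\}$ are the simple modules of $H_{n,\db,s,\lambda}(q)$ by the previous theorem together with \cite[Theorem 8.6.12]{ChrissGinzburg1997}. This proves the parametrization claim.

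The main obstacle is identifying this bijection, in particular $L_{[a,b],i}\cong\Z([a,b]_i)$. For a single segment, $\db=\grdim([a,b])$ and $\ep$ is a vector space in which $\oo([a,b])$ is the open orbit, so $\Z([a,b])=L_{[a,b]}$ corresponds to $\ic{\oo([a,b])}=\underline{\CC}_{\ep}[\dim\ep]$. The partial Fourier transform of the constant sheaf on a product of vector spaces is the constant sheaf on the untouched factors tensored with the skyscraper at $0$ on the dualized factors. This is the IC sheaf of the orbit in $\ep^{(i)}$ where the unreversed arrows are isomorphisms and the reversed arrows vanish. That orbit is exactly $\oo_i$ of $[a,i-1]+[i,i]+\ldots+[b,b]$ (respectively $[a,b]$ or $[a,a]+\ldots+[b,b]$ in the other cases). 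The delicate point is the bookkeeping of which arrow at the boundary vertex $i$ is reversed, which must be matched against the case split defining $[a,b]_i$. For general $\fm$, I would then use compatibility of $\Phi_i$ with induction to reduce to products of segments via standard modules and triangularity.
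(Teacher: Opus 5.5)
Your first two steps give a valid argument where the paper has none: it only cites \cite{VaragnoloVasserot2011Canonical}, where both $\ext$-algebras are identified with the same KLR algebra. The fibrewise scaling on the reversed factors comes from a cocharacter of $\gdb$, so equivariant complexes are conic. Lusztig's compatibility of Fourier--Sato with induction then gives $\Phi_i(\cS_\db)\cong\cS_{\db,i}$, up to blockwise shifts that do not affect $\widehat{\ext}_\gm^\bullet$. This yields $L_\fm\cong L_{\sigma_i(\fm),i}$, which is exactly the compatibility $\ct^i_{\db,\rho}\circ\cF_i=\ct_{\db,\rho}$ the paper uses later, and hence the parametrization.

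The gap is in the final identification, and it has two parts.

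\emph{Direction.} $L_{[a,b],i}$ is the multiplicity space of $\mathrm{IC}(\oo_i([a,b]))=\uc_{\ep^{(i)}}[\dim\ep^{(i)}]$. So you need $\Phi_i^{-1}$ of the constant sheaf on $\ep^{(i)}$. Computing $\Phi_i(\uc_{\ep}[\dim\ep])$ only tells you which $L_{-,i}$ is $\Z([a,b])$. For one segment both computations happen to give the same support type, so this part is repairable.

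\emph{Orbit identification (decisive).} With $Q_{(i)}$ as defined, vertex $i$ is a sink ($i-1\ra i\leftarrow i+1$). The reversed maps are exactly the $T_j$ with $j\ge i$, and $T_{i-1}$ is not reversed. Hence $\Phi_i^{-1}(\uc_{\ep^{(i)}}[\dim\ep^{(i)}])$ is the shifted constant sheaf on $\{T_i=T_{i+1}=\cdots=0\}$. Its generic point is $T([a,i])\oplus T([i+1,i+1])\oplus\cdots\oplus T([b,b])$.

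Run correctly, your argument therefore gives $L_{[a,b],i}\cong\Z([a,i]+[i+1,i+1]+\cdots+[b,b])$, not $\Z([a,i-1]+[i,i]+\cdots+[b,b])$. The two differ whenever $a<i<b$. For $\db=\grdim([0,2])$ and $i=1$, you get the constant sheaf on the line $\{T_1=0\}=\overline{\oo([0,1]+[2,2])}$, whereas $\oo([0,0]+[1,1]+[2,2])=\{0\}$.

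So your sentence identifying the orbit as $\oo_i([a,i-1]+[i,i]+\cdots+[b,b])$ is false. The boundary bookkeeping you flagged cannot be resolved in favour of the displayed formula; it looks like an index slip in the middle case of $[a,b]_i$. Note that $[a,i]+[i+1,i+1]+\cdots+[b,b]$ reduces to the first case at $b=i$ and to the third at $a=i+1$, while the displayed formula does not.

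You need to carry out this computation explicitly and state the corrected label (or move the sink to $i-1$), rather than assert the match. Your closing remark about general $\fm$ is not needed for the statement.
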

We now let $\gm$ act on $\ep^{(i)}$ via $\chi_\lambda$ and define analogously as above the fixed point locus $\ep^{0,(i)}\cong \epo^{(i)}\times\ldots\times \epk^{(i)}$ and the hyperbolic localization functors $\hyp_\lambda^{\pm,{(i)}}$. We denote the maps between the attracting/repelling locus and the fixed points by $p^{\pm,i}$ and $i_{\pm,i}$.

We let $\fm\in\mul_\db$ write it as $\De_1+\ldots+\De_k$ and set \[T_i(\fm)=T_i(\De_1)\oplus\ldots\oplus T_i(\De_k).\] We let $\db^j$ be the dimension vector of $\De_j$ and let $\lambda$ be such that $\ep^{0,(i)}\cong \epo^{(i)}\times\ldots\times \epk^{(i)}$ and $T_i(\fm)$ is a $\chi_\lambda$-fixed point. We denote its inclusion into $\ep^{(i)}$ by $i_{T_i(\fm)}$. We assume that the segments $\De_j$ are in an arbitrary order.
We then have two natural $\widehat{\ext}_{\gm}^\bullet(\cS_{\db,i},\cS_{\db,i})$-modules
\[\widehat{\HH}_{\gm}^\bullet(i_{T_i(\fm)}^!\cS_{\db,i}),\, \widehat{\HH}_{\gm}^\bullet(i_{T_i(\fm)}^*\cS_{\db,i})\] together with a morphism between them coming from the natural transformation \[i_{T_i(\fm)}^!\ra i_{T_i(\fm)}^*.\]
We call the order of the segments $\De_1,\ldots,\De_k$ $(i)$-arranged if 
\[\begin{cases}
    \dim_\CC\hom_{Q_{(i)}}(T_i(\De_b),T_i(\De_a))=0&\text{if } a<b\text{ and }\De_a\neq\De_b,\\
    \dim_\CC\ext_{Q_{(i)}}^1(T_i(\De_a),T_i(\De_b))=0&\text{if } a<b.
\end{cases}\]
We now come to the Rogawski conjecture of \cite{Rogawski1985}, which links the grading on $\irt(\fm)$ to the dimensions of stalks of simple perverse sheaves on $\ep$. For degenerate affine Hecke algebras of type $A$ this was proven in \cite{Suzuki1998} and it was announced without details for arbitrary Hecke algebras in \cite{Ginzburg1987}. Recently, proofs of it were given in \cite{FujitaHernandez2026} and \cite{Ciubotaru2026Jantzen}.
\begin{theorem}[Rogawski-conjecture]\label{T:RoCon}
Assume the segments $\De_1,\ldots,\De_k$ are in $(i)$-arranged order. 
For all $\lambda\in\ZZ^k$ there exist isomorphisms of $H_{n,\db,s,\lambda}(q)$-modules
\[\widehat{\HH}_{\gm}^\bullet(i_{T_i(\fm)}^!\cS_{\db,i})\cong L_{\De_1,i}\times\ldots\times L_{\De_k,i}[[s]]_\lambda,\]
\[\widehat{\HH}_{\gm}^\bullet(i_{T_i(\fm)}^*\cS_{\db,i})\cong L_{\De_k,i}\times\ldots\times L_{\De_1,i}[[s]]_{\overline{\lambda}}.\]
Furthermore, for suitably generic $\lambda$, $L_{\De_1,i}\times\ldots\times L_{\De_k,i}[j]$ is semi-simple for all $j\in\ZZ$ and the multiplicity of $L_{\fn,i}$ in $L_{\De_1,i}\times\ldots\times L_{\De_k,i}[j]$ equals
\[\dim_\CC\mathbb{H}^{j-\dim\oo_i(\fm)}(i_{T_i(\fn)}^*\ic{\oo_i(\fn)}).\]
\end{theorem}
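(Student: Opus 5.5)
The proof splits into two parts: the two module isomorphisms, and the graded multiplicity statement for generic $\lambda$.

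\textbf{Module isomorphisms.} I would follow the strategy of \cite{Ciubotaru2026Jantzen} and \cite{FujitaHernandez2026}, working equivariantly for the $\gm$-action via $\chi_\lambda$ on $\ep^{(i)}$.

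First, compute $\widehat{\HH}_{\gm}^\bullet(i_{T_i(\fm)}^!\cS_{\db,i})$. By base change along the Springer map, it is the completed equivariant Borel--Moore homology of the fibre $\mu^{-1}(T_i(\fm))$. This fibre is a variety of flags stable under $T_i(\fm)$, and it carries a $\gm$-action through $\chi_\lambda$.

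The $(i)$-arranged condition is used to show that the fibre has an affine paving compatible with the $\chi_\lambda$-attracting decomposition. Its fixed points are products of fibres for the single segments $T_i(\De_j)$, and each of these is a point because $T_i(\De_j)$ lies in the open orbit of its own dimension vector. Consequently the equivariant Borel--Moore homology is free over $\CC[[s]]$ of rank $|W|/|W_\alpha|$.

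Next, I would match the action of $\real_{const,s}$, transported through the Varagnolo--Vasserot isomorphism, with the convolution description of induction. The induction $L_{\De_1,i}\times\cdots\times L_{\De_k,i}[[s]]_\lambda$ is generated by a vector on which the parabolic subalgebra acts through the one-dimensional characters $L_{\De_j,i}$ twisted by $\exp(\lambda_j s)$. The fundamental class of the attracting cell plays this role: the $X$-operators act on it by the equivariant Chern classes, which equal the central-character weights shifted by $\lambda_j s$. Frobenius reciprocity then gives a morphism from the induced module, and freeness together with equality of ranks makes it an isomorphism.

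The $*$-version follows by Verdier duality, using that $\cS_{\db,i}$ is self-dual. Duality reverses the attracting direction, which replaces $\lambda$ by $\overline{\lambda}$ and reverses the order of the segments.

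\textbf{Identification of the filtration.} Under these isomorphisms, the natural map $i^!\to i^*$ becomes the intertwining operator $J(s)$ up to a unit. Both are module maps between these modules. After inverting $s$, the induced modules become irreducible over $\CC((s))$ for generic $\lambda$, so any such map is unique up to a scalar in $\CC((s))^\times$. I would pin down the normalising power of $s$ by computing on the fixed-point components through equivariant localization; the exponent $\alpha_+$ shows up there as the Euler class of the normal bundle.

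It follows that the filtration $V_j$ is the filtration by order of divisibility by $s$ of the image of $i^!$ inside $i^*$. This is exactly the weight, or hyperbolic, filtration studied in \cite{Ciubotaru2026Jantzen}.

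\textbf{Multiplicities via mixed Hodge modules.} For the multiplicity statement, lift $\cS_{\db,i}$ to a pure mixed Hodge module. The functors $i^!$ and $i^*$ at a point on an attracting slice are hyperbolic in the sense of Braden's theorem (\Cref{T:BraThm}), so the relevant stalk cohomologies are pure. By the decomposition theorem (\Cref{T:decomp}), the module then decomposes as $\bigoplus_{\fn} L_{\fn,i}\otimes \widehat{\HH}^\bullet(i^!_{T_i(\fm)}\ic{\oo_i(\fn)})$.

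Purity, combined with Hard Lefschetz for the action of $s$ (the argument of \cite{Ciubotaru2026Jantzen}), shows that the map $i^!\to i^*$ on each isotypic component is a Lefschetz-type map. Its associated graded has the following two properties:
\begin{enumerate}
  \item it is semisimple, since the grading is induced by a pure-weight splitting;
  \item in degree $j$ its multiplicity spaces are the stalks $\HH^{j-\dim\oo_i(\fm)}(i^*_{T_i(\fm)}\ic{\oo_i(\fn)})$; the shift by $\dim\oo_i(\fm)$ converts $!$-stalks into $*$-stalks by purity and parity vanishing.
\end{enumerate}
Genericity of $\lambda$ is needed so that the $s$-adic filtration agrees with the weight filtration, by \Cref{L:lambdagen}.

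\textbf{Main obstacle.} The hardest step is the identification of $i^!\to i^*$ with $J(s)$ together with the precise exponent, and, relatedly, the Lefschetz argument that makes the $s$-adic filtration strict with respect to weights. For both I would rely directly on the results of \cite{Ciubotaru2026Jantzen} and \cite{FujitaHernandez2026}.
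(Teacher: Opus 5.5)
Your outline rests on \cite{Ciubotaru2026Jantzen} and \cite{FujitaHernandez2026}, as the paper's proof does, but the one step of that proof that is not a citation is missing or misstated. The semisimplicity and multiplicity statement is proved in \cite[Theorem 7.18]{FujitaHernandez2026} and \cite{Ciubotaru2026Jantzen} only for $\lambda$ in a certain open region of $\ZZ^k$ determined by $Q$ or $Q^{(i)}$. Your contracting-slice and Hard Lefschetz sketch carries the same restriction, since it needs $\chi_\lambda$ to contract a transversal slice at $T_i(\fm)$. The paper reaches all suitably generic $\lambda$ through \Cref{L:lambdagen}. There $J(s)$ is rational in $\lambda$, so the graded pieces are constant off a proper Zariski-closed set, and this generic locus meets the region where the theorem is known. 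You invoke \Cref{L:lambdagen} instead ``so that the $s$-adic filtration agrees with the weight filtration''. The lemma says nothing of that kind, and you never transfer the result out of the region where your argument applies. This step is not cosmetic: by the Remark after \Cref{T:assoc2}, the filtration genuinely depends on $\lambda$ among tuples with distinct entries.

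Two steps of your geometric sketch also fail as written.

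\textbf{The map $i^!\to i^*$ is zero on all of $\ep^{(i)}$.} The natural map $i_{T_i(\fm)}^!\cS_{\db,i}\to i_{T_i(\fm)}^*\cS_{\db,i}$ on $\ep^{(i)}$ vanishes whenever some $\De_j$ has length at least $2$; for instance $\De_1=[1,1]$, $\De_2=[0,1]$. In that case $T_i(\fm)$ lies in the open orbit of the positive-dimensional fixed locus $\ep^{0,(i)}$. Hence $T\oo_i(\fm)$ at that point contains the nonzero space $T\ep^{0,(i)}$, on which $\chi_\lambda$ acts trivially. Locally $\cS_{\db,i}$ is a product along the orbit, so the map is the slice map multiplied by the equivariant Euler class of $T\oo_i(\fm)$, and that class vanishes. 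The map is therefore not $J(s)$ up to a unit, and your uniqueness-up-to-$\CC((s))^\times$ argument has nothing to apply to. You must first restrict to a $\chi_\lambda$-stable transversal slice. There $T_i(\fm)$ is the only fixed point, because segments have no self-extensions, and the map becomes invertible after inverting $s$.

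\textbf{Equal rank does not give an isomorphism.} A morphism between free $\CC[[s]]$-modules of equal rank need not be an isomorphism. For two linked segments, both orders of induction are free of the same rank and isomorphic over $\CC((s))$, yet only the arranged order gives the costalk. What is missing is surjectivity modulo $s$, i.e.\ that the non-equivariant costalk is the standard module. That, and not an affine paving, is where the $(i)$-arranged hypothesis is used.
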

\begin{proof}
For the first two isomorphisms we refer the reader to \cite[Section 7]{FujitaHernandez2026} for the general orientations $Q^{(i)}$ and \cite[Lemma 4.11.]{Ciubotaru2026Jantzen} for $Q$.
For the second claim, we note that it is proven in \cite[Theorem 7.18]{FujitaHernandez2026}, \emph{cf}. also \cite[Main Theorem]{Ciubotaru2026Jantzen} for $\lambda$ lying in a certain open subset of $\ZZ^k$ determined by $Q$ or $Q^{(i)}$. Via \Cref{L:lambdagen} this description can be extended to all sufficiently generic $\lambda$.
\end{proof}
Fix now dimension vectors $\db^1,\ldots,\db^k\in\grdim(Q)$ and $\rho\in \cuss$. Let $\db=\db^1+\ldots+\db^k$ and $\lambda\in\ZZ^k$ as always.
We consider $\sem(\ep,\gdb)$ as $\ZZ\twh$ module via $t\cdot \cf=\cf[-1]$.
Let ${\ZZ[\irr_{\db,\rho}]\twh}^*$ be the dual $\ZZ\twh$-module of $\ZZ[\irr_{\db,\rho}]\twh$.
Define an isomorphism of $\ZZ\twh$-modules
\[\ct_{\db,\rho}\colon \sem(\ep,\gdb)\ra {\ZZ[\irr_{\db,\rho}]\twh}^*\]
sending for $\fm\in \mul_\db$ the sheaf $\ic{\oo(\fm)}$
to the function
\[\delta_{\fm_\rho}(\Z(\fn_\rho))=\begin{cases}
    1&\text{ if } \fn=\fm,\\0&\text{ otherwise}.
\end{cases}\]
We define similarly for all $i\in\ZZ$
\[\ct_{\db^1,\ldots,\db^k,\rho}^i\colon \sem(\epo^{(i)}\times\ldots\times\epk^{(i)},\bG_{\db^1}\times\ldots\times \bG_{\db^k})\ra {\ZZ[\irr_{\db^1,\rho}\times\ldots\times \irr_{\db^k,\rho}]\twh}^*.\]
We also adapt the multiplications $\grtim,\grtimt$ on $\ZZ[\irr][t,t^{-1}]$ by $\grtim^i$ and $\grtimt^i$ for all $i\in\ZZ$ as follows. Note that $\grtim,\grtimt$ were defined with certain normalizations from the Jantzen-filtration on the product. We replace in all such normalizations the quiver $Q$ by $Q^{(i)}$ and for a segment $\De$ the representation $T(\De)$ by $T_i(\De)$. 
\begin{lemma}\label{L:arrangedorderhyp}
    Let $\fm,\fn\in\mul_\db$, $\fm=\De_1+\ldots+\De_k$ in $(i)$-arranged form and $\db^j$ the dimension vector of $\De_j$.
    Then
    \[\ct_{\db,\rho}^i(\ic{\oo_i(\fn)})(L_{\De_1,i}\grtimt^i\ldots\grtimt^i L_{\De_k,i})=\]\[=\ct_{\db^1,\ldots,\db^k,\rho}^i(\hyp^+(\ic{\oo_i(\fn)}))(L_{\De_1,i}\otimes\ldots\otimes L_{\De_k,i}).\]
    The analogous statements also hold for the ordinary quiver $Q$.
\end{lemma}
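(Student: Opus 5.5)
The plan is to compute both sides as generating functions in $t$ and match them term by term, using \Cref{T:RoCon} for the left side and Braden's theorem for the right side.

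\textbf{Left side.} Expand $L_{\De_1,i}\grtimt^i\ldots\grtimt^i L_{\De_k,i}$. By associativity (\Cref{T:grprod}, transported to $Q^{(i)}$) this equals $t^{N}\sum_j [L_{\De_1,i}\times\ldots\times L_{\De_k,i}[j]]_{ss}t^j$. Here $N$ is the sum of the twists $\alpha_+(\De_a,\De_b)-\langle\De_a,\De_b\rangle$ over $a<b$, computed with $T_i$. Since the order is $(i)$-arranged, the Hom and Ext conditions in the definition make $\alpha_+$ and $\langle-,-\rangle$ explicit. I expect $N$, together with the shift $\alpha_+(\pi_1,\ldots,\pi_k)$ in the filtration, to reduce to a shift by $\dim\oo_i(\fm)$ minus the codimension term appearing in the hyperbolic localization shift.

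By \Cref{T:RoCon}, the coefficient of $L_{\fn,i}$ in degree $j$ is $\dim\HH^{j-\dim\oo_i(\fm)}(i_{T_i(\fm)}^*\ic{\oo_i(\fn)})$. (The stalk is taken at $T_i(\fm)$; the indices in the statement of \Cref{T:RoCon} appear swapped.) So the left side is, up to an explicit power of $t$, the Poincaré polynomial of the stalk of $\ic{\oo_i(\fn)}$ at $T_i(\fm)$.

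\textbf{Right side.} $T_i(\fm)=T_i(\De_1)\oplus\ldots\oplus T_i(\De_k)$ is a $\chi_\lambda$-fixed point. Each $\mathbf{E}_{\db^j}^{(i)}$ has the orbit of $T_i(\De_j)$ open, because $L_{\De_j,i}$ is the generic module of its dimension vector; indeed $\oo_i(\De_j)$ is a single orbit whose IC sheaf is the constant sheaf. Hence evaluating $\ct^i_{\db^1,\ldots,\db^k,\rho}(\hyp^+\cF)$ at $L_{\De_1,i}\otimes\ldots\otimes L_{\De_k,i}$ is the multiplicity of $\ic{\oo(T_i(\De_1))}\boxtimes\ldots$ in $\hyp^+\cF$ with shifts recorded by $t$. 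Since that orbit is open, this multiplicity can be read off from the stalk of $\hyp^+\cF$ at the point $T_i(\De_1)\oplus\ldots\oplus T_i(\De_k)$. This uses that $\hyp^+\cF$ is semisimple by \Cref{T:BraThm}, and that on an open orbit only the summand supported on its closure, which is the whole space, contributes. Careful: other summands can also have nonzero stalk there. So instead I restrict to the open orbit $U$, where all other IC summands vanish, and the multiplicity becomes the cohomology of the equivariant stalk, a local system on $U$.

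\textbf{Comparison.} Compute $i_x^*\hyp^+\cF$ at $x=T_i(\fm)$ via Braden's theorem in the form $p_*^-i_-^!$, or via $p^+_!i_+^*$. The fiber $(p^+)^{-1}(x)$ is contracted to $x$ by $\chi_\lambda$, so $i_x^*p^+_!i_+^*\cF\cong i_x^*\cF$ up to the equivariance shifts. This is the standard contraction lemma: the hyperbolic stalk at a fixed point equals the stalk of the original sheaf, because the attracting cell is $\gm$-contracting. Concretely, $(p^+_! \cF|_{\ep^+})_x=\HH_c((p^+)^{-1}(x),\cF)$, and with the Braden dual description this equals $\HH(\cF_x)$ by contraction. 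After the inv/coinv shifts $[-\dim\gdb+\dim\gdb^0]$ and the orbit dimension of $T_i(\De_\bullet)$, one obtains the stalk Poincaré series of $\ic{\oo_i(\fn)}$ at $T_i(\fm)$.

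\textbf{Main obstacle.} The main obstacle is bookkeeping of the shifts: showing that $t^N$ times the $\dim\oo_i(\fm)$ shift from \Cref{T:RoCon} agrees with $-\dim\gdb+\dim\gdb^0$, the equivariant shift on the orbit of $T_i(\De_\bullet)$ in $\ep^0$, and the convention $t\cdot\cF=\cF[-1]$. I would handle this with dimension identities. The first is $\dim\gdb-\dim\gdb^0=2\dim\bU^+$. The second is that $\dim\oo_i(\fm)$ equals the dimension of the orbit of $T_i(\De_\bullet)$ in $\ep^0$ plus $\dim\ep^+-\dim\ep^0$ minus the unipotent stabilizer contribution, expressed through the Hom and Ext numbers that vanish by $(i)$-arrangedness. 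The statement for $Q$ is the case $i\gg0$.
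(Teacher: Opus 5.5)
\textbf{There is a genuine gap in your Comparison step.} It rests on a claim that is false in general: the contraction lemma does not identify the hyperbolic stalk with the ordinary stalk.

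Take a $\gm$-equivariant complex $\mathcal{G}$ on a space $F$ contracted by $\gm$ to a fixed point $x$. Then $R\Gamma(F,\mathcal{G})\cong\mathcal{G}_x$, but $R\Gamma_c(F,\mathcal{G})\cong i_x^!\mathcal{G}$. So base change plus contraction identifies the stalk of $p^+_!i_+^*\cF$ at $x=T_i(\fm)$ with the \emph{costalk} at $x$ of $\cF|_{F_x}$, where $F_x=(p^+)^{-1}(x)$. Braden's other form $p^-_*i_-^!$ gives the stalk at $x$ of the $!$-restriction of $\cF$ to the repelling locus. Neither is $i_x^*\cF$, and neither differs from it by a shift independent of $\cF$.

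A minimal example: let $\gm$ scale $\CC$ with positive weight. This is $\ep=\CC$ for two simple segments at adjacent vertices, ordered so that $\ep^+=\ep$. Then $R\Gamma_c(\CC,\uc_{\CC})=\CC[-2]$ but $R\Gamma_c(\CC,\CC_{\{0\}})=\CC$, although both sheaves have stalk $\CC$ at $0$. Consistent with this, your argument uses the $(i)$-arranged hypothesis only in the dimension count. Yet the identification of the right-hand side with the stalk Poincar\'e series of $\ic{\oo_i(\fn)}$ at $T_i(\fm)$ depends on that hypothesis essentially.

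\textbf{The missing idea} is the geometric meaning of arrangedness, which is the heart of the paper's proof. A point of $F_x$ is a representation with a filtration whose subquotients are the $T_i(\De_j)$. The $\ext^1$-vanishing built into the arranged condition forces all such extensions to split, so $F_x$ is the single $\bU_\db^+$-orbit of $T_i(\fm)$. Hence $F_x$ is an affine space contained in $\oo_i(\fm)$, of dimension $\dim\bU_\db^+-\dim\mathrm{Stab}_{\bU_\db^+}(T_i(\fm))$. The paper computes the stabilizer dimension to be $\sum_{j<r}\dim\hom(T_i(\De_j),T_i(\De_r))$. Since $\ic{\oo_i(\fn)}$ is $\gdb$-equivariant, its restriction to $F_x\subseteq\oo_i(\fm)$ has constant cohomology sheaves. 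Only then does one get $R\Gamma_c(F_x,\cF)\cong i_x^*\cF[-2\dim F_x]$. The stabilizer term in $\dim F_x$ is exactly what produces the $\hom$-differences absorbed by the twist in $\grtimt^i$.

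\textbf{Two smaller points.}
\begin{enumerate}
\item Your worry that other IC summands of $\hyp^+\cF$ have nonzero stalk on the open orbit of $\ep^0$ is unfounded. They are supported on closures of smaller orbits, which miss the open orbit, so multiplicities can be read off the stalk at $T_i(\fm)$ directly, as the paper does.
\item You should not invoke associativity from \Cref{T:grprod}. In the paper that is deduced from \Cref{T:assoc2} via \Cref{T:genrog}, which in turn relies on this lemma. Read the left side directly as the twisted grading of the $k$-fold Jantzen filtration, which is what \Cref{T:RoCon} computes.
\end{enumerate}
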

\begin{proof}
    We only prove the claim for the ordinary quiver $Q$, the other statements follow analogously.
    By \Cref{T:RoCon}
    \[\ct_{\db,\rho}(\ic{\oo(\fn)})(\Ir(\fm))=\sum_{i\in\ZZ}\dim_\CC\HH^i(i_{T(\fm)}^*\ic{\oo(\fn)})t^{i+\dim\oo(\fm)},\]
    where $\ic{\oo(\fn)}$ is a sheaf on $\ep$.
    Recall from \cite[§1.1]{Gabriel1975Finite} that \[\dim\oo(\fm)=\dim \gdb-\dim_\CC\hom_Q(T(\fm),T(\fm))=\dim \ep-\sum_{i,j}\dim_\CC\ext_Q^1(T(\De_i),T(\De_j)).\] Since the segments are in an arranged order, we have \[\dim_\CC\ext_Q^1(T(\De_i),T(\De_j))=0\] if $i\ge j$, \emph{cf.} \Cref{S:class}.
    Next, note that $\Z(\De_1)\grtimt\ldots\grtimt \Z(\De_k)$ is obtained from $\Ir(\fm)$ by multiplying by
    \[t^{\sum_{i>j}\dim_\CC\hom_Q(T(\De_i),T(\De_j))-\dim_\CC\hom_Q(T(\De_j),T(\De_i))}.\]
    On the other hand, note that $\mathbf{E}_{\db^i}$ has a unique open orbit, $\oo(\De_i)$. It thus follows that the multiplicity of $\ic{\oo(\De_1)\times\ldots\times\oo(\De_k)}[i]$ in a sheaf \[\cf\in \sem(\epo\times\ldots\times\epk,\bG_{\db^1}\times\ldots\times \bG_{\db^k})\]
    is precisely
    \[\dim_\CC\HH^{-i-\dim\oo(\De_1)\times\ldots\times\oo(\De_k)}(\iota_{T(\fm)}^*\cf).\]
    Next we observe that if $\De_1,\ldots,\De_k$ are in arranged order, then
    we have the following cartesian diagram    
\[\begin{tikzcd}
    {p^+}^{-1}(T(\fm))\arrow[r,hookrightarrow]&\ep\\
    {p^+}^{-1}(T(\fm))\arrow[r,hookrightarrow]\arrow[d]\arrow[u,"\cong"]&\ep^+\arrow[d,"p^+"]\arrow[u,hookrightarrow,"i_+"]\\
    T(\fm)\arrow[r,hookrightarrow]&\ep^0
\end{tikzcd}\]
and ${p^+}^{-1}(T(\fm))$ is a vector space of dimension \[\sum_{j<r}\sum_{i\in\ZZ}\db_{i}^j\db_{i}^r-\dim_\CC\hom_Q(T(\De_j),T(\De_r))\] and \[\dim \bU_\db^+=\sum_{j<r}\sum_{i\in\ZZ}\db_{i}^j\db_{i}^r.\]
Indeed, if the segments are in an arranged form, we observed above that for $i\ge j$ \[\mathrm{Ext}_Q^1(T(\De_i),T(\De_j))=0\] 
and the fiber over $T(\fm)$ looks as follows. The character $\chi_\lambda$ determines a filtration \[\{0\}=V^{-k-1}\subseteq \ldots\subseteq V_0=\CC^\db,\, V_{i-1}\backslash V_i\cong \CC^{\db^{-i}} \] and an element in ${p^+}^{-1}(T(\fm))$ is a filtered representation $x$ of $Q$ \[\{0\}=F_{-k-1}\subseteq\ldots\subseteq F_0=x,\, F_{i-1}\backslash F_i\cong T(\De_{-i}).\]
Since all the above Ext-groups vanish, the only such extension is the trivial one, \emph{i.e.} the $\bU_\db^+$-orbit of $T(
\fm)$. Moreover, it is easy to see that the dimension of the stabilizer in $\bU_\db^+$ is precisely $\sum_{j<i}\dim_\CC\hom_Q(T(\De_j),T(\De_i))$.
    We thus have by proper base change that the multiplicity of $\ic{\oo(\De_1)\times\ldots\times\oo(\De_k)}t^{i}$ in \[\hyp^+(\ic{\oo(\fm)})\]
    equals the dimension of the cohomology group \[\dim_\CC\HH^{\bullet}(i_{T(\fm)}^*\ic{\oo(\fm)})\]
    in degree
    \[i-\dim\oo(\De_1)-\ldots-\dim\oo(\De_k)-\sum_{j<r}\sum_{i\in\ZZ}2\db_{i}^j\db_{i}^r+2\dim_\CC\hom_Q(T(\De_j),T(\De_r)),\]
    where we used that \[\dim\gdb-\dim\gdb^0=\sum_{j<r}\sum_{i\in\ZZ}2\db_{i}^j\db_{i}^r.\]
    Plugging in \[\dim\oo(\De_j)=\sum_{i\in\ZZ}\db_i^j\db_i^j-\dim_\CC\hom(T(\De_j),T(\De_j)),\] we obtain that it equals
\[i-\dim\oo(\fm)+\sum_{i<j}-\dim_\CC\hom_Q(T(\De_j),T(\De_i))+\dim_\CC\hom_Q(T(\De_i),T(\De_j))\]
Twisting by \[t^{\sum_{i>j}\dim_\CC\hom_Q(T(\De_i),T(\De_j))-\dim_\CC\hom_Q(T(\De_i),T(\De_j))}\] yields then the claim as desired.
\end{proof}
Note that as a consequence of \Cref{L:hypassos}, we have the associativity of \Cref{T:assoc2} for the representations appearing in \Cref{T:RoCon}.

We recall from \cite[Section 5]{Lusztig1991Quivers} the isomorphism
\[\cF_i\colon \sem(\ep,\gdb)\ra \sem(\ep^{(i)},\gdb)\] defined by the Fourier-Sato transform and write for $\fm\in\mul_{\db}$, $\sigma_i(\fm)$ for the multisegment such that
\[\cF_i(\ic{\oo(\fm)})=\ic{\oo_i(\sigma_i(\fm))}.\]
It follows that \[\ct_{\db,\rho}^i\circ \cF_i=\ct_{\db,\rho}.\]
\begin{theorem}\label{T:genrog}
        The following diagram commutes.
    \[\begin{tikzcd}
   \sem(\ep,\gdb)\arrow[rrr,"\hyp^+"]\arrow[d,"\ct_{
   \db,\rho}"
   ]&&&\sem(\mathbf{E}_{\db^1}\times\ldots\times \mathbf{E}_{\db^k},\bG_{\db^1}\times\ldots\times \bG_{\db^k})\arrow[d,"\ct_{\db^1,\ldots,\db^k,\rho}"]\\
        {\ZZ[\irr_{\db,\rho}][t,t^{-1}]}^*\arrow[rrr,"{\grtimt}^*"]&&&{\ZZ[\irr_{\db^1,\rho}\times\ldots\times \irr_{\db^k,\rho}][t,t^{-1}]}^*.
    \end{tikzcd}\]
\end{theorem}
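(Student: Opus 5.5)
Both sides of the diagram are $\ZZ\twh$-linear, so it suffices to test them on basis elements. On the right I will pair with the basis $\{\Ir(\pi_1)\otimes\ldots\otimes\Ir(\pi_k)\}$ of \Cref{C:gradbas}, taken in a form adapted to each orientation $Q^{(i)}$. On the left I will use the simple sheaves $\ic{\oo(\fn)}$. Concretely, I must show, for $\pi_j=\Z(\fm^j_\rho)$ with $\fm^j\in\mul_{\db^j}$,
\[\ct_{\db,\rho}(\ic{\oo(\fn)})\bigl(\Ir(\pi_1)\grtimt\ldots\grtimt\Ir(\pi_k)\bigr)=\ct_{\db^1,\ldots,\db^k,\rho}\bigl(\hyp^+\ic{\oo(\fn)}\bigr)\bigl(\Ir(\pi_1)\otimes\ldots\otimes\Ir(\pi_k)\bigr),\]
with the twist normalisations absorbed as in \Cref{L:arrangedorderhyp}. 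Writing each $\Ir(\pi_j)$ as a $\grtimt$-product of segments, the left side becomes a $\grtimt$-product of all the segments of $\fm^1,\ldots,\fm^k$, in the concatenated order.

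\Cref{L:arrangedorderhyp} handles the case where this concatenated list is arranged. In that case the left side is $\ct(\ic{\oo(\fn)})$ evaluated on a single product of segments, so it equals $\ct(\hyp^+_{\text{all segments}}\ic{\oo(\fn)})$ evaluated on the tensor product of segments. By \Cref{L:hypassos}, the total hyperbolic restriction factors as $\hyp^+$ to $\epo\times\ldots\times\epk$ followed by the blockwise hyperbolic restrictions. Applying \Cref{L:arrangedorderhyp} once more on each factor $\mathbf{E}_{\db^j}$, where the segments of $\fm^j$ are arranged, identifies the blockwise pairing with evaluation on $\Ir(\pi_j)$. The two sides then agree.

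The general case is the main obstacle, because the concatenated list of segments need not be arranged for $Q$. My plan is to exploit the freedom in the orientation. For a given finite configuration of segments I will choose $i$ (or a suitable composition of reflection functors $\cF_i$) so that the relevant segments $[a,b]_i$ become $(i)$-arranged for $Q^{(i)}$. In the extreme case $i<$ all $a$, every segment decomposes into points and ordering issues disappear; in general $\ext^1_{Q^{(i)}}$ vanishes in the needed direction. I will then run the argument above on $\ep^{(i)}$, using the version of \Cref{L:arrangedorderhyp} for $Q^{(i)}$.

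To transport back, I use $\ct^i_{\db,\rho}\circ\cF_i=\ct_{\db,\rho}$. I also need that Fourier--Sato transform commutes with hyperbolic restriction, $\hyp^{+,(i)}\circ\cF_i\cong(\cF_i\boxtimes\ldots\boxtimes\cF_i)\circ\hyp^+$. This should follow because the Fourier--Sato transform is along a $\chi_\lambda$-stable vector bundle (the arrows reversed), and Braden's functor commutes with Fourier transform of $\gm$-equivariant sheaves up to a shift. That shift is the term I expect to reconcile with the change of normalisation from $\grtimt$ to $\grtimt^i$. The remaining work is bookkeeping: checking that the modified twists $t^{\alpha_+-\langle\cdot,\cdot\rangle}$ for $Q^{(i)}$ and $Q$ differ exactly by this shift. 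The identity $\alpha_+(x_1,x_2)+\alpha_+(x_2,x_1)=(\db^1,\db^2)$ is independent of orientation, which gives me confidence that the discrepancy is a pure shift.

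Finally, since the $\Ir(\pi_1)\otimes\ldots\otimes\Ir(\pi_k)$ form a basis, matching the pairings for all $\fn$ and all $\fm^j$ proves that the diagram commutes.
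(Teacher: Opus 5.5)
You handle the case where the concatenated list of segments is already arranged correctly. The plan for the general case, however, has a genuine gap: in general there is no orientation, of the form $Q^{(i)}$ or otherwise, that makes the concatenation arranged.

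The $(i)$-arranged condition is measured on the $Q^{(i)}$-representations $T_i(\De)$, and these are indecomposable of length $l(\De)$ in every orientation. For $i<a$ the label $[a,b]_i$ does become a sum of points, but this only says $L_{[a,b],i}\cong\Z([a,a]+\ldots+[b,b])$. Meanwhile $Q^{(i)}$ is then just the opposite quiver and carries the same kind of $\hom$/$\ext^1$ obstructions as $Q$.

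Point segments do not remove ordering issues either. Take $k=2$, let $\db^1=\db^2$ be $1$ at the vertices $1,2$ and $0$ elsewhere, and let $\fm^1=\fm^2=[1,1]+[2,2]$ (the closed orbit in each factor). In every orientation $T([1,1])$ and $T([2,2])$ are the simples $S_1,S_2$. Whatever internal orders you choose, the concatenation has $S_1$ before $S_2$ and $S_2$ before $S_1$ across the two blocks. Being arranged would then require $\ext^1(S_1,S_2)=\ext^1(S_2,S_1)=0$, which is impossible because there is an arrow between $1$ and $2$.

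So this basis vector can never be reached through \Cref{L:arrangedorderhyp}. Two further problems: arbitrary orientations would need a version of \Cref{T:RoCon} beyond the $Q^{(i)}$, and choosing different orientations for different basis vectors does not obviously give a spanning set.

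The missing ingredient is a way to reach non-arranged products. The paper uses your orientation trick only where it actually works.
\begin{enumerate}
\item Base case: $k=2$ with $\db^1$ a single point at vertex $i$. In $Q^{(i)}$ the vertex $i$ is a sink, so $S_i=T_i([i,i])$ is projective and receives no nonzero maps from other indecomposables. Hence $[i,i]$ followed by any $(i)$-arranged order of $\fm^2$ is again $(i)$-arranged, and \Cref{T:RoCon}, \Cref{L:arrangedorderhyp} and \Cref{L:hypassos} apply.
\item Induction on $\lvert\db\rvert$ and on the length of the first segment. Assume the diagram for $\Z([a,b-1])\otimes\Z([b,b])\otimes\pi'$ and for $\Z([b,b])\otimes\Z([a,b-1])\otimes\pi'$. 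The paper compares the two products from \Cref{L:filtseg}, which are both $\Z([a,b])$ plus $t^{\pm1}\Z([a,b-1]+[b,b])$, and matches coefficients degree by degree starting from the lowest. This yields the diagram for $\Z([a,b])\otimes\pi'$.
\item This gives the diagram for segments in arbitrary, non-arranged order. Only then does the paper pass to the basis $\irt(\fm_1)\otimes\ldots\otimes\irt(\fm_k)$ via \Cref{L:hypassos}.
\end{enumerate}
Without such a bootstrapping step, your argument only covers the tuples whose concatenation happens to be arranged.
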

\begin{proof}
Note that the above statement is equivalent to the fact that the following diagram commutes for all $i\in\ZZ$.
    \[\begin{tikzcd}
   \sem(\ep^{(i)},\gdb)\arrow[rrr,"\hyp^{+,i}"]\arrow[d,"\ct_{
   \db,\rho}^i"
   ]&&&\sem(\mathbf{E}_{\db^1}^{(i)}\times\ldots\times \mathbf{E}_{\db^k}^{(i)},\bG_{\db^1}\times\ldots\times \bG_{\db^k})\arrow[d,"\ct_{\db^1,\ldots,\db^k,\rho}^i"]\\
        {\ZZ[\irr_{\db,\rho}][t,t^{-1}]}^*\arrow[rrr,"{\grtimt}^{i\,*}"]&&&{\ZZ[\irr_{\db^1,\rho}\times\ldots\times \irr_{\db^k,\rho}][t,t^{-1}]}^*.
    \end{tikzcd}\]
Indeed, it is well known that hyperbolic localization behaves well with respect to Fourier-Sato transform, see for example \cite[Section 10]{Lusztig1993}.

We now assume these statements hold for all $\db'$ with $\lvert\db'\rvert<\lvert\db\rvert$.

We first prove that the above diagram commutes for $k=2$ and $\db^1=(\ldots,0,1,0,\ldots)$ with $1$ at vertex $i$.  We use the above observation to transform the problem from the quiver $Q$ to the quiver $Q^{(i)}$, where we apply \Cref{T:RoCon}. Given a multisegment $\fm\in\mul_{\db^2}$, we pick an $(i)$-arranged ordering $\De_2,\ldots,\De_l$ of its segments and note that setting $\De_1=[i,i]$ gives an $(i)$-arranged order $\De_1,\ldots,\De_l$. Moreover, letting $L_{\De_2,i}\grtimt^i\ldots\grtimt^iL_{\De_l,i}$ range over all multisegments in $\mul_{\db^2}$ gives a $\ZZ[t,t^{-1}]$-basis of $\ZZ[\irr_{\db^2,\rho}][t,t^{-1}]$. We now use \Cref{L:hypassos} to obtain the result.

We will say that the diagram commutes for $\pi_1\otimes\ldots\otimes\pi_k$ if one computes for all $\ic{\oo(\fm)}$ in the top right left that 
\[\ct_{\db_1,\ldots,\db_k}(\hyp^+(\ic{\oo(\fm)}))(\pi_1\otimes\ldots\otimes\pi_k)\] computes the multiplicity of $\Z_\rho(\fm)$ in the corresponding degrees of $\pi_1\grtimt\ldots\grtimt\pi_k$.

Next we prove, still in the case $k=2$, if the diagram commutes for all $\Z_\rho(\De)\otimes \pi$ with $l(\De)=r$, then it commutes for all $\Z_\rho(\De')\otimes\pi'$ with $l(\De')=r+1$. Note that by the induction hypothesis the diagram commutes for $\Z_\rho([a,b-1])\otimes \Z_\rho([b,b])\otimes\pi'$ and by the previous paragraph the diagram commutes for $\Z_\rho([b,b])\otimes \Z_\rho([a,b-1])\otimes \pi'$. Recall also that 
\[ \Z_\rho([a,b-1])\grtimt \Z_\rho([b,b])=\Z_\rho([a,b])+t\Z_\rho([a,b-1]+[b,b]),\]\[\Z_\rho([b,b])\grtimt \Z_\rho([a,b-1])=\Z_\rho([a,b])+t^{-1}\Z_\rho([a,b-1]+[b,b])\]by \Cref{L:filtseg}, see also \Cref{T:RoCon}.
We now prove  for all $j$ that the $j$-th coefficient of $\ct_{\db_1,\db_2}(\hyp^+(\ic{\oo(\fm)}))(\Z_\rho([a,b])\otimes \pi')$ and the multiplicity of $\Z(\fm)$ in $\Z_\rho([a,b])\grtimt \pi'$ in degree $j$ agree, and similarly for $\Z_\rho([a,b-1]+[b,b])\otimes \pi'$. It is easy to see from the above observation that the claim is true for the minimal $j$ where there is a non-zero multiplicity. It then follows inductively that it holds for $j+1$ and hence for all $j$.

It thus follows inductively that the diagram commutes for an arbitrary ordering of segments $\Z_\rho(\De_1)\otimes\ldots\otimes \Z_\rho(\De_k)$. Given a multisegment $\fm_j\in\mul_{\db^j}$ we choose an arranged order $\fm_j=\Gamma_1+\ldots+\Gamma_l$ and write $\irt(\fm_j)=\Z_\rho(\Gamma_1)\grtimt\ldots\grtimt\Z_\rho(\Gamma_l)$. Note that those give a basis of $\ZZ[\irr_{\db^j,\rho}][t,t^{-1}]$ if one ranges over all such multisegments. By \Cref{L:hypassos} the diagram commutes thus for $\irt(\fm_1)\otimes\ldots\otimes \irt(\fm_k)$ for any choice of multisegments. Since these form a basis, we are done.
\end{proof}
\begin{proof}[Proof of \Cref{T:assoc2}]
    Using \Cref{T:genrog}, this follows immediately from \Cref{L:hypassos}.
\end{proof}
We now define for $\pi_1,\ldots,\pi_k\in\irr$ and $i\in\ZZ$ recursively\
\[\pi_1\times\pi_2[i]'\coloneq \pi_1\times\pi_2[i],\,\pi_1\times\ldots\times \pi_k[i]'\coloneq \bigoplus_{r+s=i}(\pi_1\times\ldots\times \pi_{k-1}[r]')\times \pi_k[s].\]
Note that in general it is not clear whether \[\pi_1\times\ldots\times \pi_k[i]'=\pi_1\times\ldots\times \pi_k[i].\] If $\pi_1,\ldots,\pi_k$ are segment representations, then the claim follows from \Cref{L:arrangedorderhyp}.
\begin{corollary}\label{C:semisimple}
    Let $\pi_1,\ldots,\pi_k\in\irr$. Then
    for all $i\in\ZZ$ we have that $\pi_1\times\ldots\times \pi_k[i]'$ is semi-simple. In particular, the image of 
    \[J_{\pi_1,\pi_2}\colon\pi_1\btimes\pi_2\ra\pi_1\times\pi_2\]
    is semi-simple.
\end{corollary}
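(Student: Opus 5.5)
We reduce to $k=2$ by induction and then combine \Cref{T:genrog} with the purity of hyperbolic localization. By definition $\pi_1\times\ldots\times\pi_k[i]'=\bigoplus_{r+s=i}(\pi_1\times\ldots\times\pi_{k-1}[r]')\times\pi_k[s]$. If $\pi_1\times\ldots\times\pi_{k-1}[r]'$ is semi-simple, it is a direct sum of irreducibles $\sigma$. The filtration $V_\bullet$ is compatible with direct sums, since the intertwining operator is additive. Hence $(\pi_1\times\ldots\times\pi_{k-1}[r]')\times\pi_k[s]$ is a direct sum of pieces $\sigma\times\pi_k[s]$. It therefore suffices to prove that $\pi_1\times\pi_2[i]$ is semi-simple for irreducible $\pi_1,\pi_2$. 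We may also reduce to a single cuspidal line $\rho$: by \Cref{T:zel}(2) the different $\rho$-components split off, and the Bushnell--Kutzko equivalence $\ct_{n,\rho}$ transports the filtration to the Hecke-module side.

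For $k=2$, let $\pi_j=\Z(\fm_j)$ with $\fm_j\in\mul_{\db^j}$. The plan is to realize $\pi_1\times\pi_2[i]$ itself (not just its semi-simplification) geometrically. Write $\fm_1=\De_1+\ldots+\De_a$ and $\fm_2=\De_{a+1}+\ldots+\De_l$ in arranged orders. By \Cref{T:RoCon}, the module $L_{\De_1}\times\ldots\times L_{\De_l}[[s]]_\lambda$ is $\widehat{\HH}_\gm^\bullet(i^!_{T(\fm)}\cS_\db)$, with the Jantzen pieces semi-simple. We would use \Cref{T:BraThm} to express $\pi_1\times\pi_2$ with its $s$-adic filtration as the equivariant cohomology of $\hyp^+$ applied to the Springer sheaf, paired against the summand $\ic{\oo(\fm_1)}\boxtimes\ic{\oo(\fm_2)}$ of $\cS_{\db^1}\boxtimes\cS_{\db^2}$. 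Concretely, the relevant object is $\widehat{\ext}^\bullet_\gm\big(\ic{\oo(\fm_1)}\boxtimes\ic{\oo(\fm_2)},\hyp^+\cS_\db\big)$. Here $\pi_1$ and $\pi_2$ appear as the simple quotients $L_{\fm_j}$ of the standard modules, which is allowed by the remark in \Cref{S:Jantzen} that $V_i$ is compatible with quotients.

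Next we argue that the graded pieces are semi-simple. $\cS_\db$ lifts to a pure mixed Hodge module, so by \Cref{T:BraThm} the object $\hyp^+\cS_\db$ is pure. By \Cref{T:decomp} it splits as $\bigoplus \ic{\oo_1\times\oo_2}[k]\otimes M_k$ with multiplicity spaces $M_k$. The $\widehat{\ext}_\gm$-algebra acts through this decomposition, and the $\gm$-weight filtration (powers of $s$) coincides with the filtration $V_\bullet$ once the shift $\alpha_+$ is accounted for. The associated graded in degree $i$ is then a direct sum over orbits $\oo$ of $L_{\oo}\otimes(\text{multiplicity space})$, which is semi-simple. As a consistency check, the multiplicities must agree with \Cref{T:genrog}, which fixes the graded character.

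The main obstacle is the identification step: showing that the analytic filtration $V_\bullet$ on $\pi_1\times\pi_2$, and not just on standard modules, equals the filtration coming from the $s$-adic/weight structure on the hyperbolic restriction. For this I would first try a comparison of filtrations. Surject the standard module $\I(\fm_1)\times\I(\fm_2)$ onto $\pi_1\times\pi_2$; the filtration is compatible with quotients. The Jantzen filtration of $\I(\fm_1)\times\I(\fm_2)$ is governed by \Cref{T:RoCon} and is semi-simple. The quotient filtration on $\pi_1\times\pi_2$ might, however, be strictly finer than the image filtration, and this is where purity must be used. Our argument is that the image of a semi-simple graded piece is again semi-simple, provided the induced filtration on the quotient agrees with $V_\bullet(\pi_1,\pi_2)$. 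That agreement is forced by comparing graded characters via \Cref{T:genrog}.

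For the final claim, the image of $J_{\pi_1,\pi_2}$ is the top nonzero graded piece by \Cref{L:filtpol} and \Cref{L:sym}, so it is semi-simple as a special case.
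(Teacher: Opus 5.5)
Your reduction from $k$ factors to two irreducible factors is fine up to reindexing by the normalizations $\alpha_+$: it uses additivity of $V_\bullet$ over the direct sum decomposition of the semi-simple piece $\pi_1\times\ldots\times\pi_{k-1}[r]'$, and is tidier than what the paper does. The gap is the case $k=2$.

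The identification of $\pi_1\times\pi_2$ and its filtration $V_\bullet$ with $\widehat{\ext}^\bullet_{\gm}\bigl(\ic{\oo(\fm_1)}\boxtimes\ic{\oo(\fm_2)},\hyp^+\cS_\db\bigr)$ and its $s$-adic filtration is not supplied by anything in the paper. \Cref{T:RoCon} only realizes products of \emph{segment} modules, in $(i)$-arranged order and for generic $\lambda\in\ZZ^l$, as (co)stalks of the Springer sheaf. Nor does purity of $\hyp^+\cS_\db$ together with \Cref{T:decomp} show that the \emph{analytic} graded pieces are semi-simple. That implication is the whole content of the Rogawski conjecture, whose proofs need considerably more (Hard Lefschetz and a suitably generic $\lambda$). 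Compare the remark after \Cref{T:assoc2} that the filtration genuinely depends on $\lambda$. So your main line rests on an unproved assertion, as you yourself acknowledge.

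Your fallback does not close the gap, for two reasons.
\begin{enumerate}
\item \Cref{T:RoCon} says nothing directly about the two-block filtration on $\I(\fm_1)\times\I(\fm_2)$. That filtration corresponds to a $\lambda$ constant on the two blocks, which is not generic in $\ZZ^l$. Moreover, concatenating arranged orders of $\fm_1$ and $\fm_2$ gives an order that is in general not arranged.
\item Comparing graded characters via \Cref{T:genrog} cannot force the image filtration $q(V_\bullet)$ to coincide with $V_\bullet(\pi_1,\pi_2)$. You only have $q(V_i)\subseteq V_i(\pi_1,\pi_2)$. The graded character of $q(V_\bullet)$ is unknown, since it depends on how the kernel of $q$ meets $V_\bullet$. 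Equality of characters is what you would need in order to conclude, not something you possess.
\end{enumerate}

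The paper proceeds differently in three steps.
\begin{enumerate}
\item It takes compatibility of $V_\bullet$ with quotients as given, by the observation in \Cref{S:Jantzen}.
\item It uses $\pi_j=\I(\pi_j)[0]$ from \Cref{L:propI} together with the associativity \Cref{T:assoc2}. This exhibits $\pi_1\times\ldots\times\pi_k[r]'$ as a quotient of the iterated piece $\Z(\De_{1,1})\times\ldots\times\Z(\De_{k,a_k})[r]'$ of a product of segment representations.
\item It removes the arranged-order hypothesis by a swap argument. By \Cref{T:assoc2} and \Cref{L:sym}, the primed pieces for one ordering decompose into direct summands of those for the ordering with two neighbouring segments exchanged. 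This reduces to an arranged order, where \Cref{T:RoCon} and \Cref{L:arrangedorderhyp} give semi-simplicity.
\end{enumerate}
These two ingredients, associativity to pass to segment products and the swap argument, are what your proof is missing.
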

\begin{proof}
Note that if $\pi_j\cong\Z(\De_j), \De_j\in\mul_\cus$ a segment, and the $\De_j$ are in arranged order, then the claim follows from \Cref{T:RoCon} and \Cref{L:arrangedorderhyp}.
If $\pi_j\cong\Z(\De_j)$, but the $\De_i$ are not in arranged order it suffices to show that the claim holds for one ordering if and only if it holds for the same ordering with two neighbouring segments $\De_j$ and $\De_{j+1}$ swapped. 
    We saw in \Cref{T:assoc2} that for all $i\in\ZZ$
    \[\pi_1\times\ldots\times\pi_k[i]'\cong \bigoplus_{r+s=i}(\ldots\times(\pi_j\times\pi_{j+1})[r]\times \ldots)[s]'.\]
    Again by \Cref{L:sym}, we have an isomorphism $(\pi_j\times\pi_{j+1})[r]\cong (\pi_{j+1}\times\pi_{j})[-r]$ and 
    \[(\ldots\times(\pi_{j+1}\times\pi_j)[-r]\times \ldots)[k]'\] is a direct summand of $(\ldots\times\pi_{j+1}\times\pi_j\times \ldots)[k-r]'$ which, by assumption, is semi-simple. The claim follows for $\pi_i=\Z(\De_i)$ in not necessarily arranged order.
    For $\pi_1,\ldots,\pi_k$ arbitrary, we now recall from \Cref{L:propI} that \[\pi_j=\I(\pi_j)[0],\,\I(\pi_j)=\Z(\De_{j,1})\times\ldots\times\Z(\De_{j,a_j})\] and hence by \Cref{T:assoc2}
    \[\pi_1\times\ldots\times\pi_k[r]'\] is a quotient of \[\Z(\De_{1,1})\times\ldots\times \Z(\De_{1,a_1})\times\ldots\times\Z(\De_{k,a_k})[r]',\]
    which by the previous step is semi-simple.
\end{proof}
We now recall the operation, see for example \cite[§10.3]{Achar2021Perverse},
\[\conv\colon \sem(\mathbf{E}_{\db^1}\times\ldots\times \mathbf{E}_{\db^k},\bG_{\db^1}\times\ldots\times\bG_{\db^k}
)\ra\sem(\ep,\bG_\db)\]
defined as follows.
Consider the diagrams
\[\begin{tikzcd}
    \epo\times\ldots\times \epk&\arrow[l,"p^+"]\ep^+\arrow[r,"i_+"]&\ep\\ \gdb\overset{\gdb^+}{\times}\ep\arrow[r,"m"]&\ep
\end{tikzcd}\]
and define the functor
\[T\coloneq {i_+}_*\circ (p^+)^*\circ {\mathrm{Inf}}_{\bG_{\db}^0}^{\gdb^+}[-\dim\bG_\db+\dim \bG_\db^0]\] between \[D_c^b(\mathbf{E}_{\db^1}\times\ldots\times\mathbf{E}_{\db^k},\bG_{\db^1}\times\ldots\times\bG_{\db^k})\ra D_c^b(\ep,\gdb^-).\]
Recall the isomorphism \[S\colon D_c^b(\ep,\gdb^+)\ra D_c^b(\gdb\overset{\gdb^+}{\times}\ep,\gdb).\]
We then set
\[\conv\coloneq m_*\circ S\circ T\colon D_c^b(\mathbf{E}_{\db^1}\times\ldots\times\mathbf{E}_{\db^k},\bG_{\db^1}\times\ldots\times\bG_{\db^k})\ra D_c^b(\ep,\gdb).\]
Note that $\dim\bG_\db-\dim \bG_\db^0=2\dim\bU_\db^+$.
It acts as the geometrization of the multiplication of the quantum algebra associated to the quiver $Q$, see for example \cite{Lusztig1991Quivers}.
\begin{lemma}[{\cite[§10.5]{Achar2021Perverse}, \cite[Theorem 1]{braden2003hyperbolic}}]\label{L:adjunct}
    The convolution $\conv$ is well defined and the right adjoint of $\hyp^+$.
\end{lemma}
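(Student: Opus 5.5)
The statement has two parts. First, $\conv$ lands in bounded complexes and indeed in $\sem(\ep,\gdb)$. Second, $\conv$ is right adjoint to $\hyp^+$. I would treat the adjunction first and get well-definedness from purity afterwards.

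\textbf{Adjunction.} I would write $\hyp^+$ as a composite of the elementary functors in its definition and take the right adjoint of each factor.
\begin{enumerate}
\item $\mathrm{For}_{\gdb}^{\gdb^+}$ has right adjoint given by induction. Under the equivalence $S$ of $D_c^b(\ep,\gdb^+)$ with $D_c^b(\gdb\overset{\gdb^+}{\times}\ep,\gdb)$, this induction is $m_*\circ S$. Since $\gdb/\gdb^+$ is proper, $m$ is proper and $m_*=m_!$, up to the shift by $\dim\gdb/\gdb^+$ relating $m^*$ and $m^!$.
\item $i_+^*$ has right adjoint ${i_+}_*$.
\item $p^+_!$ has right adjoint $(p^+)^!$. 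Here I would use that $p^+$ is a $\gdb^+$-equivariant vector bundle, with fibres the positive-weight part of $\ep$, so that $(p^+)^!=(p^+)^*[2\,\mathrm{rk}]$.
\item $\mathrm{Inv}_{\bU_\db^+,!}$ has right adjoint the inflation $\mathrm{Inf}_{\gdb^0}^{\gdb^+}$. This holds because $\bU_\db^+$ acts trivially on $\ep^0$ and is contractible, so inflation is fully faithful and the coinvariant functor is its left adjoint.
\end{enumerate}
Composing the right adjoints in reverse order gives $m_*\circ S\circ{i_+}_*\circ(p^+)^*\circ\mathrm{Inf}$, which is $\conv$ up to shift.

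\textbf{Shifts.} The shift $[-\dim\gdb+\dim\gdb^0]$ in $\hyp^+$, which equals $[-2\dim\bU_\db^+]$, must match the shift $[-\dim\gdb+\dim\gdb^0]$ in $T$. The discrepancy between $p^!$ and $p^*$, together with the shift in the induction adjunction for $\gdb/\gdb^+$ of dimension $\dim\bU_\db^+$, has to absorb the difference. This is exactly the bookkeeping in \cite[§10.5]{Achar2021Perverse} and \cite[Theorem 1]{braden2003hyperbolic}. I expect this to be the main nuisance: the equivariant versus stack normalizations differ by $2\dim$, as already noted in the proof of \Cref{T:BraThm}. I would check it on the simplest case $\ep=\ep^0$ with $\bU_\db^+$ acting trivially, where both sides reduce to the coinvariant/inflation adjunction.

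\textbf{Well-definedness.} Boundedness is clear: ${i_+}_*$, $(p^+)^*$ and inflation preserve $D_c^b$, and $m_*$ is proper pushforward of a finite-type morphism.

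For landing in $\sem(\ep,\gdb)$, I would lift everything to mixed Hodge modules, as in the remark after \Cref{T:BraThm}. Each $\ic{\oo_1\times\cdots\times\oo_k}$ lifts to a pure Hodge module. The functors $(p^+)^*$ along a smooth map with shift, ${i_+}_*$ along a closed embedding, and $m_*$ along a proper map all preserve purity: the first two by the standard weight properties and the last by Saito's decomposition theorem. Hence $\conv$ of a pure object is pure, and \Cref{T:decomp} shows that its image under $\rat$ is a direct sum of shifted simple $\gdb$-equivariant perverse sheaves $\ic{\oo}[k]$.

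One caveat: pullback along $p^+$ preserves purity only after the correct shift. I would verify this by factoring $T$ as smooth pullback followed by closed pushforward, and I would not invoke Braden's theorem for this step.
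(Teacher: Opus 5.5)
Your proposal is correct and follows the paper's proof. Well-definedness comes from purity of $\conv(\ic{\oo'})$ together with \Cref{T:decomp}, and the adjunction comes from composing the standard adjunctions $\mathrm{For}_{\gdb}^{\gdb^+}\dashv m_*\circ S$, $i_+^*\dashv {i_+}_*$, $p^+_!\dashv (p^+)^!=(p^+)^*[2\,\mathrm{rk}]$ and $\mathrm{Inv}_{\bU_\db^+,!}\dashv\mathrm{Inf}$ (up to shift).

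The paper also cites Braden's theorem for the adjunction, but reading the right adjoint directly off the defining $!$-form of $\hyp^+$, as you do, needs only these standard adjunctions. Like the paper, you leave the exact shift normalization to the references. Note that $\mathrm{For}_{\gdb}^{\gdb^+}=S^{-1}\circ m^*$ gives $\mathrm{For}_{\gdb}^{\gdb^+}\dashv m_*\circ S$ with no shift, so only $(p^+)^!$ versus $(p^+)^*$ and the normalization of $\mathrm{Inv}_{\bU_\db^+,!}$ enter that bookkeeping.
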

\begin{proof}
    The fact that $\conv$ takes simple perverse sheaves to semi-simple complexes of shifted simple perverse sheaves follows from a purity argument, see \cite[§10.5]{Achar2021Perverse}.
    Note that it follows from Braden's theorem \Cref{T:BraThm} and the usual adjoint properties that $\conv$ is the right adjoint of
    $\hyp^+.$
\end{proof}
We now define for $\ic{\oo}$ a simple perverse sheaf in $\sem(\ep,\gdb)$ the element
\[\hyp(\ic{\oo})^{\min}=\bigoplus\ic{\oo'}[k]\in\sem(\ep^0,\gdb^0),\]
where $k$ is the minimal shift appearing in $\hyp^+(\ic{\oo})$ and the sum is over all $\ic{\oo'}$ appearing with that shift (with multiplicity).
We also write $\Lambda_\hyp(\ic{\oo})\coloneq k$.
Similarly, we let
for a simple perverse sheaf $\ic{\oo'}$ in $\sem(\ep^0,\gdb^0)$ 
\[\conv(\ic{\oo'})^{{\max}}=\bigoplus\ic{\oo}[k]\in\sem(\ep,\gdb),\]
where $k$ is the maximal shift, denoted by $\Lambda_\conv(\ic{\oo'})$ appearing in $\conv(\ic{\oo'})$ and the sum is over all $\ic{\oo}$ appearing with that shift, again with multiplicity.
\begin{corollary}\label{C:reciprocity}
    Let $\oo\in\orb_\db$ and $\oo'\in\orb_{\db_1}\times\ldots\times\orb_{\db_k}$.
   The multiplicity of $\ic{\oo'}$ in $\hyp(\ic{\oo})^{\min}$ is the multiplicity of $\ic{\oo}$ in $\conv(\ic{\oo'})^{\max}$.
   Moreover, if the above multiplicity is non-zero, then $\Lambda_\conv(\ic{\oo'})=-\Lambda_\hyp(\ic{\oo})$.
\end{corollary}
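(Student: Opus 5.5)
The plan is to derive the statement from the adjunction of \Cref{L:adjunct}, combined with the semisimplicity of both sides coming from \Cref{T:BraThm} and from the purity statement in the proof of \Cref{L:adjunct}.

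Write
\[\hyp^+(\ic{\oo})\cong\bigoplus_{\oo'',j} a_{\oo'',j}\,\ic{\oo''}[j],\qquad \conv(\ic{\oo'})\cong\bigoplus_{\oo''',j} b_{\oo''',j}\,\ic{\oo'''}[j].\]
For every $n\in\ZZ$ the adjunction gives
\[\hom(\hyp^+(\ic{\oo}),\ic{\oo'}[n])\cong\hom(\ic{\oo},\conv(\ic{\oo'})[n]).\]
I would evaluate both sides using the vanishing of negative Ext groups between simple perverse sheaves and the formula $\dim\hom(\ic{\oo_1},\ic{\oo_2})=\delta_{\oo_1,\oo_2}$ recalled in \Cref{S:const}. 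For the left side, the summand $\ic{\oo''}[j]$ contributes $\ext^{n-j}(\ic{\oo''},\ic{\oo'})$. This vanishes for $j>n$, and for $j=n$ it equals $\delta_{\oo'',\oo'}$. For the right side, the summand $\ic{\oo'''}[j]$ contributes $\ext^{j+n}(\ic{\oo},\ic{\oo'''})$. This vanishes for $j<-n$, and for $j=-n$ it equals $\delta_{\oo,\oo'''}$.

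Now take $n=k\coloneq\Lambda_\hyp(\ic{\oo})$, the minimal shift occurring in $\hyp^+(\ic{\oo})$. Every summand on the left has $j\ge k$, so the only contributions to $\hom(\hyp^+(\ic{\oo}),\ic{\oo'}[k])$ come from summands with $j=k$. Summands with $j>k$ give $\ext^{k-j}$ with $k-j<0$, which is zero. Hence the left side has dimension $a_{\oo',k}$, the multiplicity of $\ic{\oo'}$ in $\hyp(\ic{\oo})^{\min}$.

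On the right side, with $n=k$, the summands $\ic{\oo'''}[j]$ with $j<-k$ vanish, and $j=-k$ contributes $b_{\oo,-k}$. However, summands with $j>-k$ contribute positive Ext groups $\ext^{j+k}(\ic{\oo},\ic{\oo'''})$, which need not vanish. This is the main obstacle, and the naive comparison breaks down here.

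To get around it, I would run the symmetric argument from the other side. Set $m\coloneq\Lambda_\conv(\ic{\oo'})$ and use the adjunction with $n=-m$. On the right, only $j=m$ survives (summands with $j<m$ give negative Ext), so the right side has dimension $b_{\oo,m}$. On the left, the summands with $j\le -m$ survive, and the contribution of $j=-m$ is $a_{\oo',-m}$.

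Combining the two computations:
\begin{enumerate}
\item If $a_{\oo',k}\neq0$, then $\hom(\ic{\oo},\conv(\ic{\oo'})[k])\neq0$. This forces some summand $\ic{\oo'''}[j]$ of $\conv(\ic{\oo'})$ with $j\ge -k$, so $m\ge -k$.
\item If $b_{\oo,m}\neq0$, the dual computation forces a summand of $\hyp^+(\ic{\oo})$ with $j\le -m$, so $k\le -m$, i.e.\ $m\le -k$.
\end{enumerate}
These give $m\ge-k$ and $m\le -k$, but only after knowing that each multiplicity is nonzero. To close the gap I would use the purity and weight argument from the Hodge-module lifts: $\hyp^+$ and $\conv$ preserve purity up to the fixed shifts, so the weights of the summands and their Ext groups line up. Concretely, I would pass to $\gr^W$, where $\hom$ between pure objects of different weights vanishes, to force the positive-Ext contributions to drop out. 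This makes both sides of the adjunction at $n=k$ computed purely by the extremal summands, which gives $a_{\oo',k}=b_{\oo,-k}$.

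Finally, if this common value is nonzero, then by the minimality of $k$ and the maximality of $m$ the two inequalities above yield $m=-k$, i.e.\ $\Lambda_\conv(\ic{\oo'})=-\Lambda_\hyp(\ic{\oo})$.
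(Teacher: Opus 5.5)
\paragraph{Assessment.} Your argument is correct up to the two implications (1) and (2), and the obstacle you isolate is genuine. Your repair of it, however, does not work, and no repair can, because the statement fails as written.

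\paragraph{Where the paper's proof has the same gap.} The paper's proof stops at exactly your obstacle. It first identifies the multiplicity in $\hyp(\ic{\oo})^{\min}$ with $\dim_\CC\hom(\hyp^+(\ic{\oo}),\ic{\oo'}[k])$, which is correct. It then asserts that ``similarly'' the multiplicity of $\ic{\oo}[-k]$ in $\conv(\ic{\oo'})^{\max}$ equals $\dim_\CC\hom(\ic{\oo}[-k],\conv(\ic{\oo'}))$. That tacitly assumes $-k=\Lambda_\conv(\ic{\oo'})$, which is what has to be proved.

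\paragraph{Why the weight argument fails.} Purity does not separate the offending summands by weight.
\begin{itemize}
\item By \Cref{T:decomp}, a pure complex of weight $w$ is $\bigoplus_j\tH^j[-j]$ with $\tH^j$ pure of weight $w+j$. So the shift of a summand $\ic{\oo''}[j]$ is compensated by the Tate twist of its multiplicity space.
\item The positive Ext groups between these IC sheaves are pure of exactly the compensating weight.
\item Hence the terms you want to discard are morphisms between pure objects of the \emph{same} weight in $D^b\mhm$, and passing to $\gr^W$ kills nothing.
\end{itemize}
Your last step is also circular. Even $a_{\oo',k}=b_{\oo,-k}\neq 0$ only gives $m\ge -k$, because your implication (2) needs $b_{\oo,m}\neq 0$, not $b_{\oo,-k}\neq 0$.

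\paragraph{A counterexample in the smallest case.} Let $\db^1,\db^2$ be the dimension vectors with a single $1$ at vertex $i$ and at vertex $i+1$, respectively.
\begin{itemize}
\item Then $\ep=\CC$ and $\gdb=\gm^2$ is a torus, so the unipotent radicals and the normalizing shifts are trivial.
\item $\chi_\lambda(t)$ scales the arrow by $t^{\lambda_2-\lambda_1}$. Hence $\ep^+=\ep^0=\{0\}$, $\hyp^+=i_0^*$ and $\conv=i_{0*}$.
\item For the open orbit $\oo=\CC^\times$ one gets $\hyp^+(\ic{\oo})=\uc_{\ep^0}[1]$. So $\Lambda_\hyp(\ic{\oo})=1$, and the multiplicity in $\hyp(\ic{\oo})^{\min}$ is $1$.
\item But $\conv(\uc_{\ep^0})=\ic{\{0\}}$ contains no $\ic{\oo}$ at all, and $\Lambda_\conv(\uc_{\ep^0})=0\neq -1$.
\item The adjunction in degree $1$ is accounted for entirely by $\ext^1(\ic{\oo},\ic{\{0\}})\cong\CC$. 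Both $\ic{\oo}$ and $\ic{\{0\}}[1]$ are pure of weight $1$, which confirms the point above.
\end{itemize}
This is not a matter of sign conventions. We have $\hyp^+(\ic{\{0\}})=\uc_{\ep^0}$ and $\hyp^+(\ic{\oo})=\uc_{\ep^0}[1]$. These match $\Z([i])\grtimt\Z([i+1])=[\Z([i]+[i+1])]+t^{-1}[\Z([i,i+1])]$ from \Cref{L:filtseg}, exactly as \Cref{T:genrog} predicts.

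\paragraph{What survives.} The adjunction does prove your (1) and (2). It also gives $a_{\oo',k}=b_{\oo,m}$ whenever $m=-k$, since then both sides at $n=k$ are extremal. So the two multiplicities agree, and $m=-k$, whenever \emph{both} are non-zero. But one of them can vanish while the other does not.
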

\begin{proof}
    Let $k=\Lambda_\hyp(\ic{\oo})$.
    We observe that the multiplicity of $\ic{\oo'}[k]$ in \[\hyp^{\min}(\ic{\oo})\]equals 
    \[\dim_\CC\hom(\hyp^+(\ic{\oo}),\ic{\oo'}[k]).\]
    Indeed, if there exists $\ic{\oo'}[j]$ in $\hyp^+(\ic{\oo})$ with a non-zero map \[\ic{\oo'}[j]\ra \ic{\oo''}[k]\] it follows that $k-j\ge 0$, see \Cref{S:const}, and hence by the minimality of $k$, $k=j$ and therefore $\oo''=\oo'$. Similarly, one sees that the multiplicity of $\ic{\oo}[-k]$ in $\conv(\ic{\oo'})^{max}$ equals
    \[\dim_\CC\hom(\ic{\oo}[-k],\conv(\ic{\oo'})).\]
    The claim now follows from adjunction of \Cref{L:adjunct}. 
\end{proof}
The following is a consequence of \Cref{C:reciprocity} and \Cref{T:genrog}.
\begin{corollary}\label{C:polecompconv}
    Let $\oo_1\in\orb_{\db^1}, \oo_2\in\orb_{\db^2}$ and $\rho\in\cuss$. Then
    \[\Lambda_\conv(\ic{\oo_1\times\oo_2})=\Lambda(\Z_\rho(\oo_1),\Z_\rho(\oo_2))-\langle\db_1,\db_2\rangle\]
    and for $\oo\in\orb_{\db^1+\db_2}$, $\Z_\rho(\oo)$ appears in the image of \[J_{\Z_\rho(\oo_1),\Z_\rho(\oo_2)}\colon \Z_\rho(\oo_1)\btimes\Z_\rho(\oo_2)\ra \Z_\rho(\oo_1)\times\Z_\rho(\oo_2)\]
    with the multiplicity of $\ic{\oo}$ in $\conv(\ic{\oo_1\times\oo_2})^{\max}$.
\end{corollary}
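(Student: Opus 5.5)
We combine \Cref{C:reciprocity} with the dictionary of \Cref{T:genrog} for $k=2$, keeping track of the normalising shifts that relate $\grtimt$ to the raw Jantzen filtration. Set $\pi_i=\Z_\rho(\oo_i)$.

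\textbf{Step 1: pole in terms of $\grtim$.} By \Cref{L:filtpol} and \Cref{L:sym}, the image of the normalised operator $J_{\pi_1,\pi_2}$ is the top (or bottom, depending on convention) graded piece of the filtration. Concretely, $\Lambda(\pi_1,\pi_2)$ is the minimal $i$ with $\bV_{i-\alpha_+(\pi_2,\pi_1)}=\pi_1\btimes\pi_2$. Moreover, $\bV_{i-\alpha_+(\pi_2,\pi_1)}/\bV_{i-1-\alpha_+(\pi_2,\pi_1)}$ at this $i$ maps isomorphically onto the image of $J_{\pi_1,\pi_2}$. Transporting via $\pi_1\btimes\pi_2[j]\cong\pi_2\times\pi_1[j]$ and \Cref{L:sym}, this piece is the extreme degree of $\pi_1\grtim\pi_2$, located in degree $\pm(\Lambda(\pi_1,\pi_2)-\alpha_+(\pi_2,\pi_1))$. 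Its semisimplification, which equals the piece itself by \Cref{C:semisimple}, is the image of $J_{\pi_1,\pi_2}$. Passing to $\grtimt$ multiplies by $t^{\alpha_+(\pi_1,\pi_2)-\langle\pi_1,\pi_2\rangle}$. Since $\alpha_+(\pi_1,\pi_2)+\alpha_+(\pi_2,\pi_1)=(\pi_1,\pi_2)$, the $\alpha_+$ terms combine into a pure $\langle\db^1,\db^2\rangle$ shift. So the extreme degree of $\pi_1\grtimt\pi_2$ is $\pm(\Lambda(\pi_1,\pi_2)-\langle\db^1,\db^2\rangle)$, and its coefficient is the image of $J_{\pi_1,\pi_2}$.

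\textbf{Step 2: geometry.} By \Cref{T:genrog}, the coefficient of $t^j$ at $\Z_\rho(\oo)$ in $\pi_1\grtimt\pi_2$ equals the multiplicity of $\ic{\oo_1\times\oo_2}$ with shift $t^j$ in $\hyp^+(\ic{\oo})$, i.e.\ with shift $[-j]$. Fix the extreme degree $j_0$ from Step 1. Then $\Z_\rho(\oo)$ occurs in the image of $J$ exactly when $\ic{\oo_1\times\oo_2}$ occurs in $\hyp^+(\ic{\oo})$ at shift $[-j_0]$. Since $j_0$ is extreme over all $\oo$, this shift is in particular extremal among those shifts of $\ic{\oo_1\times\oo_2}$ that appear.

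The catch is that $\hyp(\ic{\oo})^{\min}$ is defined using the minimal shift over all summands of $\hyp^+(\ic{\oo})$, not only those isomorphic to $\ic{\oo_1\times\oo_2}$. So we should not use the definition of $\hyp^{\min}$ directly. Instead we rerun the argument of \Cref{C:reciprocity} from the other side. The multiplicity of $\ic{\oo}[k]$ in $\conv(\ic{\oo_1\times\oo_2})$ equals $\dim\hom(\ic{\oo}[k],\conv(\ic{\oo_1\times\oo_2}))$ when $k$ is the maximal shift $\Lambda_\conv$. By adjunction (\Cref{L:adjunct}) this is $\dim\hom(\hyp^+(\ic{\oo})[k],\ic{\oo_1\times\oo_2})$.

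Now use that $\hyp^+(\ic{\oo})$ is semisimple and that there are no negative Exts between simple perverse sheaves (\Cref{S:const}). This Hom picks out exactly the summands $\ic{\oo_1\times\oo_2}[-k]$, plus possible contributions $\ic{\oo_1\times\oo_2}[j]$ with $j$ on the other side of $-k$. The latter are excluded because $-k$ is extremal, by maximality of $k=\Lambda_\conv$ read through the same adjunction.

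So the maximal shift of $\conv(\ic{\oo_1\times\oo_2})$ matches, up to sign, the extreme degree $j_0$. Multiplicities match summand by summand. This yields both $\Lambda_\conv(\ic{\oo_1\times\oo_2})=\Lambda(\pi_1,\pi_2)-\langle\db^1,\db^2\rangle$ and the multiplicity statement.

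\textbf{Main obstacle.} The delicate part is sign and convention bookkeeping: $t\cdot\cf=\cf[-1]$, the min-versus-max conventions, and whether the image of $J$ sits in the top or bottom degree. I would pin all of these down by testing on two linked segments using \Cref{L:filtseg}. For $\De_1\prec\De_2$ one has $\Lambda=1$ and $\langle\db^1,\db^2\rangle$ is explicit, so the sign of $j_0$ can be read off directly.
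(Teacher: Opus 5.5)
Your diagnosis that \Cref{C:reciprocity} cannot be quoted directly is correct. Its extremality is over the summands of $\hyp^+(\ic{\oo})$ for fixed $\oo$, whereas here you need extremality over $\oo$ for fixed $\oo'=\oo_1\times\oo_2$. But the patch in Step 2 fails.

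Write $a^{\oo}_{\oo'',j}$ for the multiplicity of $\ic{\oo''}[j]$ in $\hyp^+(\ic{\oo})$, $b_{\oo,n}$ for that of $\ic{\oo}[n]$ in $\conv(\ic{\oo'})$, and $k=\Lambda_{\conv}(\ic{\oo'})$. Then $b_{\oo,k}=\dim\hom(\hyp^+(\ic{\oo})[k],\ic{\oo'})=\sum_{j,\oo''}a^{\oo}_{\oo'',j}\dim\ext^{-j-k}(\ic{\oo''},\ic{\oo'})$. For $j<-k$ these are positive-degree Ext groups, and they do not vanish in general, also when $\oo''\neq\oo'$; only negative Ext vanish (\Cref{S:const}). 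So it is not true that only $\ic{\oo'}$-summands contribute.

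Reading maximality of $k$ through the adjunction gives $\hom(\hyp^+(\ic{\oo})[l],\ic{\oo'})=0$ for $l>k$. This yields $a^{\oo}_{\oo',j}=0$ for $j<-k$, and $\ext^{e}(\ic{\oo''},\ic{\oo'})=0$ for $e<-j-k$ whenever $a^{\oo}_{\oo'',j}\neq 0$. The critical degree $e=-j-k$ is untouched. So you only get $b_{\oo,k}\ge a^{\oo}_{\oo',-k}$, and not even that some $\ic{\oo'}[-k]$ actually occurs.

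The missing input is control of \emph{all} summands of $\hyp^+(\ic{\oo})$, namely $\Lambda_{\hyp}(\ic{\oo})=-k$ for the relevant $\oo$. That is exactly the second clause of \Cref{C:reciprocity}, on which the paper's one-line deduction from \Cref{C:reciprocity} and \Cref{T:genrog} rests. However, the argument given there has the same hole: its ``similarly'' step presupposes that $-\Lambda_{\hyp}(\ic{\oo})$ is the maximal shift of $\conv(\ic{\oo'})$. The cross terms are real. Take $\db^1,\db^2$ the dimension vectors of $[0,0],[1,1]$. Then $\ep\cong\CC$, $\ep^+=\ep^0=\{0\}$, $\hyp^+=i_0^*$ and $\conv=i_{0*}$. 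Hence $\hyp^+(\ic{\oo([0,1])})=\ic{\ep^0}[1]$ while $\conv(\ic{\ep^0})=\ic{\{0\}}$, so the two multiplicities compared in \Cref{C:reciprocity} are $1$ and $0$. The discrepancy is exactly $\ext^1(\ic{\oo([0,1])},\ic{\{0\}})\neq 0$.

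Second, the orientation is not cosmetic bookkeeping. Since $\cf[m]=t^{-m}\cf$, \Cref{T:genrog} says that $\Z_\rho(\oo)$ occurs in degree $t^{j}$ of $\pi_1\grtimt\pi_2$ with multiplicity $a^{\oo}_{\oo',-j}$. The adjunction with $k$ maximal therefore bounds these degrees from \emph{above} by $k$: it only sees the top of the filtration. The image of $J_{\pi_1,\pi_2}$, however, is the top piece of the $\bV$-filtration, which \Cref{L:sym} moves to the \emph{lowest} degree of $\pi_1\grtimt\pi_2$ (the paper uses this at the end).

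The test case you propose exhibits the mismatch. For $\De_1=[0,0]\prec\De_2=[1,1]$, \Cref{L:filtseg} gives $\pi_1\grtimt\pi_2=\Z([0,0]+[1,1])+\Z([0,1])t^{-1}$. The image of $J_{\pi_1,\pi_2}\colon\pi_1\btimes\pi_2\ra\pi_1\times\pi_2$ is $\Z([0,1])$, whereas $\conv(\ic{\oo_1\times\oo_2})^{\max}=\ic{\{0\}}$ corresponds to $\Z([0,0]+[1,1])$. Moreover $\Lambda_{\conv}(\ic{\oo_1\times\oo_2})=0$, while $\langle\db^1,\db^2\rangle=-1$ makes $\Lambda(\pi_1,\pi_2)-\langle\db^1,\db^2\rangle$ strictly positive.

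So, with the conventions as written, even a repaired Step 2 identifies $\conv(\ic{\oo_1\times\oo_2})^{\max}$ with the top graded piece, i.e.\ with the image of $J_{\pi_2,\pi_1}$. Reaching $J_{\pi_1,\pi_2}$ requires swapping the two factors (cf.\ \Cref{L:hypsym}), and the cross-term problem above still has to be solved.
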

\section{Characteristic cycles on Lusztig's characteristic variety}\label{S:charcy}
In this section we will see how the weights of mixed Hodge modules determine $\Lambda^+(\ic{\oo})$ and hence the poles of intertwining operators. 

Let $\bX$ be a complex $\bG$-variety and we let $\bG$ act on $T^*\bX$ via the induced action.
We recall the map
\[\gr^F\colon \mhm(\bX,\bG)\ra \coh(T^*\bX,\bG)\]
of \Cref{S:MHM} and recall the following strictness properties of mixed Hodge modules.
\begin{theorem}[{\cite[§2]{Saito1990}, \cite{Saito1988}, \cite{laumon1985transformations}}]\label{T:strictness}
    Let $f\colon \bX\ra \bY$ be a $\bG$-equivariant map and consider the correspondence
    \[\begin{tikzcd}
        T^*\bX&\arrow[l,"df"]\bX\underset{\bY}{\times}T^*\bY\arrow[r,"p"]&T^*\bY.
    \end{tikzcd}\]
    We denote the projection map $\pi\colon T^*\bX\ra\bX$.
    If $f$ is smooth of relative dimension $d$, then for all mixed Hodge modules $M$
    \[\gr^F(f^*M[d])=df_*p^*(\gr^F(M)).\]
    If $f$ is proper, then
    \[\gr^F(\tH^i(f_*M))=\tH^i(\mathbb{R}p_*\mathbb{L}df^*(\gr^F(M)\otimes_{\mathscr{O}_{T^*\bX}}\pi^*\omega_{\bX\backslash\bY} )),\]
    where we consider the derived functors $\mathbb{R}p_*$ and $\mathbb{L}df^*$.
    Finally, for $M_1\boxtimes M_2$ a mixed Hodge module on $\bX\times \bY$, \[\gr^F (M_1\boxtimes M_2)=\gr^F(M_1)\boxtimes \gr^F(M_2).\]
\end{theorem}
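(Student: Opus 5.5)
The plan is to reduce the statement to the underlying filtered $\mathscr{D}$-modules, where each of the three claims becomes a computation with the associated graded modules over $\mathrm{Sym}(T_\bX)=\pi_*\mathscr{O}_{T^*\bX}$. Equivariance is then handled formally. Since everything is compatible with smooth pullback along the action and projection maps $\bG\times\bX\rightrightarrows\bX$, the $\bG$-equivariant structure on $\gr^F$ is obtained by applying the smooth case to these maps. So it suffices to treat the non-equivariant situation.

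\emph{Smooth pullback.} For $f$ smooth of relative dimension $d$, the underlying $\mathscr{D}_\bX$-module of $f^*M[d]$ is $\mathscr{O}_\bX\otimes_{f^{-1}\mathscr{O}_\bY}f^{-1}\mathcal{M}$ with filtration $\mathscr{O}_\bX\otimes f^{-1}F_\bullet$. The indexing conventions for the Hodge filtration should be chosen so that no shift appears. Since $f$ is flat, $\gr^F$ commutes with this tensor product, giving $\mathscr{O}_\bX\otimes f^{-1}\gr^F\mathcal{M}$. The $\mathrm{Sym}(T_\bX)$-action factors through $T_\bX\ra f^*T_\bY$. Translated to cotangent bundles, this says exactly that the sheaf is obtained by pulling back along $p\colon \bX\times_\bY T^*\bY\ra T^*\bY$ and pushing forward along the closed embedding $df$, which is injective because $f$ is smooth. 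This gives $df_*p^*\gr^F(M)$.

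\emph{External products.} Here the filtration on $\mathcal{M}_1\boxtimes\mathcal{M}_2$ is the tensor product filtration $F_k=\sum_{i+j=k}F_i\boxtimes F_j$. Because we are working over a field, the associated graded of a tensor product of filtered vector spaces is the tensor product of the graded pieces. Sheafifying gives $\gr^F(M_1)\boxtimes\gr^F(M_2)$ on $T^*\bX\times T^*\bY$.

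\emph{Proper pushforward.} This is the main step and the main obstacle. I would first factor $f$ as a closed embedding into $\bX\times\bY$ followed by the projection to $\bY$. Next, write the filtered direct image in Laumon's form, $f_*(\mathcal{M},F)=\mathbb{R}f_*\big((\mathcal{M},F)\otimes^{\mathbb{L}}_{\mathscr{D}_\bX}(\mathscr{D}_{\bX\ra\bY},F)\big)$, using the transfer module with its order filtration. Then compute its associated graded by Laumon's theorem (\cite{laumon1985transformations}): passing to the Rees module, $\gr^F$ of the filtered direct image complex equals $\mathbb{R}p_*\mathbb{L}df^*\big(\gr^F\mathcal{M}\otimes\pi^*\omega_{\bX\backslash\bY}\big)$. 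The twist by the relative canonical bundle comes from the left/right module conversion. This identity holds at the level of the filtered derived category. To pass to cohomology, $\gr^F\tH^i=\tH^i\gr^F$, one needs the filtered complex $f_*(\mathcal{M},F)$ to be strict. This is precisely Saito's strictness theorem for proper direct images of (mixed) Hodge modules (\cite{Saito1988}, \cite{Saito1990}). It is the deep input, proved via polarizations and the decomposition theorem, and I would take it as given rather than reprove it. With strictness, taking $\tH^i$ commutes with $\gr^F$, which yields the stated formula.
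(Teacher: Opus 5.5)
Your sketch is correct and follows the route of the cited sources: the paper gives no argument beyond citing Saito and Laumon. In the proper case this is Laumon's formula for $\gr^F$ of the filtered direct image, combined with Saito's strictness to get $\gr^F\tH^i=\tH^i\gr^F$, exactly as you describe. The preliminary factorization of $f$ through $\bX\times\bY$ is unnecessary, and its projection to $\bY$ is not proper unless $\bX$ is complete, so Laumon's formula and strictness should be applied directly to $f$, as you in fact do.
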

Note that if $f$ is a closed embedding, then $p$ is a closed embedding and $df$ smooth, thus we can work with the underived functors.
The above theorem is the reason why we will work with $\conv$ instead of $\hyp^+$, since the former is a composition of smooth pullbacks and proper pushforwards and the latter a composition of smooth pushforwards and proper pullbacks.
\subsection{Lusztig's characteristic variety}
Before we describe the situation for $\bX=\ep$ in more detail, we need to recall several definitions, see for example \cite{Lusztig1991Quivers} or \cite{LapMin25}.
Note first that we can identify $\ep^*$ with the moduli-space of representations of the opposite quiver of $Q$ with dimension vector $\db$, \emph{i.e.} endomorphisms of degree $-1$ acting on $\bigoplus_{i\in\ZZ} \CC^{\db_i}$.
The cotangent space of $\ep$ is $T^*\ep=\ep\times \ep^*$,
with natural moment map
\[\mu\colon \ep\times \ep^*\ra \prod_{i\in\ZZ}\mathrm{End}(\CC^{\db_i}), (T,T')\mapsto T\circ T'-T'\circ T.\] We denote the Lagrangian subvariety
$\Lambda(\db)\coloneq \mu^{-1}(0)$ and its irreducible components by $\comp_\db$.
For each $\oo\in\orb_\db$, we denote by
\[C_\oo\coloneq \overline{T_\oo^*\ep}\] the closure of its conormal bundle and by
\[C_\oo^\circ\coloneq C_\oo\setminus \bigcup_{\oo''\neq \oo}C_{\oo''}.\]
    The set $\comp_\db$ of irreducible components of $\Lambda(\db)$ is given by \[\comp_\db=\{C_\oo:\oo\in\orb_\db\}.\]

Let $\Pi$ be the path algebra of the doubled quiver $\overline{A_+^\infty}$, \emph{i.e.}, the quiver with arrows \[\begin{tikzcd}
    \cdots\arrow[r,bend right]&\bullet\arrow[l,bend right]\arrow[r,bend right,"f_i"']&\bullet\arrow[l,bend right,"e_i"']\arrow[r,bend right,"f_{i+1}"']&\bullet\arrow[l,bend right,"e_{i+1}"']\arrow[r,bend right]&\arrow[l,bend right]\cdots
\end{tikzcd}\] modulo relations $f_{i}e_i-e_{i+1}f_{i+1}=0$ for all $i\in \ZZ$. 
\begin{lemma}[{\cite[Lemma 1]{crawleyboevey2000exceptional}}]\label{L:cy2}
    For all $\Pi$-modules $x,y$ with dimension vectors $\db,\db'$ one has that
    \[\dim_\CC\hom_\Pi(x,y)+\dim_\CC\hom_\Pi(y,x)-(\db,\db')=\dim_\CC\ext_\Pi^1(x,y).\]
\end{lemma}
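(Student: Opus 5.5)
The plan is to follow Crawley-Boevey's argument and compute $\hom_\Pi$ and $\ext^1_\Pi$ from an explicit three-term complex attached to the doubled quiver. Since $x$ and $y$ are finite dimensional, only finitely many vertices $i$ carry non-zero spaces $x_i,y_i$. So we may replace $\overline{A_+^\infty}$ by the finite doubled quiver of type $A$ on a large enough interval. None of the quantities involved changes under this replacement: $\hom_\Pi$, $\ext^1_\Pi$ and $(\db,\db')$ all only see the support.

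For $\Pi$-modules $x,y$ consider the complex
\[
C(x,y)\colon\ \bigoplus_{i}\hom_\CC(x_i,y_i)\xrightarrow{\ d_0\ }\bigoplus_{a}\hom_\CC(x_{s(a)},y_{t(a)})\xrightarrow{\ d_1\ }\bigoplus_i\hom_\CC(x_i,y_i),
\]
where $a$ runs over the arrows $e_i,f_i$ of the doubled quiver. The maps are
\[
d_0(\phi)_a=y_a\phi_{s(a)}-\phi_{t(a)}x_a,\qquad
d_1(\psi)_i=\sum_{t(a)=i}\epsilon_a\,(y_{a}\psi_{a^*}+\psi_a x_{a^*}),
\]
with signs $\epsilon_a=\pm1$ chosen as in the preprojective relation $f_ie_i-e_{i+1}f_{i+1}$. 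The relations of $\Pi$ ensure $d_1\circ d_0=0$.

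I would establish three identifications.
\begin{enumerate}
\item $\ker d_0=\hom_\Pi(x,y)$. This is immediate, since $d_0(\phi)=0$ says exactly that $\phi$ commutes with every arrow.
\item $\ker d_1/\operatorname{im} d_0\cong\ext^1_\Pi(x,y)$. An element $\psi\in\ker d_1$ defines an extension $0\to y\to E\to x\to0$ with $E_i=y_i\oplus x_i$ and the arrows of $E$ given by upper triangular matrices with off-diagonal entries $\psi_a$. The condition $d_1\psi=0$ is exactly the preprojective relation for $E$. Extensions are split, or isomorphic, precisely when $\psi$ differs by an element of $\operatorname{im} d_0$.
\item $\operatorname{coker} d_1\cong\hom_\Pi(y,x)^*$. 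Use the trace pairings $\hom_\CC(x_i,y_i)\times\hom_\CC(y_i,x_i)\to\CC$ and the analogous pairings on the arrow terms. Under these pairings $d_1$ is, up to the signs $\epsilon_a$, the transpose of the map $d_0$ for the pair $(y,x)$. Hence $\operatorname{coker} d_1\cong(\ker d_0^{(y,x)})^*=\hom_\Pi(y,x)^*$.
\end{enumerate}

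Taking the Euler characteristic of $C(x,y)$ then gives
\[
\dim\hom_\Pi(x,y)-\dim\ext^1_\Pi(x,y)+\dim\hom_\Pi(y,x)=2\sum_i\db_i\db'_i-\sum_i(\db_i\db'_{i+1}+\db_{i+1}\db'_i).
\]
The right-hand side is exactly $\langle\db,\db'\rangle+\langle\db',\db\rangle=(\db,\db')$. Rearranging yields the claim.

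The main obstacle is the sign bookkeeping in step (3). One must check that the transpose of $d_0^{(y,x)}$ really coincides with $d_1$, with signs compatible with the relation $f_ie_i=e_{i+1}f_{i+1}$. I would fix the signs $\epsilon_{f_i}=1$, $\epsilon_{e_i}=-1$ and verify this directly vertex by vertex. Step (2) is standard Yoneda-extension bookkeeping and should be routine.
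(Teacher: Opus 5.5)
Your proposal is correct and is essentially Crawley-Boevey's proof of his Lemma 1, which the paper cites rather than reproves: the three-term complex has $\ker d_0=\hom_\Pi(x,y)$, middle cohomology $\ext^1_\Pi(x,y)$, and $\operatorname{coker} d_1\cong\hom_\Pi(y,x)^*$ by trace duality, and an Euler characteristic count finishes the argument. The sign check in step (3) goes through with the convention $\epsilon_{a^*}=-\epsilon_a$, and the reduction to a finite interval is harmless; it is even unnecessary, since all sums in the complex are finite.
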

The variety $\Lambda(\db)$ is the moduli space of complex $\Pi$-modules with underlying graded vector space $\bigoplus_{i\in\ZZ}\CC^{\db_i}$.
We now come to the following construction of \cite{AizLap}.
For $\oo_i\in \orb_{\db^i},i=1,2$ we denote by 
\[C_{\oo_1}*C_{\oo_2}\coloneq \overline{\{x: 0\ra x_2\ra x \ra x_1\ra 0, \,x_i\in C_{\oo_i}, \dim\ext_\Pi^1(x_1,x_2)\text{ minimal }\}}\]
\begin{theorem}[{\cite[Theorem 3.1]{AizLap}}]
    Let $\oo_i\in \orb_{\db^i},i=1,2$.
    Then $C_{\oo_1}*C_{\oo_2}\in\comp_\db$.
\end{theorem}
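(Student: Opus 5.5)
The plan is to realize $C_{\oo_1}*C_{\oo_2}$ as the closure of the image of an irreducible variety of ``filtered $\Pi$-modules'', and to show that this closure has dimension at least $\dim\ep$. Since $\Lambda(\db)$ is Lagrangian in $T^*\ep$, it is of pure dimension $\dim\ep$. An irreducible closed subset of $\Lambda(\db)$ of dimension $\ge\dim\ep$ is therefore one of its irreducible components, that is, an element of $\comp_\db$.

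\textbf{Step 1 (irreducibility).} Fix open dense subsets $U_i\subseteq C_{\oo_i}$ on which $\hom_\Pi(x_1,x_2)$, $\hom_\Pi(x_2,x_1)$ and $\ext^1_\Pi(x_1,x_2)$ are constant and minimal. Consider the variety $\mathcal{Z}$ of triples $(x,W,\phi)$ where:
\begin{itemize}
\item $x\in\Lambda(\db)$;
\item $W\subseteq\CC^\db$ is an $x$-stable graded subspace of dimension vector $\db^2$;
\item $x\vert_W\in U_2$ and $x\vert_{\CC^\db/W}\in U_1$, after fixing graded bases.
\end{itemize}
Working first with a fixed splitting $\CC^\db=\CC^{\db^2}\oplus\CC^{\db^1}$, such an $x$ is block upper triangular, given by $(x_1,x_2,c)$. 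Here $c$ is an off-diagonal block of degree $\pm1$ subject to the linear preprojective relation $\mu=0$. The admissible $c$ form the space of cocycles $Z(x_1,x_2)$ of the standard complex
\[0\ra\bigoplus_i\hom(\CC^{\db^1_i},\CC^{\db^2_i})\ra\bigoplus_i\bigl(\hom(\CC^{\db^1_i},\CC^{\db^2_{i+1}})\oplus\hom(\CC^{\db^1_{i+1}},\CC^{\db^2_i})\bigr)\ra\bigoplus_i\hom(\CC^{\db^1_i},\CC^{\db^2_i})\ra0,\]
whose cohomology computes $\hom_\Pi,\ext^1_\Pi$ (and, in the last degree, $\hom_\Pi(x_2,x_1)^*$). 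On $U_1\times U_2$ the dimension of $Z(x_1,x_2)$ is constant, so we get a vector bundle over the irreducible base $U_1\times U_2$. Spreading out by $\gdb$ via $\gdb\times^{\gdb^+}(-)$ gives an irreducible variety. Its image in $\Lambda(\db)$ contains the locus defining $C_{\oo_1}*C_{\oo_2}$ as an open dense subset, because minimality of $\ext^1_\Pi$ is an open condition. Hence the closure is irreducible.

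\textbf{Step 2 (dimension count).} The Euler characteristic of the complex above, combined with \Cref{L:cy2}, gives
\[\dim Z(x_1,x_2)=\sum_i\bigl(\db^1_i\db^2_{i+1}+\db^1_{i+1}\db^2_i\bigr)-\sum_i\db^1_i\db^2_i+\hom_\Pi(x_2,x_1).\]
Adding $\dim C_{\oo_1}+\dim C_{\oo_2}=\dim\mathbf{E}_{\db^1}+\dim\mathbf{E}_{\db^2}$ and the dimension $\sum_i\db^1_i\db^2_i$ of the graded Grassmannian $\gdb/\gdb^+$, the source has dimension
\[\dim\ep+\hom_\Pi(x_2,x_1),\]
using $\dim\ep=\dim\mathbf{E}_{\db^1}+\dim\mathbf{E}_{\db^2}+\sum_i(\db^1_i\db^2_{i+1}+\db^1_{i+1}\db^2_i)$.

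\textbf{Step 3 (fibres), the main obstacle.} It remains to show that the generic fibre of $(x,W)\mapsto x$ has dimension at most $\hom_\Pi(x_2,x_1)$. The fibre over $x$ consists of submodules $W\subseteq x$ with $W\in U_2$ and $x/W\in U_1$. I would bound it by a tangent-space computation in the quiver Grassmannian. At the point $W$, the tangent space is $\hom_\Pi(W,x/W)=\hom_\Pi(x_2,x_1)$, so the fibre has dimension $\le\hom_\Pi(x_2,x_1)$. This is exactly the argument of Crawley-Boevey and Schröer for generic extensions of irreducible components.

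It follows that the image has dimension $\ge\dim\ep$. Since it is contained in $\Lambda(\db)$, which has pure dimension $\dim\ep$, its dimension equals $\dim\ep$, and its closure is an irreducible component, i.e.\ $C_{\oo_1}*C_{\oo_2}\in\comp_\db$.
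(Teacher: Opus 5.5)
Your argument is correct and is essentially the standard proof behind the cited \cite[Theorem 3.1]{AizLap}, which the paper only quotes. The paper's later remark that $p\colon\Lambda(\db)^+\ra\Lambda(\db^1)\times\Lambda(\db^2)$ is smooth over a generic locus is exactly your vector-bundle step, and the rest is your count $\dim\ep+\hom_\Pi(x_2,x_1)$ against the tangent space $\hom_\Pi(W,x/W)$ of the quiver Grassmannian, together with purity of $\Lambda(\db)$. Two small repairs are needed. First, work over an open dense $O\subseteq C_{\oo_1}\times C_{\oo_2}$ rather than a product $U_1\times U_2$, since the minimal locus need not contain a product of nonempty opens. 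Second, justify that your image is exactly the set defining $C_{\oo_1}*C_{\oo_2}$ by observing that wherever $\dim\ext^1_\Pi(x_1,x_2)$ is minimal, both $\hom$-dimensions are automatically minimal by \Cref{L:cy2} and upper semicontinuity; openness of the minimal-$\ext^1$ locus alone does not give density of your image in that set.
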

We denote the orbit $\oo_1*\oo_2\in \orb_\db$ corresponding to $C_{\oo_1}*C_{\oo_2}$.
\begin{theorem}[{\cite[Theorem 12.3]{Lusztig1991Quivers}}]\label{T:suppcond}
    Let $M\in \mhm(\ep,\gdb)$. Then $\gr^F(M)$ is supported on $\Lambda(\db)$ and $\siso(M)$ is a union of irreducible components in $\comp_\db$.
\end{theorem}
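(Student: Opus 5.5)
The plan is to identify $\siso(M)=\supp(\gr^FM)$ with the characteristic variety of the underlying regular holonomic $\mathscr{D}$-module. Then I would bound it from above by the union of conormal bundles to the $\gdb$-orbits, and conclude by a dimension count.

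\textbf{Step 1: reduction to the characteristic variety.} Write $M=(\mathcal{M},F_\bullet,\cf,W_\bullet)$. Since $F_\bullet$ is a good filtration of $\mathcal{M}$, the support of $\gr^F\mathcal{M}$ on $T^*\ep$ is independent of the choice of good filtration. It therefore coincides with the characteristic variety $\mathrm{Ch}(\mathcal{M})$, which by Kashiwara's theorem also equals the singular support of the perverse sheaf $\mathrm{DR}(\mathcal{M})\cong\cf\otimes_\QQ\CC$. So both claims reduce to statements about $\mathrm{Ch}(\mathcal{M})$, or equivalently about $\siso(\cf)$ in the sense of \cite{KashiwaraSchapira1990Sheaves}.

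\textbf{Step 2: upper bound by conormals.} The complex $\cf$ is $\gdb$-equivariant. By Gabriel's theorem there are only finitely many $\gdb$-orbits, so $\cf$ is constructible with respect to the stratification by the orbits $\oo\in\orb_\db$. This stratification satisfies Whitney's conditions, since homogeneity along orbits gives this automatically for finitely many orbits of an algebraic group. The standard estimate then gives
\[\siso(\cf)\subseteq\bigcup_{\oo\in\orb_\db}T^*_\oo\ep\subseteq\bigcup_{\oo}C_\oo.\]
It remains to check that each $T^*_\oo\ep$ lies in $\Lambda(\db)=\mu^{-1}(0)$. A covector $T'$ at $T\in\oo$ is conormal to $\oo$ exactly when it kills the tangent space $\{[A,T]:A\in\mathrm{Lie}(\gdb)\}$. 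This says $\mathrm{tr}(A\,\mu(T,T'))=0$ for all $A\in\prod_i\mathrm{End}(\CC^{\db_i})$, and hence $\mu(T,T')=0$. Since $\Lambda(\db)$ is closed, the closures $C_\oo$ lie in it as well. This proves the first assertion, $\supp\gr^F(M)\subseteq\Lambda(\db)$.

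As an alternative route, one can argue directly on $\gr^F$. Strong equivariance forces, for each $A$, the operator of the vector field $\xi_A$ to agree with the differentiated group action, which preserves $F_\bullet$ and has order $0$ on $\gr^F$. Consequently the principal symbol $\sigma(\xi_A)=\mathrm{tr}(A\,\mu)$ annihilates $\gr^F\mathcal{M}$. I would mention this only as a remark, because compatibility of $F_\bullet$ with the equivariant structure has to be checked and is the delicate point of that route.

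\textbf{Step 3: union of components.} Since $\mathcal{M}$ is holonomic, Gabber's involutivity theorem shows that every irreducible component of $\mathrm{Ch}(\mathcal{M})$ has dimension at least $\dim\ep$. Holonomicity gives dimension exactly $\dim\ep$, and the characteristic variety is conic Lagrangian. On the other hand, $\Lambda(\db)=\bigcup_\oo C_\oo$ and each $C_\oo$ is irreducible of dimension $\dim\ep$, being the closure of a conormal bundle. A closed subset of $\bigcup_\oo C_\oo$ all of whose components have dimension $\dim\ep$ must be a union of some of the $C_\oo$, since an irreducible closed subset of full dimension inside an irreducible $C_\oo$ equals $C_\oo$. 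Hence $\siso(M)$ is a union of elements of $\comp_\db$.

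The only genuinely nontrivial input is the conormal estimate in Step 2, namely that $\siso$ of a constructible complex lies in the union of conormals of a Whitney stratification. I would cite this from \cite{KashiwaraSchapira1990Sheaves} rather than reprove it. Everything else is linear algebra with the moment map $\mu$ and dimension bookkeeping.
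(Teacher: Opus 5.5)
Your proof is correct, but it is not what the paper does. The paper gives no argument and simply cites Lusztig, whose theorem concerns arbitrary quivers. In that generality orbits come in continuous families and $\Lambda$ carries an extra nilpotency condition. Moreover, the containment of singular supports holds only for the perverse sheaves in Lusztig's specific class, not for all equivariant ones. Already for the Kronecker quiver with dimension vector $(1,1)$, the constant sheaf on an invariant line has singular support not contained in the nilpotent variety. So no orbit-stratification argument is available there.

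You instead use the type $A$ input that $\gdb$ has finitely many orbits on $\ep$ (Gabriel). This gives a Whitney stratification and hence the conormal estimate. Your trace computation shows that the fibre of $\mu^{-1}(0)$ over $T$ is exactly the conormal fibre of the orbit through $T$, so $\Lambda(\db)=\bigsqcup_{\oo}T^*_\oo\ep$, and involutivity finishes the argument.

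The citation buys generality. Your route buys a self-contained proof from standard microlocal facts ($\mathrm{Ch}(\mathcal{M})=\siso(\mathrm{DR}(\mathcal{M}))$, the conormal estimate, involutivity). It applies to every equivariant $M$, and it proves along the way the description $\comp_\db=\{C_\oo\}$, which the paper only asserts.

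The equivariance route you set aside is also unproblematic for mixed Hodge modules. The equivariant structure is an isomorphism of mixed Hodge modules, so each $F_p$ is a $\gdb$-stable $\mathscr{O}$-submodule and $\xi_A F_p\subseteq F_p$. That route gives the slightly stronger fact that $\gr^F M$ is annihilated by the components of $\mu$, i.e.\ it lives on the scheme-theoretic fibre $\mu^{-1}(0)$. Your main line gives the set-theoretic support, which is all the paper uses.
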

We also recall that if $C^\circ$ is a generic open subset of an irreducible component $C\in\comp_\db$, then for $M\in \mhm(\ep,\gdb)$ one has that $\gr^F(M)\lvert_{C^\circ}$ is a locally free coherent sheaf, \emph{cf}. \cite[Lemme 5.1.13]{Saito1988}.

Let now $\chi_\lambda$ be a $\gm$-cocharacter of $\gdb$ as in \Cref{S:JanGeo}, let $\ep^0=\mathbf{E}_{\db^1}\times\ldots\times \mathbf{E}_{\db^k}$ be the $\gdb^0$-variety of fixed points and $\ep^+$ the $\gdb^+$ variety of attracting points. 
We now consider the following Lagrangian correspondence.
We denote by $\Lambda(\db)^+$ the attracting variety of the action of $\chi_\lambda$ on $\Lambda(\db)$, which is a $\gdb^+$-variety. We denote by 
\[p\colon \Lambda(\db)^+\ra \Lambda(\db^1)\times\ldots\times \Lambda(\db^k),\, x\mapsto \lim_{t\ra 0}\chi_\lambda(t)x.\]
Note that this morphism is compatible with the actions of $\gdb^+$ and $\gdb^0$.
Next we consider the diagram
\[\begin{tikzcd}
    \Lambda(\db)^+\arrow[d,"p"]\arrow[r,"\iota"]&\gdb\overset{\gdb^+}{\times}\Lambda(\db)^+\arrow[d,"m"]\\
    \Lambda(\db^1)\times\ldots\times \Lambda(\db^k)&\Lambda(\db)
\end{tikzcd}\]
 In the proof of \cite[Theorem 3.1]{AizLap}, $p$ restricts to a smooth morphism for suitable generic subsets.
Let now $C_1\in\comp_{\db^1}, C_2\in\comp_{\db^2}$ and let $x_i\in C_i$ be generic, \emph{i.e.}, such that all three quantities  \[\hom_\Pi(x_1,x_2),\,\hom_\Pi(x_2,x_1),\, \ext_\Pi^1(x_1,x_2)\] are minimized.
We denote their respective dimensions by
\[\hom(C_1,C_2),\,\hom(C_2,C_1),\, \ext^1(C_1,C_2).\] 
If $A\subseteq \comp_{\db^1}, B\subseteq \comp_{\db^2}$, we let
\[\hom(A,B)\coloneq \max_{C\in A,C'\in B}\hom(C,C').\]
Moreover, for $C\in\comp_{\db}$, we set
\[\hom(C_1,C_2:C)\coloneq \min\{\dim_\CC\hom_\Pi(x_1,x_2):(x_1,x_2)\in (C_1\times C_2)\cap p(\Lambda(\db)^+\cap C)\}.\]
If the above minimum is not defined, we set it to $0$.
\begin{lemma}\label{L:minmax}
For all $C\in\comp_\db$
\[\hom(C_1,C_2:C_2*C_1)\ge \hom(C_1,C_2:C).\]
\end{lemma}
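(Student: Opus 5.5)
The plan is to first simplify $p(\Lambda(\db)^+\cap C)$ and then compare both sides of the inequality with the generic value $\hom(C_1,C_2)$.

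\emph{Step 1.} I claim that for every $C\in\comp_\db$
\[p(\Lambda(\db)^+\cap C)=C\cap\Lambda(\db)^0,\qquad \Lambda(\db)^0=\Lambda(\db^1)\times\Lambda(\db^2).\]
Here $\Lambda(\db)^0$ is the fixed locus of $\chi_\lambda$ on $\Lambda(\db)$, which is $\Lambda(\db^1)\times\Lambda(\db^2)$ because the moment map is compatible with restriction to the Levi. The argument runs as follows.
\begin{enumerate}
\item Since $\chi_\lambda$ factors through $\gdb$ and $C$ is a closed $\gdb$-stable subvariety, for $y\in\Lambda(\db)^+\cap C$ the limit $\lim_{t\ra 0}\chi_\lambda(t)y$ stays in $C$. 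This gives the inclusion $\subseteq$.
\item Fixed points are their own limits, which gives $\supseteq$.
\end{enumerate}
Consequently
\[\hom(C_1,C_2:C)=\min\{\dim_\CC\hom_\Pi(x_1,x_2):(x_1,x_2)\in (C_1\times C_2)\cap C\},\]
where a pair $(x_1,x_2)$ is identified with the module $x_1\oplus x_2$.

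\emph{Step 2.} Compute the left-hand side. For $(x_1,x_2)\in C_1\times C_2$ generic (minimizing $\hom$ and $\ext^1$, which is possible by upper semicontinuity), every extension $x$ of $x_1$ by $x_2$ is attracted under $\chi_\lambda$ to $x_1\oplus x_2$, since the extension class is scaled to $0$. Taking $x$ to be a generic extension, so that $x\in C_1*C_2$ or $C_2*C_1$ according to the ordering convention of \Cref{S:JanGeo}, the closedness of that component shows that $x_1\oplus x_2$ lies in it. Hence the minimum on the left-hand side is attained at a generic pair and equals $\hom(C_1,C_2)$.

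\emph{Step 3.} Compare with an arbitrary $C$. Any $C$ with $(C_1\times C_2)\cap C\neq\emptyset$ gives a value that is at least the generic value $\hom(C_1,C_2)$ on all of $C_1\times C_2$. If the intersection is empty, the value is $0$ by convention. To conclude the stated inequality, one therefore has to show that no component $C$ gives a strictly larger minimum. This amounts to the following: whenever $C\cap(C_1\times C_2)$ is nonempty, it already contains a pair on which $\hom_\Pi(x_1,x_2)$ is no larger than at the points realizing $C_2*C_1$. I would attempt this by a Lusztig-type dimension count, as in the proof of \cite[Theorem 3.1]{AizLap}.
\begin{enumerate}
\item Stratify $C_1\times C_2$ by $S_h=\{\dim\hom_\Pi(x_1,x_2)=h\}$.
\item Compute that the fibre of $p$ over a point of $S_h$ is a vector space whose dimension is affine in $h$, using \Cref{L:cy2} to express $\ext^1$ through the two $\hom$'s.
\item Compare $\dim \gdb\times^{\gdb^+}p^{-1}(S_h)$ with the pure dimension $\dim\Lambda(\db)$.
\item Deduce which strata can dominate a component $C$, and show that the stratum dominating $C_2*C_1$ is the extremal one.
\end{enumerate}

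The main obstacle is Step 3, in particular getting the direction right. Steps 1–2 as written make $C_2*C_1$ attain the \emph{generic} (minimal) value of $\hom$, so the stated inequality forces every meeting component to attain that value as well. The dimension count must therefore show that every stratum $S_h$ meeting some component $C$ is dominated in a way that forces the minimum over $C\cap(C_1\times C_2)$ down to the value on $C_2*C_1$. Before carrying out the count, I would first recheck the conventions: which factor is the sub and which the quotient in $C_2*C_1$ relative to $p$, and which of $\hom_\Pi(x_1,x_2)$ or $\hom_\Pi(x_2,x_1)$ is controlled.
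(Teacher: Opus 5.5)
Your Steps 1 and 2 are correct, and Step 2 gives more than you claim. The set of $(x_1,x_2)\in C_1\times C_2$ with $x_1\oplus x_2\in C_2*C_1$ is closed and contains a dense open subset, so $C_1\times C_2\subseteq C_2*C_1$, and likewise $C_1\times C_2\subseteq C_1*C_2$. Hence the left-hand side equals the generic value $\hom(C_1,C_2)$. This is the smallest value any $C$ meeting $C_1\times C_2$ can give, so the lemma is equivalent to the claim you isolate in Step 3.

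That claim is the genuine gap. Step 3 is only an outline, and no dimension count can complete it, because the claim is false. Write $e_i$ for the unit dimension vector at vertex $i$ and take:
\begin{itemize}
\item $\db^1=e_0$, so $C_1=\Lambda(\db^1)=\{S_0\}$;
\item $\db^2=e_0+e_1$, with $C_2=\{(b,0)\}$ the conormal closure of the open orbit; its generic member $M_b$ ($b\neq0$) has $\dim_\CC\hom_\Pi(S_0,M_b)=0$.
\end{itemize}
For $\db=2e_0+e_1$ write $T=(a,b)$ and $T'=(c,d)^t$, where $a,c$ are the components between $V^1$ and $V^2$. The relations $T'T=0=TT'$ give $\Lambda(\db)=A\cup B$ with $A=\{T'=0\}$ and $B=\{T=0\}$. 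By \Cref{L:cy2}, $\ext^1_\Pi(S_0,M_b)=\ext^1_\Pi(M_b,S_0)=0$. Hence $C_1*C_2=C_2*C_1=A$ and $\hom(C_1,C_2:A)=0$. However, $(C_1\times C_2)\cap p(\Lambda(\db)^+\cap B)$ is the single point $(S_0,S_0\oplus S_1)$, so $\hom(C_1,C_2:B)=\dim_\CC\hom_\Pi(S_0,S_0\oplus S_1)=1>0$. Since $C_1*C_2=C_2*C_1$ here, rechecking conventions will not rescue the statement either.

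The paper argues differently from you, and it breaks at the same point. It first uses minimality of $\hom(C_1,C_2:\cdot)$ at $C_1*C_2$, which your Step 2 confirms. It then asserts that the minimizing locus in $(C_1\times C_2)\cap p(\Lambda(\db)^+\cap C)$ meets the locus where $\ext^1_\Pi$ takes its generic value. From this, \Cref{L:cy2} would give $\hom(C_1,C_2:C)=\ext^1(C_1,C_2)+(\db^1,\db^2)-\hom(C_2,C_1:C)$, turning minimality of $\hom(C_2,C_1:\cdot)$ at $C_2*C_1$ into maximality of $\hom(C_1,C_2:\cdot)$.

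For $C=B$ in the example, that assertion fails. The only available point has $\dim_\CC\ext^1_\Pi(S_0,S_0\oplus S_1)=1\neq0=\ext^1(C_1,C_2)$, and the identity would predict $0+1-1=0$ instead of $1$. So with $\hom(C_1,C_2:C)$ defined as written, neither route proves the lemma.

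What your Steps 1--2 do prove is the reverse inequality: $\hom(C_1,C_2:C_2*C_1)=\hom(C_1,C_2)\le\hom(C_1,C_2:C)$ for every $C$ meeting $C_1\times C_2$. To get something usable in \Cref{T:conc}, the quantity has to be redefined, for example via generic points of $\Lambda(\db)^+\cap C$ over $C_1\times C_2$, which is the stabilizer dimension that proof actually computes. A finer count will not fix the current definition.
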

\begin{proof}
    We have that \[\hom(C_1,C_2:C_1*C_2)\le \hom(C_1,C_2:C),\]
    since the $\dim_\CC\hom_\Pi$ and $\dim_\CC\ext_\Pi^1$ are upper-semicontinuous.
    For this reason, the locus where $\hom(C_1,C_2:C)$ is attained by $\dim_\CC\hom_\Pi(x_1,x_2):(x_1,x_2)\in C_1\times C_2\cap p(\Lambda(\db)^+\cap C)$ intersects non-trivially the locus where 
    $\ext^1(C_1,C_2)=\ext^1(C_2,C_1)$ is attained.
    By \Cref{L:cy2} it follows that for all $C$ we have that \[\hom(C_1,C_2:C)=\ext^1(C_1,C_2)+(\db_1,\db_2) -\hom(C_2,C_1:C)\] and hence the maximality of $\hom(C_1,C_2:C_2*C_1)$ follows from the minimality of $\hom(C_1,C_2:C_1*C_2)$.
\end{proof}
\begin{theorem}\label{T:conc}
    Let $\oo_1\in\orb_{\db^1}$, $\oo_2\in\orb_{\db^2}$, $\db=\db^1+\db^2$ and denote the corresponding irreducible components by $C_1$ and $C_2$.
    If \[\ic{\oo}[i]\subseteq \conv(\ic{\oo_1}\boxtimes\ic{\oo_2})\] then 
    \[i+\langle\db_1,\db_2\rangle\ge \max_{C\in\comp_\db, C\subseteq \siso(\ic{\oo})}\{\hom(C_2,C_1:C)\}.\]
    Moreover, there exists $\oo\in\orb_\db$ with $C_{1}*C_{2}\subseteq \siso(\ic{\oo})$ 
    and $i\ge \hom(C_2,C_1)-\langle\db_1,\db_2\rangle$ such that \[\ic{\oo}[i]\subseteq \conv(\ic{\oo_1}\boxtimes\ic{\oo_2}).\]
    Finally,  \[\Lambda_\conv(\ic{\oo_1\times\oo_2})\le \hom(\siso(\ic{\oo_2}), \siso(\ic{\oo_1}))-\langle\db_1,\db_2\rangle.\]
\end{theorem}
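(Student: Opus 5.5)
We will prove the statement by lifting everything to mixed Hodge modules and computing $\gr^F$ of the convolution with \Cref{T:strictness}. Lift $\ic{\oo_1}\boxtimes\ic{\oo_2}$ to a pure Hodge module $M$ of weight $\dim\oo_1+\dim\oo_2$. The functor $\conv=m_*\circ S\circ T$ is a composition of four operations: the inflation, the smooth pullback $(p^+)^*$ along the vector bundle $\ep^+\ra\ep^0$, the closed pushforward ${i_+}_*$, and the proper pushforward $m_*$ along $\gdb\overset{\gdb^+}{\times}\ep\ra\ep$. By \Cref{T:strictness} each step has a controlled effect on $\gr^F$. We will follow the coherent sheaf $\gr^F M=\gr^F\ic{\oo_1}\boxtimes\gr^F\ic{\oo_2}$ on $\Lambda(\db^1)\times\Lambda(\db^2)$ through the Lagrangian correspondence $\Lambda(\db^1)\times\Lambda(\db^2)\xleftarrow{p}\Lambda(\db)^+\ra \gdb\overset{\gdb^+}{\times}\Lambda(\db)^+\xrightarrow{m}\Lambda(\db)$.

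Step 1. Smooth pullback and closed pushforward. Pulling $\gr^F M$ back along $p$ and pushing forward to $T^*\ep$ gives a sheaf supported on $p^{-1}(\siso\ic{\oo_1}\times\siso\ic{\oo_2})\subseteq\Lambda(\db)^+$. The conormal direction of $\ep^+\subseteq\ep$ is encoded as a grading shift. We will track the Hodge degree shift, since $\gr^F$ is graded and a shift $[j]$ together with a Tate twist changes the $F$-grading.

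Step 2. Proper pushforward along $m$. By \Cref{T:strictness}, $\gr^F\tH^i(\conv M)=\tH^i(\mathbb{R}m_*\mathbb{L}dm^*(\ldots\otimes\omega))$. By \Cref{T:decomp} $\conv M$ is pure, so $\ic{\oo}[i]$ occurs as a summand of $\tH^{-i}$. Restrict to a generic point $x$ of a component $C\subseteq\siso(\ic{\oo})$. The fiber of $m$ over $x$ is a closed subvariety of the partial flag variety $\gdb/\gdb^+$: it parametrises the $\chi_\lambda$-compatible $\Pi$-submodules $x_2\subseteq x$ with $(x/x_2,x_2)=(x_1,x_2)$ lying in $C_1\times C_2$. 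Its tangent/fiber dimension involves $\hom_\Pi(x_2,x_1)$, since deformations of the submodule are governed by $\hom_\Pi(x_2,x/x_2)$.

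Step 3. Degree bound. For $\ic{\oo}[i]$ to appear, $\gr^F\ic{\oo}$ restricted to $C^\circ$ must be nonzero. It is $\mathscr O_{C^\circ}$ by the generic freeness recalled after \Cref{T:suppcond}. Hence $\tH^{-i}$ of the pushforward must be nonzero generically on $C$. The cohomological range of $\mathbb{R}m_*$ on a fiber, combined with the relative canonical bundle twist and the normalisation $[-\dim\gdb+\dim\gdb^0]$, yields the bound $i+\langle\db^1,\db^2\rangle\ge\dim\hom_\Pi(x_2,x_1)$. We then minimise over points $(x_1,x_2)\in p(\Lambda(\db)^+\cap C)$, giving $\hom(C_2,C_1:C)$, and maximise over $C$. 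This Hodge-filtration degree bookkeeping is where the weight/Hodge decomposition enters. It is the main obstacle: one must ensure that the $F$-grading shifts match the perverse shift $i$ exactly, and this requires using purity to identify $F$-degrees with cohomological degrees.

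Step 4. Existence and the final bound. Take a generic point of $C_1*C_2$; by \cite{AizLap} it is a generic extension and $p$ is smooth there. The pushforward of $\mathscr O$ from that open piece is nonzero on $C_1*C_2$, and the corresponding $\gr^F$ piece must come from some summand $\ic{\oo}[i]$ with $C_1*C_2\subseteq\siso(\ic{\oo})$. The degree in which it sits is read off from the top-degree cohomology of the fiber, giving $i\ge\hom(C_2,C_1)-\langle\db^1,\db^2\rangle$.

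For the last inequality, run Step 3 in reverse for the maximal shift. A nonvanishing summand forces $i+\langle\db^1,\db^2\rangle\le\dim\hom_\Pi(x_2,x_1)$ for some $x_j$ in components of $\siso\ic{\oo_j}$, because the fiber cohomology vanishes above that degree. Taking the maximum over components gives $\Lambda_\conv\le\hom(\siso\ic{\oo_2},\siso\ic{\oo_1})-\langle\db^1,\db^2\rangle$. Here \Cref{L:minmax} and \Cref{L:cy2} let us compare the hom bounds computed at different components.
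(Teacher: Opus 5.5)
Your architecture matches the paper's. You lift to mixed Hodge modules and use purity to write $\conv M\cong\bigoplus_k\tH^k(\conv M)[-k]$. You push $\gr^F$ through the smooth pullback, closed pushforward and proper pushforward making up $\conv$ via \Cref{T:strictness}, and evaluate at a generic point of a component $C$. But your Step 3, which is the step that produces the numbers, is only asserted, and it carries the whole content of the theorem. The paper does it as follows. Take a generic $x\in\Lambda(\db)^+\cap C$ with $p(x)$ generic in $C_1\times C_2$, so the input sheaf is the structure sheaf. Base change reduces the derived stalk of $\gr^F(\conv M)$, up to a fixed shift, to $\mathbb{R}\pi_*\mathbb{L}i^*i_*(\omega_Y)$. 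Here $Y=G_x/P_x$, with $G_x$ and $P_x=G_x\cap\gdb^+$ the stabilizers of $x$ in $\gdb$ and $\gdb^+$, and $i\colon Y\hookrightarrow T^*Y$ is the zero section. The self-intersection formula $\mathbb{L}i^*i_*\mathscr{O}_Y\cong\bigoplus_j\bigwedge^jT_Y[j]$, together with $\omega_Y\otimes\bigwedge^jT_Y\cong\Omega_Y^{d-j}$ for $d=\dim Y$, turns this into the Hodge cohomology $\bigoplus_j\mathbb{R}\Gamma(Y,\Omega_Y^{d-j})[j]$ of the projective variety $Y$. This complex vanishes outside degrees $[-d,d]$ and has rank one in both extreme degrees, since $h^{0,0}=h^{d,d}=1$. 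The paper then identifies $d$ with the dimension of the $\bU_\db^-$-stabilizer of $x$, i.e. $\hom(C_2,C_1:C)$, and uses \Cref{L:minmax} to compare components. The vanishing outside this window bounds the shifts, hence $\Lambda_\conv$. The nonvanishing at the ends over $C_1*C_2$ produces the summand in the existence statement.

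Your substitute, ``the cohomological range of $\mathbb{R}m_*$ on a fiber'' with the fiber the full quiver Grassmannian $m^{-1}(x)$, does not deliver this.
\begin{itemize}
\item A pushforward along a $d$-dimensional fiber only gives vanishing above degree $d$, and says nothing about nonvanishing at the top. The symmetric window appears only once you keep the derived self-intersection terms $\bigwedge^jT$ coming from $\mathbb{L}dm^*$, together with the canonical twist. Nonvanishing at both ends then needs the relevant variety to be smooth, connected and projective.
\item The Grassmannian of $\Pi$-submodules is in general singular and reducible. $\dim\hom_\Pi(x_2,x_1)$ is only its Zariski tangent dimension at one point. So none of these tools apply to it, and your identification of the exponent with $\dim\hom_\Pi(x_2,x_1)$ is unsupported.
\item A range statement by itself only bounds $\lvert i+c\rvert$ for some constant $c$. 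You give no mechanism for the one-sided bound of Step 3 holding for every summand.
\item In Step 4, ``read off from the top-degree cohomology'' presupposes exactly the top-degree nonvanishing you never establish.
\item The obstacle you name, matching $F$-grading shifts with the perverse shift, is a misdiagnosis. By purity and strictness, a summand $\ic{\oo}[i]$ contributes to the coherent complex $\gr^F(\conv M)$ exactly in cohomological degree $-i$, and the Tate twists ($F$-indices) play no role. What is needed is the set of cohomological degrees of the generic stalk.
\end{itemize}
A smaller point: $\gr^F\ic{\oo}\rvert_{C^\circ}\cong\mathscr{O}_{C^\circ}$ holds only for $C=C_\oo$. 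On other components of $\siso(\ic{\oo})$ you only know it is nonzero and locally free, which suffices for your purpose.
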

\begin{proof}
    We denote by abuse of notation the mixed Hodge modules corresponding to $\ic{\oo_i}$ with the same letter and all constructions will be carried out in $D(\mhm(\ep,\gdb))$.
    Recall from \cite[§10.5]{Achar2021Perverse} that \[\conv(\ic{\oo_1}\boxtimes\ic{\oo_2})\] is a pure complex of mixed Hodge modules and hence the shifts appearing in \[\rat(\conv(\ic{\oo_1}\boxtimes\ic{\oo_2}))\] are determined by the weights as follows. For each $k\in\ZZ$, one has that
    \[\rat(\tH^k(\conv(\ic{\oo_1}\boxtimes\ic{\oo_2})))\] is a semi-simple perverse sheaf by \Cref{T:decomp}.
    Moreover, by the same theorem one has that 
    \[\conv(\ic{\oo_1}\boxtimes\ic{\oo_2})=\bigoplus_{k\in\ZZ}\tH^k(\conv(\ic{\oo_1}\boxtimes\ic{\oo_2}))[-k].\]
    Thus $\ic{\oo}$ appears with a shift $-k$ in $\conv(\ic{\oo_1}\boxtimes\ic{\oo_2})$ if and only if it appears in \[\rat(\tH^k((\conv(\ic{\oo_1}\boxtimes\ic{\oo_2})))).\]
    To determine the shifts we compute for a generic $x\in \Lambda(\db)^+\cap C, \,C\in\comp_\db$ the stalk of \[\gr^F(\conv(\ic{\oo_1}\boxtimes\ic{\oo_2})).\]
    Note that by proper base-change, \Cref{T:strictness}, and the definition of $\conv$, this is nothing but the following.
    Let $G_x$ be the stabilizer in $\gdb$ of $x$ and $P_x$ the stabilizer in $\gdb^+$ of $x$. Moreover, let $\cf$ be the derived pullback of $\mathrm{gr}^F(\ic{\oo_1}\boxtimes\ic{\oo_2})$ to $p(x)$, which is an algebraic representation of $M_{p(x)}$, the stabilizer of $p(x)$ under the action $\bG_{\db^1}\times \bG_{\db^2}$. Recall that if $p(x)\in C_{\oo_1}\times C_{\oo_2}$ is a generic point then this is nothing but the structure sheaf by \Cref{S:MHM}. 
    Consider the diagram
    \[\begin{tikzcd}
        T^*(G_x/P_x)&\arrow[l,"i"]G_x/P_x\arrow[r,"\pi"]&*
    \end{tikzcd},\]
    where $i$ is the inclusion of the zero-section. We denote by $i_{p(x)}$ the inclusion of $p(x)$ into $T^*\ep^0$.
    Combining the above we arrive at
    \[\mathbb{L}i_{p(x)}^*\gr^F(\conv(\ic{\oo_1}\boxtimes\ic{\oo_2}))\cong\]\[\cong\mathbb{R}\pi_*\mathbb{L}i^*(i_*(\cf'\otimes_{\mathscr{O}_{G_x/P_x}}\omega_{G_x/P_x}))[\dim \bU_\db^+ -\dim \ep^++\dim\ep^0] \]
    where $\cf'$ is the sheaf corresponding to $\cf$ on $G_x/P_x$. Again, if $p(x)\in C_{\oo_1}\times C_{\oo_2}$, this is nothing but the structure sheaf. 
    We now use the self-intersection formula 
    \[\mathbb{L}i^*i_*(\cf')=\bigoplus_{i=0}^{\dim G_x/P_x}\cf'\otimes_{\mathscr{O}_{G_x/P_x}}\bigwedge^i{T_{G_x/P_x}}[i],\] where we write $T_{G_x/P_x}$ for the tangent sheaf of $G_x/P_x$.
    After tensoring with $\omega_{G_x/P_x}$ we obtain in degree $-i$ 
    \[\cf'\otimes_{\mathscr{O}_{G_x/P_x}}\Omega_{G_x/P_x}^{\dim G_x/P_x-i}\]and pushing forward along $\pi$, we are computing the Hodge numbers of the projective variety $G_x/P_x$ and obtain a complex of sheaves that vanishes outside the degrees \[[-\dim G_x/P_x,\dim G_x/P_x]\] and does not vanish in the lowest and highest degree if $\cf'$ is the structure sheaf. Indeed, the dimension of that complex in degree $j$ is in that case
    \[\sum_{p+q=\dim G_x/P_x+j}h_{p,q}(G_x/P_x),\]
    where $h_{p,q}(G_x/P_x)$ denotes the Hodge numbers of $G_x/P_x$.
    Moreover, it is of rank $1$ at these edge-degrees. We finish the claim by noting that $\dim G_x/P_x$ is nothing but the dimension of the stabilizer of $x$ under the action of $\bU_\db^-$, which is easily seen to be for generic $x$ with $p(x)\in C_1\times C_2$, \[\hom(C_2,C_1:C).\]
    By construction and \Cref{L:minmax} the claim follows.
\end{proof}
\begin{rem}
    Note that one can strengthen the result in an analogous fashion as follows. Let $\oo'$ be as in the theorem and $U$ a generic open subset of $C_{\oo_1*\oo_2}$. Then
    \[\gr^F\ic{\oo'}\rvert_U=\mathscr{O}_U.\]
\end{rem}
We collect the results of \Cref{S:JanGeo} and \Cref{S:charcy} in the following theorem.
\begin{theorem}
    Let $\rho\in\cuss$, $\pi_1\in\irr_{\db^1,\rho},\pi_2\in \irr_{\db^2,\rho}$ corresponding to $\oo_i\in\orb_{\db^i}$ and $C_i\in\comp_{\db^i}$. 
    Then the following hold.
 \begin{enumerate}
     \item  \[\hom(\siso(\ic{\oo_2}),\siso(\ic{\oo_1}))\ge \Lambda(\pi_1,\pi_2)\ge \hom(C_2,C_1).\]
     \item If $J_{\pi_1,\pi_2}$ is an isomorphism, then $\ext^1(C_1,C_2)$ is $0$. If both $\siso(\ic{\oo_i}), i=1,2$ are irreducible, then also the opposite holds.
     \item The image of $J_{\pi_1,\pi_2}\colon \pi_1\btimes\pi_2\ra\pi_1\times \pi_2$ is semi-simple.
     \item There exists an irreducible representation $\pi'=\Z_\rho(\oo')\in\irr_{\db^1+\db^2,\rho}$  appearing in $\pi_1\times \pi_2$ such that \[C_1*C_2\subseteq \siso(\ic{\oo'}).\] The second inequality in (1) is an equality if and only if $\pi'$ appears in the image of $J_{\pi_1,\pi_2}$, in which case it is a subrepresentation of $\pi_1\times\pi_2$.
 \end{enumerate}
\end{theorem}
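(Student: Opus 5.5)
The plan is to translate each item into a statement about $\conv(\ic{\oo_1}\boxtimes\ic{\oo_2})$ and then read it off from \Cref{T:conc}.

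\textbf{Step 1 (dictionary).} By \Cref{C:polecompconv}, $\Lambda(\pi_1,\pi_2)=\Lambda_\conv(\ic{\oo_1\times\oo_2})+\langle\db^1,\db^2\rangle$. Moreover, the constituents of the image of $J_{\pi_1,\pi_2}$ are exactly the $\ic{\oo}$ occurring in $\conv(\ic{\oo_1\times\oo_2})^{\max}$.

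\textbf{Item (1).} The upper bound is the last inequality of \Cref{T:conc}, shifted by $\langle\db^1,\db^2\rangle$. For the lower bound, the second part of \Cref{T:conc} gives some $\ic{\oo}[i]$ in $\conv$ with $i\ge\hom(C_2,C_1)-\langle\db^1,\db^2\rangle$. Since $\Lambda_\conv$ is the maximal shift, $\Lambda_\conv\ge i$, and so $\Lambda(\pi_1,\pi_2)\ge\hom(C_2,C_1)$.

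\textbf{Item (3).} This is exactly \Cref{C:semisimple}.

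\textbf{Item (4).} Take $\oo'$ to be the orbit from the second part of \Cref{T:conc}. Then $\ic{\oo'}[i]$ occurs in $\conv$, so by \Cref{C:reciprocity} and \Cref{T:genrog} the representation $\Z_\rho(\oo')$ occurs in some graded piece of $\pi_1\times\pi_2$, hence in $\pi_1\times\pi_2$.
\begin{itemize}
\item If equality holds in the lower bound of (1), then $\Lambda_\conv=\hom(C_2,C_1)-\langle\db^1,\db^2\rangle\le i\le\Lambda_\conv$. So $i$ is the maximal shift and $\ic{\oo'}$ lies in $\conv^{\max}$, i.e.\ $\pi'$ lies in the image of $J_{\pi_1,\pi_2}$.
\item Conversely, if $\pi'$ lies in the image, then $\ic{\oo'}$ lies in $\conv^{\max}$. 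The first inequality of \Cref{T:conc}, applied to the component $C=C_1*C_2\subseteq\siso(\ic{\oo'})$, gives $\Lambda_\conv+\langle\db^1,\db^2\rangle\ge\hom(C_2,C_1:C_1*C_2)$. I then need \Cref{L:minmax} and semicontinuity to identify this quantity with $\hom(C_2,C_1)$ and to get the reverse inequality.
\end{itemize}
This step is the main obstacle, since \Cref{T:conc} only gives inequalities. What I would try first is to examine, in the proof of \Cref{T:conc}, the top edge degree of the Hodge-number complex of $G_x/P_x$ at generic $x\in C_1*C_2$. There it is of rank one and $\dim G_x/P_x=\hom(C_2,C_1)$. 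So the shift of the summand supported on $C_1*C_2$ is forced to be exactly $\hom(C_2,C_1)-\langle\db^1,\db^2\rangle$, which is the needed upper bound for that summand.

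Being a subrepresentation follows because the image of $J_{\pi_1,\pi_2}$ is a semisimple subrepresentation of $\pi_1\times\pi_2$ by (3).

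\textbf{Item (2).} By \Cref{L:intiso}, $J_{\pi_1,\pi_2}$ is an isomorphism iff $\pard(\pi_1,\pi_2)=0$, i.e.\ $\Lambda+\bLambda=\alpha=(\db^1,\db^2)$ by \Cref{L:alpha}. Apply (1) to both orders, using \Cref{L:sym} and \Cref{L:hypsym} to get $\bLambda(\pi_1,\pi_2)=\Lambda(\pi_2,\pi_1)$. This gives
\[\Lambda+\bLambda\ge\hom(C_2,C_1)+\hom(C_1,C_2)=\ext^1(C_1,C_2)+(\db^1,\db^2)\]
by \Cref{L:cy2} applied at generic points. Hence if $J_{\pi_1,\pi_2}$ is an isomorphism, then $\ext^1(C_1,C_2)=0$.

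For the converse, assume both singular supports are irreducible, so $\siso(\ic{\oo_i})=C_i$. Then the upper and lower bounds in (1) coincide, both orders give equality, and $\pard=\ext^1(C_1,C_2)=0$. One subtlety: generic hom dimensions and the generic $\ext^1$ must be achieved at a common generic pair. I would handle this by upper semicontinuity on the irreducible $C_1\times C_2$.
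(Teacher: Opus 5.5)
\textbf{Verdict: there is a genuine gap, in the converse half of (4), plus a smaller one at the start of (4).} Your treatment of (1), (3), both halves of (2), and the implication ``equality in (1) $\Rightarrow$ $\pi'$ lies in the image'' follows the paper's argument, which is much terser, and these parts are fine. One small correction: $\bLambda(\pi_1,\pi_2)=\Lambda(\pi_2,\pi_1)$ comes from the $\mathrm{Ad}(w_\alpha)$ relation in \S\ref{S:int}, not from \Cref{L:hypsym}.

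The main gap is the claim that $\pi'$ lying in the image of $J_{\pi_1,\pi_2}$ forces $\Lambda(\pi_1,\pi_2)=\hom(C_2,C_1)$. For this you need an \emph{upper} bound $\Lambda_\conv(\ic{\oo_1\times\oo_2})\le\hom(C_2,C_1)-\langle\db^1,\db^2\rangle$ whenever $\ic{\oo'}$ sits at the maximal shift. No stated result gives it. The first inequality of \Cref{T:conc} is a lower bound. Its last inequality only bounds by $\hom(\siso(\ic{\oo_2}),\siso(\ic{\oo_1}))$, which can be larger than $\hom(C_2,C_1)$.

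Your edge-degree idea does not close this as stated. Rank one in the top degree of the Hodge-number complex of $G_x/P_x$ pins down one summand. But $\ic{\oo'}$ may occur in $\conv(\ic{\oo_1}\boxtimes\ic{\oo_2})$ with several shifts, and other orbits may also contain $C_1*C_2$ in their singular support. ``$\pi'$ lies in the image'' only says that \emph{some} copy of $\ic{\oo'}$ sits at shift $\Lambda_\conv(\ic{\oo_1\times\oo_2})$. What you actually need is that every summand $\ic{\oo}[j]$ with $C_1*C_2\subseteq\siso(\ic{\oo})$ satisfies $j\le\hom(C_2,C_1)-\langle\db^1,\db^2\rangle$. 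That requires three things:
\begin{enumerate}
\item vanishing of the (derived) fibre of $\gr^F\conv(\ic{\oo_1}\boxtimes\ic{\oo_2})$ at generic $x\in C_1*C_2$ above the top degree;
\item a precise translation from cohomological degrees of that fibre to the shifts of the individual summands;
\item the identity $\dim G_x/P_x=\hom(C_2,C_1)$ at such $x$.
\end{enumerate}
The third point is a density statement for $p(\Lambda(\db)^+\cap(C_1*C_2))$ in $C_1\times C_2$; \Cref{L:minmax}, which compares $\hom(C_2,C_1:C)$ across components $C$, does not provide it. The paper's proof only invokes \Cref{T:conc} and does not carry out this direction either, so this half of (4) needs a sharpened form of \Cref{T:conc}.

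The smaller gap is at the start of (4). From ``$\ic{\oo'}[i]$ is a summand of $\conv(\ic{\oo_1}\boxtimes\ic{\oo_2})$'' you conclude, via \Cref{C:reciprocity} and \Cref{T:genrog}, that $\Z_\rho(\oo')$ occurs in $\pi_1\times\pi_2$. However, \Cref{C:reciprocity} only compares the maximal-shift part of $\conv$ with the minimal-shift part of $\hyp^+$. For $i<\Lambda_\conv(\ic{\oo_1\times\oo_2})$, adjunction only gives a nonzero map $\hyp^+(\ic{\oo'})\to\ic{\oo_1\times\oo_2}[-i]$. Such a map can come from a positive-degree $\ext$ out of a \emph{different} summand of $\hyp^+(\ic{\oo'})$. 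So it does not show that $\ic{\oo_1\times\oo_2}$ occurs in $\hyp^+(\ic{\oo'})$, which is what \Cref{T:genrog} needs. As written, your argument shows that $\pi'$ is a constituent of $\pi_1\times\pi_2$ only when $i=\Lambda_\conv(\ic{\oo_1\times\oo_2})$, for instance in the equality case of (1).
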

\begin{proof}
    Using \Cref{C:polecompconv} and \Cref{C:reciprocity}, we have that the lowest degree in which $\pi_1\grtimt\pi_2$ does not vanish is $-\Lambda(\ic{\oo_1\times\oo_2})$. On the other hand, it follows from the definition of the filtration of $\pi_1\grtimt\pi_2$ that its lowest degree is the image of the intertwining operator and it sits in degree $-\Lambda(\pi_1,\pi_2)+\langle\db_1,\db_2\rangle$. Moreover, by \Cref{C:semisimple} this image is semi-simple.
    Finally, we can bound $\Lambda(\ic{\oo_1\times\oo_2})$ with \Cref{T:conc}.
    The claim regarding $\ext^1$ follows from the first claim and \Cref{L:cy2}.
\end{proof}
Lastly, we perform a sanity check as in \cite{Dro25} and use the language of the theorem. 
We recall the Aubert-Zelevinsky involution of \cite{Aubert1995Dualite}, \cite{MoeglinWaldspurger1986Zelevinski}.
\[(-)^*\colon \irr_{\db,\rho}\ra\irr_{\db,\rho}.\]
In \cite[Lemma 4.8]{Dro25} it was shown that
\[\Lambda(\pi_1,\pi_2)=\Lambda(\pi_2^*,\pi_1^*).\]
For $\oo\in\orb_\db$ we write $\oo^*\in\orb_\db$ such that \[\Z_\rho(\oo^*)=\Z_\rho(\oo)^*.\]
Similarly, for $C_\oo\in\comp_\db$ we write $C_\oo^*\coloneq C_{\oo^*}$.
We extend this notion straightforwardly to arbitrary unions of irreducible components of $\Lambda(\db)$. 
Using this notation, we recall from \cite[Theorem 5.5.5]{KashiwaraSchapira1990Sheaves} and \cite[Proposition 7.2]{EvensMirkovic1997Fourier} that \[\siso(\ic{\oo_i^*})=\siso(\ic{\oo_i})^*.\]Arguing thus as in 
\cite[p.17]{Dro25} we obtain that (1) in the above theorem holds for the pair $\pi_1,\pi_2$ if and only if it holds for the pair $\pi_2^*,\pi_1^*$.
\bibliographystyle{abbrv}
\newpage
\bibliography{References.bib}

@book{Lusztig1993,
  author    = {George Lusztig},
  title     = {Introduction to Quantum Groups},
  series    = {Progress in Mathematics},
  volume    = {110},
  publisher = {Birkh{\"a}user Boston},
  address   = {Boston, MA},
  year      = {1993}
}

@article{VaragnoloVasserot2011Canonical,
  author        = {Michela Varagnolo and Eric Vasserot},
  title         = {Canonical bases and {K}hovanov--{L}auda algebras},
  journal       = {Journal f{\"u}r die reine und angewandte Mathematik},
  volume        = {659},
  pages         = {67--100},
  year          = {2011},
  doi           = {10.1515/CRELLE.2011.068},
  eprint        = {0901.3992},
  archivePrefix = {arXiv},
  primaryClass  = {math.RT}
}

@article{FujitaHernandez2026,
  author  = {Ryo Fujita and David Hernandez},
  title   = {Monoidal {J}antzen filtrations},
  journal = {Advances in Mathematics},
  volume  = {495},
  pages   = {110963},
  year    = {2026},
  doi     = {10.1016/j.aim.2026.110963}
}

@article{Williamson2016LocalHodge,
  author  = {Geordie Williamson},
  title   = {Local {H}odge theory of {S}oergel bimodules},
  journal = {Acta Mathematica},
  volume  = {217},
  number  = {2},
  pages   = {341--404},
  year    = {2016},
  doi     = {10.1007/s11511-017-0146-8},
  eprint  = {1410.2028},
  archivePrefix = {arXiv},
  primaryClass  = {math.RT}
}

@misc{Ciubotaru2026Jantzen,
  author        = {Dan Ciubotaru},
  title         = {Jantzen filtrations for graded affine Hecke algebras},
  year          = {2026},
  eprint        = {2609.03651},
  archivePrefix = {arXiv},
  primaryClass  = {math.RT},
  doi           = {10.48550/arXiv.2609.03651},
  url           = {https://arxiv.org/abs/2609.03651}
}

@article{Waldspurger2003Plancherel,
  author  = {Waldspurger, Jean-Loup},
  title   = {La formule de {P}lancherel pour les groupes r{\'{e}}ductifs $p$-adiques (d'apr{\`{e}}s {H}arish-{C}handra)},
  journal = {Journal of the Institute of Mathematics of Jussieu},
  year    = {2003},
  volume  = {2},
  number  = {2},
  pages   = {235--333},
  doi     = {10.1017/S147474800300008X}
}

@article{Gabriel1972,
  author  = {Gabriel, Peter},
  title   = {Unzerlegbare {D}arstellungen {I}},
  journal = {Manuscripta mathematica},
  year    = {1972},
  volume  = {6},
  number  = {1},
  pages   = {71--103},
  doi     = {10.1007/BF01298413}
}

@article{lapid2018geometric,
  title={Geometric conditions for $\square$-irreducibility of certain representations of the general linear group over a non-archimedean local field},
  author={Lapid, Erez and M{\'\i}nguez, Alberto},
  journal={Advances in Mathematics},
  volume={339},
  pages={113--190},
  year={2018},
  publisher={Elsevier}
}

@article{KazhdanLusztig1987,
  author  = {Kazhdan, David and Lusztig, George},
  title   = {Proof of the {D}eligne-{L}anglands conjecture for {H}ecke algebras},
  journal = {Inventiones mathematicae},
  year    = {1987},
  volume  = {87},
  number  = {1},
  pages   = {153--215},
  doi     = {10.1007/BF01389157}
}

@book{ChrissGinzburg1997,
  author    = {Chriss, Neil and Ginzburg, Victor},
  title     = {Representation Theory and Complex Geometry},
  publisher = {Birkh{\"a}user},
  address   = {Boston},
  year      = {1997},
  isbn      = {978-0-8176-3792-7}
}

@article {KKKO,
    AUTHOR = {Kang, Seok-Jin and Kashiwara, Masaki and Kim, Myungho and Oh,
              Se-jin},
     TITLE = {Monoidal categorification of cluster algebras},
   JOURNAL = {J. Amer. Math. Soc.},
  FJOURNAL = {Journal of the American Mathematical Society},
    VOLUME = {31},
      YEAR = {2018},
    NUMBER = {2},
     PAGES = {349--426},
      ISSN = {0894-0347,1088-6834},
   MRCLASS = {13F60 (16Gxx 17B37 18D10 81R50)},
  MRNUMBER = {3758148},
MRREVIEWER = {Fan\ Qin},
       DOI = {10.1090/jams/895},
       URL = {https://doi.org/10.1090/jams/895},
}

@article{HernandezLeclerc2010,
  author  = {Hernandez, David and Leclerc, Bernard},
  title   = {Cluster algebras and quantum affine algebras},
  journal = {Duke Mathematical Journal},
  year    = {2010},
  volume  = {154},
  number  = {2},
  pages   = {265--341},
  doi     = {10.1215/00127094-2010-040}
}

@article{GeissLeclercSchroer2006,
  author  = {Gei{\ss}, Christof and Leclerc, Bernard and Schr{\"o}er, Jan},
  title   = {Rigid modules over preprojective algebras},
  journal = {Inventiones mathematicae},
  year    = {2006},
  volume  = {165},
  number  = {3},
  pages   = {589--632},
  doi     = {10.1007/s00222-006-0507-y}
}

@article{Aubert1995Dualite,
  author  = {Aubert, Anne-Marie},
  title   = {Dualit{\'e} dans le groupe de {G}rothendieck de la cat{\'e}gorie des repr{\'e}sentations lisses de longueur finie d'un groupe r{\'e}ductif $p$-adique},
  journal = {Transactions of the American Mathematical Society},
  volume  = {347},
  number  = {6},
  pages   = {2179--2189},
  year    = {1995},
  doi     = {10.1090/S0002-9947-1995-1285969-0}
}

@article{KashiwaraSaito1997,
  author  = {Kashiwara, Masaki and Saito, Yoshihisa},
  title   = {Geometric construction of crystal bases},
  journal = {Duke Mathematical Journal},
  year    = {1997},
  volume  = {89},
  number  = {1},
  pages   = {9--36},
  doi     = {10.1215/S0012-7094-97-08902-X}
}

@book{KashiwaraSchapira1990Sheaves,
  author    = {Kashiwara, Masaki and Schapira, Pierre},
  title     = {Sheaves on Manifolds},
  series    = {Grundlehren der mathematischen Wissenschaften},
  volume    = {292},
  note      = {With a chapter in French by Christian Houzel},
  publisher = {Springer-Verlag},
  address   = {Berlin},
  year      = {1990},
  doi       = {10.1007/978-3-662-02661-8}
}

@article{EvensMirkovic1997Fourier,
  author  = {Evens, Sam and Mirkovi{\'c}, Ivan},
  title   = {Fourier transform and the {I}wahori-{M}atsumoto involution},
  journal = {Duke Mathematical Journal},
  volume  = {86},
  number  = {3},
  pages   = {435--464},
  year    = {1997},
  doi     = {10.1215/S0012-7094-97-08613-0}
}

@article{MoeglinWaldspurger1986Zelevinski,
  author  = {M{\oe}glin, Colette and Waldspurger, Jean-Loup},
  title   = {Sur l'involution de {Z}elevinski},
  journal = {Journal f{\"u}r die reine und angewandte Mathematik},
  volume  = {372},
  pages   = {136--177},
  year    = {1986}
}

@article{deligne1971theorie2,
  author    = {Deligne, Pierre},
  title     = {Th{\'{e}}orie de {H}odge: {II}},
  journal   = {Publications Math{\'{e}}matiques de l'IH{\'{E}}S},
  volume    = {40},
  pages     = {5--57},
  year      = {1971},
  publisher = {Institut des Hautes {\'{E}}tudes Scientifiques}
}

@article{griffiths1968periods1,
  author    = {Griffiths, Phillip A.},
  title     = {On the Periods of Integrals on Algebraic Manifolds, {I} ({{\it {P}}eriod {D}}istributions and {L}ocal {T}orelli {T}heorem)},
  journal   = {American Journal of Mathematics},
  volume    = {90},
  number    = {2},
  pages     = {568--626},
  year      = {1968},
  publisher = {Johns Hopkins University Press}
}

@article{Schnell2014OverviewMHM,
  author        = {Schnell, Christian},
  title         = {An overview of {M}orihiko {S}aito's theory of mixed {H}odge modules},
  journal       = {arXiv preprint arXiv:1405.3096},
  year          = {2014},
  eprint        = {1405.3096},
  archiveprefix = {arXiv},
  primaryclass  = {math.AG}
}

@unpublished{Sabbah2019IntroMHM,
  author       = {Sabbah, Claude},
  title        = {Introduction to mixed {H}odge modules},
  note         = {Lecture notes, Angers summer school},
  year         = {2019},
  url          = {https://perso.pages.math.cnrs.fr/users/claude.sabbah/livres/sabbah_angers1904.pdf}
}

@article{crawleyboevey2000exceptional,
  title={On the exceptional fibres of {K}leinian singularities},
  author={Crawley-Boevey, William},
  journal={American Journal of Mathematics},
  volume={122},
  number={5},
  pages={1027--1037},
  year={2000},
  publisher={Johns Hopkins University Press}
}

@article{laumon1985transformations,
  author    = {Laumon, G{\'e}rard},
  title     = {Transformations canoniques et sp{\'e}cialisation pour les $\mathcal{D}$-modules filtr{\'e}s},
  journal   = {Ast{\'e}risque},
  volume    = {130},
  pages     = {56--129},
  year      = {1985},
  publisher = {Soci{\'e}t{\'e} math{\'e}matique de France}
}

@article{Saito1988,
  author  = {Saito, Morihiko},
  title   = {Modules de {H}odge Polarisables},
  journal = {Publications of the Research Institute for Mathematical Sciences},
  year    = {1988},
  volume  = {24},
  number  = {6},
  pages   = {849--995},
  doi     = {10.2977/prims/1195173930}
}

@article{Saito1990,
  author  = {Saito, Morihiko},
  title   = {Mixed {H}odge {M}odules},
  journal = {Publications of the Research Institute for Mathematical Sciences},
  year    = {1990},
  volume  = {26},
  number  = {2},
  pages   = {221--333},
  doi     = {10.2977/prims/1195171082}
}

@article{Lusztig1991Quivers,
  author  = {Lusztig, George},
  title   = {Quivers, perverse sheaves, and quantized enveloping algebras},
  journal = {Journal of the American Mathematical Society},
  volume  = {4},
  number  = {2},
  pages   = {365--421},
  year    = {1991},
  doi     = {10.1090/S0894-0347-1991-1088333-2}
}

@inproceedings{Gabriel1975Finite,
  author    = {Gabriel, Peter},
  title     = {Finite representation type is open},
  booktitle = {Representations of Algebras},
  editor    = {Dlab, Vlastimil and Gabriel, Peter},
  series    = {Lecture Notes in Mathematics},
  volume    = {488},
  publisher = {Springer},
  year      = {1975},
  pages     = {132--135},
  doi       = {10.1007/BFb0081219}
}

@inproceedings{Ginzburg1987,
  author    = {Ginzburg, Victor},
  title     = {Geometrical aspects of representation theory},
  booktitle = {Proceedings of the International Congress of Mathematicians},
  volume    = {1},
  pages     = {840--848},
  year      = {1987},
  publisher = {American Mathematical Society},
  address   = {Providence, RI}
}

@article{Suzuki1998,
  author    = {Suzuki, Takeshi},
  title     = {Rogawski's conjecture on the {J}antzen filtration for the degenerate affine {H}ecke algebra of type {A}},
  journal   = {Representation Theory of the American Mathematical Society},
  volume    = {2},
  pages     = {393--409},
  year      = {1998},
  doi       = {10.1090/S1088-4165-98-00043-0}
}

@misc{Dro25,
      title={Poles of intertwining operators in terms of irreducible components of {L}usztig's characteristic variety in type {A}}, 
      author={Johannes Droschl},
      year={2025},
      eprint={2508.13817},
      archivePrefix={arXiv},
      primaryClass={math.RT},
      url={https://arxiv.org/abs/2508.13817}, 
}

@article {Dat05,
    AUTHOR = {Dat, J.-F.},
     TITLE = {{$v$}-tempered representations of {$p$}-adic groups. {I}.
              {$l$}-adic case},
   JOURNAL = {Duke Math. J.},
  FJOURNAL = {Duke Mathematical Journal},
    VOLUME = {126},
      YEAR = {2005},
    NUMBER = {3},
     PAGES = {397--469},
      ISSN = {0012-7094,1547-7398},
   MRCLASS = {22E50 (11F70)},
  MRNUMBER = {2120114},
MRREVIEWER = {Anton\ Deitmar},
       DOI = {10.1215/S0012-7094-04-12631-4},
       URL = {https://doi.org/10.1215/S0012-7094-04-12631-4},
}

@article {LapMin25,
    AUTHOR = {Lapid, Erez and M\'inguez, Alberto},
     TITLE = {A binary operation on irreducible components of {L}usztig's
              nilpotent varieties {II}: applications and conjectures for
              representations of {${\rm GL}_n$} over a non-archimedean local
              field},
   JOURNAL = {Pure Appl. Math. Q.},
  FJOURNAL = {Pure and Applied Mathematics Quarterly},
    VOLUME = {21},
      YEAR = {2025},
    NUMBER = {2},
     PAGES = {813--863},
      ISSN = {1558-8599,1558-8602},
   MRCLASS = {22E50 (14M15 16G20 16T20 20G42)},
  MRNUMBER = {4847251},
       DOI = {10.4310/pamq.241205005734},
       URL = {https://doi.org/10.4310/pamq.241205005734},
}

@article{Zel80,
author = {Zelevinsky, A. V.},
journal = {Annales scientifiques de l'École Normale Supérieure},
number = {2},
pages = {165-210},
publisher = {Elsevier},
title = {Induced representations of reductive $p$-adic groups. {II}. {O}n irreducible representations of $\mathrm{GL}(n)$},
volume = {13},
year = {1980}
}

@article {BerZel77,
    AUTHOR = {Bernstein, I. N. and Zelevinsky, A. V.},
     TITLE = {Induced representations of reductive {${\mathfrak p}$}-adic
              groups. {I}},
   JOURNAL = {Ann. Sci. \'{E}cole Norm. Sup. (4)},
  FJOURNAL = {Annales Scientifiques de l'\'{E}cole Normale Sup\'{e}rieure.
              Quatri\`eme S\'{e}rie},
    VOLUME = {10},
      YEAR = {1977},
    NUMBER = {4},
     PAGES = {441--472},
      ISSN = {0012-9593},
   MRCLASS = {22E50},
  MRNUMBER = {579172},
       URL = {http://www.numdam.org/item?id=ASENS_1977_4_10_4_441_0},
}

@article {BerZel76,
    AUTHOR = {Bernstein, I. N. and Zelevinsky, A. V.},
     TITLE = {Representations of the group {$GL(n,F),$} where {$F$} is a
              local non-{A}rchimedean field},
   JOURNAL = {Uspehi Mat. Nauk},
  FJOURNAL = {Akademija Nauk SSSR i Moskovskoe Matemati\v{c}eskoe
              Ob\v{s}\v{c}estvo. Uspehi Matemati\v{c}eskih Nauk},
    VOLUME = {31},
      YEAR = {1976},
    NUMBER = {3(189)},
     PAGES = {5--70},
      ISSN = {0042-1316},
   MRCLASS = {22E50},
  MRNUMBER = {425030},
MRREVIEWER = {G.\ I.\ Ol\cprime shanski\u{\i}},
}

@article{BushnellKutzko1998,
  author  = {Bushnell, Colin J. and Kutzko, Philip C.},
  title   = {Smooth representations of reductive $p$-adic groups: structure theory via types},
  journal = {Proceedings of the London Mathematical Society},
  volume  = {77},
  number  = {3},
  pages   = {582--634},
  year    = {1998},
  doi     = {10.1112/s0024611598000574}
}

@article{BushnellKutzko1999,
  author  = {Bushnell, Colin J. and Kutzko, Philip C.},
  title   = {Semisimple types in $\mathrm{GL}_n$},
  journal = {Compositio Mathematica},
  volume  = {119},
  number  = {1},
  pages   = {53--97},
  year    = {1999},
  doi     = {10.1023/A:1001773929735}
}

@article{braden2003hyperbolic,
  author    = {Braden, Tom},
  title     = {Hyperbolic localization of intersection cohomology},
  journal   = {Transformation Groups},
  volume    = {8},
  number    = {3},
  pages     = {209--216},
  year      = {2003},
  publisher = {Springer},
  doi       = {10.1007/s00031-003-0719-7},
  eprint    = {math/0202251},
  archivePrefix = {arXiv},
  primaryClass  = {math.AG}
}

@article{drinfeld2014theorem,
  author        = {Drinfeld, Vladimir and Gaitsgory, Dennis},
  title         = {On a theorem of {B}raden},
  journal       = {arXiv preprint arXiv:1308.3786},
  year          = {2013},
  eprint        = {1308.3786},
  archivePrefix = {arXiv},
  primaryClass  = {math.AG}
}

@article{Rogawski1985,
  author  = {Rogawski, Jonathan D.},
  title   = {On modules over the {H}ecke algebra of a $p$-adic group},
  journal = {Inventiones mathematicae},
  year    = {1985},
  volume  = {79},
  number  = {3},
  pages   = {443--465},
  doi     = {10.1007/BF01388580}
}

@article{FinisLapidMuller2012,
  author  = {Finis, Tobias and Lapid, Erez and M{\"u}ller, Werner},
  title   = {On the degrees of matrix coefficients of intertwining operators},
  journal = {Pacific Journal of Mathematics},
  year    = {2012},
  volume  = {260},
  number  = {2},
  pages   = {433--456},
  doi     = {10.2140/pjm.2012.260.433}
}

@book{Jantzen1979,
  author    = {Jantzen, Jens Carsten},
  title     = {Moduln mit einem h{\"o}chsten Gewicht},
  series    = {Lecture Notes in Mathematics},
  volume    = {750},
  publisher = {Springer-Verlag},
  address   = {Berlin, Heidelberg, New York},
  year      = {1979},
  doi       = {10.1007/BFb0069521},
  isbn      = {978-3-540-09558-3}
}

@book{Achar2021Perverse,
  author    = {Achar, Pramod N.},
  title     = {Perverse Sheaves and Applications to Representation Theory},
  series    = {Mathematical Surveys and Monographs},
  volume    = {258},
  publisher = {American Mathematical Society},
  address   = {Providence, RI},
  year      = {2021},
  isbn      = {978-1-4704-5597-2},
  doi       = {10.1090/surv/258}
}

@article{BBD1982,
  author    = {Beilinson, Alexander A. and Bernstein, Joseph and Deligne, Pierre},
  title     = {Faisceaux pervers},
  journal   = {Ast{\'e}risque},
  volume    = {100},
  publisher = {Soci{\'e}t{\'e} Math{\'e}matique de France},
  address   = {Paris},
  year      = {1982}
}

@article{AizLap,
  author       = {Aizenbud, Avraham and Lapid, Erez},
  title        = {A binary operation on irreducible components of {L}usztig’s nilpotent varieties {I}: definition and properties},
  journal      = {Pure and Applied Mathematics Quarterly},
  year         = {2025},
  volume       = {21},
  number       = {1},
  pages        = {5--41},
  doi          = {10.4310/pamq.241203024730}
}

@article{Zelevinsky1981,
  author  = {Zelevinsky, A. V.},
  title   = {A {$p$}-adic analogue of the {K}azhdan-{L}usztig conjecture},
  journal = {Functional Analysis and Its Applications},
  year    = {1981},
  volume  = {15},
  number  = {2},
  pages   = {83--92},
  doi     = {10.1007/BF01082260}
}
\end{document}